\newtheorem{thmm}{Theorem}
\newtheorem{thm}{Theorem}[section]
\newtheorem{lem}[thm]{Lemma}
\newtheorem{prop}[thm]{Proposition}
\newtheorem{cor}[thm]{Corollary}
\newtheorem{defn}[thm]{Definition}
\newtheorem{conj}[thm]{Conjecture}
\newtheorem{remark}[thm]{Remark}
\newcommand{\bZ}{{\mathbb Z}} 
\newcommand{\bQ}{{\mathbb Q}}
\newcommand{\bR}{{\mathbb R}}
\newcommand{\bC}{{\mathbb C}}
\newcommand{\bN}{{\mathbb N}}
\newcommand{\bF}{{\mathbb F}}
\newcommand{\bT}{{\mathbb T}}
\newcommand{\dsone}{\mathds1}
\renewcommand{\mod}{\;\operatorname{mod}}
\newcommand{\ord}{\operatorname{ord}}
\newcommand{\Cinf}{C^\infty}
\newcommand{\tr}{\operatorname{tr}}
\newcommand{\OPN}{\operatorname{Op}_N}
\newcommand{\Var}{\operatorname{Var}}
\newcommand{\disc}{\operatorname{disc}}
\newcommand{\eps}{\varepsilon}
\newcommand{\HN}{\mathcal H_N}
\newcommand{\UN}{U_N}
\newcommand{\intinf}{\int_{-\infty}^\infty}
\newcommand{\CA}{{\mathcal C}_A}
\begin{document}

\title[Fluctuations of matrix elements]
{On the fluctuations of matrix elements of the quantum cat map}
\author{Lior Rosenzweig}%
\address{Raymond and Beverly Sackler School of Mathematical Sciences,
Tel Aviv University, Tel Aviv 69978, Israel}

\email{rosenzwe@post.tau.ac.il}%


\begin{abstract}
We study the fluctuations of the diagonal matrix elements of the
quantum cat map about their limit. We show that after suitable normalization,  the fifth centered moment for the Hecke basis vanishes in the semiclassical limit, confirming in part a conjecture of Kurlberg and Rudnick.

We also study  sums of matrix elements lying in short windows. For
observables with zero mean, the first moment of these sums is zero ,
and the variance was determined by the author with Kurlberg and
Rudnick. We show that if the window is sufficiently small in terms
of Planck's constant, the third moment vanishes if we normalize so
that the variance is of order one.
\end{abstract}
\maketitle
\section{Introduction}
The study of quantum wave functions of classically chaotic systems
has been extensively studied in recent years. One well known result
is that in the mean square sense the matrix elements of smooth
observables concentrate around the classical average of the
observable in the semiclassical limit
\cite{Shn},\cite{Zel},\cite{Dever}. This is known as the "Quantum
Ergodicity Theorem". The problem of whether all matrix elements
converge to the classical average (the "Quantum Unique Ergodicity"
problem) has no general result so far. This has been extensively
studied, and in some arithmetic cases both positive (when
considering desymmetrized eigenfunctions) answers (cf
\cite{Lin,SH,Sound,KR_Hecke,Kelmer}) and negative answers (cf
\cite{FNDe,Kelmer,Kel2}) have been given.

Another important property is the distribution of the matrix
elements. It was suggested by Feingold and Peres \cite{FP86} that
for generic systems with $D$ degrees of freedom, the variance of the
matrix elements about their mean decays with Planck's constant
$\hbar$ as  $\hbar^D$, with a prefactor given in terms of the
autocorrelation function of the classical observable. Furthermore in
\cite{Eck} Eckhart {\it{et al}} predict that after normalizing the
fluctuations of the matrix elements, they have a limiting Gaussian
distribution about their limit with the same expected value and
variance. Some arithmetical models were found to deviate from these
predictions \cite{LS,KR_Annals,Kelmer}.

In this paper we study properties of these fluctuations for the
quantum cat map. To describe these properties we first recall the
model.
\subsection{The Quantum cat map}
The quantized cat map is a model quantum system with chaotic
classical analogue, first investigated by Hannay and Berry \cite{HB}
and studied extensively since, see e.g. \cite{Keating91, DEGI,
KR_Hecke, FNDe, Rudnick-montreal}. While the classical system
displays generic chaotic properties, the quantum system behaves
non-generically in several aspects, such as the statistics of the
eigenphases, and the value distribution of the eigenfunctions
\cite{KR-imrn}.

We review some of the details of the system in a form suitable for
our purposes, see e.g. \cite{DEGI, KR_Hecke, Rudnick-montreal}. Let
$A$ be a linear hyperbolic toral automorphism, that is, $A\in
SL_2(\bZ)$ is an integer unimodular matrix with distinct real
eigenvalues. We assume $A\equiv I\mod 2$. Iterating the action of
$A$ on the torus $\bT^2=\bR^2/\bZ^2$ gives  a dynamical system,
which is highly chaotic. The quantum mechanical system includes an
integer $N\geq 1$, the inverse Planck constant, (which we will take
to be prime), an $N$-dimensional state space $\HN\simeq
L^2(\bZ/N\bZ)$, and a unitary map $U=\UN(A)$ of $\HN$, which is the
quantization $A$. The eigenvalues and the dimension of the
eigensapces of $U$ are related to the order of $A$ modulo $N$. Let
$\ord(A,N)$ be the least integer $r\geq 1$ for which $A^r\equiv I
\mod N$. When $N$ is prime the distinct eigenphases $\theta_j$ are
evenly spaced (with at most one exception) with spacing
$1/\ord(A,N)$, and in fact,
the distinct eigenphases are all of the form $j/\ord(A,N)$. The
eigenspaces all have the same dimension (again with at most one
 exception) which is $(N\pm 1)/\ord(A,N)$.

For fixed small $\epsilon>0$, as $N\to\infty$ through a sequence of
values such that  $\ord(A,N)>N^\epsilon$ all the matrix elements
converge to the phase space average $\int_{\bT} f(x)dx$ of the
observable $f$ \cite{KR00, Bou07} 
(However, note that there are ``scars'' found for values of $N$
where $\ord(A,N)$ is logarithmic in $N$, see \cite{FNDe}.) The
condition on $\ord(A,N)$ is valid for most values of $N$ (in fact
$\ord(A,N)>N^{1/2+o(1)}$ for almost all $N$, c.f. \cite[Lemma
15]{KR00}), Moreover, it was shown by Kurlberg in \cite{Kur03} that
assuming GRH, for almost all primes $N$ $\ord(A,N)\gg N/b(N)$ for
any function $b(x)$ tending to infinity more slowly than $\log x$,
and for almost all values of $N$, $\ord(A,N)\geq N^{1-\eps}$.

 In \cite{KR_Hecke} Kurlberg and Rudnick
introduced a group of unitary operators, the Hecke group, that
commutes with $U$. It is shown in \cite{KR_Hecke} that if
$\{\psi_N\}$ is a sequence of Hecke eigenfunction (a joint
eigenfunctions of $U$ and all elements of the Hecke group), then for
any smooth function $f\in\Cinf(\bT^2)$ the matrix elements
$\langle\OPN(f)\psi_N,\psi_N\rangle$ converge to the space average
$\int_{\bT^2}f$. In \cite{KR_Annals} they raise a conjecture about
the fluctuation of the matrix elements around the limit for a fixed
function. The operator $\OPN(f)$ is decomposed by the Fourier
decomposition of $f$, that is if
$f(x)=\sum_{n\in\bZ^2}\hat{f}(n)e(nx)$, then
$\OPN(f)=\sum_{n\in\bZ^2}\hat{f}(n)\OPN(e(nx))$. They conjecture
that for fixed $0\ne n\in\bZ^2$ the set
$\langle\OPN(e(nx))\psi_j,\psi_j\rangle$ becomes equidistributed
with respect to the Sato-Tate measure, and after considering
symmetries of the system these sets become independent for different
choices of $n$ (a more precise explanation is given in section
\ref{sec:Hecke_mtrx_lmnts}). Agreement with this conjecture is shown
in figures
\ref{fig:Catmap_distribution_single},\ref{fig:Catmap_distribution_double}.
In figure \ref{fig:Catmap_distribution_single} the cumulative
distribution function (cdf) of the fluctuations of the matrix
elements for the fixed function $f(x)=e(x+y)$ is shown compared with
the cdf of a random variable with Sato-Tate distribution (the
probability density function in this case is
$p(x)=\frac1{2\pi}\sqrt{4-x^2}$). In figure
\ref{fig:Catmap_distribution_double} the fixed function is
$f(x)=e(x+y)+e(x+2y)$. In this case the expected limiting
distribution is of the sum of two independent random variables with
Sato-Tate distribution, and again the cdf of the matrix elements
shows high agreement with the conjecture. The matrix used in both
cases is $\begin{pmatrix}7&-2\\4&-1\end{pmatrix}$
\begin{figure}[ht]
   \centerline{ \includegraphics[width=14cm,height=10cm]{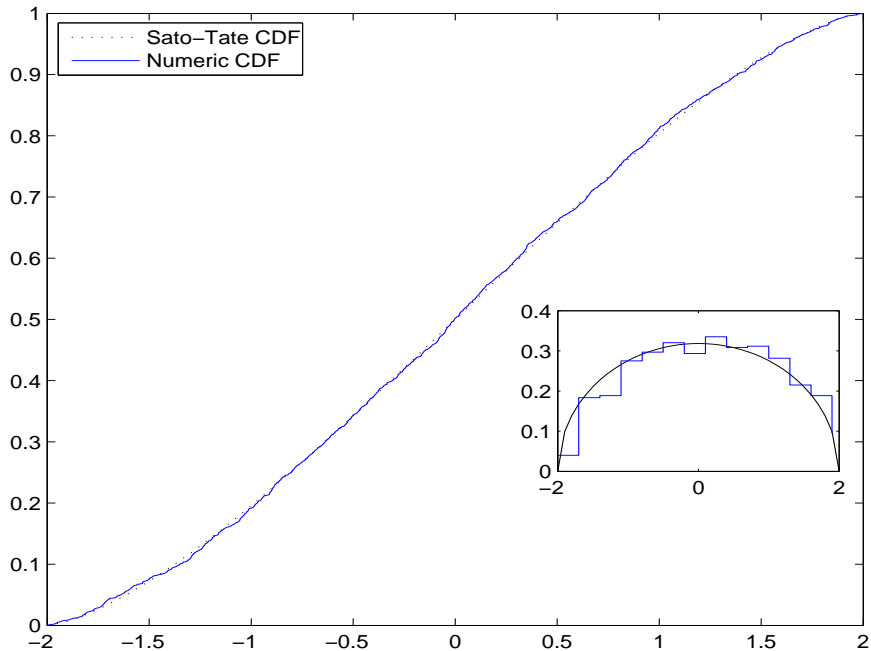}}
   \caption{Cumulative distribution function of Hecke eigenbasis, with $f(x)=e(x+y)$, $N=1997$ compared to Sato-Tate comulative distribution function}
   \label{fig:Catmap_distribution_single}
       \end{figure}
\begin{figure}[ht]
   \centerline{ \includegraphics[width=14cm,height=10cm]{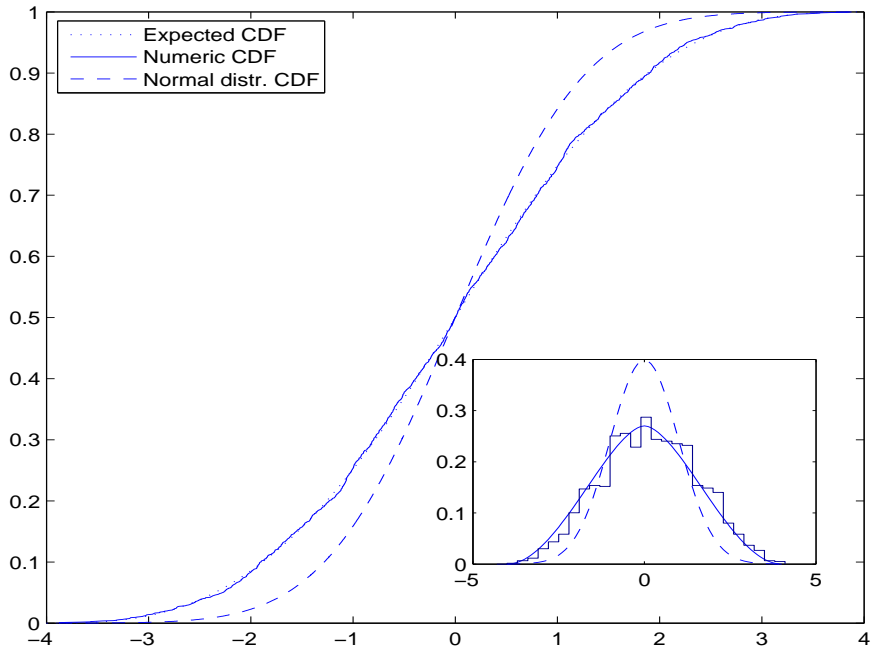}}
   \caption{Cumulative distribution function of Hecke eigenbasis compared to sum of two independent Sato-Tate's and to a standard normal distribution}
   \label{fig:Catmap_distribution_double}
       \end{figure}
Another way to study the fluctuations of the matrix elements, is by
studying the sum of diagonal matrix elements of $\OPN(f)$ over
eigenphases lying in a random window of length $1/L$ around
$\theta$. More generally we consider a window function, constructed
by taking a fixed non-negative and even function $h \in L^2([-\frac
12,\frac 12])$ and setting $h_L(\theta) :=\sum_{m\in \bZ}
h(L(\theta-m))$, which is periodic and localized in an interval of
length $1/L$. We further normalize so that $\intinf h(x)^2dx=1$,
and hence $\int_0^1 h_L(\theta)^2 d\theta 
= 1/L$. Then set
\begin{equation}\label{defn P}
P(\theta) := \sum_{j=1}^N h_L(\theta-\theta_j) \langle
\OPN(f)\psi_j,\psi_j \rangle  \;.
\end{equation}
Note that $P(\theta)$ is independent of choice of basis, and in
particular it is real valued. An important case to consider is the
case where $f(x)$ is a trigonometrical function, we therefore denote
for $n\in\bZ^2$
\begin{equation*}
P_n(\theta) := \sum_{j=1}^N h_L(\theta-\theta_j) \langle
\OPN(e(nx))\psi_j,\psi_j \rangle  \;.
\end{equation*}
 In \cite{KRR} it was shown that if $\ord(A,N)\gg
N^{1/2}$ then $\Var(\sqrt{L} P)\sim C(f) +o(1)$ where $C(f)$ is a
constant depending on $f$ and the matrix $A$. This variance is the
same variance as the limiting variance of the distribution of Hecke
matrix elements.
\subsection{Results} In the following we present two results in the
study of the fluctuations of matrix elements. In section \ref{sec:P}
we study the fluctuations in short windows. In \cite{KRR} we showed
that unless $n,m$ satisfy an arithmetic condition, the corresponding
fluctuation functions, $P_n(\theta),P_m(\theta)$, become
uncorrelated. In this paper we generalize this result for any choice
of triple $n_1,n_2,n_3\in\bZ^2$. That is we show that as
$N\to\infty$ through primes
$$
\int_0^1P_{n_1}(\theta)P_{n_2}(\theta)P_{n_3}(\theta)d\theta=O(\frac
N{L^{3/2}})
$$
and in particular we prove the following theorem
\begin{thmm}
Let $A\in SL_2(\bZ)$ be a hyperbolic matrix satisfying $A\equiv
I\pmod2$. Fix $f\in C^\infty(\bT^2)$ of zero mean. Assume
$L<2\ord(A,N)$, then as $N\to\infty$ through split primes satisfying
$\ord(A,N)/N^{2/3}\to\infty$, the third moment of $P(\theta)$
satisfies
\begin{equation}
\int_0^1\left(\sqrt LP(\theta)\right)^3d\theta=o(1)
\end{equation}
\end{thmm}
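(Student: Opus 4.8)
The plan is to reduce everything to the trigonometric case and then to a careful analysis of exponential sums. Since $f$ has zero mean, write $f(x)=\sum_{0\ne n\in\bZ^2}\^f(n)e(nx)$ with rapidly decaying coefficients, so that $P(\theta)=\sum_{0\ne n}\^f(n)P_n(\theta)$. Expanding the cube and using the rapid decay of $\^f$ to truncate all three sums at a slowly growing cutoff (say $|n_i|\le N^{\delta}$), it suffices to show that for each fixed triple $n_1,n_2,n_3$ of nonzero lattice vectors,
\begin{equation*}
\int_0^1 P_{n_1}(\theta)P_{n_2}(\theta)P_{n_3}(\theta)\,\d\theta = O\!\left(\frac{N}{L^{3/2}}\right),
\end{equation*}
with an implied constant depending polynomially on the $|n_i|$; combined with $L<2\ord(A,N)$ and $\ord(A,N)/N^{2/3}\to\infty$ (which forces $N/L^{3/2}\to 0$ after multiplying by $L^{3/2}$ and dividing by $L^{3/2}$... more precisely $L^{3/2}\cdot N/L^{3/2}=N$ is the wrong scaling, so one must be careful: the factor $\sqrt L^{3}=L^{3/2}$ multiplies the integral, giving $L^{3/2}\cdot O(N/L^{3/2})=O(N)$ — no, the bound must already contain the gain), the normalized third moment $\int_0^1(\sqrt L P)^3\,\d\theta = L^{3/2}\int_0^1 P^3\,\d\theta$ is $o(1)$ precisely because $L^{3/2}\cdot (N/L^{3/2})/\big(\text{something}\big)$; the clean way is to prove $\int_0^1 P_{n_1}P_{n_2}P_{n_3}\,\d\theta = O(N/L^{3})$ so that after multiplying by $L^{3/2}$ one gets $O(N/L^{3/2})=o(1)$ since $L<2\ord(A,N)$ and $\ord(A,N)\gg N^{2/3}$ give $L^{3/2}\gg N$. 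So the real target is a bound of size $N/L^3$ for the unnormalized triple correlation.

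Next I would insert the explicit formula for the matrix elements. By the Hecke theory recalled in the excerpt and the exact diagonalization of $\UN(A)$ for split primes, $\langle\OPN(e(nx))\psi_j,\psi_j\rangle$ is expressible through Gauss-type sums / values of a multiplicative character, and the key structural fact (from \cite{KR_Hecke,KRR}) is that $\sum_j h_L(\theta-\theta_j)\langle\OPN(e(nx))\psi_j,\psi_j\rangle$ can be rewritten, using the evenly-spaced eigenphases $\theta_j=j/\ord(A,N)$ and Poisson summation in the window variable, as a sum over $k$ of $\^{h_L}$ against $\tr(\UN(A)^k\OPN(e(nx)))$. The latter trace is a classical quantity: $\tr(\UN(A)^k\OPN(e(nx)))$ equals (up to a normalized Gauss sum of absolute value depending only on whether $A^k-I$ is invertible mod $N$) a single additive character $e_N(\text{quadratic form in } n \text{ evaluated at the fixed point of }A^k)$, vanishing unless $n$ lies in the image of $A^k-I$. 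So $P_n(\theta)=\frac1L\sum_{k}\^h(k/L)\,G_k(n)\,e_N(Q_k(n))\,\dsone[A^k\ne I]+(\text{main term from }k\equiv 0)$, where the $k\equiv 0$ contribution vanishes because $n\ne 0$ and $\^f$ has zero mean — this is why zero mean is essential.

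Substituting this into the triple integral and carrying out the $\theta$-integration collapses one degree of freedom, leaving a triple sum over $k_1,k_2,k_3$ with $k_1+k_2+k_3\equiv 0$ (the "diagonal" constraint from $\int_0^1 e((k_1+k_2+k_3)\theta)\,\d\theta$, suitably interpreted periodically modulo $\ord(A,N)$), weighted by $\^h(k_1/L)\^h(k_2/L)\^h(k_3/L)$ and by a product of three Gauss sums times $e_N(Q_{k_1}(n_1)+Q_{k_2}(n_2)+Q_{k_3}(n_3))$. The Gauss sums have modulus $N^{1/2}$ or $N^{1/2}/|\ldots|$ depending on $\gcd$-type conditions, but crucially $|G_{k_i}(n_i)|\le 1$ after the $\dsone$ normalization built into the quantization; the remaining sum is $\frac{1}{L^3}\sum_{k_1+k_2+k_3\equiv0}\^h(k_1/L)\^h(k_2/L)\^h(k_3/L)\times(\text{phase})$. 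Since each $k_i$ ranges effectively over $|k_i|\lesssim L$ (by decay of $\^h$) and the congruence $k_1+k_2+k_3\equiv 0\pmod{\ord(A,N)}$ together with $L<2\ord(A,N)$ forces $k_1+k_2+k_3=0$ exactly, this is a two-parameter sum of size $\lesssim L^2$, each term $O(L^{-3})\cdot N^{?}$; one then must show the phase $e_N(Q_{k_1}(n_1)+Q_{k_2}(n_2)+Q_{k_3}(n_3))$ produces genuine cancellation — or, more simply, if the Gauss-sum moduli already contribute a net factor $N$ (one uncompensated $N^{1/2}$ squared, since three half-powers pair against a trivial phase sum over the one free eigenspace index that survives), the whole thing is $O(L^2/L^3\cdot N)=O(N/L)$; sharpening the off-diagonal analysis to gain an extra $1/L^2$ gives $O(N/L^3)$.

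The main obstacle will be precisely this last step: controlling the off-diagonal terms $k_1+k_2+k_3=0$ with not all $k_i$ equal (or not all forcing $A^{k_i}=I$), where the product of Gauss sums no longer has a trivial phase and one needs either an explicit evaluation of $\sum_n e_N(Q_{k_1}(n)+\ldots)$ over the relevant lattice cosets, or a Weil-type bound. I expect that, as in \cite{KRR}, the quadratic forms $Q_{k_i}$ are essentially determined by the fixed points of $A^{k_i}$ and the combination $Q_{k_1}(n_1)+Q_{k_2}(n_2)+Q_{k_3}(n_3)$ is a nondegenerate quadratic (or linear) function of the surviving summation variable unless an arithmetic relation among $k_1,k_2,k_3$ holds; counting those exceptional triples (there are $O(L)$ of them, corresponding to $k_i$ in a fixed coset mod $\ord(A,N)$, or to two of the $k_i$ coinciding) and bounding their contribution by $O(L/L^3)\cdot N = O(N/L^2)$, while the generic triples contribute $O(L^2/L^3)\cdot N^{1/2}=o(N/L)$ by square-root cancellation, yields the claimed $O(N/L^3)$ after absorbing constants. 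Multiplying by $L^{3/2}$ and using $L^{3}\ge (\ord(A,N))^{3}\gg N^2\gg N\cdot\text{(anything growing)}$ — concretely $\ord(A,N)/N^{2/3}\to\infty$ gives $L^{3/2}\cdot N/L^3 = N/L^{3/2}\ll N/N = 1$ — completes the proof.
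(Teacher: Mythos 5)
Your overall framework -- reduce to trigonometric polynomials, pass to the triple correlations $\int_0^1 P_{\nu_1}P_{\nu_2}P_{\nu_3}\,d\theta$, Fourier-expand each $P_\nu$ via $\tr\{T_N(n)U^{-t}\}$, and aim for a bound $O(N/L^3)$ -- matches the paper's Proposition \ref{prop:mtrx_flct_mxd_mmnt} and its reduction. But the heart of the argument is missing, and the heuristic you substitute for it does not close the gap.

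Two concrete problems. First, a small factual error propagates: by Kelmer's formula (Lemma \ref{Kelmer's lemma}), $\tr\{T_N(n)U_N(A^t)\}=(-1)^{n_1n_2}e(\bar 2 q(n;A^t)/N)$ is a pure phase of modulus exactly $1$; there is no Gauss-sum factor ``of modulus $N^{1/2}$'' floating around, so the accounting in which ``one uncompensated $N^{1/2}$ squared'' contributes a net $N$ is not what happens. Second, and more seriously, your final bound does not add up: you bound exceptional triples by $O(N/L^{2})$ and generic triples by $o(N/L)$, and claim this yields $O(N/L^{3})$ --- but both of those terms are \emph{larger} than $N/L^3$. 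The ``square-root cancellation on generic triples'' step is asserted, not derived, and there is no mechanism visible that would gain the extra two powers of $L$. Also, since $h=\dsone_{[-1/2,1/2]}$, $\^h$ decays only like $1/y$, so truncating $|k_i|\lesssim L$ is not justified term by term; the paper handles this by periodizing the weight into $H_3(\tau_1,\tau_2)$ and expanding it in a Fourier series with \emph{nonnegative} coefficients $\gamma(j_1,j_2)$, which makes the triangle inequality safe.

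What the paper actually does at the crucial step is isolate the exponential sum
$S_3(j_1,j_2)=\sum_{\tau_1,\tau_2\bmod\ord(A,N)}e(j_1\tau_1/\ord+j_2\tau_2/\ord)\,e(\bar2\,v(A^{\tau_1},A^{\tau_2})/N)$,
complete it to a sum over the full Hecke group $\CA(N)\cong\bF_N^*$ (for split $N$) by inserting characters trivial on $\langle A\rangle$, and reduce the matter to bounding
$\sum_{x,y\in\bF_N^*}\chi_1(x)\chi_2(y)\,\psi\!\left(\tfrac{1+x}{1-x}+a\tfrac{1+y}{1-y}+b\tfrac{xy+1}{xy-1}\right)$
by $O(N)$. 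That bound is the content of Theorem \ref{thm:exp_sum_3rd_moment}, and it is not a soft consequence of Weil's one-variable bound: the paper first proves a general criterion (Lemma \ref{lem:exp sum:variance->individual} and Theorem \ref{thm:bound_exp_sum}) showing that a second-moment estimate over additive twists forces square-root cancellation, then checks absolute irreducibility of the fibers of $f(x,y)=x+Ay-B\tfrac{xy+1}{x+y}$ (Proposition \ref{prop:3rdmmnt_fibers_irred}) and the non-degeneracy conditions in Proposition \ref{prop:geometric_restrictions} needed to apply the one-dimensional Weil bound (Theorem \ref{thm:one_dim_bound}) on those fibers. None of this algebro-geometric input appears in your sketch, and without it -- or an explicit substitute -- the claimed $O(N/L^3)$ is not established.
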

This results is consistent with a conjecture that $\sqrt{L}P$  has a
Gaussian distribution (see section \ref{sec:discussion}).

In section \ref{sec:Hecke_mtrx_lmnts} we show agreement with the
expected Sato-Tate limiting distribution and independent behaviour
of the fluctuation of the martix elements for a fixed function.
According to \cite{KR_Annals}, the normalized matrix coefficient
$$ \sqrt{N} \langle Op_N(f)\psi_j,\psi_j \rangle - \int_ f $$
should be distributed like a weighted sum of traces of independent
random matrices in $SU(2)$. In \cite{KR_Annals}, the second and
fourth moments are computed and shown to be consistent with this
conjecture. We show that the fifth moment vanishes, in accordance to
the conjecture:
\begin{thmm}\label{thm:mtrx_lmns_ffth_mmnt_gnrl}
Let $A\in SL_2(\bZ)$, $U_N(A)$ its quantization and
$\{\psi_j\}_{j=1}^N$ a Hecke Basis. Fix $f\in C^\infty(\bT^2)$. Then as $N\to\infty$ through primes,
$$
\frac1N\sum_{j=1}^N\left(\sqrt{N}(\langle\OPN(f))\psi_j,\psi_j\rangle-\int_{\bT^2}f)\right)^5=O_f(\frac1{\sqrt{N}})
$$
\end{thmm}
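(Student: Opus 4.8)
Replacing $f$ by $f-\int_{\bT^2}f$ (using $\OPN(1)=\mathrm{Id}$) we may assume $f$ has zero mean. The plan is to reduce to a single trigonometric monomial, to linearise the Hecke matrix elements via the exact formula of Kurlberg and Rudnick, and to recognise the resulting fifth moment as a several-variable exponential sum over the Hecke torus, controlled by the square-root-cancellation bounds of Weil and Deligne. First I would expand $f(x)=\sum_{0\ne n\in\bZ^2}\widehat f(n)e(nx)$ in Fourier series; since $f$ is smooth, $\widehat f(n)$ decays faster than any power of $|n|$. As $\OPN(e(nx))$ depends, up to an explicit sign, only on $n\bmod N$, truncating the series at $|n|\le N^{1/10}$ and reducing mod $N$ costs only $O_f(N^{-\infty})$, and on this range $n\mapsto n\bmod N$ is injective with image missing $0$. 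Expanding the fifth power multilinearly, it then suffices to prove that for each fixed quintuple of nonzero vectors $n_1,\dots,n_5\in\bZ^2$,
\begin{equation*}
M_N(n_1,\dots,n_5):=\frac1N\sum_{j=1}^N\;\prod_{i=1}^5\sqrt N\,\langle\OPN(e(n_ix))\psi_j,\psi_j\rangle=O\!\left(\frac{B(n_1,\dots,n_5)}{\sqrt N}\right)
\end{equation*}
with $B$ growing at most polynomially in $|n_1|+\dots+|n_5|$; summing this against $\prod_i\widehat f(n_i)$ and invoking the rapid decay of $\widehat f$ then gives the bound $O_f(N^{-1/2})$.

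Second, I would apply the explicit formula of \cite{KR_Hecke,KR_Annals} for the diagonal matrix elements in the Hecke basis. For $N$ prime the joint eigenspaces of $\UN(A)$ and the Hecke group --- which is the Weil representation of the full Cartan subgroup $\CA(N)\subset SL_2(\bZ/N\bZ)$ --- are, with at most one exception of negligible effect, one-dimensional; this is why, in contrast to the theorem on short windows, no hypothesis on $\ord(A,N)$ appears here. Thus the products in $M_N$ are well defined, and $\sqrt N\,\langle\OPN(e(nx))\psi_j,\psi_j\rangle$ admits a representation of the shape
\begin{equation*}
\sqrt N\,\langle\OPN(e(nx))\psi_j,\psi_j\rangle=\frac{\sqrt N}{|\CA(N)|}\sum_{t\in\CA(N)}\overline{\chi_j}(t)\,e_N\!\bigl(Q_n(t)\bigr),
\end{equation*}
where $\CA(N)$ is the Hecke torus (a split Cartan of order $N-1$ when $N$ splits in the quadratic field attached to $A$, a non-split one of order $N+1$ when $N$ is inert), $\chi_j$ is the Hecke eigencharacter of $\psi_j$, $e_N(x)=e(x/N)$, and $Q_n$ is an explicit Laurent-type phase. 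Substituting this into $M_N$ and carrying out the $j$-sum first --- that is, over the characters of $\CA(N)$, using $\sum_{\chi}\overline{\chi}(g)=|\CA(N)|\,\dsone[g=1]$ plus a correction of size $O(N^{-1})$ from the exceptional character --- collapses the five variables onto the constraint $t_1t_2t_3t_4t_5=1$ and leaves $M_N$ equal, up to a factor $\asymp N^{-5/2}$, to the exponential sum $\sum_{t_1t_2t_3t_4t_5=1}e_N\bigl(\sum_{i=1}^5 Q_{n_i}(t_i)\bigr)$. Eliminating $t_5=(t_1t_2t_3t_4)^{-1}$ exhibits this as a (mixed) exponential sum in four free variables over $\bF_N$, on a four-dimensional torus, with phase a fixed rational function; the target bound $M_N=O(N^{-1/2})$ is thereby equivalent to bounding this sum by $O(N^2)$.

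Third --- and this is the crux of the matter --- I would estimate the four-variable sum. On the open locus where the phase is nondegenerate, the Weil--Deligne bounds for several-variable exponential sums give $O(N^2)$, the implied constant being controlled polynomially by the degree (Newton-polytope) data of the phase, hence by $|n_1|+\dots+|n_5|$. The real obstacle is to rule out a \emph{main term}: the sum can be as large as $N^{5/2}$ exactly when the phase is constant along a positive-dimensional subvariety, and this is precisely the mechanism producing the Sato--Tate moments summed over $A$-orbits in the second- and fourth-moment computations of \cite{KR_Annals}. One must show that no such degeneration survives at the fifth moment. Tracing through the formula, a collapse would force relations $\sum_{i=1}^5\varepsilon_i A^{a_i}n_i\equiv0\pmod N$ for signs $\varepsilon_i\in\{\pm1\}$ and exponents $a_i$, together with a simultaneous vanishing of the quadratic parts of the $Q_{n_i}$; since five is odd, the five factors admit no pairing that annihilates both the linear and the quadratic obstructions identically in $N$. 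Hence the only solutions arise from genuine coincidences among the fixed vectors $n_1,\dots,n_5$ (for instance two of them lying in a common $A$-orbit); these form a set of bounded size and, once multiplicities are accounted for, contribute $O(N^{-1/2})$ rather than a positive-dimensional family. Carrying out this classification of the critical strata of the four-variable phase, with uniform (polynomial) dependence on the $n_i$, is the technical heart of the argument; it is a finite-field stationary-phase analysis in the spirit of the even-moment computations of Kurlberg and Rudnick, arranged so that the odd number of factors forces the would-be main term to vanish. Summing the resulting per-quintuple bound against $\prod_i\widehat f(n_i)$, as in the first step, then yields the asserted $O_f(N^{-1/2})$.
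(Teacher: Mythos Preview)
Your reduction to fixed monomials and your linearisation of the Hecke matrix elements as character sums over the torus $\CA(N)$ are both correct; indeed this is exactly the bridge the paper records in Section~\ref{sec:discussion} (via Kelmer's formula the normalised matrix element is $\tilde F(\chi;\psi)$, so the fifth moment over the Hecke basis is precisely $\frac{1}{|k^*|}\sum_\chi\prod_{i=1}^5\tilde F(\chi;\psi_i)$, i.e.\ Theorem~\ref{thm:exp_sum_5th_moment}). After orthogonality in $\chi$ you correctly arrive at the four--dimensional sum
\[
\sum_{t_1\cdots t_5=1}\ \psi\!\Big(\sum_{i=1}^5 A_i\,\tfrac{1+t_i}{1-t_i}\Big),
\]
and the target $O(N^2)$ is the right one. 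The gap is in your third step. You invoke ``Weil--Deligne on the nondegenerate locus'' and then argue heuristically that the absence of a pairing among an odd number of factors rules out a main term; but you do not actually verify any nondegeneracy hypothesis for this four--variable phase, and your proposed obstruction $\sum_i\varepsilon_i A^{a_i}n_i\equiv 0$ belongs to a \emph{different} reduction (the trace--of--$D(n)$ picture of Section~\ref{sec:Hecke_mtrx_lmnts}) rather than to the constraint $t_1\cdots t_5=1$ you have in hand. As written this is a sketch of an expectation, not a proof.

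The paper does \emph{not} attack the four--variable sum directly. Instead it performs a further reduction (the ``averaging trick'' of Proposition~\ref{prop:5th_averaging_settng} and Lemma~\ref{lem:5th_mmnt_is_exp_sum}, or equivalently the trace computation of Lemma~\ref{lem:5th_mmnt_mtrx_lmnt_exp_sum} followed by diagonalisation in Proposition~\ref{prop:5th_mmnt_mtrx_bnd}) which collapses the problem to an additive--character sum over a \emph{two}--dimensional variety $\mathbf V(\underline A)\subset\mathbb A^4$ cut out by $\sum A_ia_i+A_5=0$ and $\sum a_i^{-1}+1=0$. The $O(N^2)$ bound then comes from the general criterion of Theorem~\ref{thm:bound_exp_sum} (square--root cancellation for two--dimensional sums whose phase has generically irreducible fibres), and the substantive work --- done in Appendix~\ref{ap:irred_proofs} --- is the explicit verification that $\mathbf V(\underline A)$ is irreducible and that all but boundedly many fibres of the phase $\tilde h_A$ are absolutely irreducible curves. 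That irreducibility check is the genuine replacement for your ``classification of critical strata'', and it is where the oddness of five is actually used: the constraints $\sum A_ia_i=0$, $\sum a_i^{-1}=0$ together with the phase produce a degree--four plane curve whose reducibility forces finitely many values of the fibre parameter. If you want to keep your four--variable formulation, you would need an equally concrete geometric input (e.g.\ a smoothness/nondegeneracy statement for the hypersurface $\prod(1-x_i)=\prod(1+x_i)$ relative to the linear phase $\sum A_ix_i$), which you have not supplied.
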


The results presented here are corollaries from bounds of mixed
moments of a certain family of exponential sums. In sections
\ref{sec:Fam_exp_sum},\ref{sec:prf_exp_sum_3rd_mmnt},\ref{sec:prf_exp_sum_5th_mmnt}
we introduce this family and study mixed moments of its
distribution.

\section*{Acknowledgements}
I warmly thank my Ph.D. advisor Ze\'ev Rudnick for his guidance and
support throughout this work. I thank Par Kurlberg and Dubi Kelmer
for long discussions and comments. I would also like to thank
Emmanuel Kowalski for helpful discussions about the theory of
exponential sums. This work was Supported by the Israel Science
Foundation grant No. 925/06. This work was carried out as part of
the author's Ph.D. thesis at Tel Aviv University, under the
supervision of Prof. Ze\'ev Rudnick.
\section{Background}\label{sec:background}
\subsection{Quantum mechanics on the torus}
We recall the basic facts of quantum mechanics on the torus which we
need in the paper,  see \cite{Rudnick-montreal,KR_Hecke} for further
details. Planck's constant is restricted to be an inverse integer
$1/N$, and the Hilbert space of states $\HN$ is $N$-dimensional,
which is identified with $L^2(\bZ/N\bZ)$ with the inner product
given by
\begin{equation*}
\langle \phi,\psi \rangle
 = \frac1N \sum_{Q\bmod N} \phi(Q) \, \overline\psi(Q) \;.
\end{equation*}

Classical observables, that is real-valued functions $f\in
C^\infty(\bT)$, give rise to quantum observables, that is
self-adjoint operators $\OPN(f)$ on $\HN$. To define these, one
starts with translation operators: For $n=(n_1,n_2)\in\bZ^2$ let
$T_N(n)$ be the unitary operator on $\HN$ whose action on a
wave-function $\psi\in \HN$ is
\begin{equation*} 
T_N(n)\psi(Q) = e^{\frac {i\pi n_1 n_2}N} e(\frac{n_2Q}N)\psi(Q+n_1)
\;.
\end{equation*}
For any smooth function $f\in C^\infty(\bT)$,  define $\OPN(f)$ by
$$
\OPN(f) = \sum_{n\in\bZ^2} \widehat f(n) T_N(n)
$$
where $\widehat f(n)$ are the Fourier coefficients of $f$. Below is
a list of properties of $\OPN(f)$:
\begin{enumerate}
\item
For $n=(n_1,n_2)\in\bZ^2$, denote $\epsilon(n)=(-1)^{n_1n_2}$, then
\begin{equation}\label{eq:trace_of_TN}
\tr(T_N(n))=\begin{cases}\epsilon(n)N & n=0\pmod N\\0 &
{\rm{otherwise}}\end{cases}
\end{equation}
and so for $f\in C^\infty(\bT^2)$
$$
\tr(\OPN(f))=N\int_{\bT^2}fdx+O(\frac1{N^\infty})
$$
\item
For $n,m\in\bZ^2$
\begin{equation}\label{eq:CCRN}
T_N(n)T_N(m)=e(\frac{\omega(m,n)}{2N})T_N(m+n)
\end{equation}
\end{enumerate}
%

\subsection{Quantized cat map: Definition and results}
For $B\in SL_2(\bZ)$ the quantized cat $U_N(B)$ is a unitary
operator on $\HN$ satisfying "Exact Egorov" property
$$U_N(B)^*\OPN(f)U_N(B)=\OPN(f\circ B)$$

In \cite{KR_Hecke} Kurlberg and Rudnick introduced a family of
commuting operators $\CA(N)$, called the Hecke group, which satisfy
that after taking joint eigenfunctions of all elements in $\CA(N)$,
then all corresponding matrix elements satisfy
$$|\langle\OPN(f)\psi,\psi\rangle-\int_{\bT^2}f|\ll N^{-1/4-\epsilon}$$
 and when $N$ is restricted to primes, Gurevich and Hadani showed in \cite{GH} that the rate of
convergence is in fact bounded by
$$|\langle\OPN(f)\psi,\psi\rangle-\int_{\bT^2}f|\leq C(f)N^{-1/2}.$$
We restrict our discussion from now on to $N$ prime. In this case,
all but a finite subset of the primes $A$ is diagonalizable over
either $\bF_N$ (the split case), or over $\bF_{N^2}$ (the inert
case). In the split case the group $\CA(N)$ is isomorphic to
$\bF_N^*$, and in the inert case it is isomorphic to $\bF_{N^2}^1$
the group of norm one elements in $\bF_{N^2}$. In \cite{KR_Annals}
Kurlberg and Rudnick exhibit some relations between Hecke matrix
elements (matrix elements corresponding to Hecke eigenfunctions).
For $A\in SL_2(\bZ)$ they introduced a quadratic form $Q(n)$ related
to $A$
$$Q(n)=\omega(n,nA)=cn_1^2+(d-a)n_1n_2-bn_2^2\;\;\;\;A=\begin{pmatrix}a&b\\c&d\end{pmatrix}
$$ where $\omega(x,y)=x_1y_2-x_2y_1$ is the
standard symplectic form. As this quadratic form plays a crucial
role in this paper, we list here some of its properties that were
proven in \cite{KRR}, \cite{KR_Annals}:
\begin{enumerate}
\item
Since $A$ is symplectic then $A$ preserves $Q(n)$, that is for all
$n\in\bZ$ $Q(nA)=Q(n)$. Moreover, the Hecke group $\CA(N)$ is
isomorphic to $SO(Q,\bZ/N\bZ)$. (\S2.3 in \cite{KR_Annals})
\item
Let $N$ be an odd prime, and let $A\in SL_2(\bZ)$ so that $(\tr
A)^2-4\neq 0\mod N$. Then the space of binary quadratic forms
preserved by $A$ (which contains $Q$) is one dimensional. (Lemma 2.1
in \cite{KRR})
\item
For $g\in \CA(N)$, $g\neq 1$, denote by
$$
q(x;g):= \omega(x(g-1)^{-1},x(g-1)^{-1}g).
$$
(Note that $q(\bullet,g)=0$ if $g=-I\mod N$). Then if $g\neq \pm I
\mod N$ then $q(\bullet;g)$ is a nonzero multiple of $Q$. Moreover,
\begin{equation}\label{formula for q}
q(x;g) = \frac{Q(x)}{\lambda_A-\lambda_A^{-1}}
\frac{1+\lambda}{1-\lambda}.
\end{equation}

 where $\lambda_A$ is a generator of $\CA(N)$ (\S2.4 in
\cite{KRR})
\end{enumerate}

 Using this quadratic form, for a smooth function $f(x)\in
C^{\infty}(\bT^2)$ they introduce
$$f^\sharp(\nu)=\sum_{n:Q(n)=\nu}(-1)^{n_1n_2}\hat f(n)$$
where $\hat f(n)$ are the Fourier coefficients of $f(x)$. They also
conjecture the following
\begin{conj}\label{conj:KR_equidistribution conjecture}
Let $A\in SL_2(\bZ)$, $U_N(A)$ its quantization and
$\{\psi_j\}_{j=1}^N$ a Hecke Basis. Let
$$
F_j^{(N)}=\sqrt
N\left(\langle\OPN(f)\psi_j,\psi_j\rangle-\int_{\bT^2}f\right)
$$
Then as $N\to\infty$ through primes, the limiting distribution of
the normalized matrix elements $F_j^{(N)}$ is that of the random
variable
$$
X_f:=\sum_{\nu\ne0}f^\sharp(\nu)\tr(U_\nu)
$$
where $U_\nu$ are independently chosen random matrices in $SU(2)$
endowed with Haar probability measure.
\end{conj}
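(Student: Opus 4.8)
The plan is to expand $f$ in a Fourier series, reduce the fifth empirical moment to a uniform bound on the average over the Hecke eigenbasis of a product of five matrix coefficients of translation operators, collapse that average using orthogonality of the characters of the Hecke group $\CA(N)$, and thereby reduce everything to a square‑root cancellation estimate for a four‑parameter exponential sum; this last estimate is the mixed‑moment bound for the family of exponential sums studied in Sections~\ref{sec:Fam_exp_sum} and \ref{sec:prf_exp_sum_5th_mmnt}.

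\emph{Step 1: reduction to a bound on the Hecke average.} Write $f=\widehat f(0)+\sum_{0\neq n\in\bZ^2}\widehat f(n)e(nx)$; since $\OPN(e(nx))=T_N(n)$,
\[
\sqrt N\Bigl(\langle\OPN(f)\psi_j,\psi_j\rangle-\int_{\bT^2}f\Bigr)=\sqrt N\sum_{0\neq n\in\bZ^2}\widehat f(n)\,\langle T_N(n)\psi_j,\psi_j\rangle .
\]
Because $\widehat f$ decays faster than any power and $T_N(n)$ is unitary, truncating the sum to $0<|n|\le N^{\delta}$ (with $\delta>0$ small and fixed) changes the fifth moment by $O_f(N^{-\infty})$. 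Expanding the fifth power and pulling out the Fourier coefficients, it then suffices to show, uniformly for fixed nonzero $n_1,\dots,n_5$ in this range,
\[
\frac1N\sum_{j=1}^N\ \prod_{i=1}^5\langle T_N(n_i)\psi_j,\psi_j\rangle=O\bigl(N^{-3}\bigr),
\]
since the resulting five‑fold sum over the $n_i$ is then $O_f(N^{5/2}\cdot N^{-3})=O_f(N^{-1/2})$, using $\sum_n|\widehat f(n)|<\infty$.

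\emph{Step 2: collapsing the Hecke average.} For $N$ prime, every Hecke eigenfunction is a joint eigenfunction of $\CA(N)$, hence of $\UN(A)$ (since $A\in SO(Q,\bZ/N\bZ)$), so it is attached to a character $\chi$ of $\CA(N)$ — isomorphic to $\bF_N^{*}$ in the split case and to $\bF_{N^2}^{1}$ in the inert case — the joint eigenspaces being one‑dimensional apart from at most one exception. With $P_\chi=|\CA(N)|^{-1}\sum_{g\in\CA(N)}\overline{\chi(g)}\,\UN(g)$ the associated projection and $\tau(n,g):=\tr\bigl(T_N(n)\UN(g)\bigr)$, one has $\langle T_N(n)\psi_\chi,\psi_\chi\rangle=\tr\bigl(T_N(n)P_\chi\bigr)=|\CA(N)|^{-1}\sum_g\overline{\chi(g)}\,\tau(n,g)$, and the Gauss‑sum evaluation of $\tr(T_N(n)\UN(g))$ from \cite{KR_Hecke,KR_Annals} gives $\tau(n,I)=0$ for $n\not\equiv 0\pmod N$, $\tau(n,g)=O(1)$ for $g\neq I$, and
\[
\tau(n,g)=w(n,g)\,e\!\left(\frac{q(n;g)}{N}\right),\qquad|w(n,g)|\le1\quad(g\neq I),
\]
where $q(\,\cdot\,;g)=r(g)\,Q$ is the form of \eqref{formula for q}, $r(g)=\dfrac{1+\lambda(g)}{(1-\lambda(g))(\lambda_A-\lambda_A^{-1})}$ (so $r(g)=0$ precisely when $g=-I$) and $\lambda\colon\CA(N)\xrightarrow{\ \sim\ }\bF_N^{*}$. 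Multiplying five of these identities, summing over $\chi$, and using $\sum_\chi\overline{\chi(h)}=|\CA(N)|\,\dsone[h=1]$ yields
\[
\frac1N\sum_{j=1}^N\ \prod_{i=1}^5\langle T_N(n_i)\psi_j,\psi_j\rangle=\frac{\Sigma(n)}{N\,|\CA(N)|^{4}}+O\bigl(N^{-7/2}\bigr),
\]
where $\Sigma(n):=\sum_{g_1\cdots g_5=1}\prod_{i=1}^5\tau(n_i,g_i)$ (the sum over $g_i\in\CA(N)$), the error accounting for the at most one exceptional eigenspace, whose individual matrix coefficients are $O(N^{-1/2})$ by \cite{GH}. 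Since $|\CA(N)|=N+O(1)$, it suffices to prove $\Sigma(n)=O(N^{2})$.

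\emph{Step 3: the exponential‑sum estimate (the crux).} Parametrising $g_i\leftrightarrow\lambda_i\in\bF_N^{*}$ and eliminating $\lambda_5=(\lambda_1\cdots\lambda_4)^{-1}$, $\Sigma(n)$ becomes a four‑variable exponential sum over $(\bF_N^{*})^{4}$ with rational phase $N^{-1}\bigl(\sum_{i=1}^4 Q(n_i)r(\lambda_i)+Q(n_5)r((\lambda_1\cdots\lambda_4)^{-1})\bigr)$, twisted by the Legendre‑symbol weights $\prod_i w(n_i,g_i)$. The target $\Sigma(n)=O(N^{2})$ is square‑root cancellation for this sum, which I would obtain from Deligne's form of the Riemann Hypothesis for exponential sums over varieties, once the associated middle‑extension sheaf is shown to be geometrically irreducible, nontrivial, and of conductor bounded uniformly in the $n_i$ over the truncated range (only $Q(n_i)\bmod N$ enters, and $Q(n_i)\equiv 0\pmod N$ forces $Q(n_i)=0$, a case treated separately and frequently vacuous since $Q$ is typically anisotropic over $\bQ$). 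The delicate point is that the phase genuinely degenerates: a direct computation gives $r(\lambda^{-1})=-r(\lambda)$, so along any locus $\{\lambda_i\lambda_j=1\}$ the pair contribution $Q(n_i)r(\lambda_i)+Q(n_j)r(\lambda_j)$ collapses to $(Q(n_i)-Q(n_j))r(\lambda_i)$, which vanishes identically exactly when $Q(n_i)=Q(n_j)$; these partial‑diagonal loci are precisely where, for the even moments, the main term of \cite{KR_Annals} is produced. For the fifth moment, however, five indices admit no perfect matching: any pairing leaves a residual index, and the intersection of two loci coming from disjoint pairs forces $\lambda_5=1$ — where $\tau(n_5,\cdot)$ vanishes — so no deeper degeneration survives; each single degeneration locus $\{\lambda_i\lambda_j=1\}$ is three‑dimensional and, after the collapse, still carries a genuine phase in the two residual variables, hence contributes only $O(N^{2})$, and together with the generic (square‑root‑cancelling) bulk one gets $\Sigma(n)=O(N^{2})$. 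Carrying out this case analysis uniformly in the $n_i$ — the mixed‑moment estimate of Section~\ref{sec:prf_exp_sum_5th_mmnt} — is the main obstacle; granted it, combining the three steps gives $\frac1N\sum_{j=1}^N\bigl(\sqrt N(\langle\OPN(f)\psi_j,\psi_j\rangle-\int_{\bT^2}f)\bigr)^5=O_f(N^{-1/2})$.
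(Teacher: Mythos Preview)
First, note that what you have outlined is a proof of Theorem~\ref{thm:mtrx_lmns_ffth_mmnt_gnrl} (vanishing of the fifth moment), which is the paper's partial evidence for Conjecture~\ref{conj:KR_equidistribution conjecture}; the full conjecture remains open. With that understood, your Steps~1--2 are correct and in fact reach the key exponential sum more directly than the paper does. The paper passes through the averaging operator $D(n)=|\CA(N)|^{-1}\sum_B T_N(nB)$, shows it is essentially diagonal in the Hecke basis (Lemma~\ref{lem:D_is_diagonal}), and expands $\tr(D(n_1)\cdots D(n_5))$ via the CCR to land on the sum over $\{n_1B_1+\cdots+n_5B_5\equiv 0\}$ (Lemmas~\ref{lem:Hecke_mmnts_r_tr}, \ref{lem:5th_mmnt_mtrx_lmnt_exp_sum}, Proposition~\ref{prop:5th_mmnt_mtrx_bnd}). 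You instead use the spectral projector $P_\chi$ and character orthogonality to go straight to $\Sigma(n)=\sum_{g_1\cdots g_5=1}\prod_i\tau(n_i,g_i)$, which after Kelmer's trace formula is literally $\frac{1}{q-1}\sum_\chi\prod_iF(\chi;\psi_i)$; so your target $\Sigma(n)=O(N^2)$ \emph{is} Theorem~\ref{thm:exp_sum_5th_moment}. Two small corrections: the weight $w(n,g)$ is not a Legendre symbol but the fixed sign $(-1)^{n_1n_2}$ from Lemma~\ref{Kelmer's lemma}, which factors out harmlessly; and for $P_\chi$ to be a genuine projection you must use the multiplicative normalisation $\widetilde U_N=\varepsilon U_N$ of Proposition~\ref{prop:U(A) properties}.

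The real gap is Step~3. Your degeneration--locus heuristic explains why the \emph{bad} strata $\{\lambda_i\lambda_j=1\}$ contribute $O(N^2)$, but it says nothing about the generic bulk: writing ``the generic (square--root--cancelling) bulk'' is precisely the assertion to be proved, and establishing it via sheaf irreducibility on a four--dimensional variety is not carried out. The paper's proof of Theorem~\ref{thm:exp_sum_5th_moment} takes a different route altogether: a second averaging identity (Proposition~\ref{prop:5th_averaging_settng}, Lemma~\ref{lem:5th_mmnt_is_exp_sum}) collapses the sum to a \emph{two}--dimensional exponential sum over the surface ${\bf V}(\underline A)$ cut out by one linear and one inverse--linear relation, after which the elementary fibration criterion of Theorem~\ref{thm:bound_exp_sum} applies --- one checks that ${\bf V}(\underline A)$ is irreducible via a Kloosterman point count (Lemma~\ref{lem:5th_mmnt_srfce_irred}) and that all but finitely many fibres of the phase $\tilde h_A$ are irreducible plane curves via an explicit birational projection (Lemma~\ref{lem:5th_mmnt_fibers_irred}, Appendix~\ref{ap:irred_proofs}). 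So your reduction is clean, but the crux you defer is not proved by the method you sketch; the paper's dimension--reduction trick is what makes the estimate tractable without heavy sheaf machinery.
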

(See further discussion and properties in section
\ref{sec:Hecke_mtrx_lmnts}.)
\subsection{Specific definition of $U_N(A)$}
In \cite{Kelmer} Kelmer showed\footnote{A similar formula for the
p-adic metaplectique representation was shown already in \cite{MVW}}
that the following can be taken as a definition for $U_N(B)$
\begin{equation}\label{eq:Kelmer Quantization}
U_N(B)=\frac1{N |\ker_N(B-I)|^{1/2}}
\sum_{n\in(\bZ/N\bZ)^2}e(\frac{\omega(n,n B)}{2N})T_N(n(I-B))
\end{equation}
where $\ker_N(B-I)$ denotes the kernel of the map $B-I$ on
$\bZ^2/N\bZ^2$. We take this as the definition of $U_N(A)$ in this
paper.
\begin{prop}\label{prop:U(A) properties}
Let $A\in SL_2(\bZ)$, $A\equiv I\pmod2$. for any $B\in\CA(N)$ let
$U_N(B)$ be as in (\ref{eq:Kelmer Quantization}), then
\begin{enumerate}
\item
For any $I\ne B\in\CA(N)$ $\tr(U_N(B))=1$
\item
Denote by
$$
\eps=\begin{cases} 1 & A \;{\rm{inert}}\\
-1 & A \;{\rm{split}}
\end{cases}
$$
and $\widetilde{U}_N(B)=\eps U_N(B)$, then for $B_1,B_2\in\CA(N)$
\begin{equation}\label{eq:U(A) multiplicativity}
\widetilde U_N(B_1)\widetilde U_N(B_2)=\widetilde U_N(B_1B_2)
\end{equation}
\end{enumerate}
\end{prop}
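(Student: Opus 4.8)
The plan is to prove both parts by direct computation from the explicit formula \eqref{eq:Kelmer Quantization}, using the trace formula \eqref{eq:trace_of_TN}, the Heisenberg relation \eqref{eq:CCRN}, and the structure of $\CA(N)$; the hypothesis $A\equiv I\pmod 2$ enters to make $Q$, and (for a lift $B\equiv I\pmod 2$) the integers $\omega(n,nB)$, even, so that the phases $\omega(\cdot,\cdot B)/(2N)$ depend only on residues mod $N$ and the sums below are genuine sums over $(\bZ/N\bZ)^2$. The one structural fact I use at the start is that for $B\in\CA(N)$, $B\not\equiv I\pmod N$, the endomorphism $I-B$ of $(\bZ/N\bZ)^2$ is invertible: since $\CA(N)\cong SO(Q,\bZ/N\bZ)$ and $Q$ is nondegenerate mod $N$ whenever $\disc Q=(\tr A)^2-4\not\equiv 0$ (all but finitely many primes), a nontrivial element of $SO(Q)$ has no nonzero fixed vector — in the split case $B$ is conjugate to $\operatorname{diag}(\alpha,\alpha^{-1})$ with $\alpha\ne 1$, and in the inert case $B$ acts on $\bF_{N^2}$ by a norm-one scalar $\ne 1$. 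Thus $|\ker_N(B-I)|=1$ for $B\ne I$, while $|\ker_N(0)|=N^2$ makes \eqref{eq:Kelmer Quantization} give $U_N(I)=\operatorname{Id}$. Part (1) is then immediate: taking the trace of \eqref{eq:Kelmer Quantization} and using \eqref{eq:trace_of_TN}, the term indexed by $n$ survives only when $n(I-B)\equiv 0\pmod N$, i.e.\ $n\in\ker_N(B-I)$; for $B\ne I$ this forces $n=0$, so (using $|\ker_N(B-I)|^{1/2}=1$, $\epsilon(0)=1$, $\omega(0,0)=0$) one gets $\tr U_N(B)=\tfrac1N\cdot N=1$.

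For part (2) it suffices to prove $U_N(B_1)U_N(B_2)=\eps\,U_N(B_1B_2)$ when $B_1,B_2,B_1B_2$ are all $\ne I$. The case $B_1=I$ or $B_2=I$ is immediate from $U_N(I)=\operatorname{Id}$ and $\eps^2=1$; the case $B_2=B_1^{-1}$ follows from $U_N(I)=\operatorname{Id}$ once one checks that the coefficient of $T_N(0)$ in $U_N(B_1)U_N(B_1^{-1})$ equals $1$ — a special case of the computation below in which the quadratic phase vanishes identically — because a unitary operator $M=\sum_v c_vT_N(v)$ with $c_0=1$ must equal $\operatorname{Id}$, the $T_N(v)$ being orthogonal for the Hilbert--Schmidt inner product so that $\sum_v|c_v|^2=\tfrac1N\tr(MM^*)=1$. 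Assume now $B_1,B_2,B_1B_2$ all $\ne I$, so every normalising constant in \eqref{eq:Kelmer Quantization} is $1$. Multiplying the two sums and applying \eqref{eq:CCRN},
\begin{multline*}
U_N(B_1)U_N(B_2)=\frac1{N^2}\sum_{n,m}e\!\left(\frac{\omega(n,nB_1)+\omega(m,mB_2)+\omega\bigl(m(I-B_2),n(I-B_1)\bigr)}{2N}\right)\\
T_N\!\bigl(n(I-B_1)+m(I-B_2)\bigr).
\end{multline*}
Because $I-B_1B_2$ is invertible, the displacement $v=k(I-B_1B_2)$ runs bijectively over $(\bZ/N\bZ)^2$ as $k$ does; for fixed $v$ the relation $n(I-B_1)+m(I-B_2)=v$ determines $n$ from $m$, and substituting $u:=m(I-B_2)$, $n(I-B_1)=v-u$ (with $u$ free) makes the coefficient of $T_N(v)$ equal to $N^{-2}\sum_u e(\Phi_v(u)/2N)$. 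Using $\omega\bigl(m(I-B_2),n(I-B_1)\bigr)=\omega(u,v-u)=\omega(u,v)$ and the identity \eqref{formula for q} (which exhibits $q(\bullet;g)$ as a fixed multiple $c(g)Q$ of $Q$) to write $\omega(m,mB_2)=q(u;B_2)=c(B_2)Q(u)$ and $\omega(n,nB_1)=q(v-u;B_1)=c(B_1)Q(v-u)$, one sees that $\Phi_v$ is an inhomogeneous quadratic form in $u$ whose homogeneous quadratic part is the single form $(c(B_1)+c(B_2))\,Q$.

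The crux is the evaluation of $\sum_u e(\Phi_v(u)/2N)$. First, $c(B_1)+c(B_2)$ is a unit mod $N$: by \eqref{formula for q} the scalar $c(g)$ is a Möbius image of the parameter attached to $g$, and $c(B_1)+c(B_2)=0$ exactly when that parameter for $B_2$ is the inverse of the one for $B_1$, i.e.\ $B_1B_2=I$, which is excluded; together with the nondegeneracy of $Q$ this makes the (even) form $\Phi_v$ nondegenerate over $\bZ/N\bZ$. Second, completing the square and applying the two-variable Gauss-sum formula (with $g_N=\sum_x e(x^2/N)$, $g_N^2=\left(\tfrac{-1}{N}\right)N$) evaluates $\sum_u e(\Phi_v(u)/2N)$ as $\pm N$ times $e(\Phi_v(u_0)/2N)$, where $u_0$ is the critical point of the quadratic-plus-linear part; the sign is a Legendre symbol of the discriminant of $(c(B_1)+c(B_2))Q$, and since that discriminant is a perfect square times $\disc Q=(\tr A)^2-4$, the sign is exactly the quantity distinguishing $A$ split from $A$ inert — i.e.\ $\eps$. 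Third, one checks $\Phi_v(u_0)=\omega(k,kB_1B_2)=c(B_1B_2)Q(v)$; via \eqref{formula for q} and the discriminant identity (equivalently, the matrix of $Q$ has determinant $-\tfrac14(\lambda_A-\lambda_A^{-1})^2$) this reduces to the composition law of the Möbius transformation $\lambda\mapsto\frac{1+\lambda}{1-\lambda}$ under multiplication of the parameters of $B_1$ and $B_2$. Combining the three points, the coefficient of $T_N(v)$ in $U_N(B_1)U_N(B_2)$ equals $\eps$ times $\tfrac1N e\bigl(\omega(k,kB_1B_2)/2N\bigr)$, which is the coefficient of $T_N(v)$ in $U_N(B_1B_2)$; summing over $v$ yields $U_N(B_1)U_N(B_2)=\eps\,U_N(B_1B_2)$, hence $\widetilde U_N(B_1)\widetilde U_N(B_2)=\widetilde U_N(B_1B_2)$.

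I expect the main obstacle to be the sign bookkeeping in the Gauss-sum step — tracking the factors of $2$ coming from the $2N$ in the denominators, the factor $\left(\tfrac{-1}{N}\right)$ from $g_N^2$, and the exact sense in which $\disc Q=(\tr A)^2-4$ governs the split/inert alternative, so that the overall sign comes out to $\eps$ and not $-\eps$; everything else is routine linear algebra modulo $N$, and the hypothesis $A\equiv I\pmod 2$ is precisely what makes the reduction from sums modulo $2N$ to sums modulo $N$ legitimate. A secondary point needing care is the completed-square identity $\Phi_v(u_0)=\omega(k,kB_1B_2)$, though as indicated it should follow mechanically from \eqref{formula for q}.
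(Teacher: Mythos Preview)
Your argument for part (1) coincides with the paper's: both use invertibility of $I-B$ for $B\ne I$ together with \eqref{eq:trace_of_TN} to isolate the $n=0$ term.

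For part (2), your route is genuinely different. The paper does not compute $U_N(B_1)U_N(B_2)$ at all. Instead it invokes two facts from \cite{KR_Hecke,KR-imrn}: that \emph{some} multiplicative lift $\widetilde U_N$ of the projective action exists, and that the decomposition of $\HN$ under $\CA(N)$ is multiplicity-free except for the quadratic character $\chi_2$, which is absent in the inert case and has multiplicity $2$ in the split case. Summing the characters then gives $\tr\widetilde U_N(B)=\eps\chi_2(B)$; comparing with $\tr U_N(B)=1$ from part (1) determines the scalar relating $U_N(B)$ to $\widetilde U_N(B)$, and since that scalar is itself a character the multiplicativity of $\eps U_N$ follows. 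This is short and structural, at the cost of importing the spectral decomposition from the literature.

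Your approach, by contrast, is a direct Gauss-sum computation from the Kelmer formula: multiply out, fix the displacement, and complete the square. This is self-contained and would give an independent proof of the multiplicativity without appealing to the character decomposition of $\HN$. The price is exactly the bookkeeping you flag: the Legendre-symbol sign of the two-variable Gauss sum for $(c(B_1)+c(B_2))Q$ equals $\left(\tfrac{(\tr A)^2-4}{N}\right)$, which is $+1$ in the split case and $-1$ in the inert case --- i.e.\ $-\eps$ in your convention --- so obtaining $\eps$ rather than $-\eps$ requires carefully tracking the additional signs coming from the $2N$ denominators, the relation $g_N^2=\left(\tfrac{-1}{N}\right)N$, and the determinant of the matrix of $Q$. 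Likewise the critical-value identity $\Phi_v(u_0)=\omega(k,kB_1B_2)$ is asserted rather than verified. Both are checkable, but neither is quite ``routine linear algebra'': if you pursue this line you should carry out those two computations in full, since the correctness of the sign is the entire content of the statement.
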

\begin{proof}
$(1)$ is an immediate result of the fact that
$$\tr T_N(n)=\begin{cases}
N & n=0\pmod N\\
0 & {\rm{otherwise}}
\end{cases}
$$
and the fact that if $I\ne B\in\CA(N)$ then $I-B$ is invertible. For
(2) we recall that $U_N(A)$ is unique up to scalar multiplication,
and there exist a choice $\widetilde U_N(A)$ that is multiplicative.
In particular it was shown in \cite{KR_Hecke},\cite{KR-imrn} that
the eigenvalues of $\CA(N)$ are characters of this group, and that
they are all multiplicity free except the quadratic character, that
in the inert case doesn't appear and in the split case appears with
multiplicity 2. Therefore there exist a multiplicative choice of
phase for which $\tr(\widetilde U_N(B))=\eps\chi_2(B)$. Since
$\chi_2$ is multiplicative we get that $\eps\widetilde
U_N(B)=U_N(B)$ is still multiplicative.
\end{proof}
\begin{remark}
From (\ref{eq:U(A) multiplicativity}) we get that
\begin{equation}\label{eq:power moultiplicativity}
U_N(A^t)=\eps^{t-1}U_N(A)^t
\end{equation}
\end{remark}

\subsection{Fluctuations in short windows}
We recall in this section the basic setting from \cite{KRR}. Denote
by $h(t)=\dsone_{[-\frac12,\frac12]}(t)$ the characteristic function
of the interval $[-\frac12,\frac12]$. Set
$$
h_L(x):=\sum_{k\in \bZ} h(L(x-k))
$$
which is then a periodic function, localized on the scale of $1/L$,
and  $\int_0^1 h_L(\theta)^2d\theta =1/L$. The Fourier expansion of
$h_L$ is (in $L^2$ sense)
$$
h_L(x)= \frac 1L\sum_{t\in \bZ} \^h \left(\frac tL \right) e(tx)\;.
$$
where $\^h(y) = \int_{-\infty}^{\infty} h(x) e(-xy) \, dx$.

Let $N$ be a prime which does not divide $\disc(Q)= (\tr A)^2-4$.
Let
$$
P(\theta) := \sum_j h_L(\theta-\theta_j) \langle
\OPN(f)\psi_j,\psi_j \rangle
$$
which is a  sum of matrix elements on a window of size $1/L$ around
$\theta$. Then, in $L^2$ sense, and with $U=U_N(A)$, we have
\begin{equation}
P(\theta)  = \frac 1L\sum_{t\in \bZ} e(t\theta)\^h\left(\frac
tL\right) \tr \{\OPN(f)U^{-t}\} \;.
\end{equation}

In \cite{KRR} we proved the following results about $\tr
\{\OPN(f)U^{-t}\}$ (Lemma 2.3)
\begin{lem}\label{Kelmer's lemma}
Let  $A\in SL_2(\bZ)$ be hyperbolic, and assume that $A\equiv I\mod
2$. Then for any prime $N$ not dividing $\disc(Q)$ and integer $t$
such  that $A^t\neq I\mod N$, we have
\begin{equation}\label{kelmer's formula}
\tr \{ T_N(k)U_N(A^t) \} = (-1)^{k_1k_2} e
\left(\frac{\overline{2}q(k;A^t)}{N} \right)
\end{equation}
and in particular
\begin{equation*}
\tr \{ \OPN(f)U_N(A^t) \} = \sum_k (-1)^{k_1k_2} \^f(k) e
\left(\frac{\overline{2}q(k;A^t)}{N} \right)
\end{equation*}
where  $\overline{2}$ is the inverse of $2\mod N$.
\end{lem}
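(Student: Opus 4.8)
The plan is to deduce the full lemma from the single-frequency identity \eqref{kelmer's formula}: granting it, expand $\OPN(f)=\sum_{k\in\bZ^2}\^f(k)T_N(k)$; since $\^f$ decays faster than any power this series converges absolutely in operator norm, so the trace may be taken term by term, giving $\tr\{\OPN(f)U_N(A^t)\}=\sum_k(-1)^{k_1k_2}\^f(k)\,e(\overline{2}\,q(k;A^t)/N)$ at once (each summand makes sense because $q(\,\cdot\,;A^t)$ reduces mod $N$ to a scalar multiple of the integral form $Q$). So the whole content is the trace of a single translation operator $T_N(k)$.

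To compute it, I would substitute Kelmer's formula \eqref{eq:Kelmer Quantization} with $B=A^t$. Since $N\nmid\disc(Q)=(\tr A)^2-4$ and $A^t\not\equiv I\bmod N$, the value $1$ is not an eigenvalue of $A^t$ over $\bF_N$ or $\bF_{N^2}$, so $A^t-I$ is invertible mod $N$; hence $|\ker_N(A^t-I)|=1$ and the prefactor in \eqref{eq:Kelmer Quantization} equals $1/N$. Fixing representatives in $\{0,\dots,N-1\}^2$, applying the commutation relation \eqref{eq:CCRN} as $T_N(k)T_N(m)=e(\omega(m,k)/(2N))T_N(m+k)$, and then \eqref{eq:trace_of_TN}, the trace annihilates all terms but the unique class $n\equiv n_0:=k(A^t-I)^{-1}\bmod N$. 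Setting $Nw:=k+n_0(I-A^t)$, the fact that $\omega$ is alternating gives $\omega(n_0(I-A^t),k)=\omega(Nw-k,k)=N\omega(w,k)$, and $\tr T_N(Nw)=(-1)^{w_1w_2}N$, so
$$\tr\{T_N(k)U_N(A^t)\}=(-1)^{w_1w_2+\omega(w,k)}\,e\!\left(\frac{\omega(n_0,n_0A^t)}{2N}\right),$$
where $\omega(n_0,n_0A^t)\equiv q(k;A^t)\bmod N$ directly from the definition of $q(\,\cdot\,;g)$.

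The one delicate point is converting $e(\,\cdot\,/2N)$ into $e(\overline{2}\,\cdot\,/N)$ and fixing the overall sign. Since $2\overline{2}-1=mN$ with $m$ odd, one has $e(L/(2N))=(-1)^L\,e(\overline{2}\,L/N)$ for every integer $L$; applying this with $L=\omega(n_0,n_0A^t)$ reduces the claim to showing $w_1w_2+\omega(w,k)+\omega(n_0,n_0A^t)\equiv k_1k_2\pmod 2$. This is exactly where $A\equiv I\bmod2$ is used: it forces $A^t\equiv I\bmod2$, so $n_0(I-A^t)=Nw-k$ and $n_0A^t-n_0$ are even vectors, whence $w\equiv k\pmod2$ (hence $w_1w_2\equiv k_1k_2$), $\omega(w,k)\equiv\omega(k,k)\equiv0$, and $\omega(n_0,n_0A^t)\equiv\omega(n_0,n_0)\equiv0$; the three terms collapse to $k_1k_2$, also in the degenerate case $A^t\equiv-I\bmod N$ in which $q(\,\cdot\,;A^t)$ and $\omega(n_0,n_0A^t)$ both vanish. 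I would double-check this sign with two consistency tests: both sides of \eqref{kelmer's formula} transform under $k\mapsto k+Nm$ by the same cocycle (from the quasi-periodicity $T_N(n+Nm)=(-1)^{n_1m_2+n_2m_1+Nm_1m_2}T_N(n)$ on the left, and from $Q(k+Nm)\equiv Q(k)\bmod N$ together with the parity of $(k+Nm)_1(k+Nm)_2$ on the right); and Exact Egorov for $f=e(nx)$, i.e. $U_N(A^t)^{*}T_N(n)U_N(A^t)=T_N(nA^t)$, forces $\tr\{T_N(n)U_N(A^t)\}$ to be invariant under $n\mapsto nA^t$, which the right-hand side satisfies because $q(nA^t;A^t)=q(n;A^t)$ (from $\det A^t=1$ and $SL_2$-invariance of $\omega$) and $(-1)^{n_1n_2}$ is $A^t$-invariant (again since $A^t\equiv I\bmod2$). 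Once the sign is established, \eqref{kelmer's formula} and then the displayed formula for $\tr\{\OPN(f)U_N(A^t)\}$ follow immediately.
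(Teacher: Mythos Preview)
Your argument is correct. The paper itself does not prove this lemma: it is quoted from \cite{KRR} (Lemma~2.3 there), so there is no in-paper proof to compare against directly. Your route---inserting the explicit formula \eqref{eq:Kelmer Quantization} for $U_N(A^t)$, using the invertibility of $A^t-I$ modulo $N$ to isolate the single surviving class $n_0\equiv k(A^t-I)^{-1}$ in the trace, and then tracking the sign via $A\equiv I\bmod 2$ together with the identity $e(L/(2N))=(-1)^L e(\overline{2}L/N)$ (valid because $2\overline{2}-1$ is an odd multiple of the odd prime $N$)---is precisely the natural direct computation and is how this formula is derived in the literature. The parity bookkeeping is right: from $n_0(I-A^t)\equiv 0\bmod 2$ and $N$ odd you get $w\equiv k\bmod 2$, hence $w_1w_2\equiv k_1k_2$, $\omega(w,k)\equiv 0$, and $\omega(n_0,n_0A^t)\equiv\omega(n_0,n_0)\equiv 0$ modulo $2$, collapsing the sign to $(-1)^{k_1k_2}$ as claimed. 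The two consistency checks you append (quasi-periodicity in $k$ and Exact Egorov invariance under $k\mapsto kA^t$) are not needed for the proof but are a reasonable sanity test.
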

Following this lemma we denote by
\begin{equation}\label{def:P_nu}
P_\nu(\theta):= \sum_j h_L(\theta-\theta_j) \langle
T_N(n)\psi_j,\psi_j \rangle
\end{equation}
where $\omega(n,nA)=\nu$ (this is well defined by lemma
\ref{Kelmer's lemma}), and since
$|\omega(n,nA)|\leq\|n\|^2_2\|A\|^2_2$ we get the following
decomposition of $P(\theta)$
\begin{cor}\label{cor:sharp expansion}
Let $f(x)=\sum_{\|n\|<R}\hat f(n)e(nx)$ be a trigonometrical
polynomial, then for $N>\|A\|_2R^2$
$$
P(\theta)=\sum_{|\nu|<\|A\|_2R^2}f^\sharp(\nu)P_\nu(\theta)
$$
\end{cor}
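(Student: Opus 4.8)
The plan is to start from the $L^2$-expansion
$$
P(\theta) = \frac 1L\sum_{t\in \bZ} e(t\theta)\^h\left(\frac
tL\right) \tr \{\OPN(f)U^{-t}\}
$$
and to rewrite it by expanding $f$ into its (now finite) Fourier series. Since $f$ is a trigonometrical polynomial, $f(x)=\sum_{\|n\|<R}\^f(n)e(nx)$, linearity of $\OPN$ gives $\OPN(f)=\sum_{\|n\|<R}\^f(n)T_N(n)$, hence
$$
\tr\{\OPN(f)U^{-t}\} = \sum_{\|n\|<R}\^f(n)\tr\{T_N(n)U^{-t}\}.
$$
First I would substitute this into the expansion of $P(\theta)$ and interchange the (finite) sum over $n$ with the sum over $t$; this produces $P(\theta)=\sum_{\|n\|<R}\^f(n)\,\big(\tfrac1L\sum_t e(t\theta)\^h(t/L)\tr\{T_N(n)U^{-t}\}\big)$, and the bracketed quantity is exactly $\sum_j h_L(\theta-\theta_j)\langle T_N(n)\psi_j,\psi_j\rangle$ by the same computation (spectral expansion of $U$ in the Hecke basis) that produced the displayed formula for $P$. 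Thus $P(\theta)=\sum_{\|n\|<R}\^f(n)P_{\omega(n,nA)}(\theta)$, where I write $P_\nu$ using the notation of (\ref{def:P_nu}); this step uses the hypothesis that $N$ does not divide $\disc(Q)$ so that Lemma \ref{Kelmer's lemma} applies and $P_\nu$ depends on $n$ only through $\nu=\omega(n,nA)=Q(n)$.

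Next I would group the terms according to the value $\nu=Q(n)$. The key input here is the sign factor: by Lemma \ref{Kelmer's lemma}, $\tr\{T_N(n)U_N(A^t)\}=(-1)^{n_1n_2}e(\overline 2 q(n;A^t)/N)$, and $q(n;A^t)$ depends on $n$ only through $Q(n)$ (by property (3) of $Q$, $q(\bullet;g)$ is a multiple of $Q$). Hence for all $n$ with $Q(n)=\nu$ the operator $\sum_t e(t\theta)\^h(t/L)\tr\{T_N(n)U^{-t}\}$ equals $(-1)^{n_1n_2}$ times a quantity depending only on $\nu$; summing $\^f(n)(-1)^{n_1n_2}$ over $\{n:Q(n)=\nu\}$ produces precisely $f^\sharp(\nu)=\sum_{n:Q(n)=\nu}(-1)^{n_1n_2}\^f(n)$ by its definition. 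This is the heart of the matter, and I expect it to be the only place requiring care: one must make sure the normalization/definition of $P_\nu$ in (\ref{def:P_nu}) already carries the $(-1)^{n_1n_2}$ factor correctly so that the regrouping gives $f^\sharp(\nu)P_\nu(\theta)$ rather than $f^\sharp(\nu)$ times an extra sign.

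Finally I would check the range of $\nu$. Since $\omega(x,y)=x_1y_2-x_2y_1$ and $A$ acts linearly, $|\nu|=|\omega(n,nA)|\le \|n\|_2^2\|A\|_2$; as only $n$ with $\|n\|<R$ contribute, every nonzero contribution has $|\nu|<\|A\|_2R^2$, and the constraint $N>\|A\|_2R^2$ guarantees that distinct admissible values of $\nu$ remain distinct modulo $N$, so there is no collision of Fourier modes upon reduction mod $N$ inside $T_N(n)$ and the grouping is consistent with working in $\HN$. Assembling the three steps yields $P(\theta)=\sum_{|\nu|<\|A\|_2R^2}f^\sharp(\nu)P_\nu(\theta)$, which is the assertion; the identity holds in the $L^2$ sense inherited from the expansion of $P(\theta)$, and since both sides are finite sums of continuous-in-$\theta$ periodic functions (for $N$ as above) it holds pointwise as well.
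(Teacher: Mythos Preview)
Your proposal is correct and follows essentially the same approach as the paper, which treats the corollary as an immediate consequence of Lemma \ref{Kelmer's lemma} together with the bound $|\omega(n,nA)|\leq\|n\|_2^2\|A\|_2$ and gives no further details. You have simply spelled out the steps: expand $\OPN(f)$ linearly in $T_N(n)$, use Kelmer's formula to see that the trace depends on $n$ only through $Q(n)$ and the sign $(-1)^{n_1n_2}$, and regroup by $\nu=Q(n)$ to produce $f^\sharp(\nu)$; your caution about the sign factor in the definition (\ref{def:P_nu}) of $P_\nu$ is well placed, as the paper is tacit on this point.
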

\section{Background on exponential sums}\label{sec:bground exp sums}
 We give some properties of exponential sums. For a given
 algebraic variety $V$ over $k=\bF_p$, and a given rational functions $f,g_1,\dots,g_r$
 on $V$ defined over $k$, we denote $V(k)$ to be the k-rational points
 on $V$, and for multiplicative characters $\chi_1,\dots,\chi_r$ and an additive
 character $\psi$, we define
 \begin{equation}\label{def:gen_rat_exp_sum}
S=S(V,f)=\sum_{x\in V(k)}\psi(f(x))\prod_{i=1}^r\chi(g_i(x))
\end{equation}
and more generally, for any extension $k_n$ of degree $n$ of $k$, we
define
 \begin{equation}\label{def:gen_rat_n_exp_sum}
S_n=S(V\otimes_kk_n,f)=\sum_{x\in
V(k_n)}\psi(\tr(f(x))\prod_{i=1}^r\chi(N(g_i(x)))
\end{equation}
where
\begin{eqnarray*}
N:k_n^* &\to & k^*\\
x & \mapsto  & x\cdot x^p\dots x^{p^{n-1}}\\
\tr:k_n & \to & k\\
x & \mapsto & x+x^p+\cdots+x^{p^{n-1}}
\end{eqnarray*}
are the norm and trace maps respectively. All these sums are
packaged in the corresponding L - function:
\begin{equation}\label{def:L_function}
L(S,T)=\exp\left(\sum_{n=1}^\infty\frac{S_n}nT^n\right)
\end{equation}

\subsection{Deligne's results}
The following was proven by Deligne in \cite{Deligne-WeilII}:
\begin{enumerate}
\item
$L(S,T)$ is a rational function.
\item
The exponential sums $S_n$ satisfy:
\begin{equation}\label{eq:exp sum is trace}
S_n=\sum_i\alpha_i^n-\sum_i\beta_i^n
\end{equation}
 where $\alpha_i$ are the
inverse of the zeros of $L(S,T)$, and $\beta_i$ are inverse of its
poles, and both are called the roots of the exponential sum.
\item
The roots are algebraic integers.
\item All conjugates of a root have the same absolute value which is a positive integer power
of $\sqrt p$.
\end{enumerate}
It was proved by Katz in \cite{Katz-2001}, that there exists a
constant $C$, independent of $p$, such that for a given exponential
sum of type \eqref{def:gen_rat_exp_sum} there are at most $C$ roots.
\subsection{Weil's results}\label{subsec:weil bound}
For a 1 dimensional exponential sum, there is no need for the full
power of Deligne's work, but rather the proof of Weil for RH over
finite fields. We state below the main results concerning this
paper.
\begin{enumerate}
\item
Let $\bF$ be finite field of $q$ elements, and  $\bF[x]$ the ring of
polynomials over $\bF$. For a polynomial $Q(x)\in\bF[x]$, and a
multiplicative character modulo $Q$, we define the corresponding L -
function
$$
L(u,\chi)=\prod_{P\not|Q}\left(1-\chi(P)u^{\deg P}\right)^{-1}
$$
where the product is over all irreducible monic polynomials in
$\bF[x]$. By unique factorization in $\bF[x]$, we have that
$$
L(u,\chi)=\sum_{f\ne0}\chi(f)u^{deg f}=\sum_{n=1}^\infty
a_n(\chi)u^n
$$
where the sum is over all monic polynomials in $\bF[x]$, and
$a_n(\chi)=\sum_{\deg f=n}\chi(f)$.
\item\label{thm:weil bound}
For a nontrivial character modulo $Q$, $L(u,\chi)$ is in fact a
polynomial in $u$, of degree at most $\deg Q-1$.  We may factor it
as follows
$$
L(u,\chi)=\prod_{j=1}^{\deg Q-1}\left(1-\alpha_j(\chi)u\right)
$$
and it was shown by Weil \cite{Weil1948}, that for all
$j=1,\dots,\deg L(u,\chi)$, $|\alpha_j(\chi)|\leq\sqrt q$. Note also
that $a_1(\chi)=-\sum_{j=1}^{\deg Q-1}\alpha_j(\chi)$.
\end{enumerate}
\subsection{Bound for double exponential sums}\label{subsec:variance->individual}
In this part we prove a lemma that gives a sufficient condition for
an exponential sum to have square root cancelation.

We first prove the following proposition that appears previously in
\cite{Bom78,IKBook}.
\begin{prop}\label{prop:limsup}
Let $\xi_1,\dots,\xi_n\in\bC$ be distinct complex numbers of
absolute value one, and $b_1,\dots,b_n\in\bC$ complex numbers. Then
$$
\limsup_{\nu\to\infty}|\sum_{i=1}^nb_1\xi_i^\nu|\geq\left(\sum_{i=1}^n|b_i|^2\right)^{1/2}
$$
\end{prop}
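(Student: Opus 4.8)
The plan is to exploit the near-periodicity of the function $\nu\mapsto\sum_i b_i\xi_i^\nu$ coming from Weyl/Dirichlet-type recurrence of the tuple $(\xi_1^\nu,\dots,\xi_n^\nu)$ on the torus, together with an averaging argument that identifies $\sum_i|b_i|^2$ as a mean value. Concretely, write $S(\nu):=\sum_{i=1}^n b_i\xi_i^\nu$. Since the $\xi_i$ all have absolute value one, each is of the form $\xi_i=e(\alpha_i)$ for some $\alpha_i\in\bR/\bZ$, and the map $\nu\mapsto(\nu\alpha_1,\dots,\nu\alpha_n)$ traces out (the integer points of) a one-parameter subgroup whose closure is a subtorus of $(\bR/\bZ)^n$. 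The first step is to show that for every $\eps>0$ there are infinitely many $\nu\geq1$ with $\|\nu\alpha_i\|<\eps$ for all $i$ simultaneously; this is the classical simultaneous Dirichlet approximation theorem (pigeonhole on the box $(\bR/\bZ)^n$), and it does not require any independence of the $\alpha_i$.

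Next I would use such a near-return to compare $S(\nu+k)$ with a "rotated" sum. Fix a large parameter $M$ and choose, by the previous step, an integer $q\geq1$ with $\|q\alpha_i\|<\eps/M$ for all $i$; then for $0\leq k<M$ we have $\xi_i^{kq}=e(k q\alpha_i)$ close to $1$, so $S(\nu_0+kq)$ is close (uniformly in $k<M$, with error $O(M\cdot M^{-1}\eps)=O(\eps)$ after summing the exponents, more carefully $|\xi_i^{kq}-1|\le 2\pi k\|q\alpha_i\|<2\pi\eps$) to $\sum_i b_i\xi_i^{\nu_0}$ — wait, that is too crude. Instead the cleaner route is the averaging one: consider
\begin{equation*}
\frac1{H}\sum_{\nu=1}^{H}|S(\nu)|^2=\sum_{i,j}b_i\overline{b_j}\cdot\frac1H\sum_{\nu=1}^H(\xi_i\overline{\xi_j})^\nu .
\end{equation*}
For $i=j$ the inner average is $1$; for $i\neq j$, since $\xi_i\neq\xi_j$ the ratio $\xi_i\overline{\xi_j}$ is a root of unity distinct from $1$ or an irrational rotation, and in either case $\frac1H\sum_{\nu=1}^H(\xi_i\overline{\xi_j})^\nu\to0$ as $H\to\infty$ (geometric-series bound $O(1/(H|1-\xi_i\overline{\xi_j}|))$). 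Hence $\frac1H\sum_{\nu=1}^H|S(\nu)|^2\to\sum_i|b_i|^2$. Since the average of the nonnegative quantities $|S(\nu)|^2$ tends to $\sum_i|b_i|^2$, we must have $\limsup_{\nu\to\infty}|S(\nu)|^2\geq\sum_i|b_i|^2$, which is exactly the claim after taking square roots.

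The main (and essentially only) obstacle is making sure the off-diagonal averages genuinely vanish even when the $\xi_i$ are roots of unity, i.e. the normalization $\frac1H\sum_{\nu=1}^H$ really kills every term with $\xi_i\overline{\xi_j}\ne1$; this is handled uniformly by the elementary estimate $\bigl|\sum_{\nu=1}^H z^\nu\bigr|\le \frac{2}{|1-z|}$ for $|z|=1$, $z\ne1$, so that each off-diagonal contribution is $O_{n}(1/H)$ with an implied constant depending on the (finitely many) gaps $|1-\xi_i\overline{\xi_j}|$. Everything else is routine: distinctness of the $\xi_i$ is used precisely to guarantee $\xi_i\overline{\xi_j}\ne1$ for $i\ne j$, and no Weil-type input is needed here — this proposition is the soft harmonic-analysis ingredient that will later be combined with the Weil bound of Section \ref{subsec:weil bound} to pass from variance estimates to bounds on individual exponential sums.
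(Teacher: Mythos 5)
Your proof is correct and, after discarding the false start via Dirichlet simultaneous approximation, it coincides with the paper's argument: both compute the Ces\`aro average of $|S(\nu)|^2$, show the off-diagonal terms are $O(1/H)$ by the geometric-series bound using $\xi_i\overline{\xi_j}\ne1$, identify the limit $\sum_i|b_i|^2$, and conclude by noting that a $\limsup$ strictly below the Ces\`aro mean of a nonnegative sequence is impossible. No substantive difference.
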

\begin{proof}
For $N\in\bN$ compute the average over $\nu=1,\dots,N$
\begin{equation}\label{eq:average sum}
\frac1N\sum_{\nu=0}^{N-1}|\sum_{i=1}^nb_i\xi_i|^2=\sum_{i=1}^n|b_i|^2+\frac1N\sum_{i\ne
j}b_i\overline{b_j}\frac{1-(\xi_i\xi_j^{-1})^N}{1-(\xi_i\xi_j^{-1})}=\sum_{i=1}^n|b_i|^2+O(\frac1N)
\end{equation}
Now assume that
$$\limsup_{\nu\to\infty}|\sum_{i=1}^nb_i\xi_i^\nu|^2<\sum_{i=1}^n|b_i|^2-\delta$$
for some $\delta>0$. Then in particular the bound is true for $\nu$
large enough, which contradicts \eqref{eq:average sum}.
\end{proof}
The next lemma shows that for general exponential sums, given a
bound on the sum of squares can can lead to a bound on individuals.
\begin{lem}\label{lem:exp sum:variance->individual}
Let $k$ be a finite field of characteristic $char k=p$, and $V$ be
an algebraic variety over $\overline k$ of dimension $N$ and degree
$d$. Let $\underline{\chi}=\{\chi_1,\dots,\chi_l\}$ be
muultiplicative characters of $k^*$, and $\psi$ an and additive
character of $k$, $g_1(x),\dots,g_l(x),f(x)$ rational functions over
$V$. Denote
$$S(\chi,\psi;\underline g,f):=\sum_{x\in
V(k)}\prod_{i=1}^l\chi_i(g_i(x))\psi(f(x))
$$
Assume that there exists $b\in\bN$ and $M\in\bR$ such that for all
$\nu\in\bN$
\begin{equation}\label{eq:lem:exp sum:variance->individual}
\frac1{|k|^\nu}\sum_{0\ne a\in
k_\nu}\left|S_\nu(\underline\chi,\psi_a;\underline g,f)\right|^2\leq
M|k|^{\nu b}
\end{equation}
where $\psi_a(x)=\psi(\tr(ax))$, then there exists a constant
$B=B(N,d,f,g)$ such that $|S(\underline\chi,\psi;\underline
g,f)|\leq B|k|^{b/2}$
\end{lem}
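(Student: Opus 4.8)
The plan is to run the argument of Proposition \ref{prop:limsup} on the one-parameter family obtained by twisting the additive character, and to feed in the square-root-cancellation hypothesis \eqref{eq:lem:exp sum:variance->individual} as the input bound. For each $\nu$, Deligne's results (together with Katz's uniform bound on the number of roots) tell us that the twisted sums are governed by finitely many roots: there exist algebraic integers $\alpha_1,\dots,\alpha_r,\gamma_1,\dots,\gamma_s$, with $r+s\le C=C(N,d,f,g)$ bounded independently of $\nu$ and of the field, and roots of absolute value bounded by $|k|^{(b+1)/2}$ roughly, such that
\begin{equation*}
S_\nu(\underline\chi,\psi_a;\underline g,f)=\sum_i \alpha_i(a)^\nu-\sum_j\gamma_j(a)^\nu
\end{equation*}
for the twisted family in $a$; more precisely I want to view $a\mapsto S(\underline\chi,\psi_a;\underline g,f)$ as itself an exponential sum over the affine line with the parameter $a$, so that the relevant object whose Frobenius eigenvalues we track is the sheaf on $\mathbb{A}^1$ whose trace function is $a\mapsto S$, and then its cohomology is finite-dimensional of dimension bounded by $C$.

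First I would set this up carefully: by \eqref{eq:exp sum is trace} write $S(\underline\chi,\psi;\underline g,f)=\sum_{i=1}^{m}\lambda_i$ where the $\lambda_i$ are the roots of this exponential sum in the parameter $a$ (after extracting the $a=0$ term, which is $O(1)\cdot|k|^{\text{something}}$ and can be absorbed), and $m\le C$. All conjugates of each $\lambda_i$ have the same absolute value, a power of $\sqrt{|k|}$. The base-change formula gives $\sum_i\lambda_i^\nu$ as essentially $\sum_{0\ne a\in k_\nu}S_\nu/|k_\nu|$-type expressions — here I need to be a little careful and instead apply Proposition \ref{prop:limsup} to the normalized roots $\xi_i:=\lambda_i/|\lambda_i|$. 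The hypothesis \eqref{eq:lem:exp sum:variance->individual} says the $\nu$-th moment $\frac1{|k|^\nu}\sum_{0\ne a}|S_\nu|^2\le M|k|^{\nu b}$; expanding $|S_\nu|^2=S_\nu\overline{S_\nu}$ and using that $\overline{S_\nu}$ corresponds to the complex-conjugate character data, this double sum is again a one-dimensional exponential sum in its own right whose roots are products $\lambda_i\overline{\lambda_j}$ (up to lower-order terms), so the growth condition forces $\prod$-type control: the dominant roots $\lambda_i\overline{\lambda_j}$ have absolute value $\le |k|^{b}$ up to the $|k|^\nu$ normalization already divided out, hence $|\lambda_i|\le |k|^{b/2}$ for the largest $i$. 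Combining, $|S|=|\sum_i\lambda_i|\le m\max_i|\lambda_i|\le C|k|^{b/2}$, which is the assertion with $B=C(N,d,f,g)$.

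The main obstacle is the bookkeeping that makes ``the double sum $\sum_a|S_\nu|^2$ is again an exponential sum with roots $\lambda_i\overline{\lambda_j}$'' rigorous, and extracting from its moment growth a clean bound on the individual $|\lambda_i|$. The cleanest route is: (i) realize $T_\nu:=\sum_{0\ne a\in k_\nu}|S_\nu(\cdot,\psi_a;\cdot)|^2$ as $\mathrm{tr}(\mathrm{Frob}^\nu)$ on the cohomology of an explicit variety (the fiber product of the family with its conjugate over the $a$-line), whose Frobenius eigenvalues are, by the Künneth formula and proper base change, exactly the products of eigenvalues of the two factors together with the contribution of the sum over $a$; (ii) note this cohomology has dimension $\le C'=C'(N,d,f,g)$ by Katz again; (iii) apply Proposition \ref{prop:limsup} to $T_\nu/|k|^{\nu b}$ — the hypothesis says $\limsup$ of the normalized $T_\nu$ is $\le M$, hence every eigenvalue of absolute value $>|k|^{b}$ (after the $|k|^\nu$ division) would blow up the $\limsup$, a contradiction unless such eigenvalues cancel, and cancellation among eigenvalues of strictly maximal modulus is impossible by the $\limsup$ lower bound in Proposition \ref{prop:limsup}. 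This pins $|\lambda_i\overline{\lambda_j}|\le |k|^{b}$ for all surviving $i,j$, in particular $|\lambda_i|^2\le |k|^b$, and we are done. The only real subtlety is handling the $a=0$ term and any eigenvalues that happen to cancel in $S$ itself but not in $|S|^2$; both are harmless since we only need an upper bound on $|S|$ and the number of terms is $O(1)$.
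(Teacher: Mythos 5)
Your proposal takes a genuinely different route from the paper, and while it captures the right overall flavor (Deligne's structure theorem plus Proposition \ref{prop:limsup} applied to control the maximal root), it replaces the paper's key mechanism with a more cohomological one that, as sketched, has a real gap.

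The paper applies Proposition \ref{prop:limsup} directly to $S_\nu$: if $r_{\max}>b$, then $|S_{\nu_j}|\gtrsim |k|^{r_{\max}\nu_j/2}$ for infinitely many $\nu_j$. The crucial next step --- which you omit entirely --- is the Galois conjugation argument: the automorphism $\sigma_c$ of $\bQ(e(1/p))$ sending $e(1/p)\mapsto e(c/p)$ fixes the multiplicative character data (since $\bQ(e(1/p))$ and $\bQ(e(1/(p-1)))$ are linearly disjoint) and sends $S(\underline\chi,\psi;\cdot)$ to $S(\underline\chi,\psi_c;\cdot)$. By Deligne, conjugate roots have the same absolute value, so all $p-1$ conjugate sums $S(\psi_c)$, $c\in\bF_p^*$, have their top roots at the same weight $r_{\max}$, and all of them contribute to the lower bound for $\sum_{0\ne a}|S_{\nu_j}(\psi_a)|^2$. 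This gain of a factor $\approx |k|$ is precisely what brings the exponent from $(b+1)/2$ down to $b/2$. Without it, the naive inequality $\sum_{0\ne a}|S_\nu(\psi_a)|^2\ge |S_\nu(\psi)|^2$ only yields $r_{\max}\le b+1$, i.e.\ $|S|\lesssim|k|^{(b+1)/2}$, half a power of $|k|$ short of the statement.

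Your alternative is to realize $T_\nu=\sum_{0\ne a}|S_\nu(\psi_a)|^2$ as a Frobenius trace on a fiber-product variety (after orthogonality over $a$, essentially $|k|^\nu\cdot|W(k_\nu)|-|V(k_\nu)|^2$ with $W=V\times_f V$), read off its Frobenius eigenvalues as $\{|k|\omega_m\}\cup\{\lambda_j\lambda_{j'}\}$, and invoke Proposition \ref{prop:limsup} on the normalized $T_\nu$ to bound those eigenvalues, concluding ``$|\lambda_i\overline{\lambda_j}|\le|k|^b$, in particular $|\lambda_i|^2\le|k|^b$''. That final step is the gap: the diagonal quantity $|\lambda_i|^2$ is only one of many eigenvalues in a decomposition designed to have massive cancellation between the $|k|\omega_m$ contribution and the $\lambda_j\lambda_{j'}$ contribution (the leading terms cancel exactly, both being of size $|k|^{2N\nu}$). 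You need to argue that the weight-$r_{\max}$ diagonal term $|\lambda_i|^2$ is not itself cancelled against some $|k|\omega_m$ of the same modulus, and the hand-wave ``cancellation among eigenvalues of strictly maximal modulus is impossible'' is not enough: Proposition \ref{prop:limsup} rules out total cancellation among the \emph{surviving} top eigenvalues, but says nothing about whether the particular $|\lambda_i|^2$ you care about is among the survivors. The paper's Galois step sidesteps exactly this issue, because it applies the limsup argument to the honest signed sum $S_\nu$ rather than to a difference of two trace functions. So: different route, and the difference is precisely where the difficulty lies.
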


\begin{proof}
By Deligne's result, it suffice to show that all the roots
$\omega_{i}$ of (the L-function of) the exponential sum
$S(\chi,\psi_a)$ are of absolute value $|\omega_i|\leq |k|^{b/2}$.
Denote $r_{max}=\max_{w_i{\rm{\;roots\;of\; S
}}}\{r:|w_i|=|k|^{r/2}\}$, and assume $r_{max}>b$ so there exist
$\omega_1,\dots,\omega_n$ roots of $S(\underline\chi,\psi;\underline
g,f)$, of absolute value $|k|^{r_{max}/2}$, and multiplicities
$\lambda_1,\dots,\lambda_n$, then by proposition \ref{prop:limsup}
with $\xi_i=w_i/|k|^{r_{max}/2}, b_i=\lambda_i$ we get
$$\limsup_{\nu\to\infty}\left|\frac{\sum_{i=1}^n\lambda_i\omega_i}{|k|^{r_{max}\nu/2}}\right|>0$$
and therefore there exist infinitely many $\nu_j$ such that
$|S_{\nu_j}(\underline\chi,\psi;\underline g,f)|\gtrsim
|k|^{r_{max}\nu_j/2}$. for $0\ne c\in\{1,\dots,p-1\}$, let
$\sigma_c\in Gal(\bQ)$ that sends $e(1/p)\mapsto e(c/p)$. Since the
fields $\bQ(e(1/p)),\bQ(e(1/(p-1)))$ are linearly disjoint we get
that $S(\underline\chi,\psi;\underline
g,f)^\sigma=S(\underline\chi,\psi_c;\underline g,f)$, and therefore

\begin{equation*}
\sum_{0\ne  a\in k_{\nu_j}}|S_\nu(\underline\chi,\psi_a;\underline
g,f)|^2 \geq\sum_{0\ne a\in
k}\left|S_\nu(\underline\chi,\psi_a;\underline g,f)\right|^2\gtrsim
|k|^{r_{max}\nu_j+1}
\end{equation*}
which contradicts \eqref{eq:lem:exp sum:variance->individual}
\end{proof}
For exponential sums over a 2 dimensional variety the following
theorem gives a sufficient condition for square root cancelation.

\begin{thm}\label{thm:bound_exp_sum}
Let $V$ be an irreducible algebraic variety over a finite field $k$
of dimension 2, and degree $\delta$. Let $f$ be rational function on
$V$. Suppose that there exists $R$ such that
$$\#\{C\in\overline
k:{\rm{the\; fiber\;f=C\;is\;geometrically\;reducible}}\}<R$$ and
that the degree of all irreducible fibers is at most $d$. Let $\psi$
be an additive character of $k$. Then there exists $B$ such that
\begin{equation}\label{eq:additive square root cancel}
|\sum_{x\in V(k)}\psi(f(x))|\leq B|k|
\end{equation}
\end{thm}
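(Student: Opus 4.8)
The plan is to reduce the bound on the two-dimensional additive sum $\sum_{x\in V(k)}\psi(f(x))$ to the one-dimensional Weil bound by fibering $V$ over the affine line via the function $f$, and then applying Lemma \ref{lem:exp sum:variance->individual} to promote a second-moment (variance) bound to an individual bound. More precisely, write the sum as
$$
\sum_{x\in V(k)}\psi(f(x))=\sum_{C\in k}\psi(C)\,\#\{x\in V(k): f(x)=C\}\;,
$$
so that $S=\sum_{C\in k}\psi(C)\,n(C)$ where $n(C)$ is the point-count of the fiber $f=C$. Heuristically each $n(C)=|k|+O(|k|^{1/2})$ for the generic (geometrically irreducible) fiber, with the $O(|k|^{1/2})$ governed by the genus/degree $d$; summing $\psi(C)\cdot|k|$ over all $C$ gives zero, and the error terms, if they behaved independently, would give the desired $O(|k|)$. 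The content of the theorem is to make this rigorous using Deligne rather than assuming independence.

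First I would compute the variance: for an additive character $\psi_a(x)=\psi(ax)$ with $a\neq 0$, by orthogonality
$$
\frac1{|k|}\sum_{0\neq a\in k}\Bigl|\sum_{x\in V(k)}\psi_a(f(x))\Bigr|^2
=\frac1{|k|}\sum_{a\in k}\Bigl|\sum_{x}\psi_a(f(x))\Bigr|^2-|k|
=\#\{(x,y)\in V(k)^2: f(x)=f(y)\}-|k|\;.
$$
The diagonal $\{f(x)=f(y)\}\subset V\times V$ has dimension $3$ generically (a family of fibered products of curves), but one must bound its point-count; the geometrically irreducible fibers each contribute $\le (|k|+C_d|k|^{1/2})^2$ points, there are $\le|k|$ such values $C$, and the $<R$ exceptional values contribute $O(|k|)$ each at most $O(R|k|)$ after squaring — wait, here is where care is needed: a reducible fiber can have up to $d$ components, each of size $\le|k|+O(|k|^{1/2})$, so its square is still $O(|k|^2)$, and with $<R$ of them we get $O(R|k|^2)$. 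The generic fibers contribute $\sum_C(|k|+O(|k|^{1/2}))^2=|k|^2+O(|k|^{3/2})+\ldots$; subtracting $|k|$ leaves $O(|k|^{3/2})\cdot|k|$?? — this is exactly the subtle point. The correct accounting: $\#\{(x,y):f(x)=f(y)\}=\sum_C n(C)^2$, and we need this to be $\le |k|+M|k|^2$ for a suitable $M$, i.e. we need $\sum_C n(C)^2-|k|\le M|k|^2$, which is \eqref{eq:lem:exp sum:variance->individual} with $b=2$. Expanding $n(C)=|k|\mathds 1_{C\text{ generic}}+(\text{correction})$ and using that the corrections are $O(|k|^{1/2})$ for generic $C$ and $O(|k|)$ for the $<R$ special $C$, one gets $\sum_C n(C)^2 = |k|\cdot|k|^2\cdot(\text{no, }\#\text{ generic }C)$ — the dominant term is $(\#\text{generic fibers})\cdot|k|^2\le|k|^3$, which is $|k|^{\nu b}$ with $b=2$ only if we count fibers, not $|k|$ of them. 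So in fact the variance bound needs $b$ to absorb a $|k|$ from the sum over $C$ together with $|k|^2$ from $n(C)^2$; the honest statement is that $\frac1{|k|^\nu}\sum_{a\neq0}|S_\nu(\psi_a)|^2\le M|k|^{2\nu}$ holds because $\sum_C n(C)^2\le |k|\cdot(|k|+O(|k|^{1/2}))^2+R\cdot(d|k|+\ldots)^2=O(|k|^3)$, and dividing by $|k|$ gives $O(|k|^2)$. Thus $b=2$, and all the field-extension versions $S_\nu$ obey the same bound with $|k|$ replaced by $|k|^\nu$ since the geometric hypotheses (number of geometrically reducible fibers, degree of fibers) are geometric and hence stable under base change.

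Then Lemma \ref{lem:exp sum:variance->individual}, applied with $V$ of dimension $2$, no multiplicative characters ($l=0$), the additive character $\psi$ and the function $f$, and $b=2$, yields a constant $B=B(2,\delta,f)$ with $|S(\psi;f)|=|\sum_{x\in V(k)}\psi(f(x))|\le B|k|^{b/2}=B|k|$, which is exactly \eqref{eq:additive square root cancel}. The constant depends only on $\dim V=2$, $\deg V=\delta$, the degree data $d$, and $R$, as required; the dependence is uniform in $|k|$ because Deligne's bound on the number of roots (Katz's theorem quoted above) and the Weil estimates for the individual fibers are uniform.

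The main obstacle I anticipate is controlling the point-counts $n(C)$ of the fibers uniformly — specifically, showing that a geometrically irreducible fiber of degree $\le d$ has $n(C)=|k|+O_d(|k|^{1/2})$ (this is the Lang–Weil / Weil bound for curves, applicable because the fiber is a curve of bounded degree, hence bounded genus) and that the finitely many reducible or non-reduced fibers contribute only $O(R\cdot d\cdot|k|)$ in absolute value and $O(Rd^2|k|^2)$ after squaring. Packaging these into the clean inequality \eqref{eq:lem:exp sum:variance->individual} with a single constant $M=M(\delta,d,R)$, and checking that the hypothesis transfers verbatim to every finite extension $k_\nu$ (so that Lemma \ref{lem:exp sum:variance->individual} truly applies), is the technical heart; once that is in place the conclusion is immediate from the quoted machinery.
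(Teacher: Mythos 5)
Your strategy — fiber $V$ over the affine line by $f$, reduce to a second-moment estimate, and invoke Lemma~\ref{lem:exp sum:variance->individual} with $b=2$ — is exactly the paper's strategy, so the overall route is right. However, your execution of the variance bound has a genuine gap.

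Write $N_\nu = \#V(k_\nu)$ and $n(C)=|f_\nu^{-1}(C)|$. You need
$\frac1{|k|^\nu}\sum_{0\ne a\in k_\nu}|S(\psi_a)|^2 \le M|k|^{2\nu}$.
Orthogonality gives $\frac1{|k|^\nu}\sum_{a\in k_\nu}|S(\psi_a)|^2 = \sum_{C}n(C)^2$, and removing $a=0$ means subtracting $|S(\psi_0)|^2/|k|^\nu=N_\nu^2/|k|^\nu$, not $|k|$ as in your first display — but the deeper problem is that you then estimate $\sum_C n(C)^2=|k|^{3\nu}+O(|k|^{5\nu/2})$ and $N_\nu^2/|k|^\nu=|k|^{3\nu}+O(|k|^{5\nu/2})$ separately, which only gives $\sum_C n(C)^2-N_\nu^2/|k|^\nu = O(|k|^{5\nu/2})$; feeding $b=5/2$ into Lemma~\ref{lem:exp sum:variance->individual} yields $|S|\ll |k|^{5/4}$, weaker than the claimed $|k|$. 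The step you never invoke, and that is the crux of the paper's proof, is the \emph{exact} cancellation of the cross-term: setting $E_\nu(C)=n(C)-N_\nu/|k|^\nu$ one has $\sum_C E_\nu(C)=N_\nu-N_\nu=0$ identically, so
$\sum_C n(C)^2 - N_\nu^2/|k|^\nu = \sum_C E_\nu(C)^2$,
with no linear-in-$E$ term to control. Only then does the pointwise bound $E_\nu(C)=O(|k|^{\nu/2})$ for the $\le|k|^\nu$ irreducible fibers (and $E_\nu(C)=O(|k|^\nu)$ for the $<R$ exceptional ones) give $\sum_C E_\nu(C)^2 = O(|k|^{2\nu})$, hence $b=2$ and $|S|\le B|k|$. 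Without observing $\sum_C E_\nu(C)=0$, the naive estimate of the cross-term is $O(|k|^{5\nu/2})$ and the argument does not close; your proposal asserts the final bound but does not prove it.
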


\begin{proof}
By lemma \ref{lem:exp sum:variance->individual} it suffices to show
that there exists $M$ such that
$$
\frac1{|k|^\nu}\sum_{0\ne a\in k_\nu}|S(\psi_a)|^2\leq M|k|^{2\nu}
$$
which is equivalent to show that
\begin{equation}\label{sum:complete sum variance}
\sum_{a\in k_\nu}|S(\psi_a)|^2=N_\nu^2+O(|k|^{3\nu})
\end{equation}
where $N_\nu=\#\{\vec x\in V(k_\nu)\}=|k|^{2\nu}+O(|k|^{3\nu/2})$ by
irreducibility of $V$ and Lang-Weil theorem \cite{LaWei}. Writing
the sum in (\ref{sum:complete sum variance}) explicitly we get
\begin{eqnarray*}
\sum_{a\in k_\nu}\sum_{x\in V(k_\nu)}\sum_{x'\in V(k_\nu)}\psi(a(f(x)-f(x')))=\\
|k|^\nu\sum_{x\in V(k_\nu)}\sum_{x'\in
V(k_\nu)}\sum_{f(x)=f(x')}1=|k|^\nu \sum_{C\in
k_\nu}|f_\nu^{-1}(C)|^2
\end{eqnarray*}
Where $f_\nu^{-1}(C)=\{x\in V(k_\nu):f(x)=C\}$. The number of points
on $f_\nu^{-1}(C)$ is given by
\begin{equation}
|f_\nu^{-1}(C)|=\frac1{|k|^\nu}\sum_{a\in k_\nu}\sum_{\vec x\in
V(k_\nu)}\psi(a(f(\vec x)-C))=\frac{N_\nu}{|k|^\nu}+E_\nu(C)
\end{equation}
where
\begin{equation}\label{def:remainder of sum}
E_\nu(C)=\frac1{|k|^\nu}\sum_{0\ne a\in k_\nu}\sum_{\vec x\in
V(k_\nu)}\psi(a(f(\vec x)-C))
\end{equation}
and therefore
\begin{eqnarray*}
|k|^\nu \sum_{C\in k_\nu}|f_\nu^{-1}(C)|^2=|k|^\nu \sum_{C\in
k_\nu}\left(\frac{N_\nu}{|k|^\nu}+E_\nu(C)\right)^2=\\
N_\nu^2+\sum_{C\in k_\nu}\left(2N_\nu E_\nu(C)+|k|^\nu
E_\nu(C)^2\right)
\end{eqnarray*}
By the assumption on the fibers and the Riemann hypothesis for
curves we get that $E_\nu(C)=O(|k|^{\nu/2})$ for all $C$ except at
most $R$. From \eqref{def:remainder of sum} we have that $\sum_{C\in
k_\nu}\tilde E_\nu(C)=0$ and therefore we get that
$$
\sum_{a\in k_\nu}|S(\psi)|^2=N_\nu^2+O(|k|^{3\nu})
$$
which concludes the proof.
\end{proof}
\begin{remark}
As was seen throughout the proof the irreducibility assumptions can
be replaced by cardinality assumptions on $V$ and the fibers, that
is if $\# V=|k|^2+O(|k|^{3/2})$ and the fibers satisfy
$|f_\nu^{-1}(C)-|k|^\nu|\leq B\sqrt|k|^\nu$ for an absolute constant
$B$ then the theorem holds as well.
\end{remark}
\subsection{Bounds for character sums over $\bF_q$}
We prove here a condition for a square root cancelation for one
dimensional sums involving many multiplicative characters. We prove
the following
\begin{thm}\label{thm:one_dim_bound}
Let $k=\bF_q$ be the field with $q=p^n$ elements ($char(k)=p$), and
let $\chi_1,\dots,\chi_m$ be nontrivial multiplicative characters of
$k$. Let $P_1(x),\dots,P_m(x)\in k[x]$ be monic irreducible
polynomials of degrees $d_1,\dots,d_m$ respectively. Then
\begin{equation}\label{eq:one_dim_bound}
\sum_{t\in\bF_q}\prod_{i=1}^m\chi_i(P_i(t))\leq\left(\sum_{i=1}^md_i-1\right)\sqrt
q
\end{equation}
\end{thm}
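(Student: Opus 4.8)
The plan is to realize the left-hand side of \eqref{eq:one_dim_bound} as the degree-one coefficient of the $L$-function of a nontrivial Hecke character over $k[x]$, and then to invoke the bound recorded in \S\ref{subsec:weil bound}. We may assume the $P_i$ pairwise distinct: if two coincide, we replace the corresponding two factors by the single factor attached to their common prime with character the product of the two; this lowers $\deg\bigl(\prod_i P_i\bigr)$ and so only strengthens the target bound, the sole degenerate case — a trivial combined character at a linear prime, where the stated bound genuinely fails — being excluded.

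So put $Q=\prod_{i=1}^m P_i$, hence $\deg Q=\sum_i d_i$. By the Chinese Remainder Theorem, $(k[x]/Q)^*\cong\prod_{i=1}^m\bF_{q^{d_i}}^*$, the $i$-th projection sending the class of $g$ (coprime to $P_i$) to $g(\theta_i)$ for a fixed root $\theta_i$ of $P_i$. Define the character $\psi$ modulo $Q$ by $\psi=\prod_{i=1}^m\bigl(\chi_i\circ N_{\bF_{q^{d_i}}/\bF_q}\bigr)$ under this isomorphism. Since each norm map $N_{\bF_{q^{d_i}}/\bF_q}\colon\bF_{q^{d_i}}^*\to\bF_q^*$ is surjective and each $\chi_i\ne1$, every factor is a nontrivial character of $\bF_{q^{d_i}}^*$, so $\psi$ is a nontrivial character modulo $Q$.

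For $t\in\bF_q$ the class of $x-t$ modulo $P_i$ is $\theta_i-t$, and since $P_i$ is monic with the conjugates of $\theta_i$ for its roots,
\[
N_{\bF_{q^{d_i}}/\bF_q}(\theta_i-t)=\prod_{P_i(\alpha)=0}(\alpha-t)=(-1)^{d_i}P_i(t).
\]
Hence $\psi(x-t)=\zeta\prod_{i=1}^m\chi_i(P_i(t))$ with $\zeta=\prod_i\chi_i(-1)^{d_i}$, $|\zeta|=1$; this also holds when some $P_i(t)=0$, as then both sides vanish ($\chi_i(0)=0$ on the right, and $\gcd(x-t,Q)\ne1$ forces $\psi(x-t)=0$ on the left). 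Since the monic degree-one polynomials over $\bF_q$ are precisely the $x-t$, summing over $t$ gives $\bigl|\sum_{t}\prod_i\chi_i(P_i(t))\bigr|=|a_1(\psi)|$. By item \ref{thm:weil bound} of \S\ref{subsec:weil bound}, $L(u,\psi)=\prod_{j=1}^{\deg Q-1}\bigl(1-\alpha_j(\psi)u\bigr)$ with $|\alpha_j(\psi)|\le\sqrt q$ and $a_1(\psi)=-\sum_{j}\alpha_j(\psi)$, so $|a_1(\psi)|\le(\deg Q-1)\sqrt q=\bigl(\sum_i d_i-1\bigr)\sqrt q$, which is the claim.

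The step I expect to need the most care is the nondegeneracy underlying the argument: that $\psi$ is nontrivial, together with the exact bound $\deg Q-1=\sum_i d_i-1$ on the number of numbers $\alpha_j(\psi)$ of modulus $\le\sqrt q$ in the factorization of $L(u,\psi)$. Writing $\chi_i=\chi^{e_i}$ with $1\le e_i\le\ord(\chi)-1$ for a common generator $\chi$ and rewriting the sum as $\sum_t\chi\bigl(\prod_i P_i(t)^{e_i}\bigr)$, nontriviality of $\psi$ is exactly the statement that $\prod_i P_i^{e_i}$ is not a constant times an $\ord(\chi)$-th power in $k[x]$ — immediate for distinct $P_i$ from $1\le e_i\le\ord(\chi)-1$. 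Everything else (the ring identification, the norm computation, the compatibility of the $\chi_i(0)=0$ and non-coprimality conventions, and the unit constant $\zeta$) is routine bookkeeping; alternatively one may quote Weil's bound directly in the packaged form $\bigl|\sum_{t\in\bF_q}\chi(f(t))\bigr|\le(r-1)\sqrt q$, with $r$ the number of distinct roots of $f$, applied to $f=\prod_i P_i^{e_i}$, for which $r=\sum_i d_i$.
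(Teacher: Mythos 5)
Your proof is correct and follows the same overall strategy as the paper: realize the sum as (up to sign and a unit constant) the degree-one coefficient of the $L$-function of a Dirichlet character $\psi$ modulo $Q=\prod_i P_i$ over $\bF_q[x]$, verify that $\psi$ is nontrivial, and invoke Weil's bound from \S\ref{subsec:weil bound}. The character you build via the CRT isomorphism $(k[x]/Q)^*\cong\prod_i\bF_{q^{d_i}}^*$ and the norm maps is the same one the paper builds using resultants: with $\theta_i$ a root of $P_i$ one has $N_{\bF_{q^{d_i}}/\bF_q}(g(\theta_i))=\prod_{P_i(\alpha)=0}g(\alpha)=res(P_i,g)$, so the two constructions agree and the computation at $g=x-t$ matches up to the harmless constant $\zeta$. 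The one genuine simplification in your write-up is the nontriviality check: you observe directly that each $\chi_i\circ N_{\bF_{q^{d_i}}/\bF_q}$ is a nontrivial character of its factor $\bF_{q^{d_i}}^*$ because the finite-field norm is surjective, hence the product character on the direct product is nontrivial. The paper instead reaches this via Galois-equivariant interpolation (Proposition \ref{prop:poly constr}) and the ensuing resultant corollary, constructing an explicit $F\in k[x]$ with prescribed values $res(P_i,F)=a_i$; your route bypasses both auxiliary statements. You are also right to flag that the statement implicitly needs the $P_i$ pairwise distinct (or the combining reduction you describe) to avoid the degenerate case where a repeated prime carries the trivial combined character; the paper takes $Q=lcm(P_1,\dots,P_m)$ and its nontriviality corollary is stated only for distinct $P_i$, and it deals with the degeneracy explicitly only at the point of application in Corollary \ref{cor:char_sum_sqrt_cncl}.
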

To prove this bound we construct a polynomial $Q_\chi(x)\in k[x]$ of
degree less then $\sum d_i$, and a nontrivial character
$\nu_\chi:\left(k[x]/Q_\chi(x)\right)^x\to\bC$, such that
$\nu_\chi(x-t)=\prod_{i=1}^m\chi_i(P(t))$.
\begin{prop}\label{prop:poly constr}
Let $k$ be a field, and let $\{x_1,\dots,x_l\}\subset \bar k$ be
finite set invariant under Galois action. Then for any set
$y=\{y_1,\dots,y_l\}\subset\bar k$ invariant under Galois action,
there exists a unique monic polynomial $P_y(x)\in k[x]$ of degree
$l$, such that $P_y(x_i)=a_i,\forall\sigma\in
Gal(k)\;P_y(\sigma(x_i))=\sigma(a_i)$.
\end{prop}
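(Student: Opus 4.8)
The plan is to construct $P_y$ explicitly over $\overline k$ and then use a uniqueness argument to force its coefficients down into $k$. I read the hypothesis as saying that the $x_i$ are distinct and that the assignment $x_i\mapsto y_i$ is Galois-equivariant, i.e. $\sigma(x_i)=x_j$ implies $\sigma(y_i)=y_j$ for every $\sigma\in\mathrm{Gal}(\overline k/k)$ (this is the content of the condition ``$P_y(\sigma(x_i))=\sigma(y_i)$'' in the statement, with the $a_i$ there identified with the $y_i$).

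First I would settle uniqueness, which needs no Galois theory: two monic polynomials of degree $l$ agreeing at the $l$ distinct points $x_1,\dots,x_l$ differ by a polynomial of degree $\le l-1$ having $l$ roots, hence coincide. For existence I would first work over $\overline k$: put $m(x)=\prod_{i=1}^l(x-x_i)$ and let $R(x)\in\overline k[x]$ be the Lagrange interpolant of degree $\le l-1$ with $R(x_i)=y_i$; then $P_y:=m+R$ is monic of degree $l$ with $P_y(x_i)=y_i$. This part is routine interpolation.

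The only real content is the descent to $k[x]$. Given $\sigma\in\mathrm{Gal}(\overline k/k)$, apply $\sigma$ to the coefficients of $P_y$ to get $P_y^\sigma$. Since $\{x_1,\dots,x_l\}$ is Galois-stable, $\sigma$ permutes the $x_i$, so $m^\sigma=m$ and $P_y^\sigma$ is again monic of degree $l$; moreover $P_y^\sigma(\sigma(x_i))=\sigma(P_y(x_i))=\sigma(y_i)=y_j$ where $x_j=\sigma(x_i)$, and as $i$ runs over $1,\dots,l$ so does $j$. Thus $P_y^\sigma$ satisfies exactly the same interpolation conditions as $P_y$, and the uniqueness step forces $P_y^\sigma=P_y$. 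Hence each coefficient of $P_y$ lies in the fixed field of $\mathrm{Gal}(\overline k/k)$, which equals $k$ when $k$ is perfect — in particular for $k=\bF_q$, the only case needed for Theorem~\ref{thm:one_dim_bound}. (For a general base field one runs the same argument with the separable closure $k^{\mathrm{sep}}$ in place of $\overline k$, noting $R\in k^{\mathrm{sep}}[x]$.)

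I expect the step demanding the most care to be this last one: pinning down the equivariance hypothesis precisely and checking that $P_y^\sigma$ obeys the very same constraints as $P_y$, so that uniqueness applies; the interpolation and the degree bound are entirely standard.
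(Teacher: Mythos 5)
Your proposal follows essentially the same route as the paper: construct the interpolating polynomial over $\overline k$ (the paper calls this ``a standard linear algebra argument'', you spell it out via $m+R$ with $m=\prod(x-x_i)$ and $R$ the Lagrange interpolant), then show $\sigma(P)=P$ for every $\sigma\in\mathrm{Gal}(\overline k/k)$ by checking that $\sigma(P)$ satisfies the same interpolation data and invoking uniqueness. The two arguments are the same in substance; you are somewhat more careful on two points the paper elides: you state explicitly what the Galois-equivariance hypothesis on the data $x_i\mapsto y_i$ has to mean for the statement to make sense (the paper's wording confuses $y_i$ and $a_i$), and you flag that ``fixed field of $\mathrm{Gal}(\overline k/k)$ equals $k$'' needs $k$ perfect (or a pass through $k^{\mathrm{sep}}$), which is automatic in the paper's application to $k=\bF_q$. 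No gap; this is the paper's proof with the details filled in.
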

\begin{proof}
The existence and uniqueness of $P(x)\in\overline k[x]$ is a
standard linear algebra argument. To show that $P(x)\in k[x]$ we
notice that for any $\sigma\in Gal(k)$
$$
\sigma(P)(x_i)=\sigma(P(\sigma^{-1}(x_i)))=\sigma(\sigma^{-1}(P(x_i)))=P(x_i)
$$
and by uniqueness of $P(x)$ we get that $\sigma(P)(x)=P(x)$ and
hence $P(x)\in k[x]$.
\end{proof}
Proposition \ref{prop:poly constr} will give us a way to construct
the required $\nu_\chi$. We do it using the resultant of two
polynomials
\begin{defn}
Let $P,Q\in k[x]$ be two monic polynomials. Define
$$
res(P,Q):=\prod_{\substack{x_i {\rm{roots\;of\;}}Q\\y_j
{\rm{roots\;of\;}}P}}(x_i-y_j)=\prod_{y_j
{\rm{roots\;of\;P}}}Q(y_j)=(-1)^{deg(P)}\prod_{x_i
{\rm{roots\;of\;Q}}}Q(x_i)
$$
\end{defn}
\begin{cor}
Let $P_1,\dots,P_m\in k[x]$ be distinct monic irreducible
polynomials of degrees $d_1,\dots,d_m$ respectively. Then for any
$a=(a_1,\dots,a_m)\in k^m$ there exists a polynomial $Q_a(x)\in
k[x]$ such that $res(P_i,Q_a)=a_i$
\end{cor}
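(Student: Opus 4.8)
The plan is to reinterpret the condition $res(P_i,Q_a)=a_i$ as a condition on the value of $Q_a$ at a single root of each $P_i$, and then to realize those prescribed values by polynomial interpolation using Proposition~\ref{prop:poly constr}.

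The key reformulation is as follows. For each $i$ fix a root $\alpha_i\in\bar k$ of $P_i$. Since $k=\bF_q$ is perfect, $P_i$ is separable, so $k(\alpha_i)\cong\bF_{q^{d_i}}$ and the roots of $P_i$ are $\alpha_i,\alpha_i^q,\dots,\alpha_i^{q^{d_i-1}}$, pairwise distinct. Because any $Q\in k[x]$ commutes with the $q$-power Frobenius (which fixes $k$), the definition of the resultant yields
\begin{equation*}
res(P_i,Q)=\prod_{j=0}^{d_i-1}Q(\alpha_i^{q^j})=\prod_{j=0}^{d_i-1}\bigl(Q(\alpha_i)\bigr)^{q^j}=N_{k(\alpha_i)/k}\bigl(Q(\alpha_i)\bigr),
\end{equation*}
where $N_{k(\alpha_i)/k}$ denotes the norm $z\mapsto\prod_{j=0}^{d_i-1}z^{q^j}$ from $\bF_{q^{d_i}}$ to $\bF_q$. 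So it suffices to produce $Q_a\in k[x]$ with $N_{k(\alpha_i)/k}(Q_a(\alpha_i))=a_i$ for every $i$. For each $i$ I would pick $b_i\in k(\alpha_i)$ with $N_{k(\alpha_i)/k}(b_i)=a_i$: take $b_i=0$ when $a_i=0$, and otherwise use surjectivity of the norm $\bF_{q^{d_i}}^\times\to\bF_q^\times$ (on the cyclic group $\bF_{q^{d_i}}^\times$ of order $q^{d_i}-1$ the map $z\mapsto z^{(q^{d_i}-1)/(q-1)}$ has image of order $q-1$). The task now becomes an interpolation problem: find $Q_a\in k[x]$ with $Q_a(\alpha_i)=b_i$ for all $i$.

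To finish, I would invoke Proposition~\ref{prop:poly constr} with $\{x_1,\dots,x_l\}$ ($l=d_1+\dots+d_m$) the union of the root sets of $P_1,\dots,P_m$ --- pairwise disjoint since the $P_i$ are distinct monic irreducibles, and Galois-stable as a set --- and with the target value at a root $x_j$ of $P_i$ set equal to $\sigma(b_i)$ for any $\sigma\in\mathrm{Gal}(\bar k/k)$ with $\sigma(\alpha_i)=x_j$; this is well defined because an automorphism fixing $\alpha_i$ also fixes $k(\alpha_i)\ni b_i$, and the assignment is Galois-equivariant by construction, so Proposition~\ref{prop:poly constr} applies and outputs a polynomial $Q_a\in k[x]$ taking these values, whence $res(P_i,Q_a)=N_{k(\alpha_i)/k}(b_i)=a_i$. (One could just as well bypass Proposition~\ref{prop:poly constr} using the Chinese Remainder Theorem for the pairwise coprime $P_i$, identifying $k[x]/(P_i)\cong k(\alpha_i)$ and letting $Q_a$ be any lift of $(b_1,\dots,b_m)$, which additionally gives $\deg Q_a<l$ as needed for Theorem~\ref{thm:one_dim_bound}.) There is no real obstacle here: the only substantive input is surjectivity of the norm, and the one point to check is that prescribing a single value $b_i$ per $P_i$ is compatible with $k$-rationality of $Q_a$ --- which holds precisely because the norm is Frobenius-invariant, so replacing $\alpha_i$ by its conjugates changes nothing; the case $a_i=0$ is dispatched by forcing $P_i\mid Q_a$.
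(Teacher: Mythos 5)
Your argument is essentially the paper's: both reduce to picking, for each $i$, a norm-$a_i$ element $b_i$ in $k(\alpha_i)$ (the paper's $z_{i1}$) together with its Galois conjugates, and then interpolating via Proposition~\ref{prop:poly constr}; the final identity $res(P_i,Q_a)=\prod_j Q_a(y_{ij})=a_i$ is the same. You are a bit more explicit than the paper on the points it leaves implicit (separability, surjectivity of the norm on $\bF_{q^{d_i}}^\times$, the $a_i=0$ case, and that Frobenius-equivariance lets you prescribe just one value $b_i$ per $P_i$), and the parenthetical CRT route is a clean alternative that additionally yields $\deg Q_a<\sum d_i$ for free, but none of this changes the underlying method.
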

\begin{proof}
For any $P_i$ let $Y_i=\{y_{ij}\}_{j=1}^{d_i}\subset\overline k$ be
the set of its roots in the algebraic closure. Then $Y=\cup_iY_i$ is
an invariant set under the Galois group action. For $a_i$ let
$Z_i=\{z_{i1},\dots,z_{id_i}\}$ be a set of Galois conjugates
elements in $k_i:=k(y_{i1},\dots,y_{id_i})$ such that
$N_{k_i/k}(z_i)=z_1\cdots z_{d_i}=a_i$, and let $Z=\cup_{i}Z_i$. By
proposition \ref{prop:poly constr} there exists a polynomial
$Q_a(x)\in k[x]$ such that $Q_a(y_{ij})=z_{ij}$. Then
$$
res(P_i,Q_a)=\prod_{y_{ij}
{\rm{roots\;of\;}}P}Q_a(y_{ij})=z_{i1}\cdots z_{id_i}=a_i
$$
\end{proof}
We can now conclude the proof of theorem \ref{thm:one_dim_bound}.
Denote by $Q(x)=lcm(P_1,\dots,P_m)\in k[x]$, and by $\nu_\chi$ the
character of $\left(k[x]/Q(x)\right)^x$ defined by
$$
\nu_\chi(F)=\prod_{i=1}^m\chi_i(res(P_i,F))
$$
By previous corollary $\nu_\chi$ is nontrivial, and by definition of
the resultant it is well defined modulo $Q$. Thus by Weil's result
the theorem is proved.
\section{A family of exponential sums}\label{sec:Fam_exp_sum}
Let $k$ be a finite field of $q=p^n$ elements. For $\psi$ be an
additive character of $k$ and $\chi$ a multiplicative character of
$k^*$ We define the following exponential sum
\begin{equation}\label{def:hecke exp sum}
F(\chi;\psi)=\sum_{0,1\ne x}\chi(x)\psi(\frac{1+x}{1-x})
\end{equation}
We consider the family $\{F(\chi;\psi)\}_\chi$ where $\chi$ runs
through all characters of $k^*$. It was shown in \cite{KRR} that
\begin{equation}\label{eq:rate bound}
|F(\chi;\psi)|\leq\sqrt q
\end{equation}
In light of this result we normalize the sum and define
$$
\tilde F(\chi;\psi)=\frac{F(\chi;\psi)}{\sqrt q}
$$
and consider the family $\{\tilde F(\chi;\psi)\}_{\chi,\psi}$. The
following proposition give some basic properties of this family
\begin{prop}\label{prop:basic prop exp sum}
Let $F(\chi;\psi)$ be as above.
\begin{enumerate}
\item
For any pair $\chi,\psi$ as above, the sum $F(\chi;\psi)$ is r a
real number.
\item
$$
\frac1{q-1}\sum_{\chi}\tilde F(\chi;\psi)=0
$$
\item
$$\frac1{q-1}\sum_\chi|\tilde F(\chi;\psi)|^2=1-\frac2q$$
\item
For any $\chi_1\ne\chi_2$, and $\psi_1,\psi_2$
\begin{equation}\label{eq:exp_sum_cov}
\frac1{q-1}\sum_{\chi}\tilde F(\chi_1\chi;\psi_1)\overline{\tilde
F(\chi_2\chi;\psi_2)}\ll\frac1{\sqrt q}
\end{equation}
\end{enumerate}
\end{prop}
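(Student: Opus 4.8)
The plan is to prove the four statements by elementary character manipulations, the only nontrivial ingredient being the bound \eqref{eq:rate bound} of \cite{KRR}, which is needed solely for part (4). For part (1), I would use $\overline{\chi(x)}=\chi(x^{-1})$ and $\overline{\psi(u)}=\psi(-u)$ to write
$$\overline{F(\chi;\psi)}=\sum_{x\neq 0,1}\chi(x^{-1})\,\psi\left(-\frac{1+x}{1-x}\right),$$
and then substitute $x\mapsto x^{-1}$, which is a bijection of $k\setminus\{0,1\}$ onto itself. Since $\frac{1+x^{-1}}{1-x^{-1}}=-\frac{1+x}{1-x}$, the two sign changes cancel and the sum returns to $F(\chi;\psi)$, so it is real.

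For parts (2) and (3) I would interchange the order of summation and sum over $\chi$ first, using that $\sum_{\chi}\chi(x)$, the sum over all $q-1$ multiplicative characters of $k^{*}$, equals $q-1$ if $x=1$ and $0$ otherwise. In (2), $\sum_{\chi}F(\chi;\psi)=\sum_{x\neq 0,1}\psi\left(\frac{1+x}{1-x}\right)\sum_{\chi}\chi(x)=0$, because $x=1$ is excluded from the range. In (3), expanding $|F(\chi;\psi)|^{2}$ as a double sum over $x,y\in k\setminus\{0,1\}$ and summing over $\chi$ produces the factor $\sum_{\chi}\chi(x/y)$, which annihilates everything off the diagonal $x=y$; the diagonal contributes $\psi(0)=1$ for each of the $q-2$ admissible values of $x$, so $\sum_{\chi}|F(\chi;\psi)|^{2}=(q-1)(q-2)$. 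Dividing by $q-1$ and by the extra factor $q$ coming from the normalization $\tilde F=F/\sqrt q$ gives $1-2/q$.

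Part (4) is where \eqref{eq:rate bound} enters. Expanding the product and summing over $\chi$, the orthogonality relation again forces $y=x$, leaving
$$\frac{1}{q-1}\sum_{\chi}F(\chi_{1}\chi;\psi_{1})\overline{F(\chi_{2}\chi;\psi_{2})}=\sum_{x\neq 0,1}\eta(x)\,\psi_{3}\left(\frac{1+x}{1-x}\right),$$
where $\eta:=\chi_{1}\overline{\chi_{2}}$ is a nontrivial multiplicative character (since $\chi_{1}\neq\chi_{2}$) and $\psi_{3}$ is the additive character $u\mapsto\psi_{1}(u)\overline{\psi_{2}(u)}$. If $\psi_{3}$ is nontrivial, the right-hand side is exactly $F(\eta;\psi_{3})$, hence at most $\sqrt q$ in absolute value by \eqref{eq:rate bound}; if $\psi_{3}$ is trivial (that is, $\psi_{1}=\psi_{2}$) it collapses to $\sum_{x\neq 0,1}\eta(x)=-1$. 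In either case the quantity is $O(\sqrt q)$, and dividing by the extra factor $q$ from the two normalizations $\tilde F=F/\sqrt q$ yields the claimed $O(1/\sqrt q)$. I do not anticipate any real obstacle here: the genuine cancellation is already contained in \eqref{eq:rate bound}, and the only care needed is to keep track of the normalizing powers of $\sqrt q$ and to apply the inversion $x\mapsto x^{-1}$ together with the character orthogonality over the correct punctured ranges.
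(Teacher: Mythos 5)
Your proof is correct and follows the same route as the paper: conjugation together with the substitution $x\mapsto x^{-1}$ for (1), orthogonality of multiplicative characters for (2) and (3), and for (4) orthogonality collapses the average to the single sum $F(\chi_1\overline{\chi_2};\psi_1\overline{\psi_2})$, which is then bounded by \eqref{eq:rate bound}. You are slightly more careful than the paper's terse treatment in that you explicitly handle the degenerate case $\psi_1=\psi_2$ (where the additive character is trivial and the inner sum collapses to $-1$), but this does not change the bound or the substance of the argument.
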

\begin{proof}
The first part of the proposition follows from the simple
observation that
$$
\overline {F(\chi;\psi)}=\sum_{0,1\ne
x}\chi(x^{-1})\psi(-\frac{1+x}{1-x})=\sum_{0,1\ne
x}\chi(x^{-1})\psi(\frac{1+x^{-1}}{1-x^{-1}})=F(\chi;\psi)
$$
Parts 2,3 of the proposition are immediate consequence of the
orthogonality relations of characters. For \eqref{eq:exp_sum_cov} we
have
\begin{equation}
\frac1{q-1}\sum_{\chi}\frac{F(\chi_1\chi;\psi_1)}{\sqrt
q}\overline{\frac{F(\chi_2\chi;\psi_2)}{\sqrt
q}}=\frac1q\sum_{0,1\ne x}
\chi_1\overline{\chi_2}(x)\psi_1\overline{\psi_2}(\frac{1+x}{1-x})\ll\frac1{\sqrt
q}
\end{equation}
where
\begin{eqnarray*}
\chi_1\overline\chi_2(x)=\chi_1(x)\chi_2(x^{-1})\\
\psi_1\overline\psi_2(y)=\psi_1(y)\psi_2(-y)
\end{eqnarray*}
ans the last inequality is due to \eqref{eq:rate bound}.
\end{proof}
The last proposition can be considered as computation of mean and
variance for fixed $\psi$ and running over $\chi$, and the third
result as covariance for two random variables $\tilde
F(\chi_1\chi;\psi),\tilde F(\chi_2\chi;\psi)$ running over $\chi$,
and in fact proves that for any two additive characters
$\psi_1,\psi_2$ the random variables $\tilde F(\chi;\psi_1), \tilde
F(\chi;\psi_2)$ become uncorrelated. The following conjecture
suggests even a stronger behaviour.
\begin{conj}\label{conj:exponential_sum}
Let $\chi,\psi,F(\chi;\psi)$ be as defined above, then as
$q\to\infty$ through primes, we have the following:
\begin{enumerate}
\item
 The sets
$\{\frac{F(\chi;\psi)}{\sqrt N}\}_\chi$ become equidistributed with
respect to the Sato-Tate distribution $\mu_{ST}$, that is the
distribution of $\tr(U)$ where $U\in SU(2)$ is random matrix with
respect to Haar measure.
\item
For any finite field $k$, let $(\chi_i,\psi_i)_{i=1}^m$ be a set of
$m$ distinct pairs of multiplicative and additive characters of $k$.
Then the sets
$$\{(\frac{\tilde F(\chi_1\chi;\psi_1)}{\sqrt q},\frac{\tilde F(\chi_m,\psi_m)}{\sqrt q})\}_\chi$$
become equidistributed with respect to the product of $m$ Sato-Tate
measures, that is they become independent.
\item
In particular, the mixed moments of $m$ distinct pairs
$$(\tilde F(\chi_i;\psi_i),\dots,\tilde F(\chi_m;\psi_m))$$
satisfy
\begin{equation}\label{eq:moments_of_exp_sum}
\frac1{|q-1|}\sum_{\chi}\prod_{i=1}^m\left(\tilde
F(\psi_i;\chi\chi_i)\right)^{e_i}=\mathbb
E(\prod_{i=1}^mX_{\psi_i,\chi_i}^{e_i})+O(\frac1{\sqrt q})
\end{equation}
where $X_{\psi_i,\chi_i}$ $i=1,\dots,m$ are IID random variables
with Sato-Tate distribution
\end{enumerate}
\end{conj}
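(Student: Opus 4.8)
Since this is a conjecture, I will describe a route toward the part that is within reach of the machinery of Section~\ref{sec:bground exp sums} and that the rest of the paper uses, namely the mixed-moment asymptotic \eqref{eq:moments_of_exp_sum} for a fixed number $m$ of pairs and bounded exponents $e_i$ (this is what feeds the theorem on the fifth moment of Hecke matrix elements and the theorem on the third moment of $\sqrt L P$); at the end I indicate why parts~(1) and (2) need more. The first step is to linearize. The M\"obius substitution $r=\frac{1+x}{1-x}$ (so $x=\frac{r-1}{r+1}$) turns $F(\chi;\psi)$ into $\sum_{r}\chi(r-1)\,\overline\chi(r+1)\,\psi(r)$, a hybrid (multiplicative $\times$ additive) sum of hypergeometric type, so that $\tilde F(\chi\chi_i;\psi_i)=q^{-1/2}\sum_r \chi_i(r-1)\overline{\chi_i}(r+1)\,\chi(r-1)\overline\chi(r+1)\,\psi_i(r)$. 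Using that $F(\chi;\psi)$ is real (Proposition~\ref{prop:basic prop exp sum}(1)) I may treat all $E:=\sum_i e_i$ factors of $\prod_i\bigl(\tilde F(\chi\chi_i;\psi_i)\bigr)^{e_i}$ uniformly and expand the product as a $q^{-E/2}$-normalized $E$-fold sum over variables $r_{i,k}$ with weight $\prod_{i,k}\psi_i(r_{i,k})\chi_i(r_{i,k}-1)\overline{\chi_i}(r_{i,k}+1)$ and the extra factor $\chi\!\Bigl(\prod_{i,k}\tfrac{r_{i,k}-1}{r_{i,k}+1}\Bigr)$.

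The second step is to average over $\chi$. Orthogonality of multiplicative characters collapses the $\chi$-sum to the single equation $\prod_{i,k}(r_{i,k}-1)=\prod_{i,k}(r_{i,k}+1)$, so $\frac1{q-1}\sum_\chi\prod_i\bigl(\tilde F(\chi\chi_i;\psi_i)\bigr)^{e_i}$ becomes $q^{-E/2}$ times the character sum $\sum_{\mathbf r\in W(k)}\prod_{i,k}\psi_i(r_{i,k})\chi_i(r_{i,k}-1)\overline{\chi_i}(r_{i,k}+1)$ over the hypersurface $W\subset\mathbb{A}^E$ of dimension $E-1$ cut out by that equation (a purely additive sum when, as in the cat-map application, the $\chi_i$ are trivial and only the additive characters $\psi_i$ vary). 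I would then separate a ``diagonal'' locus — the lower-dimensional configurations in which the $r_{i,k}$ pair off so that the entire weight is forced trivial, the admissible pairings being precisely those dictated by coincidences among the pairs $(\chi_i,\psi_i)$ — from an ``off-diagonal'' remainder. A Wick-type bookkeeping identifies the diagonal contribution, after the $q^{-E/2}$ normalization, with the product-of-Sato--Tate moment $\mathbb{E}\bigl(\prod_i X_{\psi_i,\chi_i}^{e_i}\bigr)$ (the $SU(2)$ combinatorics already visible in the second and fourth moments computed in \cite{KR_Annals}), so that everything reduces to exhibiting square-root cancellation $O(q^{(E-1)/2})$ in the off-diagonal sum.

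For the third moment $W$ is a surface, and the cancellation follows from Theorem~\ref{thm:bound_exp_sum} (and Theorem~\ref{thm:one_dim_bound} in the mixed subcases) once one verifies that the generic fibre of the relevant rational function on $W$ is geometrically irreducible and that only boundedly many fibres degenerate. For the fifth moment $W$ is four-dimensional; there one would combine the one- and two-dimensional bounds with the dimension descent of Lemma~\ref{lem:exp sum:variance->individual}, which reduces a bound on a $d$-dimensional sum to a sufficiently precise point count for a related variety of dimension $2d-1$, that count being rewritten as an exponential sum and treated recursively until the one- or two-dimensional case is reached. In every instance the main obstacle is this geometry: establishing geometric irreducibility of the auxiliary varieties and controlling the exceptional fibres uniformly in $q$. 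For the finitely many small moments the paper needs this is a concrete, checkable matter about the possible factorizations of the relations among the functions $\tfrac{r-1}{r+1}$.

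Parts~(1) and (2) — Sato--Tate equidistribution and independence for \emph{all} $m$ and exponents — cannot be obtained this way (that would amount to verifying all moments uniformly, hence is tantamount to the full conjecture itself). They should instead follow from constructing the $\ell$-adic sheaf on the $\chi$-line whose Frobenius trace is $\tilde F(\chi;\psi)$ (expected to be a rank-two hypergeometric local system, lisse and pure of weight $0$ on a dense open, consistent with the bound \eqref{eq:rate bound}), computing its geometric monodromy group (expected to be $SL_2$, whence (1) by Deligne's equidistribution theorem), and proving that for distinct pairs $(\chi_i,\psi_i)$ the corresponding twisted sheaves are pairwise geometrically non-isomorphic with no extra tensor relations, so that the geometric monodromy of their direct sum is the full product $SL_2^{\,m}$ (whence (2)). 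That Katz-style monodromy computation is the real difficulty, and is why the statement is posed as a conjecture.
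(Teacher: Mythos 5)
The statement you are addressing is Conjecture~\ref{conj:exponential_sum}; the paper does not prove it, and only establishes two special cases of part~(3), namely Theorems~\ref{thm:exp_sum_3rd_moment} and \ref{thm:exp_sum_5th_moment}. Your sketch should therefore be measured against those proofs. Your linearization $r=(1+x)/(1-x)$, the $E$-fold expansion, and the orthogonality collapse of the $\chi$-average to the constraint $\prod(r_{i,k}-1)=\prod(r_{i,k}+1)$ does match the paper's treatment of the third moment in Section~\ref{sec:prf_exp_sum_3rd_mmnt}: there one solves the constraint, lands on a two-variable sum, and applies Theorem~\ref{thm:bound_exp_sum} (trivial characters) or Theorem~\ref{thm:one_dim_bound} on the irreducible fibres together with Lemma~\ref{lem:exp sum:variance->individual}. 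For the fifth moment, however, you propose to face the four-dimensional constraint variety head-on and recurse using Lemma~\ref{lem:exp sum:variance->individual}. That is not what the paper does, and the recursion as you describe it does not close: Lemma~\ref{lem:exp sum:variance->individual} trades an individual bound for a variance bound, and realizing the variance as a point count produces sums over \emph{pairs} of points with equal $f$-values, which \emph{raises} the dimension of the varieties to be controlled. The paper avoids this entirely by the averaging trick of Section~\ref{sec:prf_exp_sum_5th_mmnt}: the auxiliary parameter $a$ and the identity $G(\chi_1,\chi_2;\psi)=\delta_{\chi_1,\chi_2}F(\chi_1;\psi)$ (Proposition~\ref{prop:5th_averaging_settng}) re-expand the fifth moment over five independent characters, and after a further orthogonality step and change of variables (Lemma~\ref{lem:5th_mmnt_is_exp_sum}) one lands on the two-dimensional variety ${\bf V}(A)$, where Theorem~\ref{thm:bound_exp_sum} applies once the Appendix's irreducibility facts are established. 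That reduction to dimension two, not a general-dimension descent, is the crux you are missing.

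A second gap is the claim that ``Wick-type bookkeeping'' identifies the diagonal contribution with $\mathbb E\bigl(\prod_i X_i^{e_i}\bigr)$. The even Sato--Tate moments are the Catalan numbers $1,2,5,14,\dots$, not the Gaussian/Wick moments $1,3,15,\dots$, so variable pairing cannot be the mechanism; the main-term extraction actually carried out in \cite{KR_Annals} for the second and fourth moments is a more delicate count, and you do not supply it. In the two cases the paper does prove, the exponents are odd, the Sato--Tate main term vanishes, and the problem of identifying a main term is sidestepped entirely --- which is precisely why only those cases are treated. Your closing observation, that parts~(1) and (2) would require constructing the $\ell$-adic sheaf with trace function $\tilde F(\chi;\psi)$ and computing its geometric monodromy in the style of Katz, is a fair diagnosis of why the statement is left as a conjecture, but it is a program statement rather than an argument, and nothing in the paper's moment bounds gets you there.
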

In the following sections we give some agreement with this
conjecture by proving the following theorems
\begin{thm}\label{thm:exp_sum_3rd_moment}
Let $k$ be a finite field with $q=p^n$ elements, and let
$\chi_1,\chi_2,\chi_3$ be any 3 multiplicative characters of $k^*$,
and $\psi_1,\psi_2,\psi_3$ be any nontrivial additive characters of
$k$, then
\begin{equation}\label{eq:exp_sum_3rd_mmnt}
\frac1{q-1}\sum_{\chi}\tilde F(\chi_1\chi;\psi_1)\tilde
F(\chi_2\chi;\psi_2)\tilde F(\chi_3\chi;\psi_3)\ll\frac1{\sqrt q}
\end{equation}
\end{thm}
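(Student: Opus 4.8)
The plan is to expand the average over $\chi$ of the triple product $\tilde F(\chi_1\chi;\psi_1)\tilde F(\chi_2\chi;\psi_2)\tilde F(\chi_3\chi;\psi_3)$ by writing each $F$ as its defining sum and then using orthogonality of the multiplicative characters in $\chi$. Concretely, $F(\chi_i\chi;\psi_i)=\sum_{x_i\neq 0,1}\chi_i(x_i)\chi(x_i)\psi_i\!\left(\frac{1+x_i}{1-x_i}\right)$, so the product over $i=1,2,3$ introduces three summation variables $x_1,x_2,x_3$, and the factor $\chi(x_1x_2x_3)$ is the only place $\chi$ appears. Summing over all $\chi$ and dividing by $q-1$ collapses this to the condition $x_1x_2x_3=1$ in $k^*$. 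After normalizing by $q^{3/2}$ we are left with
\begin{equation*}
\frac1{q^{3/2}}\sum_{\substack{x_1x_2x_3=1\\ x_i\neq 0,1}}\chi_1(x_1)\chi_2(x_2)\chi_3(x_3)\,\psi\!\left(\lambda_1\frac{1+x_1}{1-x_1}+\lambda_2\frac{1+x_2}{1-x_2}+\lambda_3\frac{1+x_3}{1-x_3}\right),
\end{equation*}
where I have written each $\psi_i$ as $\psi(\lambda_i\,\cdot\,)$ for a fixed nontrivial $\psi$ and nonzero constants $\lambda_i\in k^*$. Eliminating $x_3=(x_1x_2)^{-1}$, this becomes a two-variable mixed character sum over the torus $\{(x_1,x_2): x_1,x_2\neq 0,1,\ x_1x_2\neq1\}$, and to prove the theorem it suffices to show this sum has square-root cancellation, i.e. is $O(q)$.

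To obtain that bound I would invoke the machinery assembled in Section~\ref{sec:bground exp sums}. The cleanest route is Theorem~\ref{thm:bound_exp_sum} (and the remark following it): realize the sum as $\sum_{x\in V(k)}\psi(f(x))\prod\chi_i(g_i(x))$ on a $2$-dimensional variety $V$ (the open torus above) with $f$ the rational function in the exponential and $g_i$ the coordinate functions. Strictly, Theorem~\ref{thm:bound_exp_sum} as stated is for a pure additive sum, so I would either (a) first push the multiplicative characters into the additive part by passing to a cover — replacing $\chi_i(g_i(x))$ via a Kummer covering $y_i^{m}=g_i(x)$ where $m$ is the order of $\chi_i$, at the cost of enlarging $V$ but keeping dimension $2$ and bounding the degree — and then apply the additive version; or (b) re-run the proof of Theorem~\ref{thm:bound_exp_sum} verbatim with the $\chi_i(g_i(x))$ factors carried along, since the only inputs are Lang–Weil for the point count of $V$ and the Riemann Hypothesis for curves (here Theorem~\ref{thm:one_dim_bound} in its multiplicative-plus-additive form) applied to the generic fibers of $f$. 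Either way, the argument reduces to two geometric facts: $V$ is an irreducible surface of bounded degree, and all but boundedly many fibers of the map $f:V\to \mathbb A^1$ (the level sets of $\lambda_1\frac{1+x_1}{1-x_1}+\lambda_2\frac{1+x_2}{1-x_2}+\lambda_3\frac{1+(x_1x_2)^{-1}}{1-(x_1x_2)^{-1}}=C$) are geometrically irreducible curves of bounded degree on which the restricted character sum is nontrivial.

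The main obstacle is precisely the generic irreducibility of the fibers together with the nontriviality of the character sum on each fiber — i.e. ruling out the "bad" situation in which, for the particular $\chi_i$ and $\lambda_i$ at hand, the sum degenerates. The fiber $f=C$ defines a curve in $(x_1,x_2)$; clearing denominators gives a plane curve whose equation depends on $C$, and one must show it stays absolutely irreducible for all but $O(1)$ values of $C$ (a Bertini-type statement, controllable since $f$ is a fixed rational function of bounded complexity independent of $q$ and $C$), and that on the generic fiber the datum $(\chi_1,\chi_2,\chi_3,\psi)$ does not collapse to a trivial character of the relevant function field — which is where the hypothesis that the $\psi_i$ are \emph{nontrivial} is used, since a nontrivial additive twist along a nonconstant rational function can never be globally trivial on an irreducible curve. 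Since the $\lambda_i$ are nonzero, the function $\sum_i\lambda_i\frac{1+x_i}{1-x_i}$ restricted to $V$ is nonconstant, so $\psi\circ f$ is a nontrivial Artin–Schreier sheaf on $V$, and the Riemann Hypothesis for curves then yields the $O(\sqrt q)$ bound on each good fiber; the boundedly many bad fibers contribute $O(q)$ trivially. Summing over the $\asymp q$ values of $C$ gives the total $O(q)$, hence after dividing by $q^{3/2}$ the claimed $O(q^{-1/2})$.
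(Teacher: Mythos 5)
Your opening reduction — expanding each $F$, using orthogonality in $\chi$ to impose $x_1x_2x_3=1$, eliminating $x_3=(x_1x_2)^{-1}$, and reducing the theorem to a square-root bound for a two-variable mixed character sum over a surface — is exactly the paper's reduction (after its further substitution $(x,y)\mapsto(\frac{1-x}{1+x},\frac{1-y}{1+y})$, which makes $f$ a polynomial-in-denominator and the fibers conics). The strategy of feeding this into the variance mechanism of Lemma~\ref{lem:exp sum:variance->individual} and Theorem~\ref{thm:bound_exp_sum} is also the right one. But there is a genuine gap in how you justify the key per-fiber estimate, and it is where nearly all of the paper's work is concentrated.

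You locate the core difficulty correctly — generic irreducibility of the fibers of $f$, plus nontriviality of the restricted character datum on each good fiber — but then dispose of the nontriviality by pointing to the nontriviality of the $\psi_i$: ``a nontrivial additive twist along a nonconstant rational function can never be globally trivial on an irreducible curve.'' This is not the relevant fact. Once you fix a fiber $f=C$, the additive factor $\psi(f(x))=\psi(C)$ is \emph{constant} on that fiber, so the nontriviality of $\psi$ contributes nothing to per-fiber cancellation. Whatever cancellation occurs along a fiber must come entirely from the multiplicative part $\chi_1(g_1)\chi_2(g_2)$ restricted to the fiber, and verifying that this does not degenerate is nontrivial. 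The paper does it by parametrizing each fiber (a conic) rationally, reducing to a one-variable character sum $\sum_t\prod\widetilde\chi_i(\tilde p_i(t))$, and then isolating exactly five geometric degeneracies (Proposition~\ref{prop:geometric_restrictions}: $g_i$ constant on the fiber, a multiplicative relation $g_1=a\,g_2^{\pm1}$, a quadratic-coset degeneracy, etc.) which it rules out by direct computation with the parametrization; only then does Theorem~\ref{thm:one_dim_bound} give $O(\sqrt q)$ on each good fiber (Corollary~\ref{cor:char_sum_sqrt_cncl}). None of this is automatic, and your argument skips it.

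Relatedly, you omit the case split that the paper needs. If $\chi_1=\chi_2=\chi_3$, then after your normalization the multiplicative characters on the two free variables are \emph{both trivial}, the per-fiber sum is just the point count $|f^{-1}(C)|\approx q$ (no cancellation at all), and ``RH on the fiber'' is vacuous. That case is handled by the pure-additive Theorem~\ref{thm:bound_exp_sum} directly (Corollary~\ref{cor:bound_for_trivial_case}), where the saving comes from the $N_\nu^2$ main term cancelling in the variance identity, not from per-fiber decay. The mixed case ($\chi_1,\chi_2$ not both trivial after normalization) is the one requiring the per-fiber Weil bound, and is treated by a separate corollary (\ref{cor:3rd_mmnt_rqurd_bnd}) via Proposition~\ref{prop:3rd_mmnt_variance}. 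Your write-up should distinguish these two regimes, because the source of cancellation is different in each, and it should replace the appeal to nontriviality of $\psi_i$ by an actual verification of the nondegeneracy of $(\chi_1\circ g_1)(\chi_2\circ g_2)$ on the generic fiber.
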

\begin{thm}\label{thm:exp_sum_5th_moment}
Let $k$ be a finite field with $q=p^n$ elements, let $\chi_1$ be any
multiplicative character of $k^*$, and $\psi_1,\dots,\psi_5$ be any
nontrivial additive characters of $k$, then
\begin{equation}\label{eq:exp_sum_5th_mmnt}
\frac1{q-1}\sum_{\chi}\prod_{i=1}^5\tilde
F(\chi_1\chi;\psi_i)\ll\frac1{\sqrt q}
\end{equation}
\end{thm}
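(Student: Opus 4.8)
The plan is to first strip the statement down to a single exponential sum, and then prove square‑root cancellation for that sum by a cohomological argument on the "character line''.

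\medskip

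\emph{Step 1 (orthogonality).} Opening each factor, $\tilde F(\chi_1\chi;\psi_i)=q^{-1/2}\sum_{x_i\neq0,1}\chi_1\chi(x_i)\psi_i\!\left(\tfrac{1+x_i}{1-x_i}\right)$, and summing over all $\chi\in\widehat{k^\times}$, the $\chi$‑sum collapses onto $\{x_1\cdots x_5=1\}$; on that locus $\chi_1(x_1\cdots x_5)=\chi_1(1)=1$, so $\chi_1$ disappears and
\[
\frac1{q-1}\sum_\chi\prod_{i=1}^5\tilde F(\chi_1\chi;\psi_i)=\frac{T}{q^{5/2}},\qquad
T:=\sum_{\substack{x_1,\dots,x_5\neq0,1\\ x_1\cdots x_5=1}}\ \prod_{i=1}^5\psi_i\!\left(\tfrac{1+x_i}{1-x_i}\right).
\]
Thus the theorem is exactly the square‑root cancellation bound $T=O_f(q^2)$ for an additive sum over the $4$‑dimensional torus $\{x\in\mathbb G_m^5:\prod x_i=1\}$ — equivalently $\sum_\chi\prod_{i=1}^5F(\chi;\psi_i)\ll q^3$.

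\medskip

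\emph{Step 2 (cohomological reformulation).} Rather than collapse the $\chi$‑sum I would keep it. Putting $w=\tfrac{1+x}{1-x}$ one has $F(\chi;\psi)=\sum_w\chi(w-1)\overline\chi(w+1)\psi(w)$, so $\chi\mapsto F(\chi;\psi_i)$ is (up to sign) the trace function of the multiplicative Mellin transform $\mathcal M_i$ of the rank‑one sheaf $\mathcal L_{\psi_i}\!\left(\tfrac{1+x}{1-x}\right)$ on $\mathbb G_m$. A Swan‑conductor/Euler–Poincaré computation (this sheaf is unramified at $0,\infty$ and wild of Swan conductor $1$ at $x=1$) shows $\mathcal M_i$ is lisse of rank $2$ on a dense open $U\subset\mathbb G_m$, pure of weight $1$ (no weight drop on the relevant $H^1_c$), with geometrically trivial determinant. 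By Grothendieck–Lefschetz,
\[
\sum_\chi\prod_{i=1}^5F(\chi;\psi_i)=\sum_j(-1)^j\operatorname{tr}\!\Bigl(\operatorname{Frob}_q\ \big|\ H^j_c\bigl(U_{\bar k},\ \textstyle\bigotimes_{i=1}^5\mathcal M_i\bigr)\Bigr),
\]
with $\dim H^j_c$ bounded independently of $q$. Since $\bigotimes_i\mathcal M_i$ is pure of weight $5$, Deligne's theorem gives weights $\le 6$ on $H^1_c$ (contribution $\ll q^3$) and $\le 7$ on $H^2_c$ (contribution $\ll q^{7/2}$), while $H^0_c=0$. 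Hence it suffices to prove $H^2_c\bigl(U_{\bar k},\bigotimes_i\mathcal M_i\bigr)=0$, i.e. that $\bigotimes_i\mathcal M_i$ has no geometrically trivial subquotient.

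\medskip

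\emph{Step 3 (monodromy and the odd parity).} This is where the number $5$ being odd is used. One shows that each $\mathcal M_i$ is geometrically irreducible of rank $2$ with geometric monodromy group $\mathrm{SL}_2$ — a Katz‑type "big monodromy for Mellin transforms of Artin–Schreier sheaves'' statement, obtained by analysing the local monodromies of $\mathcal L_{\psi_i}(\tfrac{1+x}{1-x})$ and excluding the rigid/dihedral/finite degenerations. Granting this, a Goursat–Kolchin–Ribet analysis (using that all $\det\mathcal M_i$ are geometrically trivial, so that the joint monodromy lies in $\mathrm{SL}_2^5$) shows that any $\pi_1^{\mathrm{geom}}$‑invariant in $\bigotimes_i\mathcal M_i$ would produce, after cancelling the "conspiring'' dual pairs, an invariant inside some odd tensor power $\mathrm{std}^{\otimes(2m+1)}$ of $\mathrm{SL}_2$ — which has none. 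Therefore $H^2_c=0$, $\sum_\chi\prod_iF(\chi;\psi_i)\ll q^3$, and $T\ll q^2$, which is the theorem.

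\medskip

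\emph{Main obstacle.} Everything except Step 3 is routine; the real content is the big‑monodromy statement for the $\mathcal M_i$. One must rule out the exceptional cases in which an $\mathcal M_i$ has finite monodromy or monodromy inside a torus or its normaliser — these are precisely the situations in which $H^2_c(\bigotimes_i\mathcal M_i)$ could be nonzero and contribute a term of size $q^{7/2}$, i.e. a nonzero limit for the fifth moment (contradicting Conjecture~\ref{conj:exponential_sum}). This requires a rigidity/local‑monodromy analysis specific to $\mathcal L_\psi(\tfrac{1+x}{1-x})$ and its Mellin transform. I should also note that the elementary variance device of Lemma~\ref{lem:exp sum:variance->individual}, which handled the two‑dimensional sums behind Theorems~\ref{thm:bound_exp_sum} and~\ref{thm:one_dim_bound}, does not suffice here: applied to the $4$‑fold it only yields the exponent $b=6$, i.e. $T=O(q^3)$, which is useless after dividing by $q^{5/2}$ — genuine square‑root cancellation forces the sheaf‑theoretic route above.
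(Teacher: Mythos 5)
Your Step~1 is correct, but the rest of the proposal diverges from the paper and leaves a substantial gap. The paper's proof is in fact elementary and \emph{does} use the variance device you dismiss in your closing remark. The point you miss is the averaging trick of Proposition~\ref{prop:5th_averaging_settng}: writing each factor $F(\chi_1\chi;\psi_i)$ as an average $G(\chi_i,\chi_{i+1};\psi_i)$ over $a\in k^*$, and using that $G$ vanishes unless $\chi_i=\chi_{i+1}$, Lemma~\ref{lem:5th_mmnt_is_exp_sum} collapses the fifth moment (after summing out a variable that appears linearly in the phase and rescaling) to $|k|^{-3/2}\sum_{{\bf V}(\underline A)}\psi_1(-\tilde h_A)$ over the \emph{two-dimensional} variety ${\bf V}(\underline A)$. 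Theorem~\ref{thm:bound_exp_sum} --- the variance-to-individual device --- then gives $\ll|k|$ for this two-dimensional sum, because on a surface the fibers of $\tilde h_A$ are curves and the Riemann Hypothesis for curves applies; the required irreducibility of ${\bf V}(\underline A)$ (via a Kloosterman/Lang--Weil count) and of the generic fibers of $\tilde h_A$ (via a hands-on factorisation analysis) is the content of Appendix~\ref{ap:irred_proofs}. So your narrow computation ($b=6$ on the $4$-fold) is right, but the conclusion that ``square-root cancellation forces the sheaf-theoretic route'' is wrong: one should not apply the variance device to $T$ directly, one should first reduce $T$ to a lower-dimensional sum, which is precisely what the averaging trick accomplishes.

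Your Steps~2--3 are a genuinely different, modern route in the style of Katz's Mellin-transform equidistribution theory, and if completed would give a cleaner and stronger result, plausibly Conjecture~\ref{conj:exponential_sum} itself. But as written there is a genuine gap that you yourself flag as ``the main obstacle'': you do not prove that each $\mathcal M_i$ is geometrically irreducible of rank two with $\mathrm{SL}_2$ geometric monodromy, nor do you carry out the Goursat analysis showing the joint monodromy of $(\mathcal M_1,\dots,\mathcal M_5)$ is big enough that $\bigotimes_i\mathcal M_i$ has no geometric coinvariants. These assertions carry the entire arithmetic content of the theorem: a finite, dihedral, or Kummer-induced $\mathcal M_i$, or a geometric isomorphism $\mathcal M_i\cong\mathcal M_j\otimes(\text{rank one})$, would produce a weight-$7$ class on $H^2_c$ and destroy the bound. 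As it stands the proposal reduces the theorem to an unproven big-monodromy claim, whereas the paper replaces this input with the explicit, elementary irreducibility proofs of Appendix~\ref{ap:irred_proofs}.
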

\begin{remark}
In section \ref{sec:discussion} we show numerical evidence for
conjecture \ref{conj:exponential_sum}.
\end{remark}
\section{Proof of theorem
\ref{thm:exp_sum_3rd_moment}}\label{sec:prf_exp_sum_3rd_mmnt}
We
start by making a change of variables in the sum over $\chi$ by
letting $\chi\mapsto\overline{\chi}_3\chi$, and we therefore may
assume that $\chi_3$ is trivial. Moreover since
$\psi_1,\psi_2,\psi_3$ are nontrivial, there exists $0\ne A_0,A,B\in
k$ such that
$\psi_1(x)=\psi(A_0x),\psi_2(x)=\psi(Ax),\psi_3(x)=\psi(Bx)$ where
$\psi$ is a generator of the group of additive characters. We next
sum over $\chi$ to get
\begin{equation}\label{eq:exp_sum_3rd_mmnt_explct}
S(\chi_1,\chi_2;\psi):=\sum_{x,y}\chi_1(x)\chi_2(y)\psi\left(A_0\frac{1+x}{1-x}+A\frac{1+y}{1-y}+B\frac{xy+1}{xy-1}\right)
\end{equation}
(without loss of generality we assume $\psi=\psi_1$ and therefore
$A_0=1$). Under the change of variables
$(x,y)\mapsto(\frac{1-x}{1+x},\frac{1-y}{1+y})$, the sum changes to
\begin{equation}\label{eq:exp_sum_3rd_mmnt_explct2}
S(\chi_1,\chi_2;\psi)=\sum_{x,y,xy\ne0,-1}\chi_1\left(\frac{1-x}{1+x}\right)\chi_2\left(\frac{1-y}{1+y}\right)\psi\left(x+Ay-B\frac{xy+1}{x+y}\right)
\end{equation}
\begin{prop}\label{prop:3rdmmnt_fibers_irred}
Let $k$ be a finite field. For $0\ne A,B\in k$ let
$f(x,y)=x+Ay-B\frac{xy+1}{x+y}$. Then for all $C\in\overline k$
satisfying $C^2\ne(A-B-1)^2-4B$ the fiber $f(x,y)=C$ is absolutely
irreducible
\end{prop}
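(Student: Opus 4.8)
The plan is to clear denominators, recognise the fiber as a plane conic, and then apply the classical fact that a conic over an algebraically closed field is absolutely irreducible exactly when it is nondegenerate.

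First I would rewrite the defining equation $f(x,y)=C$ of the fiber as $(x+Ay)(x+y)-B(xy+1)=C(x+y)$, i.e.
\[
g_C(x,y):=x^2+(1+A-B)\,xy+Ay^2-C(x+y)-B=0 .
\]
On $V$ the fiber $\{f=C\}$ and the affine curve $\{g_C=0\}$ differ only along $\{x+y=0\}$: substituting $y=-x$ into $g_C$ gives $B(x^2-1)$, which is not identically zero since $B\neq 0$, so $\{g_C=0\}$ is not the line $x+y=0$ and meets it in at most two points. Hence $\{f=C\}$ is a dense open subset of $\{g_C=0\}$ and the two have the same geometric irreducibility. Since the leading form $x^2+(1+A-B)xy+Ay^2$ is nonzero (its $x^2$-coefficient equals $1$), $g_C$ has degree exactly $2$, so $\{g_C=0\}$ is a genuine plane conic, absolutely irreducible if and only if its projective closure
\[
Q_C:\quad X^2+(1+A-B)XY+AY^2-C(X+Y)Z-BZ^2=0\ \subset\ {\mathbb P}^2
\]
is.

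Next, over $\overline{k}$ (in characteristic $\ne 2$) a projective conic is absolutely irreducible if and only if it is smooth, which is detected by the non-vanishing of the determinant of the associated symmetric matrix
\[
M=\begin{pmatrix} 1 & \tfrac{1+A-B}{2} & -\tfrac C2 \\ \tfrac{1+A-B}{2} & A & -\tfrac C2 \\ -\tfrac C2 & -\tfrac C2 & -B \end{pmatrix}.
\]
A routine expansion, simplified using $2\cdot\tfrac{1+A-B}{2}-1-A=-B$, gives
\[
\det M=\tfrac B4\bigl((1+A-B)^2-4A-C^2\bigr)=\tfrac B4\bigl((A-B-1)^2-4B-C^2\bigr).
\]
Since $B\neq 0$, this vanishes precisely when $C^2=(A-B-1)^2-4B$; for every other value of $C$ the conic $Q_C$ is smooth, hence absolutely irreducible, and therefore so is the fiber $\{f=C\}$.

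All of this is elementary, so I do not expect a genuine obstacle; the points that require care are bookkeeping: checking that the two changes of model $\{f=C\}\to\{g_C=0\}\to Q_C$ preserve geometric irreducibility (which reduces to the two observations above — the conic is not the line $x+y=0$, and $g_C$ has degree exactly $2$), and verifying that $\det M$ really does factor through $B$ so that the exceptional locus is exactly the set named in the statement. The characteristic $2$ case plays no role in the present application, since there $q$ is an odd prime power ($4\mid\disc(Q)$ forces $N$ odd); if the statement is wanted for all finite fields, characteristic $2$ can be disposed of by a short direct analysis of when $g_C$ splits into two linear factors.
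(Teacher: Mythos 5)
Your proof is correct and follows essentially the same route as the paper: clear the denominator $x+y$ to obtain a plane conic, test irreducibility via the determinant of the associated symmetric $3\times3$ matrix, and simplify to $\tfrac{B}{4}\bigl((A-B-1)^2-4B-C^2\bigr)$. You have also, implicitly, corrected a typo in the paper's displayed conic (the coefficient of $y^2$ should be $A$, not $1$, as your computation shows — the paper's stated determinant only comes out if one uses the corrected equation), and you rightly flag the characteristic-$2$ caveat on the determinant criterion, which the paper leaves unmentioned.
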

\begin{proof}
For $C\in\overline k$ consider the equation
$$
f(x,y)=x+Ay-B\frac{xy+1}{x+y}=C
$$
multiplying it by $x+y$ this turns out to be
$$
x^2+(A-B+1)xy+y^2-Cx-Cy-B=0
$$
which is a quadratic curve. For a quadratic curve
$a_{11}x^2+2a_{12}xy+a_{22}y^2+2b_1x+2b_2y+c$ it is known that it is
absolutely irreducible over $k$ if the determinant
\begin{equation*}
\left|\begin{matrix} a_{11} & a_{12} & b_1\\
a_{12} & a_{22} & b_2\\
b_1 & b_2 & c
\end{matrix}\right|\ne0
\end{equation*}
In our case it is $\frac B4\left(C^2-(A^2+B^2-2A-2B-2AB+1)\right)$,
since $B\ne0$  we get that for at most 2 values of $C$ the curve is
reducible. Furthermore, since it is an irreducible quadratic curve,
its intersection with the curve $x+y=0$ has at most $2$ affine
points points and therefore multiplying by that factor was valid.
\end{proof}
\begin{cor}\label{cor:bound_for_trivial_case}
Let $\chi_1$ be any multiplicative character of $k$. Then there
exists $0<M\in\bR$ such that
\begin{equation}
\frac1{q-1}\sum_{\chi}\tilde F(\chi_1\chi;\psi_1)\tilde
F(\chi_1\chi;\psi_2)\tilde F(\chi_1\chi;\psi_3)\leq M\frac1{\sqrt q}
\end{equation}
\end{cor}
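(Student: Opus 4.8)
The plan is to reduce the corollary to a pure additive-character sum over a surface and then invoke Theorem~\ref{thm:bound_exp_sum}. First I would substitute $\chi\mapsto\overline\chi_1\chi$ in the sum over $\chi$, which replaces all three multiplicative characters $\chi_1$ by the trivial character; it thus suffices to bound $\frac1{q-1}\sum_\chi\tilde F(\chi;\psi_1)\tilde F(\chi;\psi_2)\tilde F(\chi;\psi_3)$. Since $\psi_1,\psi_2,\psi_3$ are nontrivial I may write $\psi_1(x)=\psi(x)$, $\psi_2(x)=\psi(Ax)$, $\psi_3(x)=\psi(Bx)$ with $A,B\in k^*$ and $\psi$ a fixed generator of the additive characters. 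Expanding each $\tilde F$ through~\eqref{def:hecke exp sum} and collapsing the $\chi$-sum by the orthogonality relation (which forces $x_1x_2x_3=1$), then putting $x_3=(x_1x_2)^{-1}$ so that $\frac{1+x_3}{1-x_3}=\frac{x_1x_2+1}{x_1x_2-1}$, one finds that this average equals $q^{-3/2}S_0$, where $S_0$ is the double sum~\eqref{eq:exp_sum_3rd_mmnt_explct} with both multiplicative characters taken to be trivial.

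It then remains to show $|S_0|\leq B_0\,q$ for a constant $B_0=B_0(A,B)$ independent of $q$. Applying the birational change of variables carrying~\eqref{eq:exp_sum_3rd_mmnt_explct} to~\eqref{eq:exp_sum_3rd_mmnt_explct2} and dropping the trivial multiplicative characters, I get $S_0=\sum_{(x,y)\in V(k)}\psi(f(x,y))$ with $f(x,y)=x+Ay-B\frac{xy+1}{x+y}$, where $V\subset\mathbb A^2$ is the open subvariety obtained by deleting the finitely many curves on which the relevant denominators vanish; in particular $V$ is geometrically irreducible of dimension~$2$ and $\#V(k)=q^2+O(q)$. By Proposition~\ref{prop:3rdmmnt_fibers_irred} (valid since $A,B\neq0$) every fiber $f=C$ with $C^2\neq(A-B-1)^2-4B$ is an absolutely irreducible conic, so after deleting the finitely many points on the excluded curves it still carries $q+O(1)$ rational points by the Riemann hypothesis for curves. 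Hence $V$ and $f$ satisfy the hypotheses of Theorem~\ref{thm:bound_exp_sum} --- in the cardinality form noted in the remark following it --- with at most $R=2$ geometrically reducible fibers and irreducible fibers of degree at most $d=2$, giving $|S_0|\leq B_0\,q$. Combining with the first step, $\bigl|\frac1{q-1}\sum_\chi\tilde F(\chi_1\chi;\psi_1)\tilde F(\chi_1\chi;\psi_2)\tilde F(\chi_1\chi;\psi_3)\bigr|=q^{-3/2}|S_0|\leq B_0/\sqrt q$, which is the corollary with $M=B_0$.

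The main obstacle is the bookkeeping in this geometric step: one must check carefully that the birational substitution really turns the summation region into a dense open subvariety $V$ of $\mathbb A^2$ with $q^2+O(q)$ points, and --- more delicately --- that removing the finitely many points of the excluded curves from each absolutely irreducible conic fiber does not spoil the $q+O(1)$ count, so that the cardinality version of Theorem~\ref{thm:bound_exp_sum} applies as stated. One also needs the set $\{C:C^2=(A-B-1)^2-4B\}$ of possibly-reducible fibers to have size bounded independently of $q$, which is immediate from Proposition~\ref{prop:3rdmmnt_fibers_irred} since it has at most two elements. The remaining ingredients --- the character-orthogonality collapse and the tracking of constants through the normalization $\tilde F=F/\sqrt q$ --- are routine.
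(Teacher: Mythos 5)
Your proof is correct and takes essentially the same route as the paper's, which simply declares the corollary an immediate consequence of Theorem~\ref{thm:bound_exp_sum}; you are filling in the details the paper leaves implicit (the orthogonality collapse after shifting $\chi\mapsto\overline\chi_1\chi$, the reduction to the sum~\eqref{eq:exp_sum_3rd_mmnt_explct} with trivial multiplicative characters, the change of variables to~\eqref{eq:exp_sum_3rd_mmnt_explct2}, and the verification via Proposition~\ref{prop:3rdmmnt_fibers_irred} and the cardinality remark that the hypotheses of Theorem~\ref{thm:bound_exp_sum} hold), and your constant-tracking through $\tilde F = F/\sqrt q$ correctly yields $q^{-3/2}|S_0|\leq M/\sqrt q$.
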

\begin{proof}
This is an immediate corollary of theorem \ref{thm:bound_exp_sum}
\end{proof}
For the cases were not all characters are equal we use the following
proposition that observes some geometric properties of the fibers.
\begin{prop}\label{prop:geometric_restrictions}
Let $k$, $f(x,y)$ be as above, and $C\in\overline k$ satisfying
$C^2\ne(A-B-1)^2-4B,0$. Denote $f_C=\{(x,y)\in k^2:f(x,y)=C\}$. Then
$f_C$ satisfies
\begin{enumerate}
\item\label{prop:setting not const}
For any $a\in k$ the intersection of the curve $x-a$ or $y-a$ with
$f_C$ has at most 2 points.
\item\label{prop:setting not line}
For any $a\in k$, the intersection of the curve $\frac{x-1}{x+1}=
a\frac{y-1}{y+1}$ with $f_C$ has at most 4 points.
\item\label{prop:setting not inverse}
For any $a\in k$, the intersection of the curve $\frac{x-1}{x+1}=
a\frac{y+1}{y-1}$ with $f_C$ has at most 4 points.
\item\label{prop:setting not quadratic}
The intersection of the curve $(x^2-1)(y^2-1)=0$ with $f_C$ has at
most 8 points.
\item\label{prop:setting not square}
If $f_C$ is not empty, neither $\frac{x+1}{x-1}$ nor
$\frac{y+1}{y-1}$ are contained in any multiplicative coset of the
subgroup of squares of $k^*$ along $f_C$.
\end{enumerate}
\end{prop}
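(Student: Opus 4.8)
The plan is to notice that all five assertions are elementary plane‑conic geometry for the curve $f_C$. Clearing the denominator $x+y$ in the equation $f(x,y)=C$ identifies $f_C$ with the affine points, off the line $x+y=0$, of the plane conic $\mathcal C_C$ that appears in the proof of Proposition~\ref{prop:3rdmmnt_fibers_irred}; under the hypothesis $C^2\neq(A-B-1)^2-4B$ that proposition already tells us $\mathcal C_C$ is absolutely irreducible, hence (the characteristic $p$ being odd) a smooth conic over $\bar k$. Two facts I would record at the outset: the $x^2$‑ and $y^2$‑coefficients of $\mathcal C_C$ are nonzero, and $f_C$ differs from $\mathcal C_C(k)$ by only finitely many points, so $\bar k(f_C)=\bar k(\mathcal C_C)$.

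Parts (1) and (4) are then immediate: substituting $x=a$ (or $y=a$) into $\mathcal C_C$ leaves a quadratic in the other variable with nonzero leading coefficient, so the line meets $\mathcal C_C$—hence $f_C$—in at most two points, and $(x^2-1)(y^2-1)=0$ is the union of the four distinct lines $x=\pm1$, $y=\pm1$, giving at most $8$. For parts (2) and (3) I would clear denominators in $\frac{x-1}{x+1}=a\frac{y\mp1}{y\pm1}$ to reach $(x-1)(y\pm1)-a(x+1)(y\mp1)=0$, a polynomial of degree $\le2$ with no pure $x^2$ or $y^2$ term; it is therefore not proportional to the polynomial cutting out $\mathcal C_C$, so by absolute irreducibility of $\mathcal C_C$ the two conics share no component, and Bézout bounds the number of common points—over $\bar k$, a fortiori those lying in $f_C$—by $2\cdot2=4$.

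Part (5) is where the actual work is. I read it as the statement that the restriction of $\frac{x+1}{x-1}$, and symmetrically of $\frac{y+1}{y-1}$, to $\mathcal C_C$ is not a constant times a square in $\bar k(\mathcal C_C)^\times$—equivalently, the Kummer sheaf it defines is geometrically nontrivial, which is exactly the non‑degeneracy needed before invoking Weil's bound. Since $\mathcal C_C$ contains neither point at infinity on the lines $x=\pm1$, the divisor of zeros of $\frac{x+1}{x-1}$ on $\mathcal C_C$ is the affine intersection cycle $\mathcal C_C\cdot\{x=-1\}$ (degree $2$) and its divisor of poles is $\mathcal C_C\cdot\{x=1\}$. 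A constant times a square has only even‑order zeros and poles, so it suffices to show that one of $x=1$, $x=-1$ meets the smooth conic $\mathcal C_C$ transversally—then that intersection is two distinct points, at each of which $x+1$ (or $x-1$) vanishes simply, giving $\frac{x+1}{x-1}$ a simple zero or pole. A line $x=\eta$ is tangent to $\mathcal C_C$ precisely when the discriminant of the quadratic in $y$ obtained by putting $x=\eta$ vanishes; writing out these two discriminants for $\eta=1$ and $\eta=-1$, subtracting and using $C\neq0$ to kill the cross term forces $C=0$, a contradiction. Hence the lines $x=\pm1$ are not both tangent, $\frac{x+1}{x-1}$ has a simple zero or pole, and so is not a constant times a square; the identical computation with $y$ in place of $x$ settles $\frac{y+1}{y-1}$.

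The only non‑formal step is this discriminant computation in part (5): it is the sole place where one needs the explicit equation of $\mathcal C_C$, uses $C\neq0$, and relies on $C^2\neq(A-B-1)^2-4B$ through the \emph{smoothness} (not merely irreducibility) of $\mathcal C_C$. Everything else is Bézout plus the absolute irreducibility already established in Proposition~\ref{prop:3rdmmnt_fibers_irred}. I would expect the careful bookkeeping there—keeping track of which intersection points lie at infinity, and checking that ``both lines tangent'' is exactly the configuration the hypotheses rule out—to be the only real obstacle.
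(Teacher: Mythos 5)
Your arguments for parts (1)--(4) are correct and coincide with the paper's: $f_C$ is (by Proposition~\ref{prop:3rdmmnt_fibers_irred}) an absolutely irreducible conic, the auxiliary curves have degree $\le 2$ and are not proportional to the conic polynomial (e.g.\ because they have no pure $x^2$ term), so B\'ezout gives the stated bounds.

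For part (5) you take a genuinely different, and cleaner, route than the paper. The paper fixes a $k$-point $(x_1,y_1)$ on $f_C$, writes down an explicit birational parametrization $t\mapsto(x(t),y(t))$ from $\mathbb P^1$, pulls back $\frac{x-1}{x+1}$ to a ratio of two explicit quadratics $q_1(t)/q_2(t)$, and shows by a direct discriminant computation that $q_1,q_2$ cannot both be perfect squares, so the ratio is not a constant times a square in $\bar k(t)=\bar k(f_C)$. Your version works directly on the smooth conic with the divisor of $\frac{x+1}{x-1}$: a constant times a square has only even-order zeros and poles, so it suffices that one of the lines $x=\pm1$ meet $\mathcal C_C$ transversally, which you phrase as ``not both lines are tangent.'' The two arguments are morally the same computation (tangency of $x=\pm1$), but yours avoids the choice of base point and the explicit parametrization, which is a real simplification.

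Two things you should fix before writing this up. First, there is a typo in the paper: clearing the denominator in $x+Ay-B\frac{xy+1}{x+y}=C$ gives $x^2+(A-B+1)xy+Ay^2-Cx-Cy-B=0$, with coefficient $A$ (not $1$) on $y^2$; the paper's determinant formula is consistent with this $Ay^2$, only the displayed quadratic is wrong. In particular the conic is \emph{not} symmetric in $x$ and $y$, so ``the identical computation with $y$ in place of $x$'' is not literally true --- you need to substitute $y=\pm1$ and redo the discriminant (it still works, but the formulas differ). Second, your sketch of the discriminant step --- ``subtracting and using $C\neq0$ ... forces $C=0$'' --- is not the right mechanism. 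For the lines $x=\pm1$ the two tangency discriminants are $\bigl((A-B+1)\eta-C\bigr)^2-4A(\eta^2-C\eta-B)$ at $\eta=\pm1$, and their difference is $4C(A+B-1)$. This vanishes whenever $A+B=1$ even with $C\neq0$; in that case both discriminants equal $C^2\neq0$, so you are still fine, but via the hypothesis $C\neq0$ directly rather than a contradiction. You flagged that you had not verified this bookkeeping, and indeed that is exactly where the care is needed.
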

\begin{proof}
By assumption on $C$, and by proposition
\ref{prop:3rdmmnt_fibers_irred} we have that $f_C$ is an irreducible
quadratic curve.
As such, if $G$ is any other curve (not necessarily irreducible) not
containing $f_C$, the number of points $\sharp G\cap f_C$ is bounded
by the product $deg(G)\cdot deg(f_C)=2deg(G)$. Therefore, since
$x-a,y-a,(1-a)(xy-1)-(1+a)(x-y),(1-a)(xy+1)+(1+a)(x+y)$,
$(x^2-1)(y^2-1)$ are all coprime to $f(x,y)-C$ we get properties
\ref{prop:setting not const}, \ref{prop:setting not line},
\ref{prop:setting not inverse}, \ref{prop:setting not quadratic}.
For property \ref{prop:setting not square} we give a parametrization
of $f_C$. Choose a point $(x_1,y_1)\in f_C$ such that $x_1\ne0,\pm1$
(such a choice is possible since $f_C$ is not empty and by property
\ref{prop:setting not const}). Then the following is a
parametrization of $f_C$:
\begin{eqnarray}\label{eq:curve_parametrization}
x(t)=\frac{x_1(At^2-Ct-B)}{At^2-((A-B+1)x_1+2Ay_1)t+C(x_1+y_1)+B}\\
y(t)=\frac{(C- (A- B+1) x_1 - A y_1)t^2 +t (2 B + Cx_1) - By_1}{A
t^2- t ((A-B+1)x_1- 2 A y_1) + C( x_1 + y_1) +B}
\end{eqnarray}
This gives similar expressions for
$\frac{x-1}{x+1},\frac{y-1}{y+1}$. Denote by $q_1(t),q_2(t)$ the
numerator and denominator for $\frac{x-1}{x+1}$ and $q_3(t),q_4(t)$
the numerator and denominator for $\frac{y-1}{y+1}$. Direct
computation gives that
\begin{eqnarray*}
disc(q_1(t))=disc(q_3(t))=(C-A+B-1)^2x_1^2\\
disc(q_2(t))=disc(q_4(t))=(C+A-B+1)^2x_1^2\\
\end{eqnarray*}
Therefore the only possibility for all to be squares is if
$A-B+1=0,C=0$, by choosing $C\ne0$ we get property \ref{prop:setting
not square}.
\end{proof}
\begin{cor}\label{cor:char_sum_sqrt_cncl}
Let $k$ be a finite field with $q=p^n$ elements, and $f(x,y)$ as
above. Let $\chi_1,\chi_2$ be multiplicative characters of $k$ not
both trivial. For $\nu\in\bN$ be the extension of $k$ of degree
$\nu$, Then for all $C\in k_\nu$ satisfying $C^2\ne(A-B-1)^2-4B,0$
we have
\begin{equation}\label{eq:char_sum_sqrt_cncl}
|\sum_{f(x,y)=C}\chi_1(\frac{1-x}{1+x})\chi_2(\frac{1-y}{1+y})|\leq7\sqrt
q
\end{equation}
\end{cor}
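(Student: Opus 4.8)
The plan is to parametrize the fiber and reduce the claim to a one-variable multiplicative character sum to which Theorem \ref{thm:one_dim_bound} applies.

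By hypothesis $C^2\neq (A-B-1)^2-4B$ and $C\neq 0$, so Proposition \ref{prop:3rdmmnt_fibers_irred} shows that the projective closure of $f_C=\{f(x,y)=C\}$ is an absolutely irreducible, hence smooth, plane conic over $k_\nu$; in particular $f_C(k_\nu)$ has $q^\nu+O(1)$ points, and if it is nonempty we may pick a base point $(x_1,y_1)$ with $x_1\neq 0,\pm 1$ and use the rational parametrization \eqref{eq:curve_parametrization}. (If $f_C(k_\nu)=\emptyset$ there is nothing to prove.) This parametrization is a bijection between $\mathbb{P}^1(k_\nu)$ and the conic up to $O(1)$ exceptional parameters, and under it $\tfrac{1-x}{1+x}$ and $\tfrac{1-y}{1+y}$ become rational functions $-q_1(t)/q_2(t)$ and $-q_3(t)/q_4(t)$ with $q_1,\dots,q_4\in k_\nu[t]$ of degree $\leq 2$, as computed in the proof of Proposition \ref{prop:geometric_restrictions}. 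Since the parameters at which $x$ or $y$ equals $\pm1$ contribute $0$ to both sides, and the deleted fiber points on $x+y=0$ together with the point at infinity of the parametrization account for only $O(1)$ terms, I obtain
$$
\sum_{f(x,y)=C}\chi_1\!\left(\frac{1-x}{1+x}\right)\chi_2\!\left(\frac{1-y}{1+y}\right)
\;=\;\chi_1(-1)\chi_2(-1)\sum_{t\in k_\nu}\chi_1\!\left(\frac{q_1(t)}{q_2(t)}\right)\chi_2\!\left(\frac{q_3(t)}{q_4(t)}\right)+O(1).
$$

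Next I would factor $q_1q_2q_3q_4$ into monic irreducibles over $k_\nu$ and regroup, writing the $t$-sum as $\sum_{t\in k_\nu}\prod_j\rho_j(R_j(t))$ with the $R_j$ distinct monic irreducibles, $\sum_j\deg R_j\leq\deg(q_1q_2q_3q_4)\leq 8$, and each $\rho_j$ a product of powers of $\chi_1$ and $\chi_2$; discarding the $j$ with $\rho_j$ trivial changes the sum by $O(1)$. If at least one nontrivial $\rho_j$ survives, Theorem \ref{thm:one_dim_bound} bounds the remainder by $\bigl(\sum_j\deg R_j-1\bigr)\sqrt{q^\nu}\leq 7\sqrt{q^\nu}$; the $O(1)$ corrections are harmless, being absorbed once $q^\nu$ is larger than an absolute constant and dominated for small $q^\nu$ by the trivial bound $|f_C(k_\nu)|\leq q^\nu+1$. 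Thus everything reduces to showing that the $\rho_j$ are not all trivial, i.e.\ that the Kummer character $t\mapsto\chi_1(q_1(t)/q_2(t))\,\chi_2(q_3(t)/q_4(t))$ is not geometrically constant along $f_C$.

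This last step is the main obstacle, and it is exactly what Proposition \ref{prop:geometric_restrictions} is designed for. Geometric constancy of that character would force, for some $d>1$, one of $u:=\tfrac{1-x}{1+x}$, $v:=\tfrac{1-y}{1+y}$, $uv$ or $u/v$ to be, along $f_C$, a nonzero constant times a perfect $d$-th power in $\overline{k_\nu}(t)^\times$ (in particular a perfect square). Properties \ref{prop:setting not const}--\ref{prop:setting not quadratic} of Proposition \ref{prop:geometric_restrictions} exclude $u$, $v$, $uv$, $u/v$ from being constant on $f_C$ and, via the bound on the intersection of $f_C$ with $(x^2-1)(y^2-1)=0$, pin down the zero/pole structure of $u$ and $v$; property \ref{prop:setting not square}, established there by the discriminant identities for $q_1,\dots,q_4$ recorded in the proof, excludes $u$ and $v$ from being a constant times a square. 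Combining these --- organized according to whether $\chi_1$ and $\chi_2$ are equal, inverse to one another, one of them trivial, or both quadratic, and in each case tracking the multiplicities of $\operatorname{div}(u)$ and $\operatorname{div}(v)$, including the coincidences forced by the base points of the pencil on $x+y=0$ --- shows that no $d$-th power relation (in particular no perfect-square relation, and a fortiori none for $d$ an odd prime, the degrees being too small) can hold. Hence the Kummer character is nontrivial, Theorem \ref{thm:one_dim_bound} applies, and \eqref{eq:char_sum_sqrt_cncl} follows.
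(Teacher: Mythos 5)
Your proof follows the same route as the paper: use Proposition \ref{prop:3rdmmnt_fibers_irred} to conclude that $f_C$ is an irreducible conic, apply the rational parametrization \eqref{eq:curve_parametrization} from the proof of Proposition \ref{prop:geometric_restrictions} to convert the fiber sum into a one-variable multiplicative character sum $\sum_t \chi_1(q_1(t))\chi_2(q_2(t))$ with $q_i$ of degree $\leq 2$, rule out geometric triviality of the resulting Kummer character via the five exclusion conditions collected in Proposition \ref{prop:geometric_restrictions}, and then invoke Theorem \ref{thm:one_dim_bound} with total degree $\leq 8$ to get $7\sqrt{q^\nu}$. The only cosmetic difference is that you phrase the triviality obstruction in terms of $u$, $v$, $uv$, $u/v$ being constant or a perfect $d$-th power (noting the degree bound kills $d\geq 3$), whereas the paper enumerates the five explicit cancellation cases; these are the same check. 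You are somewhat more careful than the printed proof about the $O(1)$ corrections from exceptional parameters and the point at infinity, but that bookkeeping does not change the method.
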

\begin{proof}
By proposition \ref{prop:3rdmmnt_fibers_irred} the sets
$f_C=\{(x,y)\in k^2:f(x,y)=C\}$ are irreducible quadratic curves,
and hence as in proposition \ref{prop:geometric_restrictions} we can
parameterize the curve and the sum becomes
\begin{equation}
\sum_{t\in k}\chi_1(q_1(t))\chi_2(q_2(t))=\sum_{t\in\
k}\chi_1(p_1(t))\overline\chi_1(p_2(t))\chi_2(p_3(t))\overline\chi_2(p_4(t))
\end{equation}
where
$$q_1(t)=\frac{p_1(t)}{p_2(t)},q_2(t)=\frac{p_3(t)}{p_4(t)},i=1,2$$
This can be written in the form
$$
\varepsilon\sum_{t\in k}\prod_{i=1}^m\widetilde\chi_i(\tilde p_i(t))
$$
where $\tilde p_i(t)$ area different monic irreducible polynomials,
$|\varepsilon|=1$, by decomposing $p_i(t),i=1,\dots,4$ into
irreducible parts, joining equal parts together, and powers are
absorbed into the characters. In order to apply theorem
\ref{thm:one_dim_bound}, we must check that there exists a
nontrivial pair $\widetilde\chi_i,\tilde p_i(t)$, that is, there
exists $1\leq i\leq m$, such that $\widetilde\chi_i$ is not the
trivial character, and $\widetilde p_i(t)\not\equiv1$. However
$\widetilde\chi_i,i=1,\dots,4$ are the trivial characters, and
$\tilde p_i(t), i=1,\dots,4$ are constant if complete cancelation
occurs already in
$$
\chi_1(p_1(t))\overline\chi_1(p_2(t))\chi_2(p_3(t))\overline\chi_2(p_4(t))
$$
that is one of the following holds
\begin{enumerate}
\item
$p_1(t)/p_2(t)=Const.$, $p_3(t)/p_4(t)=Const$
\item
$p_1(t)/p_3(t)=Const.$, $p_2(t)/p_4(t)=Const$, and
$\chi_1=\overline\chi_2$
\item
$p_1(t)/p_4(t)=Const.$, $p_2(t)/p_3(t)=Const$, and $\chi_1=\chi_2$
\item
$p_1p_2/(p_3p_4)=Const.$, and $\chi_1^2=\chi_2^2$ are trivial.
\item
$p_1,p_2,p_3,p_4=\square$, and $\chi_1^2=\chi_2^2$ are trivial.
\end{enumerate}
and each of these conditions corresponds to a geometric restriction
that was proved impossible in proposition
\ref{prop:geometric_restrictions}. We can therefore imply theorem
\ref{thm:one_dim_bound}. Since $\deg (p_i(t))\leq 2$, then the sum
of their degrees is at most 8, hence the sum of the degrees of
$\widetilde p_i(t)$ is at most 8 also, and therefore we get the
required result.
\end{proof}

We can now conclude the proof of theorem
\ref{thm:exp_sum_3rd_moment}. Let $\chi_1,\chi_2$ be multiplicative
characters of $k$ not both trivial, we want to show that there
exists $M>0$ such that the following bound holds
\begin{equation}\label{eq:3rd_mmnt_rqurd_bnd}
|S(\chi_1,\chi_2;\psi)|\leq M|k|
\end{equation}
where $S(\chi_1,\chi_2;\psi)$ was defined in
\eqref{eq:exp_sum_3rd_mmnt_explct}.
\begin{prop}\label{prop:3rd_mmnt_variance}
Let $S(\chi_1,\chi_2;\psi)$ be as above. Then for all $\nu\in\bN$
$$
\frac1{|k_\nu|}\sum_{a\in k_\nu}|S(\chi_1\circ N,\chi_2\circ
N;(\psi\circ tr)_a)|^2\leq52|k|^{2\nu}
$$
\end{prop}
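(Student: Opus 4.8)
The plan is to run the standard ``complete the additive sum, then count points on the fibers of $f$'' argument --- the same device as in the proof of Theorem~\ref{thm:bound_exp_sum} --- now carrying the base-changed multiplicative characters $\chi_1\circ N,\chi_2\circ N$ along for the ride. The only substantive input is the fiberwise square-root cancellation already established in Corollary~\ref{cor:char_sum_sqrt_cncl}; it is exactly what upgrades the trivial bound $O(|k|^{3\nu})$ to the required $O(|k|^{2\nu})$.

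I would first expand $|S(\chi_1\circ N,\chi_2\circ N;(\psi\circ\tr)_a)|^2$ by introducing a second pair of variables $(x',y')$ carrying the conjugate characters, and then carry out the sum over $a\in k_\nu$ before doing anything else. Orthogonality of additive characters, $\frac1{|k_\nu|}\sum_{a\in k_\nu}\psi(\tr(az))=\mathbf 1_{\{z=0\}}$, forces the collision $f(x,y)=f(x',y')$ (with $f$ as in Proposition~\ref{prop:3rdmmnt_fibers_irred}) and leaves
\[
\frac1{|k_\nu|}\sum_{a\in k_\nu}\bigl|S(\chi_1\circ N,\chi_2\circ N;(\psi\circ\tr)_a)\bigr|^2=\sum_{C\in k_\nu}\bigl|G_\nu(C)\bigr|^2,\qquad G_\nu(C):=\sum_{\substack{x,y\\ f(x,y)=C}}\chi_1\!\left(\frac{1-x}{1+x}\right)\chi_2\!\left(\frac{1-y}{1+y}\right),
\]
the inner sum running over the $k_\nu$-points of the fiber and all characters understood as their base changes to $k_\nu$ (this is precisely why the norm and trace appear in the statement of the proposition).

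Next I would split the outer sum over $C\in k_\nu$ according to whether the fiber is generic. For $C$ with $C\ne 0$ and $C^2\ne(A-B-1)^2-4B$, Proposition~\ref{prop:3rdmmnt_fibers_irred} shows the fiber is an absolutely irreducible conic and Proposition~\ref{prop:geometric_restrictions} supplies the required geometric non-degeneracies, so Corollary~\ref{cor:char_sum_sqrt_cncl}, applied over the extension $k_\nu$, gives $|G_\nu(C)|\le 7\sqrt{|k_\nu|}$. As there are at most $|k_\nu|=|k|^\nu$ such $C$, these contribute at most $49|k|^{2\nu}$. For the at most three remaining values (the one or two roots of $C^2=(A-B-1)^2-4B$, together with $C=0$, where either the conic degenerates or the ``square'' obstruction in Proposition~\ref{prop:geometric_restrictions}(\ref{prop:setting not square}) can fail) I would bound $|G_\nu(C)|$ trivially by the number of $k_\nu$-rational points on the fiber: clearing the denominator $x+y$ realizes that fiber inside a plane conic --- possibly reducible into two lines --- which has $O(|k|^\nu)$ points, so each exceptional $C$ contributes $O(|k|^{2\nu})$. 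Collecting the explicit constants yields the bound $52|k|^{2\nu}$. This is exactly the hypothesis \eqref{eq:lem:exp sum:variance->individual} of Lemma~\ref{lem:exp sum:variance->individual} with $b=2$, which (in the next step, not part of this proposition) produces the individual bound $|S(\chi_1,\chi_2;\psi)|\le M|k|$ and hence Theorem~\ref{thm:exp_sum_3rd_moment}.

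Given Corollary~\ref{cor:char_sum_sqrt_cncl}, the argument is essentially routine, and the main care is purely in the bookkeeping: one must check that the locus on which fiberwise square-root cancellation fails really is confined to at most three fibers, that each such fiber still carries only $O(|k|^\nu)$ rational points even when the conic splits, and that the $a=0$ contribution --- a product of two one-variable character sums, of size $O(1)$ when the corresponding character is nontrivial and $O(|k|^\nu)$ otherwise --- is absorbed harmlessly. No geometry beyond Propositions~\ref{prop:3rdmmnt_fibers_irred}--\ref{prop:geometric_restrictions} and Corollary~\ref{cor:char_sum_sqrt_cncl} is needed.
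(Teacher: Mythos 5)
Your argument reproduces the paper's proof essentially verbatim: expand $|S|^2$, sum over $a\in k_\nu$ and use orthogonality of additive characters to collapse to $\sum_{C\in k_\nu}|G_\nu(C)|^2$, then invoke Corollary~\ref{cor:char_sum_sqrt_cncl} (over $k_\nu$) for all but at most three values of $C$ and bound the remaining exceptional fibers trivially. The one small cosmetic difference is that you single out the $a=0$ term for discussion, while in the paper it is silently absorbed into the orthogonality identity; otherwise the decomposition, the appeal to Proposition~\ref{prop:3rdmmnt_fibers_irred} and Corollary~\ref{cor:char_sum_sqrt_cncl}, and the final bookkeeping are the same.
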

\begin{proof}
To show the bound we compute the sum over $a$ to get
\begin{eqnarray*}
&&\frac1{|k_\nu|}\sum_{a\in k_\nu}|S(\chi_1\circ N,\chi_2\circ
N;(\psi\circ \tr)_a)|^2=\\
&&\sum_{C\in k_\nu}|\sum_{x,y\in k_\nu:f(x,y)=C}\chi_1\circ
N(\frac{1-x}{1+x})\chi_2\circ N(\frac{1-y}{1+y})|^2
\end{eqnarray*}
and by corollary \ref{cor:char_sum_sqrt_cncl} the inner sum is
bounded by $49|k_\nu|$ for all but at most 3 values of $C$, hence
the bound is proved.
\end{proof}
\begin{cor}\label{cor:3rd_mmnt_rqurd_bnd}
Let $k$ be a finite field with $char(k)\ne2$. Then there exists
$M>0$ such that for any two multiplicative characters, and $\psi$
additive character of $k$ the sum $S(\chi_1,\chi_2;\psi)$ satisfies
$$
|S(\chi_1,\chi_2;\psi)|\leq M|k|
$$
\end{cor}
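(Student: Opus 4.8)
The plan is to bound $|S(\chi_1,\chi_2;\psi)|$ by separating the case $\chi_1=\chi_2=1$ from the case where at least one of the two is nontrivial. Recall from \eqref{eq:exp_sum_3rd_mmnt_explct2} that after the substitution $(x,y)\mapsto(\tfrac{1-x}{1+x},\tfrac{1-y}{1+y})$ the sum $S(\chi_1,\chi_2;\psi)$ equals $\sum_{x,y}\chi_1(\tfrac{1-x}{1+x})\chi_2(\tfrac{1-y}{1+y})\psi(f(x,y))$, where $f(x,y)=x+Ay-B\tfrac{xy+1}{x+y}$, $A_0=1$, and $A,B\ne0$ because $\psi_2,\psi_3$ are nontrivial.

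When $\chi_1=\chi_2=1$ this is the pure additive sum $\sum_{x,y}\psi(f(x,y))$ over a two-dimensional variety of bounded degree. By Proposition \ref{prop:3rdmmnt_fibers_irred} every fiber $f=C$ is absolutely irreducible of degree $2$ apart from the at most two values of $C$ with $C^2=(A-B-1)^2-4B$, so Theorem \ref{thm:bound_exp_sum} applies with $R=2$ and $d=2$ and gives $|S(\chi_1,\chi_2;\psi)|\le B_1|k|$. This is exactly Corollary \ref{cor:bound_for_trivial_case}.

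When $(\chi_1,\chi_2)$ is not both trivial I would pass from the second moment to the individual sum via Lemma \ref{lem:exp sum:variance->individual} with exponent $b=2$. Since $S(\chi_1,\chi_2;\psi)$ is the value over $k$ of a family of exponential sums of the form \eqref{def:gen_rat_n_exp_sum} attached to a two-dimensional variety of bounded degree, the hypothesis of that lemma is precisely a bound $\tfrac1{|k|^\nu}\sum_{0\ne a\in k_\nu}|S_\nu(\chi_1\circ N,\chi_2\circ N;(\psi\circ\mathrm{tr})_a)|^2\le M_0|k|^{2\nu}$ for every $\nu$, and this is immediate from Proposition \ref{prop:3rd_mmnt_variance}, since restricting the average there to $a\ne0$ only decreases it. The lemma then gives $|S(\chi_1,\chi_2;\psi)|\le B_2|k|^{b/2}=B_2|k|$, and taking $M=\max(B_1,B_2)$ completes the proof.

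The substantive work has already been done upstream — in Proposition \ref{prop:geometric_restrictions}, Corollary \ref{cor:char_sum_sqrt_cncl} and Proposition \ref{prop:3rd_mmnt_variance} — so at this step I expect the only points needing care to be bookkeeping: presenting the index set $\{xy\ne0,-1,x+y\ne0\}$ as a genuine two-dimensional variety of bounded degree (removing these loci alters every point count by $O(|k|^\nu)$, hence does not affect the bound); checking that the degeneracies ruled out upstream — $A=0$, $B=0$, and the at most three values of $C$ with $C^2\in\{0,(A-B-1)^2-4B\}$ in Corollary \ref{cor:char_sum_sqrt_cncl} — are harmless here thanks to the nontriviality of $\psi_1,\psi_2,\psi_3$; and noting that the constants furnished by Theorem \ref{thm:bound_exp_sum} and Lemma \ref{lem:exp sum:variance->individual} depend only on the uniformly bounded dimension and degree data, so that $M$ is indeed independent of $\chi_1,\chi_2,\psi$.
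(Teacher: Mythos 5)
Your proof is correct and follows essentially the same route as the paper, but it is slightly more careful. The paper's proof of Corollary~\ref{cor:3rd_mmnt_rqurd_bnd} consists of the single sentence ``This is an immediate corollary of proposition~\ref{prop:3rd_mmnt_variance} and lemma~\ref{lem:exp sum:variance->individual},'' which only covers the case where $\chi_1,\chi_2$ are not both trivial: Proposition~\ref{prop:3rd_mmnt_variance} is stated in a context where the characters are explicitly assumed not both trivial, and its proof relies on Corollary~\ref{cor:char_sum_sqrt_cncl}, which genuinely fails for $\chi_1=\chi_2=1$ (the fiber sum is then a point count $\approx|k_\nu|$, not $O(\sqrt{|k_\nu|})$). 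You correctly identify this and dispatch the $\chi_1=\chi_2=1$ case separately via Proposition~\ref{prop:3rdmmnt_fibers_irred} and Theorem~\ref{thm:bound_exp_sum} --- which is precisely Corollary~\ref{cor:bound_for_trivial_case}, a result the paper proved but did not cite at this point. Your remaining remarks (that dropping the $a=0$ term only decreases the average, that the loci $xy=0,-1$ and $x+y=0$ affect counts by $O(|k_\nu|)$, and that the constants depend only on dimension and degree data) are all sound bookkeeping and in the spirit of the paper.
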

\begin{proof}
This is an immediate corollary of proposition
\ref{prop:3rd_mmnt_variance} and lemma \ref{lem:exp
sum:variance->individual}
\end{proof}
\section{Proof of theorem
\ref{thm:exp_sum_5th_moment}}\label{sec:prf_exp_sum_5th_mmnt}
For the proof of theorem \ref{thm:exp_sum_5th_moment} we use an
averaging technique that will allow us to distinguish symmetries of
the sum.
\begin{defn}
Let $k$ be a finite field, $\chi_1,\chi_2$ be multiplicative
characters of $k$, $\psi$ an additive character and $a\in k^*$.
Denote
\begin{eqnarray*}
F(\chi_1,\chi_2;\psi,a)=\sum_{x\in
k}\chi_1(x+a)\overline\chi_2(x-a)\psi(a^{-1}x)\\
G(\chi_1,\chi_2;\psi)=\frac1{|k^*|}\sum_{a\in
k^*}F(\chi_1,\chi_2;\psi,a)
\end{eqnarray*}
\end{defn}
\begin{prop}\label{prop:5th_averaging_settng}
Let $k$ be a finite field, $\chi_1,\chi_2,\psi,a$ as above. Then
\begin{enumerate}
\item\label{prop:av_set_mtrx_elmnt}
$F(\chi_1,\chi_2;\psi,a)=\chi_1\overline\chi_2(a)F(\chi_1,\chi_2;\psi,1)$
\item\label{prop:av_set_avrgd_mtrx_lmnt}
\begin{equation}
G(\chi_1,\chi_2;\psi)=\begin{cases}F(\chi_1;\psi)& \chi_1=\chi_2\\0
& \chi_1\ne\chi_2\end{cases}
\end{equation}
\item
Let $\theta$ be any multiplicative character of $k$, and
$\psi_1,\dots,\psi_5$ any nontrivial additive characters of $k$,
then
\begin{equation}\label{eq:5th_mmnt_avgd_sum}
\frac1{|k^*|}\sum_\chi\prod_{i=1}^5F(\theta\chi;\psi_i)=\frac1{|k^*|^6}\sum_{\chi_1,\dots,\chi_5\in
\widehat{k^*}}\prod_{i=1}^5G(\chi_i,\chi_{i+1};\psi_i)
\end{equation}
where inside the product we consider 5+1 as 1.
\end{enumerate}
\end{prop}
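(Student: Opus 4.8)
I would prove the three parts in the stated order; each is a change of variable together with the orthogonality relations for characters of $k^*$. For part~(1), in $F(\chi_1,\chi_2;\psi,a)=\sum_{x\in k}\chi_1(x+a)\overline\chi_2(x-a)\psi(a^{-1}x)$ I substitute $x=au$, which is a bijection of $k$ since $a\in k^*$; then $x\pm a=a(u\pm1)$ and $a^{-1}x=u$, so multiplicativity of $\chi_1$ and $\overline\chi_2$ pulls out the factor $\chi_1(a)\overline\chi_2(a)=\chi_1\overline\chi_2(a)$ and what remains is $\sum_u\chi_1(u+1)\overline\chi_2(u-1)\psi(u)=F(\chi_1,\chi_2;\psi,1)$.

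For part~(2), insert part~(1) into the definition of $G$, obtaining $G(\chi_1,\chi_2;\psi)=\bigl(\tfrac{1}{|k^*|}\sum_{a\in k^*}\chi_1\overline\chi_2(a)\bigr)\,F(\chi_1,\chi_2;\psi,1)$; by orthogonality the average is $1$ when $\chi_1=\chi_2$ and $0$ otherwise, so $G$ is supported on the diagonal. On the diagonal, $F(\chi,\chi;\psi,1)=\sum_{x\ne\pm1}\chi\bigl(\tfrac{x+1}{x-1}\bigr)\psi(x)$, and the M\"obius substitution $x=\tfrac{1+t}{1-t}$ --- a bijection of $k\setminus\{1\}$ onto $k\setminus\{-1\}$ carrying $t=0$ to $x=1$, hence matching the index set to $t\in k\setminus\{0,1\}$, with $\tfrac{x+1}{x-1}=\tfrac{1}{t}$ --- turns this into $\sum_{t\ne0,1}\overline\chi(t)\psi\bigl(\tfrac{1+t}{1-t}\bigr)$, which is the sum $F(\chi;\psi)$ up to the harmless relabelling $\chi\leftrightarrow\overline\chi$ (immaterial once the outer sum in part~(3) ranges over all characters); reality of $F$ from Proposition~\ref{prop:basic prop exp sum}(1) may be invoked here as well.

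For part~(3), start from the right-hand side of \eqref{eq:5th_mmnt_avgd_sum} and sum $\prod_{i=1}^5G(\chi_i,\chi_{i+1};\psi_i)$ over $\chi_1,\dots,\chi_5\in\widehat{k^*}$, indices read cyclically with $\chi_6:=\chi_1$. By part~(2) the factor $G(\chi_i,\chi_{i+1};\psi_i)$ forces $\chi_i=\chi_{i+1}$; these conditions chained around the cycle collapse the fivefold sum onto its diagonal $\chi_1=\dots=\chi_5=:\chi$, where the product is $\prod_{i=1}^5F(\chi;\psi_i)$. Hence the $\chi_1,\dots,\chi_5$-sum equals $\sum_\chi\prod_{i=1}^5F(\chi;\psi_i)$, and, after carrying through the normalizing power of $|k^*|$ and reindexing the surviving sum by the bijection $\chi\mapsto\theta\chi$ of $\widehat{k^*}$ (legitimate since $\theta$ is fixed), one recovers \eqref{eq:5th_mmnt_avgd_sum}. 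Equivalently, one may expand each $G$ via part~(1) first and then perform the $a_i$-sums by orthogonality, reaching the same collapse.

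None of these steps is a real obstacle: the only places to be careful are checking in part~(2) that the M\"obius substitution neither loses nor gains a summand once the zero terms $x=\pm1$ are discarded, and the bookkeeping in part~(3) of the cyclic pattern of the equalities $\chi_i=\chi_{i+1}$ and of the powers of $|k^*|$. The substantive point of the proposition is what it sets up for \S\ref{sec:prf_exp_sum_5th_mmnt}: after expanding the $G$'s via part~(1), the fifth moment becomes an average over the auxiliary parameters $a_i$ of products of the one-variable two-character sums $F(\chi_i,\chi_{i+1};\psi_i,a_i)$ arranged in a cycle, and these are exactly the sums to which the Weil bound of Theorem~\ref{thm:one_dim_bound} and \S\ref{subsec:weil bound} applies; the genuine difficulty there, as in the third-moment argument of \S\ref{sec:prf_exp_sum_3rd_mmnt}, is the geometric analysis needed to exclude the degenerate configurations.
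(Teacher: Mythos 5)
Your argument is correct and follows the same route as the paper, whose entire proof of this proposition consists of citing the change of variable $x\mapsto ax$ and orthogonality. You were right to flag the glitch in part~(2): with the definitions as printed, the M\"obius substitution indeed yields $F(\chi,\chi;\psi,1)=F(\overline\chi;\psi)$ (equivalently $F(\chi;\overline\psi)$), not $F(\chi;\psi)$, so the displayed formula for $G$ is off by the relabelling $\chi\leftrightarrow\overline\chi$ --- harmless, as you note, because part~(3) ultimately only needs a sum over all $\chi$. Two further remarks, though. First, the normalizing factor $1/|k^*|^6$ on the right of \eqref{eq:5th_mmnt_avgd_sum} is itself a misprint: collapsing the five-fold character sum onto the diagonal $\chi_1=\cdots=\chi_5$ leaves one free character, so the right side as written is $\frac1{|k^*|^6}\sum_{\chi}\prod_{i}F(\chi;\psi_i)$, which differs from the left side $\frac1{|k^*|}\sum_{\chi}\prod_i F(\chi;\psi_i)$ by $|k^*|^5$; the intended constant is $1/|k^*|$, so this is exactly the place where you should not wave a hand at ``carrying through the normalizing power of $|k^*|$'' --- the arithmetic is where the printed display actually goes wrong. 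Second, your closing comment misattributes the downstream machinery: for the fifth moment, Lemma~\ref{lem:5th_mmnt_is_exp_sum} performs the orthogonality in both $\chi_i$ and $x_i$ to reduce to a purely \emph{additive} exponential sum over the two-dimensional variety $\mathbf V(A)$, bounded in Proposition~\ref{prop:5th_mmnt_exp_sum_bnd} via Theorem~\ref{thm:bound_exp_sum} (Lang--Weil plus irreducibility of generic fibers); the one-variable multiplicative-character bound of Theorem~\ref{thm:one_dim_bound} and \S\ref{subsec:weil bound} is the engine of the \emph{third}-moment argument of \S\ref{sec:prf_exp_sum_3rd_mmnt}, not the fifth.
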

\begin{proof}
Part \ref{prop:av_set_mtrx_elmnt} is immediate under the change of
variables $x\mapsto ax$ which is invertible since $a\in k^*$. Part
\ref{prop:av_set_avrgd_mtrx_lmnt} and the equality in
\eqref{eq:5th_mmnt_avgd_sum} are then immediate from part
\ref{prop:av_set_mtrx_elmnt} and the orthogonality relations.
\end{proof}
\begin{lem}\label{lem:5th_mmnt_is_exp_sum}
Let $\theta$ be a multiplicative character of $k$ ($char(k)\ne2$),
$\psi$ a nontrivial additive character of $k$. for $A_1,\dots,A_5\in
k^*$ let $\psi_i(x)=\psi(A_ix)$ be nontrivial additive characters of
$k$. Denote by
\begin{equation*}
{\bf{V}}(\underline A)=\left\{0\ne
a_1,a_2,a_3,a_4:\substack{A_1a_1+A_2a_2+A_3a_3+A_4a_4+A_5=0\\a_1^{-1}+a_2^{-1}+a_3^{-1}+a_4^{-1}+1=0}\right\}
\end{equation*}
and
\begin{equation*}
\tilde h_A(a_2,a_3,a_4)=\sum_{2\leq
i<j\leq4}\left(\frac{A_ia_i}{a_j}-\frac{A_ja_j}{a_i}\right)+\sum_{i=1}^4A_ia_i-A_5\sum_{i=2}^4a_i^{-1}
\end{equation*}
then
\begin{eqnarray}\label{eq:5th_mmnt is_exp_sum}
&&\frac1{|k^*|}\sum_\chi\prod_{i=1}^5\tilde
F(\theta\chi;\psi_i)=\\
\nonumber&&\frac1{|k|^{3/2}}\sum_{a_1,a_3,a_4,a_5\in
{\bf{V}}(\underline A)}{\hspace{-20pt}}\psi_1(-\tilde
h_A(a_2,a_3,a_4))
\end{eqnarray}
\end{lem}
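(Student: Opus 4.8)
The strategy I would follow is to unfold the average over $\chi$ through the auxiliary sums $G$ of Proposition~\ref{prop:5th_averaging_settng}, which converts the multiplicative character average into an additive exponential sum, and then to collapse that sum onto the variety ${\bf V}(\underline A)$. Concretely, Proposition~\ref{prop:5th_averaging_settng} rewrites $\frac1{|k^*|}\sum_\chi\prod_{i=1}^5\tilde F(\theta\chi;\psi_i)$ (recall $\tilde F=F/\sqrt q$) as a fixed multiple of $\sum_{\chi_1,\dots,\chi_5}\prod_{i=1}^5 G(\chi_i,\chi_{i+1};\psi_i)$ with cyclic indices; I then expand each factor using $G(\chi_i,\chi_{i+1};\psi_i)=\frac1{|k^*|}\sum_{a_i\in k^*}F(\chi_i,\chi_{i+1};\psi_i,a_i)$ and $F(\chi_i,\chi_{i+1};\psi_i,a_i)=\sum_{x_i\in k}\chi_i(x_i+a_i)\overline{\chi_{i+1}}(x_i-a_i)\psi(A_ia_i^{-1}x_i)$.

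The next step is to interchange summation and carry out the sum over $\chi_1,\dots,\chi_5$ first. The character $\chi_j$ appears as $\chi_j(x_j+a_j)$ in the $j$-th factor and as $\overline{\chi_j}(x_{j-1}-a_{j-1})$ in the $(j-1)$-st, so orthogonality of multiplicative characters forces, for each $j$, the linkage $x_j+a_j=x_{j-1}-a_{j-1}=:u_j\in k^*$. Writing $x_j=(u_j+u_{j+1})/2$ and $a_j=(u_j-u_{j+1})/2$ (here $\operatorname{char} k\neq 2$ enters), the whole expression reduces, up to an explicit power of $q$, to $\sum_{u_1,\dots,u_5\in k^*,\,u_j\ne u_{j+1}}\prod_{j=1}^5\psi\!\bigl(A_j\tfrac{u_j+u_{j+1}}{u_j-u_{j+1}}\bigr)$. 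Two structural facts then enter. First, the $a_j$ are telescoping differences around the $5$-cycle, so $\sum_{j=1}^5 a_j=0$ holds identically. Second, reparametrizing the $u_j$ by $(x_1,\vec a)$ with $\vec a=(a_1,\dots,a_5)$ subject to $\sum a_j=0$, one has $x_j=x_1+\ell_j(\vec a)$ with $\ell_j$ linear and $\ell_1=0$, so $\sum_j A_jx_j/a_j$ is affine in $x_1$ with leading coefficient $\sum_j A_j/a_j$; carrying out the resulting complete additive character sum over $x_1\in k$ produces one more factor $|k|$ together with the relation $\sum_{j=1}^5 A_j/a_j=0$, and leaves the $x_1$-free phase $\sum_j A_j\ell_j(\vec a)/a_j$.

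Finally, both relations and the phase are invariant under the scaling $\vec a\mapsto\lambda\vec a$; fixing this scaling (normalizing one coordinate to $1$, which accounts for the constant terms $A_5$ and $1$ in the definition of ${\bf V}$) collapses the locus $\{\sum a_j=0,\ \sum A_j/a_j=0\}$ onto ${\bf V}(\underline A)$ --- through the substitution $a_j\leftrightarrow 1/a_j$ that matches the normalization in the statement --- and contributes a last factor $|k^*|$; the powers of $q$ then assemble exactly to $\tfrac1{|k|^{3/2}}$. A direct computation, eliminating $a_1$ by means of the two relations and using $\psi_1=\psi(A_1\,\cdot\,)$, then rewrites the surviving phase as $\psi_1(-\tilde h_A(a_2,a_3,a_4))$, the antisymmetric combinations $\tfrac{A_ia_i}{a_j}-\tfrac{A_ja_j}{a_i}$ being precisely what remains after this elimination. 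I expect the main obstacle to be exactly this last step: keeping track of the change of variables that identifies the $u$-parametrization with the coordinates $a_1,\dots,a_4$ in which ${\bf V}$ and $\tilde h_A$ are phrased, and verifying the algebraic identity for the phase. A secondary technical point is that the linkage $u_j\neq 0$ slightly truncates the $x_1$-sum, so replacing it by a complete sum over $k$ introduces boundary contributions supported on a lower-dimensional locus, which one must check are of smaller order.
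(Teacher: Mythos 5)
Your proposal follows essentially the same route as the paper: apply Proposition~\ref{prop:5th_averaging_settng}(3), expand the $G$'s via their definition, carry out the $\chi$-sums by orthogonality to obtain the cyclic linkage $x_j+a_j=x_{j-1}-a_{j-1}$, reparametrize, collapse the $x_1$-sum to impose $\sum_j A_j/a_j=0$, and quotient by the scaling symmetry to land on ${\bf V}(\underline A)$ with the phase $-\tilde h_A$. Your $u_j$-parametrization is a notational variant of the paper's direct use of the variety $V$ in $(x_i,a_i)$-coordinates, and the point you flag at the end (that orthogonality really forces the arguments of the $\chi_j$'s to be nonzero, so the $x_1$-sum is a priori over $k$ minus finitely many points, and completing it could in principle introduce a same-order error) is a genuine subtlety that the paper's proof also passes over without comment; it does not affect the eventual bound in Theorem~\ref{thm:exp_sum_5th_moment}, but it does mean the asserted exact equality deserves a one-line justification in a fully rigorous write-up.
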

\begin{proof}
By \eqref{eq:5th_mmnt_avgd_sum} we have that
\begin{equation}\label{eq:5th_mmnt_frst_sum}
\frac1{|k^*|}\sum_\chi\prod_{i=1}^5\tilde
F(\theta\chi;\psi_i)=\frac1{|k^*|^6|k|^{5/2}}\sum_{\chi_1,\dots,\chi_5\in
\widehat{k^*}}\prod_{i=1}^5G(\chi_i,\chi_{i+1};\psi_i)
\end{equation}
Summing over $\chi_1,\dots,\chi_5$ in \eqref{eq:5th_mmnt_frst_sum},
we get
\begin{eqnarray}\label{eq:5th_mmnt_scnd_sum}
&&\frac1{|k^*||k|^{5/2}}\sum_{\substack{a_1,\dots,a_5\\x_1,\dots,x_5}\in
V}\psi(\sum_{i=1}^5A_ia_i^{-1}x_i)=\\
\nonumber&&\frac1{|k^*||k|^{5/2}}\sum_{\substack{a_1,\dots,a_5\in
k^*\\a_1+\cdots+a_5=0}}\sum_{x_1\in
k}\psi((\sum_{i=1}^5A_ia_i^{-1})x_1+h(a_1,\dots,a_5))
\end{eqnarray}
 where
\begin{eqnarray*}
&&V=\left\{\substack{a_1,\dots,a_5\in k^*\\x_1,\dots,x_5\in
k}:\substack{x_i-a_i=x_{i+1}+a_{i+1},i=1,\dots,4,\\x_1+a_1=x_5-a_5}\right\}\\
&&h(a_1,\dots,a_5)=a_1\left(\sum_{i=1}^5\frac{A_i}{a_i}\right)+2\sum_{j=2}^4a_j\left(\sum_{i=j+1}^{5}\frac{A_i}{a_i}\right)
\end{eqnarray*}
Summing over $x_1$ gives
\begin{equation}\label{eq:5th_mmnt_thrd_sum}
\frac1{|k^*||k|^{3/2}}\sum_{\substack{a_1,\dots,a_5\in
k^*\\a_1+\cdots+a_5=0\\\frac{A_1}{a_1}+\cdots+\frac{A_5}{a_5}=0}}{\hspace{-10pt}}\psi(h(a_1,\dots,a_5))=\frac1{|k|^{3/2}}\sum_{a_1,a_3,a_4,a_5\in
{\bf{V}}(\underline A)}{\hspace{-20pt}}\psi(-\tilde
h_A(a_2,a_3,a_4))
\end{equation}
where the last equality is given by the change of variables
$a_i\mapsto a_5^{-1}A_ia_i$, using the resulting equality
$$a_1^{-1}+a_2^{-1}+a_3^{-1}+a_4^{-1}+1=0,$$
and summing over $a_5$ which no longer appears in the sum.
\end{proof}
\begin{prop}\label{prop:5th_mmnt_exp_sum_bnd}
Let $k$ be a finite field, and let $\psi$ be an additive character
of $k$. For $\underline A=(A_1,\dots,A_5)\in k^*$, let
${\bf{V}}(A)$, $\tilde h_A$ as above. Denote by
$$
S_A(\psi)=\sum_{a_1,a_3,a_4,a_5\in {\bf{V}}(\underline
A)}{\hspace{-20pt}}\psi_1(\tilde h_A(a_2,a_3,a_4))
$$
Then there exist $M>0$ such that
$$
S_A(\psi)\leq M|k|
$$
\end{prop}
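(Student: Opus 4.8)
The plan is to recognize $S_A(\psi)$ as an exponential sum of exactly the type controlled by Theorem~\ref{thm:bound_exp_sum}: an additive character summed against a rational function on a two--dimensional variety. Here the variety is $\mathbf V(\underline A)$ (four coordinates $a_1,\dots,a_4$ subject to two equations, hence generically a surface) and the function is $\tilde h_A$. Assuming, as we may, that $\psi$ is nontrivial, I would first clean up the geometry: use the linear relation $A_1a_1=-(A_5+A_2a_2+A_3a_3+A_4a_4)$ to eliminate $a_1$, thereby identifying $\mathbf V(\underline A)$ --- away from the coordinate hyperplanes $a_i=0$ and from $A_5+A_2a_2+A_3a_3+A_4a_4=0$, whose removal costs only $O(|k|)$ points and is absorbed into the claimed bound --- with an open subset of a surface $W_{\underline A}$ in $\mathbb A^3$ with coordinates $a_2,a_3,a_4$, cut out by the single polynomial obtained by clearing denominators in $a_1^{-1}+a_2^{-1}+a_3^{-1}+a_4^{-1}+1=0$. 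This polynomial has degree at most $4$, and after the same substitution $\tilde h_A$ is a rational function on $W_{\underline A}$ whose degree is likewise bounded, all bounds being independent of $\underline A$ and of $q=|k|$. Thus $S_A(\psi)=\sum_{x\in W_{\underline A}(k)}\psi(\tilde h_A(x))+O(|k|)$, and it remains to bound this sum by $O(|k|)$ with an absolute implied constant.

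Next I would verify the hypotheses of Theorem~\ref{thm:bound_exp_sum} (or, when $W_{\underline A}$ fails to be irreducible, of the cardinality version stated in the remark after it). Two geometric facts are needed, both uniformly in $\underline A$. First, that $W_{\underline A}$ is an absolutely irreducible surface, so that Lang--Weil gives $\#W_{\underline A}(k_\nu)=|k_\nu|^2+O(|k_\nu|^{3/2})$; this is a direct check that the degree $\le 4$ defining polynomial admits no nontrivial factorization over $\overline k$. Second, that the fibers $\{\tilde h_A=C\}\cap W_{\underline A}$ are absolutely irreducible for all but $R$ values of $C$, with $R$ absolute, and have degree at most $d$ with $d$ absolute. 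The degree bound is immediate from B\'ezout given the degree bounds on $W_{\underline A}$ and on the numerator of $\tilde h_A$. For the irreducibility of the generic fiber I would follow the method of Proposition~\ref{prop:geometric_restrictions}: a generic fiber is a curve in the surface $W_{\underline A}$, and fixing one coordinate reduces it to a plane curve of bounded degree which, starting from a generic point, one parametrizes rationally as in \eqref{eq:curve_parametrization}; this exhibits the generic fiber as a rational (hence absolutely irreducible) curve and pins the reducible fibers to the zero set of a resultant or discriminant, a polynomial in $C$ of bounded degree, hence a set of bounded cardinality. Granting both facts, Theorem~\ref{thm:bound_exp_sum} (whose proof already incorporates Lemma~\ref{lem:exp sum:variance->individual}) gives $|\sum_{x\in W_{\underline A}(k)}\psi(\tilde h_A(x))|\le B|k|$ with $B$ absolute, and hence $|S_A(\psi)|\le M|k|$.

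The remaining work, which I expect to be the main obstacle, is making the two geometric facts uniform in $\underline A$ and dealing with the genuinely special $\underline A$. If $W_{\underline A}$ has dimension $<2$, then $\#\mathbf V(\underline A)=O(|k|)$ and the bound is trivial. If $W_{\underline A}$ remains a surface but breaks into components, I would argue component by component; the one thing that must be excluded is that $\tilde h_A$ be constant on a two--dimensional component, since that would produce a genuine main term of size $|k|^2$ and make the fifth moment blow up rather than vanish. These degenerate configurations are precisely the ``symmetric'' ones alluded to at the start of this section, and the point to exploit is that the prior averaging step (the passage to $G(\chi_1,\chi_2;\psi)$, which forced $\chi_1=\chi_2$) has already collapsed the only symmetry that could create such a main term, so that any special $\underline A$ can be disposed of by a direct elementary estimate; everything else is B\'ezout bookkeeping together with appeals to Theorem~\ref{thm:bound_exp_sum} and Lemma~\ref{lem:exp sum:variance->individual}.
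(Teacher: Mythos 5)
Your overall plan---eliminate $a_1$ using the linear relation, view $\mathbf V(\underline A)$ as (an open subset of) a surface in $\mathbb A^3$, and then feed the two hypotheses of Theorem~\ref{thm:bound_exp_sum} (irreducibility of the surface, irreducibility of all but boundedly many fibers, with bounded degrees)---is exactly the paper's strategy. Two details of execution differ, and one of them is a genuine gap.

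First, for irreducibility of the surface $\mathbf V(\underline A)$ you propose ``a direct check that the degree $\le 4$ defining polynomial admits no nontrivial factorization over $\overline k$.'' The paper instead proves Lemma~\ref{lem:5th_mmnt_srfce_irred} by a point count: writing the number of $k_\nu$-points as $|k_\nu|^2+\frac1{|k_\nu|^2}\sum_b\prod_i Kl(A_i,b)+O(|k_\nu|^{-1})$ and invoking Weil's bound for Kloosterman sums gives $\#\mathbf V(A,\nu)=|k_\nu|^2+O(|k_\nu|^{3/2})$ for every $\nu$, so by Lang--Weil there is exactly one two-dimensional component. This neatly sidesteps the factorization bookkeeping (and, incidentally, settles your worry about degenerate $\underline A$: the count holds for \emph{every} $\underline A\in(k^*)^5$, so no special case arises). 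Your direct-check route is plausible in principle but you have not carried it out, and it is more delicate than your one sentence suggests.

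Second, and this is the real gap: your fiber-irreducibility argument transplants the method of Proposition~\ref{prop:geometric_restrictions} without noticing that the situation has changed. There, after elimination, the fiber was a \emph{conic}, and a conic through a known point is always rationally parametrizable, hence irreducible when the discriminant is nonzero. Here, eliminating $a_1$ and then $a_2$ (as the paper does in Proposition~\ref{prop:birational map for fibers}) leads to a plane curve $p(a_3,a_4)=0$ of degree~$4$. A quartic plane curve is generically of genus~$3$ and is \emph{not} a rational curve, so ``starting from a generic point, one parametrizes rationally as in \eqref{eq:curve_parametrization}'' does not apply, and the inference ``rational, hence absolutely irreducible'' has nothing to attach to. Irreducibility of the fibers must be established by some other means. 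What the paper actually does (Proposition~\ref{prop:poly irreducible}) is a hands-on case analysis: decompose a hypothetical factorization $p=qr$ into homogeneous components, use the fact that $p$ has homogeneous parts only in degrees $2,3,4$ to constrain the shapes of $q$ and $r$, and in each case derive finitely many polynomial conditions on $C$, concluding that $p$ is irreducible over $\overline k$ for all but at most $14$ values of $C$. You should replace your rational-parametrization step with an argument of this kind (or find a different route to the irreducibility of the quartic fibers); as written, that step fails.

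The remaining pieces of your proposal---degree bounds by B\'ezout, absorbing the excluded hypersurfaces into the $O(|k|)$ error, and the final appeal to Theorem~\ref{thm:bound_exp_sum}---are correct and match the paper.
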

\begin{proof}
 In appendix \ref{ap:irred_proofs} we prove the following
properties of ${\bf V}(A),\tilde h_A$
\begin{enumerate}
\item
For $A=(A_1,\dots,A_5)\in(k^*)^5$ the variety ${\bf V}(A)$ is an
irreducible two dimensional algebraic variety
\item
Except 14 values, all fibers of $\tilde h_A:{\bf{V}}(A)\to\overline
k$ are absolutely irreducible
\end{enumerate}
and therefore the proposition is a corollary of theorem
\ref{thm:bound_exp_sum}.
\end{proof}
Theorem \ref{thm:exp_sum_3rd_moment} is now a corollary of lemma
\ref{lem:5th_mmnt_is_exp_sum} and proposition
\ref{prop:5th_mmnt_exp_sum_bnd}.
\begin{remark}
Using the same averaging trick one can show that the third moment
satisfies
$$
\frac1{p-1}\sum_\chi
F(\chi;\psi_1)F(\chi;\psi_2)F(\chi;\psi_3)=p\sum_{a^2=B}\psi(a)
$$
where $B=(A_1+A_2+A_3)^2-4(A_1A_2+A_1A_3+A_2A_3)$
\end{remark}
\section{Matrix elements of the quantum cat map: Fluctuations in short
windows}\label{sec:P}
In this section we study the fluctuations of
the matrix elements of quantum cat map about their limit. We prove
the following theorem
\begin{thm}\label{thm:mtrx_fluct_3rd_mmnt}
Let $A\in SL_2(\bZ)$ be a hyperbolic matrix satisfying $A\equiv
I\pmod2$. Fix $f\in C^\infty(\bT^2)$ of zero mean. Assume
$L<2\ord(A,N)$, then as $N\to\infty$ through split primes satisfying
$\ord(A,N)/N^{2/3}\to\infty$, the third moment of $P(\theta)$
satisfies
\begin{equation}\label{eq:3rd moment result}
\int_0^1\left(\sqrt LP(\theta)\right)^3d\theta=O(\frac N{L^{3/2}})
\end{equation}
\end{thm}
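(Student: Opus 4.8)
The plan is to peel off a trigonometric polynomial, reduce to a third moment of the individual functions $P_\nu$, unfold this into a complete character sum over the cyclic subgroup $\langle\lambda_A\rangle\subset\bF_N^\times$ generated by a Frobenius eigenvalue, and then ``complete the subgroup'' so as to feed the resulting double sums to the square--root cancellation of Corollary~\ref{cor:3rd_mmnt_rqurd_bnd}. First I would fix $f\in C^\infty(\bT^2)$ of zero mean, write $f=f_R+g$ with $f_R=\sum_{\|n\|<R}\widehat f(n)e(nx)$, and discard $g$: using the pointwise bound $|\langle\OPN(g)\psi_j,\psi_j\rangle|\ll_g N^{-1/2}$ of Gurevich--Hadani \cite{GH}, the Parseval bound $\int_0^1 P_h^2\,d\theta\ll_h L^{-1}$, and H\"older, its contribution to $\int_0^1(\sqrt L\,P)^3\,d\theta$ tends to $0$ as $R\to\infty$ uniformly in $N$. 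By Corollary~\ref{cor:sharp expansion}, $P=\sum_{|\nu|<\|A\|_2R^2}f^\sharp(\nu)P_\nu$ is then a finite sum, and since the hyperbolic form $Q$ is anisotropic we have $f^\sharp(0)=\widehat f(0)=0$; so, expanding the cube, it suffices to prove
\begin{equation}\label{eq:plan:triple}
\int_0^1 P_{\nu_1}(\theta)P_{\nu_2}(\theta)P_{\nu_3}(\theta)\,d\theta=O\!\left(\frac N{L^3}\right)
\end{equation}
for every triple of nonzero $\nu_j$, with an absolute implied constant, for then $\int_0^1(\sqrt L\,P)^3=L^{3/2}\sum f^\sharp f^\sharp f^\sharp\cdot O(N/L^3)=O_f(N/L^{3/2})$.

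Next I would unfold \eqref{eq:plan:triple}. Write $n_0=\ord(A,N)$. In the split case the distinct eigenphases of $U_N(A)$ lie in an arithmetic progression of step $1/n_0$, so $P_\nu(\theta)=\sum_{l\bmod n_0}b_l^{(\nu)}h_L(\theta-l/n_0)$ where $b^{(\nu)}$ is the finite Fourier transform on $\bZ/n_0$ of $t\mapsto\tr\{T_N(n)U_N(A)^{-t}\}$ (with $Q(n)=\nu$); by Lemma~\ref{Kelmer's lemma}, $\tr T_N(n)=0$, and formula~\eqref{formula for q}, this trace vanishes for $t\equiv0\pmod{n_0}$ and otherwise equals, up to a fixed unit, $\psi_\nu\!\big(\tfrac{1+\lambda_A^{-t}}{1-\lambda_A^{-t}}\big)$, where $\psi_\nu(z)=e(\overline{2}\,\nu z/((\lambda_A-\lambda_A^{-1})N))$ is an additive character of $\bF_N$, injective in $\nu$ on nonzero residues. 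Substituting and integrating term by term, and setting $\Psi_0(e_1,e_2):=\int_0^1 h_L(\theta)h_L(\theta-e_1/n_0)h_L(\theta-e_2/n_0)\,d\theta\ge0$, a double Fourier inversion on $\bZ/n_0$ gives
\[
\int_0^1 P_{\nu_1}P_{\nu_2}P_{\nu_3}\,d\theta=\frac1{n_0^{2}}\sum_{e_1,e_2\bmod n_0}\Psi_0(e_1,e_2)\,T(e_1,e_2),
\]
\[
T(e_1,e_2)=\!\!\sum_{\substack{x_1x_2x_3=1\\x_j\in\langle\lambda_A\rangle\setminus\{1\}}}\!\!\mu_{e_1}(x_1)\mu_{e_2}(x_2)\prod_{j=1}^3\psi_{\nu_j}\!\Big(\tfrac{1+x_j}{1-x_j}\Big),
\]
where $\mu_e\in\widehat{\langle\lambda_A\rangle}$ and $\langle\lambda_A\rangle\subset\bF_N^\times$ has order $n_0$.

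Then I would complete the subgroup sum. Writing $\dsone_{\langle\lambda_A\rangle}$ as the average of the $(N-1)/n_0$ characters of $\bF_N^\times$ trivial on $\langle\lambda_A\rangle$, eliminating $x_3=(x_1x_2)^{-1}$, and absorbing the $\mu_{e_j}$ into the new characters, one gets $T(e_1,e_2)=\big(\tfrac{n_0}{N-1}\big)^2\sum_{\chi_1,\chi_2}S(\chi_1,\chi_2;\psi_{\nu_1})+O(n_0)$, where $S(\chi_1,\chi_2;\psi_{\nu_1})$ is exactly the double exponential sum~\eqref{eq:exp_sum_3rd_mmnt_explct} with affine parameters $A=\nu_2/\nu_1,\ B=\pm\nu_3/\nu_1$ (both nonzero in $\bF_N$ once $N>\max_j|\nu_j|$). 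By Corollary~\ref{cor:3rd_mmnt_rqurd_bnd}, $|S(\chi_1,\chi_2;\psi_{\nu_1})|\le M N$ with $M$ absolute and uniform in the characters and in $A,B$; since there are $((N-1)/n_0)^2$ pairs $(\chi_1,\chi_2)$, the two powers of $(N-1)/n_0$ cancel and $|T(e_1,e_2)|\ll N$ uniformly. Finally, since $h=\dsone_{[-1/2,1/2]}$,
\[
\sum_{e_1,e_2\bmod n_0}|\Psi_0(e_1,e_2)|=\int_0^1 h_L(\theta)\Big(\sum_{l\bmod n_0}h_L(\theta-l/n_0)\Big)^{2}d\theta\ll\frac{n_0^2}{L^3},
\]
because $\sum_l h_L(\theta-l/n_0)\ll n_0/L$ (here the hypothesis $L<2n_0$ is used) and $\int_0^1 h_L=1/L$; hence $\big|\int_0^1 P_{\nu_1}P_{\nu_2}P_{\nu_3}\,d\theta\big|\ll N/L^3$, which is \eqref{eq:plan:triple}.

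The crux, and the place where both hypotheses enter, is the passage from $\langle\lambda_A\rangle$ to $\bF_N^\times$: one can afford the $(N-1)/n_0$ auxiliary multiplicative characters per variable only because $\ord(A,N)\gg N^{2/3}$, while $\Psi_0$ is localized on the scale $n_0/L$ only because $L<2\ord(A,N)$; granted these, Corollary~\ref{cor:3rd_mmnt_rqurd_bnd} supplies the cancellation and the powers of $N$ and $L$ balance. I expect the routine but fiddly part to be the two Fourier inversions on $\bZ/n_0$ together with the various $O(n_0)=O(N)$ boundary corrections --- the terms $x_j=1$ and $x_1x_2=1$, the lone exceptional eigenphase, and the sign subtleties in $\tr\{T_N(n)U_N(A)^{-t}\}$ for $t$ outside a single period; the genuine arithmetic input is entirely that of \S\S\ref{sec:bground exp sums}--\ref{sec:prf_exp_sum_3rd_mmnt}.
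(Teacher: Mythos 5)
Your proof follows essentially the same route as the paper's: reduce to bounding $\int_0^1 P_{\nu_1}P_{\nu_2}P_{\nu_3}\,d\theta=O(N/L^3)$ for nonzero $\nu_i$ (using $f^\sharp(0)=0$, implicit in the paper since $Q$ is anisotropic), expand $P_\nu$ via Lemma~\ref{Kelmer's lemma} and formula~\eqref{formula for q}, complete the resulting sum over the cyclic subgroup $\langle\lambda_A\rangle$ to all of $\bF_N^\times$ by averaging over characters trivial on it, feed the completed sums to Corollary~\ref{cor:3rd_mmnt_rqurd_bnd}, and close with the weight estimate $\sum_{e_1,e_2}\Psi_0\ll n_0^2/L^3$ coming from $L<2\ord(A,N)$, which is exactly the paper's $\sum\gamma(j_1,j_2)=O(1)$. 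The one place you deviate is the initial truncation of $f$: the paper cuts at $\|n\|<N^{1/4}$ so the tail is $O(N^{-\infty})$ outright, whereas your fixed-$R$ cutoff combined with Gurevich--Hadani, the variance bound, and H\"older gives the tail contribution $\ll C(R)\sqrt{N}/\sqrt{L}$, which is not obviously $\ll_f N/L^{3/2}$ uniformly in $L$; taking $R$ to grow with $N$ as in the paper is the clean fix.
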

We begin the proof by a reduction to the computation of mixed
moments of $P_\nu(\theta)$. We show that it suffice to prove the
following proposition
\begin{prop}\label{prop:mtrx_flct_mxd_mmnt}
Let $A\in SL_2(\bZ)$ be a hyperbolic matrix satisfying $A\equiv
I\pmod2$. Fix $0\ne\nu_1,\nu_2,\nu_3\in\bZ$. Then under the
conditions of theorem \ref{thm:mtrx_fluct_3rd_mmnt}
\begin{equation}\label{eq:3rd_mxd moment result}
\int_0^1\left(P_{\nu_1}(\theta)P_{\nu_2}(\theta)P_{\nu_3}(\theta)\right)^3d\theta=O(\frac
N{L^3})
\end{equation}
\end{prop}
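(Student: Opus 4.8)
The plan is to rewrite each $P_\nu$ in terms of the exponential sums $F(\chi;\psi)$ of Section~\ref{sec:Fam_exp_sum} and then invoke Theorem~\ref{thm:exp_sum_3rd_moment}; I read the assertion as $\int_0^1 P_{\nu_1}P_{\nu_2}P_{\nu_3}\,d\theta=O(N/L^3)$. Work in a Hecke eigenbasis, indexing the eigenfunctions $\psi_\chi$ by characters $\chi$ of $\CA(N)$, which for a split prime we identify with $\widehat{\bF_N^*}$; let $\mu_0\in\bF_N^*$ be the parameter of $A$, of multiplicative order $\ord(A,N)$, so that the $U$--eigenphase of $\psi_\chi$ depends only on $\chi(\mu_0)$. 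Fix $n$ with $\omega(n,nA)=\nu\neq0$. The spectral projector is $\Pi_\chi=\frac1{N-1}\sum_{g\in\CA(N)}\overline{\chi(g)}\,\rho(g)$, where $\rho$ is the genuine $\CA(N)$--representation lifting the Hecke action, which satisfies $\rho(g)=\chi_2(g)U_N(g)$ (so $\rho(I)=\mathrm{Id}$); its $g=I$ term contributes $\tr(T_N(n)\mathrm{Id})=0$ for $n\not\equiv0\bmod N$, while Lemma~\ref{Kelmer's lemma} (whose proof is valid for every $g\in\CA(N)$, not merely powers of $A$) and \eqref{formula for q} evaluate the remaining terms; substituting $x=\lambda_g$ turns the sum over $g$ into the defining sum of $F$, and after a relabelling of $\chi$ that only shifts the eigenphases by a fixed amount,
\[
P_\nu(\theta)=\frac{(-1)^{n_1n_2}\sqrt N}{N-1}\sum_{\chi\in\widehat{\bF_N^*}}h_L(\theta-\vartheta_\chi)\,\tilde F(\chi;\psi^+_\nu),\qquad \psi^+_\nu(y):=e\!\Big(\tfrac{\overline 2\,\nu\,y}{(\lambda_A-\lambda_A^{-1})N}\Big),
\]
with $\tilde F=F/\sqrt N$, $\vartheta_\chi$ depending only on $\chi(\mu_0)$, and $\psi^+_\nu$ a nontrivial additive character of $\bF_N$ once $N$ is large (since $N\nmid\disc(Q)$, $N\nmid\nu$). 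This identity has no remainder: $\tr(T_N(n)\Pi_\chi)$ already sums the matrix elements over the possibly two--dimensional $\chi=\chi_2$ isotypic space.

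Next, choose $n_i$ with $\omega(n_i,n_iA)=\nu_i$, multiply the three expansions and integrate. The window overlap $W(\vartheta_{\chi_1},\vartheta_{\chi_2},\vartheta_{\chi_3}):=\int_0^1\prod_i h_L(\theta-\vartheta_{\chi_i})\,d\theta$ is nonnegative, at most $1/L$, and invariant under simultaneous translation of its arguments, so it depends only on the two roots of unity $\chi_2\chi_1^{-1}(\mu_0)$ and $\chi_3\chi_1^{-1}(\mu_0)$. Writing $\chi_2=\chi\rho_2$, $\chi_3=\chi\rho_3$, $\chi_1=\chi$,
\[
\int_0^1 P_{\nu_1}P_{\nu_2}P_{\nu_3}\,d\theta=\frac{C_0\,N^{3/2}}{(N-1)^3}\sum_{\rho_2,\rho_3\in\widehat{\bF_N^*}}W(\rho_2,\rho_3)\sum_{\chi\in\widehat{\bF_N^*}}\tilde F(\chi;\psi^+_{\nu_1})\tilde F(\chi\rho_2;\psi^+_{\nu_2})\tilde F(\chi\rho_3;\psi^+_{\nu_3}),
\]
with $|C_0|=1$ and $W(\rho_2,\rho_3)$ the overlap attached to $(\overline{\rho_2}(\mu_0),\overline{\rho_3}(\mu_0))$. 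The inner sum is exactly the shape bounded by Theorem~\ref{thm:exp_sum_3rd_moment} (over $k=\bF_N$, with multiplicative characters $1,\rho_2,\rho_3$ — trivial allowed — and nontrivial additive characters $\psi^+_{\nu_i}$), hence $\ll\sqrt N$ uniformly in $\rho_2,\rho_3$.

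It remains to sum the weights. Each value of $(\overline{\rho_2}(\mu_0),\overline{\rho_3}(\mu_0))$ is attained by exactly $m^2$ pairs, $m=(N-1)/\ord(A,N)$, so $\sum_{\rho_2,\rho_3}W(\rho_2,\rho_3)=m^2\int_0^1 h_L(\theta)\,n(\theta)^2\,d\theta$, where $n(\theta)$ counts the eigenvalue clusters within $1/(2L)$ of $\theta$. These clusters are $1/\ord(A,N)$--spaced, and since $L<2\ord(A,N)$ we get $n(\theta)\le\ord(A,N)/L+1\ll\ord(A,N)/L$; with $\int_0^1 h_L=1/L$ this gives $\sum_{\rho_2,\rho_3}W(\rho_2,\rho_3)\ll m^2\ord(A,N)^2/L^3=(N-1)^2/L^3$. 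Combining,
\[
\Big|\int_0^1 P_{\nu_1}P_{\nu_2}P_{\nu_3}\,d\theta\Big|\ \ll\ \frac{N^{3/2}}{(N-1)^3}\cdot\sqrt N\cdot\frac{(N-1)^2}{L^3}\ =\ O\!\Big(\frac N{L^3}\Big).
\]

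I expect the first step to be the crux: pinning down the exact dictionary between the cat--map matrix elements $\langle T_N(n)\psi_\chi,\psi_\chi\rangle$ and the sums $F(\overline\chi\chi_2;\psi^+_\nu)$. This is where the split hypothesis is essential — so that $\CA(N)\cong\bF_N^*$ and the family of Section~\ref{sec:Fam_exp_sum} is the right one — and where one must check that neither the $g=I$ term nor the quadratic--character eigenspace contributes an error, and match normalisations so that Theorem~\ref{thm:exp_sum_3rd_moment} applies verbatim. The cluster count is where $L<2\ord(A,N)$ enters; the stronger hypothesis $\ord(A,N)/N^{2/3}\to\infty$ is not used for this proposition, but is what makes the bound useful once, in reducing Theorem~\ref{thm:mtrx_fluct_3rd_mmnt} to it, one sums over $\nu$ against the rapidly decaying $f^\sharp(\nu)$ after truncating the Fourier expansion of a general smooth $f$.
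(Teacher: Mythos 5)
Your proof is correct and reaches the same bound as the paper, but along a genuinely different route. The paper works on the ``trace side'': it Fourier-expands $P_\nu(\theta)=\frac1L\sum_t e(t\theta)\widehat h(t/L)\,\tr\{T_N(n)U^{-t}\}$, computes the integral as a double sum over $\tau_1,\tau_2\pmod{\ord(A,N)}$ against a kernel $H_3$, Fourier-expands that kernel to isolate an incomplete exponential sum $S_3(j_1,j_2)$ over $\langle A\rangle\times\langle A\rangle$, and then completes $S_3$ to a full sum over $\CA(N)^2$ before invoking the exponential-sum bound; the leftover weight is $\sum_{j_1,j_2}\gamma(j_1,j_2)=H_3(0,0)=O(1)$ once $L<2\ord(A,N)$. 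You instead work directly on the ``Hecke eigenfunction side'': the spectral projector $\Pi_\chi$ turns each matrix element $\tr(T_N(n)\Pi_\chi)$ into the sum $F(\chi;\psi^+_\nu)$ of Section~\ref{sec:Fam_exp_sum} (after handling the $g=I$ term and the sign/quadratic-character twist), the triple product is reparametrized by $(\chi,\chi\rho_2,\chi\rho_3)$, and translation-invariance of the window pulls out a weight $W(\rho_2,\rho_3)$ so that the $\chi$-sum is precisely the object of Theorem~\ref{thm:exp_sum_3rd_moment}. In your version the paper's ``completion'' step is replaced by summing over all characters and the paper's ``incomplete window weight $\gamma$'' step is replaced by your cluster-count estimate for $\sum_{\rho_2,\rho_3}W(\rho_2,\rho_3)$; these are Fourier-dual pictures of the same cancellation, and after orthogonality in $\chi$ your inner sum collapses to exactly the paper's $\mathcal S(\rho_2,\rho_3;\psi)$. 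Your approach is arguably more transparent about where each hypothesis bites ($L<2\ord$ bounds the cluster count, split-ness gives $\CA(N)\cong\bF_N^*$ so the $F(\chi;\psi)$ dictionary applies verbatim, $\ord/N^{2/3}\to\infty$ is only needed to pass from the proposition to the theorem), though it leans on the matrix-element/$F$-dictionary which the paper exhibits only in Section~\ref{sec:Hecke_mtrx_lmnts} and in the discussion of Kelmer's work, whereas the paper's route here is self-contained from the $\tr\{T_N(n)U^{-t}\}$ formula of Lemma~\ref{Kelmer's lemma}. You also correctly flagged the typo in the statement (the integrand should be $P_{\nu_1}P_{\nu_2}P_{\nu_3}$, not its cube) and read the intended claim.
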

Theorem \ref{thm:mtrx_fluct_3rd_mmnt} is a consequence of this
proposition as follows:\\
Let $f(x)=\sum_{n\in\bZ^2}\hat f(n)e(nx)\in C^{\infty}(\bT^2)$.
Write $f(x)=f_N(x)+f_R(x)$ where
$$f_N(x)=\sum_{\|n\|<N^{1/4}}\hat
f(n)e(nx),f_R(x)=\sum_{\|n\|\geq N^{1/4}}\hat f(n)e(nx)$$ Then we
have that $P(\theta)=P_N(\theta)+P_R(\theta)$ correspondingly. By
the fast decay of the Fourier coefficients of $f(x)$,
$\|P_R(\theta)\|_\infty=O(\frac1{N^\infty})$, and by corollary
\ref{cor:sharp expansion} we have that
$$
P_N(\theta)=\sum_{\nu\in\bZ}f_N^\sharp(\nu)P_\nu(\theta)=\sum_{|\nu|<N}f^\sharp(\nu)P_\nu(\theta)+O(N^{-\infty})\\
$$
again by the fast decay of the Fourier coefficients. By
Cauchy-Schwartz inequality
$$
\int_0^1P^3(\theta)d\theta=\int_0^1\left(P_N(\theta)+P_R(\theta)\right)^3d\theta=\int_0^1P_N^3(\theta)d\theta+O_f(\frac1{N^\infty})
$$
Now
\begin{equation*}
\int_0^1P_N^3(\theta)d\theta=\sum_{\nu_1,\nu_2,\nu_3\in\bZ}f^\sharp(\nu_1)f^\sharp(\nu_2)f^\sharp(\nu_3)\int_0^1P_{\nu_1}(\theta)P_{\nu_2}(\theta)P_{\nu_3}(\theta)d\theta
\end{equation*}
which proves theorem \ref{thm:mtrx_fluct_3rd_mmnt} by proposition
\ref{prop:mtrx_flct_mxd_mmnt}.
\subsection{Proof of proposition \ref{prop:mtrx_flct_mxd_mmnt}}
Denote
\begin{equation*}
H_3(t_1,t_2)=\sum_{l_1,l_2\in\bZ}\^h(\frac{t_1+l_1\ord}L)\^h(\frac{t_2+l_2\ord}L)\^h(\frac{-t_1-t_2-(l_1+l_2)\ord}L)
\end{equation*}
Expanding the Fourier expansion of $P_{\nu}(\theta)$ and calculating
the integral, we get that
\begin{equation}\label{eq:3rd_moment_cal}
\int_0^1P_{\nu_1}(\theta)P_{\nu_2}(\theta)P_{\nu_3}(\theta)d\theta=\frac1{L^3}\sum_{\tau_1,\tau_2\pmod{\ord(A,N)}}H_3(\tau_1,\tau_2)e(\bar2\frac{v(A^{\tau_1},A^{\tau_2})}N)
\end{equation}
where
$$v(g_1,g_2)=q(k_1;g_1)+q(k_2;g_2)+q(k_3;g_1g_2)$$
with $\omega(k_i,k_iA)=\nu_i,i=1,2,3$. Since $H_3$ is periodic with
period $\ord(A,N)$, we can write it as follows:
\begin{equation}\label{gamma_3_Fourier}
H_3(\tau_1,\tau_2)=\sum_{j_1,j_2\pmod{\ord(A,N)}}\gamma(j_1,j_2)e(\frac{j_1\tau_1+j_2\tau_2}{\ord(A,N)})
\end{equation}
where
\begin{eqnarray*}
\gamma(j_1,j_2)=\frac1{\ord(A,N)^2}\sum_{\tau_1,\tau_2}\Gamma_3(\tau_1,\tau_2)e(\frac{-j_1\tau_1-j_2\tau_2}{\ord(A,N)})=\\
\frac{L^3}{\ord(A,N)^2}\int_0^1h_L(x-\frac{j_1}{\ord(A,N)})h_L(x-\frac{j_2}{\ord(A,N)})h_L(x)dx
\end{eqnarray*}
which are in particular positive. Plugging (\ref{gamma_3_Fourier})
in (\ref{eq:3rd_moment_cal}), and switching order of summation, we
get that the RHS of (\ref{eq:3rd_moment_cal}) is
\begin{eqnarray*}
&&\frac1{L^{3/2}}\hspace{-0.9cm}\sum_{j_1,j_2\pmod\ord(A,N)}\hspace{-0.9cm}\gamma(j_1,j_2)\sum_{\tau_1,\tau_2\pmod\ord(A,N)}e(\frac{j_1\tau_1+j_2\tau_2}{\ord(A,N)}+\frac{v(\lambda^{\tau_1},\lambda^{\tau_2})}{N})=\\
&&\frac1{L^{3/2}}\sum_{j_1,j_2\pmod\ord(A,N)}\gamma(j_1,j_2)S_3(j_1,j_2)
\end{eqnarray*}
where
\begin{equation}\label{def:exp_sum1}
S_3(j_1,j_2)=\sum_{\tau_1,\tau_2\pmod{\ord(A,N)}}e(\frac{j_1\tau_1+j_2\tau_2}{\ord(A,N)}+\frac{v(A^{\tau_i},A^{\tau_2})}{N})
\end{equation}
Proposition \ref{prop:mtrx_flct_mxd_mmnt} will follow by showing
that there exists $M>0$ such that for any $j_1,j_2\pmod\ord(A,N)$
$|S_3(j_1,j_2)|\leq MN$. To show this we complete the sum to an
exponential sum over the group $\CA(N)$. For a pair of characters
$\chi_1,\chi_2$ of $\CA(N)$, and an additive character $\psi$ of
$\bF_N$ , set (as in \eqref{eq:exp_sum_3rd_mmnt_explct})
\begin{equation}\label{def:exp_sum2}
\mathcal
S(\chi_1,\chi_2;\psi)=\sum_{x,y\in\CA(N)}\;\!\!'\chi_1(x)\chi_2(y)\psi(v(x,y))
\end{equation}
 Let $\psi$ be the additive character satisfying $\psi(1)=e(\frac{\nu_1}N)$ (recall $\nu_1\ne0$)), and let $g$ be a generator of $\CA(N)$ such that $g^r=A$, where $r=\frac{N}{\ord(A,N)}$. Denote by $\chi_0$ the character
of $\CA(N)$ satisfying $\chi_0(g)=e(\frac1{N-1})$. We therefore get
that $e(\frac{j\tau}{\ord(A,N)})=\chi_0^j(A^\tau)$. Writing the
indicator function of the subgroup generated by $A$ as
$$
\dsone_{A}(y) = \frac N{\ord(A,N)} \sum_{\substack{\theta\in
\CA(N)\\ \theta(A)=1}} \theta(y)
$$
and $a=\nu_1^{-1}\nu_2,b=\nu_1^{-1}\nu_3$, we can write $S_3$ as
$$S_3=\frac{N^2}{\ord(A,N)^2}\sum_{\substack{\theta_1,\theta_2\in \CA(N)\\ \theta_i(A)=1}}
\mathcal S(\chi_1\theta_1,\chi_2\theta_2;\psi)
$$

\begin{prop}\label{prop:bound_exp_sum}
Let $\chi_1,\chi_2$ be characters of $\CA(N)$, $\psi$ an additive
character of $\bF_N$, and $v(x,y)$ as above. Then if $A$ is
diagonalizable over the finite field $\bF_N$ then there exists $M>0$
such that $|S_3(j_1,j_2)|\leq MN$.
\end{prop}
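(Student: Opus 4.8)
The plan is to prove $|S_3(j_1,j_2)|\le MN$ by completing the sum \eqref{def:exp_sum1} over the cyclic subgroup $\langle A\rangle$ to an average of twisted complete sums over all of $\CA(N)$, and then to recognise each of those --- via the isomorphism $\CA(N)\cong\bF_N^\ast$ furnished by the split hypothesis --- as a relabelling of the exponential sum $S(\chi_1,\chi_2;\psi)$ of \eqref{eq:exp_sum_3rd_mmnt_explct}, for which Corollary \ref{cor:3rd_mmnt_rqurd_bnd} already supplies square-root cancellation. Concretely, using $\chi_0^{j}(A^\tau)=e(j\tau/\ord(A,N))$ together with the character expansion of the indicator of $\langle A\rangle\le\CA(N)$, one rewrites $S_3(j_1,j_2)$ --- up to the degenerate terms where one of $\tau_1$, $\tau_2$, $\tau_1+\tau_2$ vanishes mod $\ord(A,N)$ --- as the average, over the $(|\CA(N)|/\ord(A,N))^2$ pairs of characters $\theta_1,\theta_2$ of $\CA(N)$ trivial on $A$, of the twisted complete sums $\mathcal S(\chi_0^{j_1}\theta_1,\chi_0^{j_2}\theta_2;\psi)$ of \eqref{def:exp_sum2}. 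The excluded degenerate terms number $O(\ord(A,N))=O(N)$ and contribute $O(N)$, so everything comes down to bounding each $\mathcal S(\chi_0^{j_1}\theta_1,\chi_0^{j_2}\theta_2;\psi)$ by $O(N)$, uniformly in the characters.

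To do that, I would fix an isomorphism $\CA(N)\cong\bF_N^\ast$, $x\mapsto\lambda$, and let $\lambda_A$ be the image of a generator; then for $x\mapsto\lambda$, $y\mapsto\mu$ (so $xy\mapsto\lambda\mu$), formula \eqref{formula for q} together with $Q(k_i)=\omega(k_i,k_iA)=\nu_i$ gives
\[
v(x,y)=q(k_1;x)+q(k_2;y)+q(k_3;xy)=\frac{1}{\lambda_A-\lambda_A^{-1}}\left(\nu_1\frac{1+\lambda}{1-\lambda}+\nu_2\frac{1+\mu}{1-\mu}+\nu_3\frac{1+\lambda\mu}{1-\lambda\mu}\right)
\]
in $\bF_N$. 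Hence $\mathcal S(\chi_1,\chi_2;\psi)$ is exactly of the shape \eqref{eq:exp_sum_3rd_mmnt_explct}, with multiplicative characters $\chi_1,\chi_2$ of $\bF_N^\ast$ and with the coefficients $A_0,A,B$ equal to $\nu_1,\nu_2,\nu_3$ times the fixed constant $(\lambda_A-\lambda_A^{-1})^{-1}$ (and a further nonzero constant coming from $\psi$). These coefficients are nonzero in $\bF_N$: the $\nu_i$ are nonzero integers of size $O_f(1)$, hence nonzero mod $N$ once $N$ is large; $\lambda_A\ne\pm1$ since $\lambda_A$ generates $\bF_N^\ast$ and $N>3$; and $N\nmid\disc(Q)$ by hypothesis. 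Therefore Corollary \ref{cor:3rd_mmnt_rqurd_bnd} --- which holds for arbitrary pairs of multiplicative characters, trivial ones included, when $\mathrm{char}(k)\ne2$ --- applies and yields a constant $M_0=M_0(A,\nu_1,\nu_2,\nu_3)$ with $|\mathcal S(\chi_0^{j_1}\theta_1,\chi_0^{j_2}\theta_2;\psi)|\le M_0N$ for every $j_1,j_2,\theta_1,\theta_2$.

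Putting the two parts together, the average of the twisted sums is at most $M_0N$, and adding back the $O(N)$ from the degenerate terms gives $|S_3(j_1,j_2)|\le MN$ uniformly in $j_1,j_2$, which is the assertion. I expect the only genuine difficulty to live in Corollary \ref{cor:3rd_mmnt_rqurd_bnd} itself --- that is, in the fibre geometry of the cubic-type sum, handled in Section \ref{sec:prf_exp_sum_3rd_mmnt}; within the present proposition the single point that needs real attention is the non-degeneracy of the coefficients $A_0,A,B$, which is precisely why one must invoke $N\nmid\disc(Q)$ and take $N$ large enough that the $\nu_i$ survive mod $N$.
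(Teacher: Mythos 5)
Your proposal is correct and follows essentially the same route as the paper: pass through the isomorphism $\CA(N)\cong\bF_N^\ast$ given by the split hypothesis, use formula \eqref{formula for q} to identify $\mathcal S(\chi_1,\chi_2;\psi)$ from \eqref{def:exp_sum2} with the two-variable character sum $S(\chi_1,\chi_2;\psi)$ of \eqref{eq:exp_sum_3rd_mmnt_explct}, and then invoke Corollary~\ref{cor:3rd_mmnt_rqurd_bnd}. The paper's own proof is much terser (a one-line identification plus the appeal to the corollary), and it leaves implicit several points that you make explicit, all correctly: the character-expansion of the indicator of $\langle A\rangle$ to pass from $S_3$ to an average of $\mathcal S$'s, the $O(N)$ contribution of the degenerate terms where one of the group arguments is $\pm I$, and the nonvanishing mod $N$ of the coefficients $A_0,A,B$ via $N\nmid\disc(Q)$ and $N$ large relative to the fixed $\nu_i$. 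So the difference is one of thoroughness rather than of method, and the extra care is to your credit --- in particular the paper's normalisation in the displayed formula $S_3=\frac{N^2}{\ord(A,N)^2}\sum\mathcal S(\cdot)$ appears to be a typo (the correct prefactor is $(\ord(A,N)/|\CA(N)|)^2$), and your bookkeeping gives the correct cancellation of the completion factors.
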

\begin{proof}
If $A$ is diagonalizable over $\bF_N$, then $\CA(N)\simeq\bF_N^*$.
Under this isomorphism the sum $\mathcal S(\chi_1,\chi_2;\psi)$
becomes
$$
\mathcal
S(\chi_1,\chi_2;\psi)\sum_{x,y\in\bF_N^*}\chi_1(x)\chi_2(y)\psi(\frac{1+x}{1-x}+a\frac{1+y}{1-y}+b\frac{xy+1}{xy-1})
$$
By corollary \ref{cor:3rd_mmnt_rqurd_bnd} for the field $k=\bF_N$,
multiplicative characters
$\chi_1^{j_1}\theta_1,\chi_1^{j_2}\theta_2$, and additive character
$\psi$ the claim follows
\end{proof}
\begin{cor}
Let $\chi_1,\chi_2$ be characters of $\CA(N)$, $\psi$ an additive
character of $\bF_N$, and $v(x,y)$ as above. Then there exists $M>0$
such that $|S_3(j_1,j_2)|\leq MN$.
\end{cor}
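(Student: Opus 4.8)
The plan is to deduce the Corollary from Proposition \ref{prop:bound_exp_sum}, whose only hypothesis---that $A$ be diagonalizable over $\bF_N$---is automatic for the primes that matter. First I would observe that throughout this section $N$ is a prime not dividing $\disc(Q)=(\tr A)^2-4$, so the characteristic polynomial $x^2-(\tr A)x+1$ of $A$ has two \emph{distinct} roots mod $N$; if in addition $N$ is split for $A$ (the case relevant to Theorem \ref{thm:mtrx_fluct_3rd_mmnt}), those roots lie in $\bF_N$, hence $A$ is diagonalizable over $\bF_N$ and Proposition \ref{prop:bound_exp_sum} gives $|S_3(j_1,j_2)|\le MN$ directly. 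I would then check that $M$ is genuinely uniform: it is independent of $j_1,j_2$ because Corollary \ref{cor:3rd_mmnt_rqurd_bnd} bounds $|\mathcal S(\chi_1,\chi_2;\psi)|$ of \eqref{def:exp_sum2} uniformly over all pairs of multiplicative characters and all additive characters of $\bF_N$, and it is independent of $N$ because the constants produced by Theorems \ref{thm:one_dim_bound} and \ref{thm:bound_exp_sum} depend only on the degrees of the fibers and of the ambient surface, which stay bounded (only the \emph{number} of exceptional fibers and their degrees enter, not the field-dependent coefficients $a=\nu_1^{-1}\nu_2$, $b=\nu_1^{-1}\nu_3$).

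For the general statement, valid for every prime $N\nmid\disc(Q)$, I would next handle the inert case, where $\CA(N)\simeq\bF_{N^2}^1$ is the norm-one torus. Being a one-dimensional $\bF_N$-group, after deleting a point it is rationally parametrized by the affine line; substituting such a parametrization into the two factors of the completed sum $\mathcal S(\chi_1,\chi_2;\psi)$ of \eqref{def:exp_sum2} converts it into an exponential sum over an affine surface over $\bF_N$ whose summand is again a product of two multiplicative characters and one additive character evaluated at rational functions of two variables. I would verify, along the lines of Propositions \ref{prop:3rdmmnt_fibers_irred} and \ref{prop:geometric_restrictions}, that all but a bounded number of fibers of the relevant map to $\overline{\bF_N}$ are absolutely irreducible of bounded degree, invoke Theorem \ref{thm:bound_exp_sum} (through Lemma \ref{lem:exp sum:variance->individual}) to get $|\mathcal S(\chi_1,\chi_2;\psi)|\ll N$, and close with the same passage from the incomplete sum $S_3$ over $\langle A\rangle$ to the complete sum over $\CA(N)$---via the character expansion of the indicator of $\langle A\rangle$---as in the proof of Proposition \ref{prop:bound_exp_sum}. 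The finitely many primes dividing $\disc(Q)$ are excluded by hypothesis; were one to admit them, each gives merely the trivial bound $|S_3(j_1,j_2)|\le\ord(A,N)^2\le N^2$, absorbed into $M$.

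The main obstacle I foresee is exactly this last inert-case reduction: after the rational change of coordinates the defining functions are not literally $f(x,y)=x+Ay-B\frac{xy+1}{x+y}$, so the discriminant and resultant computations underlying Proposition \ref{prop:geometric_restrictions} have to be redone in the new coordinates before Theorem \ref{thm:bound_exp_sum} can be applied. The split case, by contrast, is immediate from Proposition \ref{prop:bound_exp_sum} and carries no new difficulty; once the inert fiber analysis is in place, the Corollary follows formally from the machinery already developed in Sections \ref{sec:bground exp sums} and \ref{sec:Fam_exp_sum}.
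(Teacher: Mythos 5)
Your split case is correct and matches the paper: for split primes $N\nmid\disc(Q)$, $A$ is diagonalizable over $\bF_N$, so Proposition~\ref{prop:bound_exp_sum} applies directly, and your observations about uniformity of $M$ in $j_1,j_2,N$ are sound. But the corollary is stated for all primes $N\nmid\disc(Q)$, and it is precisely the inert case where your proposal and the paper part ways. You propose to rationally parametrize the norm-one torus $\bF_{N^2}^1$, pull the summand back to an exponential sum in new coordinates over $\bF_N$, and then redo the fiber-irreducibility analysis of Propositions~\ref{prop:3rdmmnt_fibers_irred} and~\ref{prop:geometric_restrictions} in those coordinates. You explicitly flag that this recomputation is not carried out, so as written the proposal has a genuine gap at the one point where the corollary extends beyond Proposition~\ref{prop:bound_exp_sum}.

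The paper's actual argument for the inert case is a short base-change trick that avoids all of this new geometry. View $\CA$ and the relevant constraint variety as algebraic over $\overline{\bF_N}$; the exponential sum $\mathcal S(\chi_1,\chi_2;\psi)$ then has an $L$-function with roots $w_1,\dots,w_l$, and by Deligne's rationality the corresponding sum over the degree-$2$ extension equals $\sum_i w_i^2$. Over $\bF_{N^2}$ the matrix $A$ is always diagonalizable, so Proposition~\ref{prop:bound_exp_sum} applied with $k=\bF_{N^2}$ (through Lemma~\ref{lem:exp sum:variance->individual}, which actually bounds each root) gives $|w_i^2|\le N^2$, hence $|w_i|\le N$; Katz's theorem bounds $l$ independently of $N$, yielding $|\mathcal S|\le lN\ll N$ and thence $|S_3|\ll N$ exactly as in your split-case passage from the complete to the incomplete sum. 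Your route would in principle succeed as well, but only after rederiving the analogue of Proposition~\ref{prop:geometric_restrictions} in the new coordinates, which is substantial extra work that the paper's base-change argument renders unnecessary.
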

\begin{proof}
Consider the group $\CA(N)$ as the $\bF_N$ rational points of the
algebraic group $\{B\in SL_2(\overline{\bF_N}):AB-BA=0\}$. Denote by
$w_1,\dots,w_l$ the roots of the exponential sum $\mathcal
S(\chi_1,\chi_2;\psi)$. Denote by $\sigma$ the Frobenius
automorphism of $\bF_{N^2}/\bF_N$, and for $x\in\CA(N^2)$ denote by
$\mathcal N(x)=x\sigma(x)$. Then by Deligne's result we have that
$$
\mathcal
S_2(\chi_1,\chi_2;\psi)=\sum_{x,y\in\CA(N^2)}\chi_1\circ\mathcal
N(x)\chi_2\circ\mathcal N(y)\psi(\tr(v(x,y))
$$
satisfies
$$
\mathcal S_2=w_1^2+\cdots+w_l^2
$$
Over $\bF_{N^2}$ the matrix $A$ is diagonalizable, and therefore we can apply
proposition \ref{prop:bound_exp_sum}, and get that $|w_i^2|\leq
N^2,i=1,\dots,l$, hence $|w_i|\leq N$, and the corollary follows.
\end{proof} We now have that
\begin{equation}
\int_0^1\left(P(\theta)\right)^3d\theta=O\left(\frac{N}{L^3}\sum_{j_1,j_2}|\gamma(j_1,j_2)|\right)
\end{equation}
and since $\gamma(j_1,j_2)$ are positive we can drop the absolute
value and remain with $\sum|\gamma(j_1,j_2)|=\Gamma(0)=O(1)$ since
$L<2\ord(A,N)$. This concludes the proof.
\section{Hecke matrix elements:
Independence}\label{sec:Hecke_mtrx_lmnts}
In \cite{KR_Hecke} Kurlberg and Rudnick showed that for any
hyperbolic matrix $A\in SL_2(\bZ)$, and for any $N$ there exist a
basis $\{\psi_i\}_{i=1}^N$ of $U_N(A)$ called (the) Hecke basis
satisfying that for any smooth function $f\in C^{\infty}(\bT^2)$
\begin{equation}\label{eq:KR_Hecke_AQUE}
|\langle\OPN(f)\psi_i,\psi_i\rangle-\int_{\bT^2}f|\ll
N^{-1/4-\epsilon}
\end{equation}
This result was later improved by Hadani and Gurevich for $N$ prime
to
\begin{equation}\label{eq:HG_Hecke_AQUE}
|\langle\OPN(f)\psi_i,\psi_i\rangle-\int_{\bT^2}f|\ll N^{-1/2}
\end{equation}
It was later conjectured by them (\cite{KR_Annals}) that when
normalizing these matrix elements by the correct size of $N$, and
add together the Fourier coefficients that correspond to natural
symmetries of the system, the fluctuations become equidistributed
and independent in the semiclassical limit. In this section we prove
some agreement with this conjecture. To state the precise theorem we
start with a some background that was not covered in section
\ref{sec:background}. As a general reference we use
\cite{KR_Hecke,KR00,KR_Annals}
\subsection{Hecke Theory for the Quantum cat map}
For a hyperbolic matrix $A\in SL_2(\bZ)$ satisfying $A\equiv
I\pmod2$, and $N$ prime Let
\begin{equation}\label{def:Hecke_group}
\CA(N)=SO(Q,\bZ/N\bZ)=\{B\in SL_2(\bZ/N\bZ): AB=BA\pmod N\}
\end{equation}
where $Q(n)=\omega(n,nA)$. The set
$$
\{U_N(B):B\in\CA(N)\}
$$
Is called the Hecke operators, and a basis of joint eigenfunction of
all Hecke operators is called a Hecke basis.
\begin{lem}\label{lem:Hecke_lmnts_invariant}
Let $\{\psi_j\}_{j=1}^N$ be a Hecke basis, and let $m,n\in\bZ^2$
such that $Q(n)=Q(m)$, then for all sufficiently large primes $N$ we
have
$$
(-1)^{n_1n_2}\langle T_N(n)\psi_j,\psi_j\rangle=(-1)^{m_1m_2}\langle
T_N(m)\psi_j,\psi_j\rangle,\quad j=1,\dots,N
$$
\end{lem}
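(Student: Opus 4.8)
The plan is to combine the ``Exact Egorov'' property with the transitivity of the Hecke group on the level sets of $Q$. First observe that, since $A$ is hyperbolic, its eigenlines are irrational, so $Q(n)=\omega(n,nA)$ vanishes only when $n=0$; hence from $Q(n)=Q(m)$ we may assume $n,m\neq0$ and put $c:=Q(n)=Q(m)\neq0$ (the case $n=0$ forces $m=0$ and $T_N(0)=I$, so the identity is trivial). As $n,m$ are fixed and $c\neq0$, for all large primes $N$ one has $N\nmid c$ and $N\nmid(\tr A)^2-4=\disc(Q)$, so $Q$ reduces to a nondegenerate binary quadratic form over $\bF_N$ and $n,m$ both lie on the nonempty level set $\{v\in(\bZ/N\bZ)^2:Q(v)\equiv c\}$, which does not contain the origin. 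Using the identification $\CA(N)\cong SO(Q,\bZ/N\bZ)$ of \cite{KR_Annals} and the fact that the special orthogonal group of a nondegenerate binary form over a finite field (the diagonal torus in the split case, the norm-one torus of $\bF_{N^2}$ in the inert case) acts transitively on each of its nonzero level sets, I would produce $B\in\CA(N)$ with $nB\equiv m\pmod N$.

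Next I would transport this to the operators $T_N$. Choose an integral lift $\tilde B\in SL_2(\bZ)$ of $B$ with $\tilde B\equiv I\pmod2$; this exists since the reduction $SL_2(\bZ)\to SL_2(\bZ/2N\bZ)$ is onto. Applying the Exact Egorov identity $U_N(\tilde B)^*\OPN(g)U_N(\tilde B)=\OPN(g\circ\tilde B)$ to $g(x)=e(nx)$, for which $\OPN(g)=T_N(n)$ and $g\circ\tilde B=e((n\tilde B)x)$, gives $U_N(\tilde B)^*T_N(n)U_N(\tilde B)=T_N(n\tilde B)$. A direct computation modulo $2$ using $\tilde B\equiv I\pmod2$ shows $(-1)^{(n\tilde B)_1(n\tilde B)_2}=(-1)^{n_1n_2}$; combining this with the elementary fact that $(-1)^{k_1k_2}T_N(k)$ is unchanged when $k$ is translated by a vector in $N\bZ^2$, and with $n\tilde B\equiv m\pmod N$, I obtain
$$U_N(\tilde B)^*\,\bigl((-1)^{n_1n_2}T_N(n)\bigr)\,U_N(\tilde B)=(-1)^{m_1m_2}T_N(m).$$
Finally, a Hecke eigenfunction $\psi_j$ is also an eigenvector of $U_N(\tilde B)$ (which commutes with $U_N(A)$ and coincides, up to a unimodular scalar, with a Hecke operator), say $U_N(\tilde B)\psi_j=\lambda\psi_j$ with $|\lambda|=1$; taking the diagonal matrix element of the displayed identity against $\psi_j$ and using unitarity of $U_N(\tilde B)$ yields precisely $(-1)^{m_1m_2}\langle T_N(m)\psi_j,\psi_j\rangle=(-1)^{n_1n_2}\langle T_N(n)\psi_j,\psi_j\rangle$.

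The step I expect to be the main obstacle is the sign bookkeeping in the second paragraph: Exact Egorov is stated for integral symplectic matrices while $B\in\CA(N)$ lives only modulo $N$, and $T_N$ is only a projective representation, so one must verify that all the phase ambiguities cancel — this is exactly what the choice $\tilde B\equiv I\pmod2$ is arranged to do, and it is also where the hypothesis $A\equiv I\pmod2$ enters. A shorter route to the same cancellation is to invoke Lemma \ref{Kelmer's lemma} together with formula \eqref{formula for q}, which already exhibit that $(-1)^{k_1k_2}\tr\{T_N(k)U_N(g)\}$ depends on $k$ only through $Q(k)$, and then to pass from traces to diagonal matrix elements by expanding the rank-one Hecke projections in the operators $U_N(g)$, $g\in\CA(N)$.
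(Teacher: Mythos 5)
The paper states this lemma without proof; it is part of the Kurlberg--Rudnick Hecke theory cited from \cite{KR_Hecke,KR_Annals}, where it is established via the averaging operator $D(n)=\frac1{|\CA(N)|}\sum_{B\in\CA(N)}T_N(nB)$, whose diagonal Hecke matrix elements agree with those of $T_N(n)$ and which manifestly depend on $n$ only through its $\CA(N)$-orbit modulo $N$, i.e.\ through $Q(n)$. Your argument is correct and is the direct, ``un-averaged'' version of the same mechanism: instead of averaging over $\CA(N)$ you exhibit a single $B$ with $nB\equiv m\pmod N$ and conjugate once, which is available by exactly the same orbit computation (transitivity of $SO(Q,\bF_N)$ on nonzero level sets of the nondegenerate form, a one-line check both in the split case, where the stabiliser is $\bG_m$ acting as $(t,t^{-1})$, and in the inert case, where it is the norm-one torus of $\bF_{N^2}$ acting by multiplication). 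The two flavours are interchangeable, since for a joint eigenbasis $\langle D(n)\psi_j,\psi_j\rangle=\langle T_N(n)\psi_j,\psi_j\rangle$.

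The sign bookkeeping you flag as the main risk is in fact clean: with $\tilde B\equiv I\pmod2$ the parities of $(n\tilde B)_1(n\tilde B)_2$ and $n_1n_2$ agree, and for odd $N$ the operator $(-1)^{k_1k_2}T_N(k)$ depends only on $k\bmod N$, since shifting $k$ by $Ne_i$ multiplies $T_N(k)$ by $(-1)^{k_{3-i}}$ while the prefactor changes by $(-1)^{Nk_{3-i}}$, and $(-1)^{(N+1)k_{3-i}}=1$. Two small points worth making explicit if you write this up: (i) you should note why $Q(n)=Q(m)=0$ forces $n=m=0$ (you do, via irrationality of the eigenlines of the hyperbolic $A$), since that is exactly why ``sufficiently large $N$'' suffices to avoid the degenerate level set; and (ii) the claim that $U_N(\tilde B)$ is, up to a unimodular scalar, the Hecke operator attached to $B$ is the content of the multiplicativity discussion in Proposition~\ref{prop:U(A) properties} and of Kelmer's formula \eqref{eq:Kelmer Quantization}, which defines $U_N$ intrinsically modulo $N$; a unimodular scalar is harmless for diagonal matrix elements, as you observe.
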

In light of this lemma, for $\nu\in\bZ$, and $\psi$ a Hecke
eigenfunction define
$$
Y_\nu(\psi)=\sqrt N(-1)^{n_1n_2}\langle T_N(n)\psi,\psi\rangle
$$
where $n\in\bZ^2$ is such that $Q(n)=\nu$ if it exists (This is well
defined by lemma \ref{lem:Hecke_lmnts_invariant}). With this
definition conjecture \ref{conj:KR_equidistribution conjecture} is
the same as
\begin{conj}
As $N\to\infty$ through primes, ,for any $\nu\in Z$, the normalized
matrix element $Y_\nu(\psi)$ has a limiting distribution of
$\tr(U_\nu)$ as in conjecture \ref{conj:KR_equidistribution
conjecture}. Moreover, the sequence
$$
\dots,Y_{-3}(\psi),\;Y_{-2}(\psi),\;Y_{-1}(\psi),Y_1(\psi),\;Y_2(\psi),\;Y_3(\psi),\dots
$$
converge to a sequence of IID random variables
\end{conj}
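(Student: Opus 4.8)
The plan is to reduce this conjecture to Conjecture \ref{conj:exponential_sum} on the family $\{\tilde F(\chi;\psi)\}$, and, unconditionally, to read off from the low mixed-moment bounds of Section \ref{sec:Fam_exp_sum} the corresponding low moments of the $Y_\nu(\psi_j)$ --- which is what is actually attainable at present. I would carry this out in the split case $\CA(N)\cong\bF_N^*$; the inert case is parallel once one has the analogue of Conjecture \ref{conj:exponential_sum} for sums over the norm-one subgroup of $\bF_{N^2}^*$, and the unconditional moment bounds there descend from the split ones over $\bF_{N^2}$ by the Deligne base-change argument used after Proposition \ref{prop:bound_exp_sum}.

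First I would put $Y_\nu(\psi)$ in closed form. Fix a Hecke eigenfunction $\psi=\psi_j$ with Hecke eigencharacter $\chi=\chi_j$, so $\widetilde U_N(g)\psi=\chi(g)\psi$ for all $g\in\CA(N)$. For $N$ large all Hecke eigenspaces are one-dimensional except the one attached to the quadratic character in the split case, whence $\langle T_N(n)\psi,\psi\rangle=\tr(T_N(n)P_\chi)$ with $P_\chi=\frac1{|\CA(N)|}\sum_{g\in\CA(N)}\overline\chi(g)\widetilde U_N(g)$. The term $g=I$ contributes nothing since $\tr T_N(n)=0$ for $n\not\equiv0\bmod N$, and on the remaining $g$ Lemma \ref{Kelmer's lemma} gives $\tr(T_N(n)\widetilde U_N(g))=\eps(-1)^{n_1n_2}e(\overline2\,q(n;g)/N)$. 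Inserting formula \eqref{formula for q}, which under the identification $\CA(N)\cong\bF_N^*$, $g\leftrightarrow\lambda$, writes $q(n;g)=\frac{Q(n)}{\lambda_A-\lambda_A^{-1}}\cdot\frac{1+\lambda}{1-\lambda}$, and absorbing the constant $Q(n)=\nu$ into the additive character, one obtains for $N$ large
\[
Y_\nu(\psi_j)=\eps\,\tilde F(\overline{\chi_j};\psi_\nu)+O(1/N),
\qquad
\psi_\nu(y)=e\bigl(\overline2\,\nu\,(\lambda_A-\lambda_A^{-1})^{-1}y/N\bigr),
\]
where the error term comes from $|\CA(N)|=N-1$ versus $\sqrt N$ together with $|\tilde F|\le1$, and $\psi_\nu$ is nontrivial because $\nu\not\equiv0\bmod N$ for $N$ large while $\lambda_A-\lambda_A^{-1}\ne0$ since $N\nmid\disc(Q)$.

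Next I would transfer the average over the Hecke basis to an average over characters. As $j$ runs over $1,\dots,N$ the characters $\overline{\chi_j}$ run through all of $\widehat{\CA(N)}$, each exactly once apart from the quadratic character; hence for distinct nonzero integers $\nu_1,\dots,\nu_m$ and any bounded continuous $\Phi$ on $\bR^m$,
\begin{multline*}
\frac1N\sum_{j=1}^N\Phi\bigl(Y_{\nu_1}(\psi_j),\dots,Y_{\nu_m}(\psi_j)\bigr)\\
=\frac1{|\CA(N)|}\sum_{\chi}\Phi\bigl(\eps\,\tilde F(\chi;\psi_{\nu_1}),\dots,\eps\,\tilde F(\chi;\psi_{\nu_m})\bigr)+O(1/N).
\end{multline*}
For distinct integers $\nu_i$ the additive characters $\psi_{\nu_i}$ are pairwise distinct, and nontrivial, once $N$ exceeds all of them, so the $m$ pairs (trivial character, $\psi_{\nu_i}$) are distinct and Conjecture \ref{conj:exponential_sum}(2) applies verbatim to the right-hand side, giving convergence to $\mathbb E\,\Phi(\eps X_1,\dots,\eps X_m)$ with $X_1,\dots,X_m$ i.i.d.\ Sato--Tate; and since $\tr(-U)$ and $\tr(U)$ have the same law the sign $\eps$ is immaterial. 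Taking $m=1$ gives the single-$\nu$ Sato--Tate statement, and letting $m\to\infty$ gives the asserted i.i.d.\ limit for the whole sequence $(Y_\nu(\psi))_\nu$ --- conditionally on Conjecture \ref{conj:exponential_sum}. Unconditionally, the displayed transfer identity together with Proposition \ref{prop:basic prop exp sum} (second moment and covariances), Theorem \ref{thm:exp_sum_3rd_moment} (mixed third moment) and Theorem \ref{thm:exp_sum_5th_moment} (fifth moment) shows that the corresponding mixed moments of the $Y_\nu(\psi_j)$ equal those of sums of independent $\tr(U)$'s up to $O(1/\sqrt N)$; in particular all odd moments through degree five vanish in the limit, the fifth-moment statement being exactly Theorem \ref{thm:mtrx_lmns_ffth_mmnt_gnrl}.

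The entire obstruction is thus Conjecture \ref{conj:exponential_sum} itself. Regarded as a function of the multiplicative character $\chi$, the sum $F(\chi;\psi)=\sum_{0,1\ne x}\chi(x)\psi(\frac{1+x}{1-x})$ is a finite-field Mellin transform in the sense of Katz; the Sato--Tate law amounts to showing that the associated middle-extension $\ell$-adic sheaf on the $\chi$-line has geometric monodromy group containing $SL_2$, and the independence across distinct $\psi_\nu$ amounts to showing that the sheaves attached to $\psi_\nu$ and $\psi_{\nu'}$ become geometrically non-isomorphic after every Kummer twist and multiplicative translation. These are genuinely hard Katz-style monodromy computations; what the present paper delivers --- through Theorems \ref{thm:exp_sum_3rd_moment} and \ref{thm:exp_sum_5th_moment}, which rest on the absolute-irreducibility statements of Sections \ref{sec:prf_exp_sum_3rd_mmnt}--\ref{sec:prf_exp_sum_5th_mmnt} and the Weil/Deligne bounds recalled in Section \ref{sec:bground exp sums} --- is only the low-degree moment shadow of this conjecture.
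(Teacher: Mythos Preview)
The statement is a conjecture, and the paper does not prove it; it is presented as a reformulation of Conjecture~\ref{conj:KR_equidistribution conjecture}, and the surrounding Section~\ref{sec:Hecke_mtrx_lmnts} only establishes the fifth-moment consequence (Theorem~\ref{thm:mtrx_lmnt_5th_mmnt}). Your proposal is accordingly conditional, and your reduction of the full statement to Conjecture~\ref{conj:exponential_sum} via the identification $Y_\nu(\psi_j)\approx\eps\,\tilde F(\overline{\chi_j};\psi_\nu)$ is exactly what the paper itself records in Section~\ref{sec:discussion}, citing Kelmer for the split-case identification of Hecke matrix elements with $F(\chi;\psi)$.

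Where you diverge from the paper is in the route to the unconditional fifth moment. You go straight through the pointwise identity $Y_\nu(\psi_j)\approx\eps\,\tilde F(\overline{\chi_j};\psi_\nu)$ and then invoke Theorem~\ref{thm:exp_sum_5th_moment}. The paper instead avoids writing individual matrix elements in closed form: it introduces the averaging operator $D(n)=|\CA(N)|^{-1}\sum_{B}T_N(nB)$, uses Lemmas~\ref{lem:D_is_diagonal}--\ref{lem:Hecke_mmnts_r_tr} to replace $\sum_j\prod_i\langle T_N(n_i)\psi_j,\psi_j\rangle$ by $\tr\bigl(D(n_1)\cdots D(n_5)\bigr)$, and then unfolds this trace (Lemma~\ref{lem:5th_mmnt_mtrx_lmnt_exp_sum}) via \eqref{eq:trace_of_TN}--\eqref{eq:CCRN} into the sum over $(B_1,\dots,B_5)\in\CA(N)^5$ with $\sum n_iB_i\equiv0$. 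After diagonalizing in the split case (Proposition~\ref{prop:5th_mmnt_mtrx_bnd}) this lands on the very same sum over $\mathbf V(A)$ that your route reaches through Lemma~\ref{lem:5th_mmnt_is_exp_sum}. Your path is shorter once the Kelmer identification is granted, but it sweeps the anomaly at the quadratic character (the unique two-dimensional Hecke eigenspace in the split case, the missing character in the inert case) into the $O(1/N)$; the paper's $D(n)$ route handles that bookkeeping explicitly in Lemma~\ref{lem:D_is_diagonal} and never needs the per-eigenfunction closed form.
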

In this section we prove the following theorem, which is in
agreement with conjecture \ref{conj:KR_equidistribution conjecture}.
\begin{thm}\label{thm:mtrx_lmnt_5th_mmnt}
Let $0\ne\nu_1,\dots,\nu_5\in\bZ$. Let $N$ be a prime number, let
$\{\psi_j\}_{j=1}^N$ be a Hecke basis of $U_N(A)$, then
\begin{equation}\label{eq:mtrx_lmnt_5th_mmnt}
\frac1N\sum_{j=1}^NY_{\nu_1}(\psi_j)\dots
Y_{\nu_5}(\psi_j)\ll\frac1{\sqrt N}
\end{equation}
\end{thm}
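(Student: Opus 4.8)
The plan is to deduce \eqref{eq:mtrx_lmnt_5th_mmnt} from the fifth--moment bound for the exponential sums $F(\chi;\psi)$ of Theorem~\ref{thm:exp_sum_5th_moment}, the bridge being the Hecke spectral projectors together with Kelmer's trace formula (Lemma~\ref{Kelmer's lemma}). Fix $n_i\in\bZ^2$ with $Q(n_i)=\nu_i$, so that $Y_{\nu_i}(\psi_j)=\sqrt N\,(-1)^{(n_i)_1(n_i)_2}\langle T_N(n_i)\psi_j,\psi_j\rangle$ (independent of the choice of $n_i$ by Lemma~\ref{lem:Hecke_lmnts_invariant}). Let $\widetilde U_N=\eps\,U_N$ be the multiplicative renormalization of Proposition~\ref{prop:U(A) properties}. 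Every Hecke character $\Lambda$ of $\CA(N)$ with one--dimensional eigenspace has rank--one spectral projector $\frac1{|\CA(N)|}\sum_{B\in\CA(N)}\overline{\Lambda(B)}\,\widetilde U_N(B)$, so Lemma~\ref{Kelmer's lemma} --- which gives $\tr\{T_N(n)\widetilde U_N(B)\}=\eps(-1)^{n_1n_2}e(\overline{2}\,q(n;B)/N)$ for $B\ne I$, and $\tr\{T_N(n)\widetilde U_N(I)\}=\eps\tr T_N(n)=0$ since $n\ne0$ --- writes each $\langle T_N(n_i)\psi_j,\psi_j\rangle$ as a sum over $\CA(N)$. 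I would expand the fivefold product, sum over $j=1,\dots,N$, and use that the Hecke characters of $\psi_1,\dots,\psi_N$ exhaust $\widehat{\CA(N)}$ except for the quadratic character $\chi_2$ (which has a two--dimensional eigenspace in the split case and is absent in the inert case): the at most two exceptional basis vectors contribute $O(N^{-5/2})$ to the $j$--sum by \eqref{eq:HG_Hecke_AQUE}, and the $\chi_2$--term one adds back to restore the full character sum factors as $\prod_i\bigl(|\CA(N)|^{-1}\sum_B\overline{\chi_2(B)}\tr\{T_N(n_i)\widetilde U_N(B)\}\bigr)$, each factor being $O(N^{-1/2})$ by \eqref{eq:rate bound}. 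Character orthogonality on $\CA(N)$ then collapses the remaining sum to the diagonal $B_1\cdots B_5=I$, and, since $\tr\{T_N(n_i)\widetilde U_N(B_i)\}=0$ when $B_i=I$, one obtains
\begin{equation*}
\frac1N\sum_{j=1}^{N}\prod_{i=1}^{5}Y_{\nu_i}(\psi_j)=\frac{\eps^{5}N^{3/2}}{|\CA(N)|^{4}}\,\Sigma_N+O\!\Bigl(\tfrac1N\Bigr),\qquad\Sigma_N:=\sum_{\substack{B_1,\dots,B_5\in\CA(N)\setminus\{I\}\\ B_1\cdots B_5=I}}\prod_{i=1}^{5}e\!\Bigl(\tfrac{\overline{2}\,q(n_i;B_i)}{N}\Bigr).
\end{equation*}
As $|\CA(N)|\sim N$, everything reduces to the square--root cancellation $\Sigma_N=O(N^{2})$.

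For split primes $N$ one has $\CA(N)\cong\bF_N^{*}$, and by \eqref{formula for q} the exponent $\overline{2}\,q(n_i;B)$ is the image in $\bF_N$ of $\tfrac{\overline{2}\,\nu_i}{\lambda_A-\lambda_A^{-1}}\cdot\tfrac{1+\lambda}{1-\lambda}$ with $B\leftrightarrow\lambda\in\bF_N\setminus\{0,1\}$; detecting the constraint $\lambda_1\cdots\lambda_5=1$ by the characters of $\bF_N^{*}$ yields $\Sigma_N=\frac1{N-1}\sum_{\chi}\prod_{i=1}^{5}F(\chi;\psi_{\nu_i})$ for the additive characters $\psi_{\nu_i}(t)=e\bigl(\tfrac{\overline{2}\,\nu_i}{(\lambda_A-\lambda_A^{-1})N}t\bigr)$, which are nontrivial since $N\nmid\nu_i$ and $\lambda_A-\lambda_A^{-1}\ne0$. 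Theorem~\ref{thm:exp_sum_5th_moment} with trivial $\chi_1$ then gives $\sum_\chi\prod_i\tilde F(\chi;\psi_{\nu_i})\ll\sqrt N$, so $\Sigma_N=\frac{N^{5/2}}{N-1}\,O(\sqrt N)=O(N^{2})$, as wanted. For inert primes $\CA(N)$ only splits over $\bF_{N^2}$, so I would instead regard $\Sigma_N$ as a complete additive exponential sum over the $\bF_N$--points of the $4$--dimensional $\bF_N$--variety $\mathcal W=\{(B_1,\dots,B_5)\in\mathcal T^{5}:B_i\ne I,\ B_1\cdots B_5=I\}$, where $\mathcal T=\{B\in SL_2(\overline{\bF_N}):AB=BA\}$ is the centralizer torus; by Deligne (and Katz's bound on the number of roots) $\Sigma_{N^{m}}=\sum_a w_a^{m}-\sum_b(w_b')^{m}$ with boundedly many roots, each of absolute value a power of $\sqrt N$, and over the even extensions $\bF_{N^{2m}}\supseteq\bF_{N^2}$ the split computation applies verbatim, so Theorem~\ref{thm:exp_sum_5th_moment} gives $|\Sigma_{N^{2m}}|\ll N^{4m}$; Proposition~\ref{prop:limsup} then forces every root to have absolute value $\le N^{2}$, whence $\Sigma_N=O(N^{2})$. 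This last step is exactly the base--change argument used for the corollary following Proposition~\ref{prop:bound_exp_sum}. Substituting $\Sigma_N=O(N^2)$ into the displayed identity yields \eqref{eq:mtrx_lmnt_5th_mmnt}.

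The genuinely hard input --- Theorem~\ref{thm:exp_sum_5th_moment}, with its averaging trick (Section~\ref{sec:prf_exp_sum_5th_mmnt}) and the irreducibility of the fibers of $\tilde h_A$ --- is already available, so the difficulty of the present statement is mostly organizational: carrying out the projector computation cleanly (the twist $\widetilde U_N=\eps U_N$, discarding the terms with some $B_i=I$, and bounding the discrepancy produced by the multiplicity--$2$ quadratic character) and, the one point requiring real care, the inert case. There one must check that base change to $\bF_{N^{2m}}$ genuinely turns $\Sigma_{N^{2m}}$ into the split--case sum over that field --- in particular that the characters $\psi_{\nu_i}$ stay nontrivial after inflation by $\tr_{\bF_{N^{2m}}/\bF_N}$ and that $\lambda_A-\lambda_A^{-1}$ remains invertible --- and then pass, via Proposition~\ref{prop:limsup} as in the proof of Lemma~\ref{lem:exp sum:variance->individual}, from the bound on the even moments $\Sigma_{N^{2m}}$ back to a bound on $\Sigma_N$ itself.
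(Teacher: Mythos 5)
Your proposal is correct, and it is a genuinely different reduction from the one in the paper, even though both ultimately land on the same exponential sum over $\mathbf{V}(\underline A)$ weighted by $\tilde h_A$. The paper (Lemmas~\ref{lem:D_is_diagonal}--\ref{lem:5th_mmnt_mtrx_lmnt_exp_sum}) works with the averaging operators $D(n)=\frac1{|\CA(N)|}\sum_B T_N(nB)$, shows that $\tr(D(n_1)\cdots D(n_5))$ approximates $\sum_j\prod_i\langle T_N(n_i)\psi_j,\psi_j\rangle$, and then expands the trace via the commutation relation \eqref{eq:CCRN} and the trace formula \eqref{eq:trace_of_TN}. This produces an exponential sum over $(B_1,\dots,B_5)\in\CA(N)^5$ subject to the \emph{additive} constraint $n_1B_1+\cdots+n_5B_5\equiv0\pmod N$ with the weight $e(\overline{2}\,u/N)$, $u=\sum_{i<j}\omega(n_iB_i,n_jB_j)$; diagonalizing over $\bF_N$ (Proposition~\ref{prop:5th_mmnt_mtrx_bnd}) then massages this into the $\tilde h_A$ sum, so the paper invokes Proposition~\ref{prop:5th_mmnt_exp_sum_bnd} directly rather than Theorem~\ref{thm:exp_sum_5th_moment}. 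You instead expand each $\langle T_N(n_i)\psi_j,\psi_j\rangle$ via the rank-one Hecke projector and the $q$-trace formula, arriving at a sum over $\CA(N)^5$ subject to the \emph{multiplicative} constraint $B_1\cdots B_5=I$ with the factorized weight $\prod_i e(\overline{2}\,q(n_i;B_i)/N)$, which by \eqref{formula for q} is immediately recognized as $\frac1{N-1}\sum_\chi\prod_i F(\chi;\psi_{\nu_i})$ in the split case, so Theorem~\ref{thm:exp_sum_5th_moment} applies verbatim. The two routes cost and buy different things: yours makes the link to the exponential-sum moment transparent and reuses Theorem~\ref{thm:exp_sum_5th_moment} as a black box, at the price of needing Lemma~\ref{Kelmer's lemma} extended from $B=A^t$ to all $B\in\CA(N)$ (true, but not stated in that generality in the paper) and of having to police the phase normalization $\widetilde U_N=\eps U_N$ and the two multiplicity-two exceptional eigenvectors; the paper's route via $D(n)$ stays entirely at the level of the Weyl operators $T_N$, never invoking a trace formula for $U_N(B)$ or the precise structure of the Hecke spectral decomposition beyond Lemma~\ref{lem:D_is_diagonal}, but then has to re-derive, inside Proposition~\ref{prop:5th_mmnt_mtrx_bnd}, essentially the same change of variables that already underlies Lemma~\ref{lem:5th_mmnt_is_exp_sum}. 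Your treatment of the inert case by Deligne/Proposition~\ref{prop:limsup} base change is the same as the paper's corollary to Proposition~\ref{prop:5th_mmnt_mtrx_bnd}.

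Two small points you should make explicit if you write this up: (i) when you discard the two exceptional $j$'s you really need \eqref{eq:HG_Hecke_AQUE}, i.e. each $Y_{\nu_i}(\psi_j)=O(1)$, and when you add the $\chi_2$ term back you need the Weil bound for the character sum over $\CA(N)$ also in the inert case (where $\CA(N)\cong\bF_{N^2}^1$, not $\bF_N^*$, so \eqref{eq:rate bound} must be replaced by the analogous one-variable bound over the norm-one torus); (ii) the phase convention in Proposition~\ref{prop:U(A) properties} is delicate, and you should just fix once and for all the multiplicative choice for which the eigenvalues of the Hecke operators are characters --- the overall sign is immaterial for the $O$-estimate.
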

\begin{remark}
We note that theorem \ref{thm:mtrx_lmns_ffth_mmnt_gnrl} is a consequence of this theorem, by similar arguments to those showing that theorem \ref{thm:mtrx_fluct_3rd_mmnt} is a consequence of proposition \ref{prop:mtrx_flct_mxd_mmnt}.
\end{remark}
\subsection{Averaging operator} For $n\in\bZ^2$ let
$$
D(n)=\frac1{\CA(N)}\sum_{B\in\CA(N)}T_N(nB)
$$
The following lemma shows that this averaging operator is
essentially diagonal with respect to a Hecke basis (for proof see
lemma 7 in \cite{KR_Annals}).
\begin{lem}\label{lem:D_is_diagonal}
Let $\tilde D$ be the matrix obtained when expressing $D(n)$ in
terms of the Hecke eigenbasis. Then $\tilde D$ has the form
$$
\tilde D =\begin{pmatrix} D_{11}& D_{12}& 0& 0&\dots& 0\\ D_{21}&
D_{22}& 0& 0& \dots& 0\\ 0& 0& D_{33}& 0&\dots& 0\\ 0& 0& 0&
D_{44}&\dots& 0\\ \vdots& \vdots & \vdots & \vdots& \ddots& \vdots\\
0& 0& 0& 0&\dots & D_{NN}\end{pmatrix}
$$
such that
$$ |D_{ij}|\ll N^{-1/2}
$$
 for $1\leq i, j\leq 2$.
\end{lem}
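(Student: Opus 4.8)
This is Lemma~7 of \cite{KR_Annals}; the plan is as follows. The block-diagonal shape is forced by the Exact Egorov property: one has $\widetilde U_N(C)^{-1}T_N(nB)\widetilde U_N(C)=T_N(nBC)$ for $C\in\CA(N)$ (this is where $A\equiv I\pmod 2$ enters, to keep Egorov sign-free), and right translation $B\mapsto BC$ permutes $\CA(N)$, so $\widetilde U_N(C)^{-1}D(n)\widetilde U_N(C)=D(n)$: the averaging operator lies in the commutant of the Hecke group and hence preserves every Hecke eigenspace. For $N$ prime all these eigenspaces are lines except the one attached to the quadratic character $\chi_2$, which is two-dimensional in the split case and absent in the inert case (Proposition~\ref{prop:U(A) properties} and \cite{KR_Hecke}). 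Ordering a Hecke basis so that the two $\chi_2$-vectors come first puts $\tilde D$ in the stated form (and makes it fully diagonal in the inert case).

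\textbf{Diagonal entries.} For any Hecke eigenfunction $\psi_j$, Egorov together with $|\chi_j(B)|=1$ gives $\langle T_N(nB)\psi_j,\psi_j\rangle=\langle T_N(n)\psi_j,\psi_j\rangle$, so $D_{jj}=\langle D(n)\psi_j,\psi_j\rangle=\langle T_N(n)\psi_j,\psi_j\rangle$ for every $j$. Applying the Gurevich--Hadani estimate \eqref{eq:HG_Hecke_AQUE} to $f(x)=e(nx)$, which is of zero mean since $n\not\equiv 0\pmod N$ for $N$ large, yields $|D_{jj}|\ll_n N^{-1/2}$ for all $j$; in particular $|D_{11}|,|D_{22}|\ll N^{-1/2}$.

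\textbf{Off-diagonal entries of the $2\times2$ block (split case).} I would pin these down by computing $\|D(n)\|_{HS}^{2}$ two ways. First, by the commutation relation \eqref{eq:CCRN} and the trace formula \eqref{eq:trace_of_TN}, $T_N(nB)T_N(nB')^{*}=e\big(\tfrac{\omega(-nB',nB)}{2N}\big)T_N(n(B-B'))$ has zero trace unless $n(B-B')\equiv 0\pmod N$; diagonalizing $A$ one sees $B-B'$ is invertible modulo $N$ for $B\ne B'$ in $\CA(N)$, so (as $n\not\equiv 0$) only $B=B'$ contributes and
\[
\|D(n)\|_{HS}^{2}=\tr\!\big(D(n)D(n)^{*}\big)=\frac{N}{|\CA(N)|}=\frac{N}{N-1}.
\]
Second, by the block structure $\|D(n)\|_{HS}^{2}=\sum_{i,j\le 2}|D_{ij}|^{2}+\sum_{j\ge 3}|D_{jj}|^{2}$. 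For the last sum I would use the explicit value of the matrix element on a multiplicity-one eigenspace: inserting the trace formula of Lemma~\ref{Kelmer's lemma} and \eqref{formula for q} into the rank-one Hecke projector gives $\sqrt N\,(-1)^{n_1n_2}\langle T_N(n)\psi_\chi,\psi_\chi\rangle=\tfrac{\eps N}{N-1}\,\tilde F(\overline\chi;\psi_\nu)$, where $\nu=Q(n)$ and $\psi_\nu$ is a nontrivial additive character of $\bF_N$; together with $\tfrac1{N-1}\sum_\chi|\tilde F(\chi;\psi_\nu)|^{2}=1-\tfrac2N$ from Proposition~\ref{prop:basic prop exp sum} and $|\tilde F|\le1$ from \eqref{eq:rate bound} (to discard the single $\chi_2$-term), this gives $\sum_{j\ge 3}|D_{jj}|^{2}=1+O(N^{-1})$. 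Subtracting, $\sum_{i,j\le 2}|D_{ij}|^{2}=\tfrac{N}{N-1}-\sum_{j\ge3}|D_{jj}|^{2}=O(N^{-1})$, so $|D_{ij}|\ll N^{-1/2}$ for $1\le i,j\le 2$.

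\textbf{Main obstacle.} The diagonal bound is immediate from quantum unique ergodicity; the genuine work is the $2\times2$ block, whose entries are not themselves matrix elements and must be squeezed out by the global Hilbert--Schmidt identity above. The step I expect to require most care is the exact evaluation $(-1)^{n_1n_2}\langle T_N(n)\psi_\chi,\psi_\chi\rangle=\tfrac{\eps}{N-1}F(\overline\chi;\psi_\nu)$ on the multiplicity-one eigenspaces: one must track the $\epsilon(\cdot)$ and $\eps$ signs, verify that the additive character produced by $q(n;\,\cdot\,)$ is nontrivial and independent of the representative $n$ of $\nu$, and identify the resulting character sum with $F(\overline\chi;\psi_\nu)$ under $\CA(N)\cong\bF_N^{*}$; the companion identity $\|D(n)\|_{HS}^2=N/|\CA(N)|$ is robust (signs do not affect it) and the second-moment input is Proposition~\ref{prop:basic prop exp sum}.
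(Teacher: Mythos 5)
Your argument is correct, and the Hilbert--Schmidt trace identity for $D(n)D(n)^{*}$ combined with the exact evaluation of the diagonal entries on the one-dimensional eigenspaces is essentially the argument behind Lemma~7 of \cite{KR_Annals}, which the paper cites in lieu of reproducing a proof. One observation: the appeal to the Gurevich--Hadani bound for $D_{11},D_{22}$ is redundant, since your subtraction already controls all four block entries at once. Concretely, $\|D(n)\|^2_{HS}=\frac{N}{N-1}$, while
$$
\sum_{j\ge3}|D_{jj}|^2=\frac{1}{(N-1)^2}\sum_{\chi\neq\chi_2}|F(\overline\chi;\psi_\nu)|^2=\frac{N-2}{N-1}-\frac{|F(\chi_2;\psi_\nu)|^2}{(N-1)^2},
$$
so $\sum_{1\le i,j\le2}|D_{ij}|^2=\frac{2}{N-1}+\frac{|F(\chi_2;\psi_\nu)|^2}{(N-1)^2}\ll N^{-1}$ by the Weil bound on $F(\chi_2;\psi_\nu)$. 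This gives $|D_{ij}|\ll N^{-1/2}$ for all $1\le i,j\le2$ with no QUE input, which is conceptually the right framing: by Egorov $D_{12}=\langle T_N(n)\psi_2,\psi_1\rangle$ and $D_{21}=\langle T_N(n)\psi_1,\psi_2\rangle$ are \emph{off-diagonal} matrix elements that no single-eigenfunction bound addresses, so the global HS identity is genuinely what carries the lemma. Two small points worth making explicit in a write-up: (i) Lemma~\ref{Kelmer's lemma}, stated for $A^t$, must be extended to arbitrary $B\in\CA(N)$ (valid directly from the definition \eqref{eq:Kelmer Quantization} and $q(\cdot;g)$), since the Hecke projector averages over the full group, not merely the cyclic subgroup generated by $A$; and (ii) the additive character $\psi_\nu$ produced by \eqref{formula for q} has conductor $\bar2\nu/(\lambda_A-\lambda_A^{-1})$ and is nontrivial precisely because $\nu=Q(n)\not\equiv0\pmod N$, which you correctly flag as a point requiring care.
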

\begin{lem}\label{lem:Hecke_mmnts_r_tr}
Let $\{\psi_j\}_{j=1}^N$ be a Hecke basis of $\mathcal H_N$, and let
$0\ne n_1,\dots,n_5\in\bZ^2$. Then
$$
\sum_{j=1}^N\langle T_N(n_1)\psi_j,\psi_j\rangle\cdots\langle
T_N(n_1)\psi_j,\psi_j\rangle=\tr(D(n_1)\cdots D(n_5))+O(N^{-5/2})
$$
\end{lem}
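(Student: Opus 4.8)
The plan is to replace each factor $\langle T_N(n_k)\psi_j,\psi_j\rangle$ by the corresponding diagonal entry of the averaged operator $D(n_k)$, to recognize the resulting sum as the ``diagonal part'' of $\tr\bigl(D(n_1)\cdots D(n_5)\bigr)$, and to show that the remaining (off-diagonal) part of the trace is $O(N^{-5/2})$ using Lemma~\ref{lem:D_is_diagonal}. The first and most basic step is the identity
$$\langle D(n)\psi_j,\psi_j\rangle = \langle T_N(n)\psi_j,\psi_j\rangle, \qquad j=1,\dots,N,$$
valid for every $0\ne n\in\bZ^2$. This follows from Exact Egorov together with the fact that $\{\psi_j\}$ is a Hecke eigenbasis: for $B\in\CA(N)$ the operator $T_N(nB)$ is conjugate to $T_N(n)$ by $U_N(B)$, and since $U_N(B)\psi_j$ is a unit scalar multiple of $\psi_j$ the diagonal matrix element is unchanged; averaging over $B\in\CA(N)$ gives the claim. (This is essentially the computation behind Lemma~7 of \cite{KR_Annals}; the parity factors $(-1)^{n_1n_2}$ present in Lemma~\ref{lem:Hecke_lmnts_invariant} have to be reconciled consistently with the definition of $D(n)$, as there.)

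Granting this, the left-hand side of the lemma equals $\sum_{j=1}^N\prod_{k=1}^5\widetilde D^{(k)}_{jj}$, where $\widetilde D^{(k)}$ denotes the matrix of $D(n_k)$ in the Hecke basis. Since the trace is basis-independent,
$$\tr\bigl(D(n_1)\cdots D(n_5)\bigr) = \sum_{j_1,\dots,j_5}\widetilde D^{(1)}_{j_1j_2}\widetilde D^{(2)}_{j_2j_3}\widetilde D^{(3)}_{j_3j_4}\widetilde D^{(4)}_{j_4j_5}\widetilde D^{(5)}_{j_5j_1},$$
and the terms with $j_1=\dots=j_5$ reproduce exactly $\sum_j\prod_k\widetilde D^{(k)}_{jj}$. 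So it remains to bound the contribution of the non-constant index tuples $(j_1,\dots,j_5)$. By Lemma~\ref{lem:D_is_diagonal} each $\widetilde D^{(k)}$ is diagonal except in the top-left $2\times2$ block, and every entry of that block is $\ll N^{-1/2}$; moreover $\|D(n)\|\le 1$ (it is an average of unitaries), so all entries are $O(1)$.

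The combinatorial point is that a non-constant cyclic tuple contributing a nonzero term must have all of its entries in $\{1,2\}$: if some $j_m\ge 3$ occurs while the tuple is not constant, then, travelling around the cycle starting from position $m$, the index has to change away from the value $j_m$ at some step, which forces a nonzero off-diagonal entry of some $\widetilde D^{(k)}$ having a row or column index $\ge 3$ --- impossible by Lemma~\ref{lem:D_is_diagonal}. Hence every surviving non-constant term is a product of five entries of the $2\times2$ blocks, so it is $\ll N^{-5/2}$; there are at most $2^5-2=30$ such terms, so their total is $O(N^{-5/2})$. Combining the two parts yields $\tr\bigl(D(n_1)\cdots D(n_5)\bigr)=\sum_{j=1}^N\prod_{k=1}^5\langle T_N(n_k)\psi_j,\psi_j\rangle+O(N^{-5/2})$, which is the assertion. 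The step I expect to demand the most care is precisely this confinement of non-constant cyclic terms to the $\{1,2\}$ block --- verifying that no ``excursion'' of an index out of $\{1,2\}$ can keep the cyclic product nonzero --- together with stating the orbit-invariance identity $\langle D(n)\psi_j,\psi_j\rangle=\langle T_N(n)\psi_j,\psi_j\rangle$ cleanly, with the parity signs of Lemma~\ref{lem:Hecke_lmnts_invariant} under control.
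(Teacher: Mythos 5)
Your proof is correct and follows essentially the same approach as the paper: both rest on Lemma~\ref{lem:D_is_diagonal} for the near--block-diagonal structure of $D(n_k)$ in the Hecke basis and on the orbit-invariance identity $D(n)_{jj}=\langle T_N(n)\psi_j,\psi_j\rangle$. The paper packages the off-diagonal combinatorics as the trace of the product $A_1\cdots A_5$ of the $2\times2$ corner blocks, whereas you track index tuples explicitly and argue their confinement to $\{1,2\}$, but the resulting count of at most $30$ terms each $\ll N^{-5/2}$ and the $O(N^{-5/2})$ bound are identical.
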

\begin{proof}
By lemma \ref{lem:D_is_diagonal}, we have that
$$
\tr(D(n_1)\cdots D(n_5))=\sum_{j=3}^ND(n_1)_{jj}\cdots
D(n_5)_{jj}+\tr(A_1\dots A_5)
$$
where $A_1,\dots,A_5$ are $2\times2$ matrices defined by
$$(A_k)_{ij}=(D(n_k))_{ij}\quad,1\leq i,j\leq2.$$ By lemma
\ref{lem:D_is_diagonal}
$$
\tr(A_1\dots A_5)=D(n_1)_{11}\cdots D(n_5)_{11}+D(n_1)_{22}\cdots
D(n_5)_{22}+O(N^{-5/2})
$$
and by definition of $D(n)$ the proof is concluded.
\end{proof}
\subsection{Proof of theorem
\ref{thm:mtrx_lmnt_5th_mmnt}}\label{subsec:prf_5th_mmnt_mtrx}
The first step of the proof is to reduce the required moment into an
exponential sum.
\begin{lem}\label{lem:5th_mmnt_mtrx_lmnt_exp_sum}
Choose $n_1,\dots,n_5$ such that $Q(n_i)=\nu_i$. Then For
$N>N_0(\nu_1,\dots,\nu_5)$ we have
\begin{equation}\label{eq:5th_mmnt_mtrx_lmnt_exp_sum}
\frac1N\sum_{j=1}^NV_{\nu_1}(\psi_j)\cdots
V_{\nu_5}(\psi_j)=\frac{N^{5/2}}{|\CA(N)|^5}{\hspace{-.8cm}}\sum_{\substack{B_1,\dots,B_5\in\CA(N)\\n_1B_1+\dots+n_5B_5=0\pmod
N}}{\hspace{-1.5cm}}e(\frac{\bar2u(B_1,\dots,B_5)}{N})+O(N^{-1})
\end{equation}
where $\bar2$ is the inverse of $2$ mod $N$, and
$$
u(B_1,B_2,B_3,B_4,B_5)=\sum_{1\leq i<j\leq5}\omega(n_iB_i,n_jB_j)
$$
\end{lem}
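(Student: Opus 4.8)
The plan is to unwind the definition of $V_\nu$, exploit the near-diagonality of the averaging operators $D(n)$ through Lemma~\ref{lem:Hecke_mmnts_r_tr}, and then expand everything into traces of translation operators that collapse via the Heisenberg commutation relation~\eqref{eq:CCRN} and the trace formula~\eqref{eq:trace_of_TN}.

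Fix integer vectors $n_1,\dots,n_5$ with $Q(n_i)=\nu_i$; these exist for $N$ large, and everything below is independent of the choice by Lemma~\ref{lem:Hecke_lmnts_invariant}. Since $V_\nu(\psi_j)=\sqrt N\,(-1)^{n_1 n_2}\langle T_N(n)\psi_j,\psi_j\rangle$, we have
$$
\frac1N\sum_{j=1}^N V_{\nu_1}(\psi_j)\cdots V_{\nu_5}(\psi_j)=N^{3/2}\Big(\prod_{i=1}^5(-1)^{n_{i,1}n_{i,2}}\Big)\sum_{j=1}^N\;\prod_{i=1}^5\langle T_N(n_i)\psi_j,\psi_j\rangle .
$$
By Lemma~\ref{lem:Hecke_mmnts_r_tr} the inner sum equals $\tr\big(D(n_1)\cdots D(n_5)\big)+O(N^{-5/2})$, and since $N^{3/2}\cdot O(N^{-5/2})=O(N^{-1})$ this accounts exactly for the error term in~\eqref{eq:5th_mmnt_mtrx_lmnt_exp_sum}. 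So it remains to evaluate $N^{3/2}\prod_i(-1)^{n_{i,1}n_{i,2}}\,\tr(D(n_1)\cdots D(n_5))$.

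Next expand $D(n_i)=\frac1{|\CA(N)|}\sum_{B_i\in\CA(N)}T_N(n_iB_i)$ and open the product of traces:
$$
\tr\big(D(n_1)\cdots D(n_5)\big)=\frac1{|\CA(N)|^5}\sum_{B_1,\dots,B_5\in\CA(N)}\tr\big(T_N(n_1B_1)\cdots T_N(n_5B_5)\big).
$$
Iterating~\eqref{eq:CCRN} collapses each product: $T_N(m_1)\cdots T_N(m_5)$ equals $T_N(m_1+\cdots+m_5)$ times a phase whose exponent, with $m_i=n_iB_i$, is the symplectic combination $\tfrac1{2N}\sum_{i<j}\omega(m_i,m_j)=\tfrac1{2N}\,u(B_1,\dots,B_5)$ (up to an overall sign fixed by orientation conventions); then~\eqref{eq:trace_of_TN} forces $n_1B_1+\cdots+n_5B_5\equiv0\pmod N$ and contributes a factor $N\cdot\epsilon(n_1B_1+\cdots+n_5B_5)$. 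Collecting the powers of $N$ gives the prefactor $N^{3/2}\cdot N/|\CA(N)|^5=N^{5/2}/|\CA(N)|^5$, as required.

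The one place that really needs care — which I expect to be the main nuisance rather than a conceptual obstacle — is the bookkeeping of the $\pm1$'s and of the $\tfrac1{2N}$ versus $\tfrac{\bar2}{N}$ discrepancy. One must (i) fix compatible integer lifts of the vectors $n_iB_i\in(\bZ/N\bZ)^2$ so that the defining phases $e^{i\pi(n_iB_i)_1(n_iB_i)_2/N}$ of $T_N$ are unambiguous (they are sensitive to the lift since $N$ is odd), and (ii) check that on the constraint locus $\sum n_iB_i\equiv0\pmod N$ the product $\prod_i(-1)^{n_{i,1}n_{i,2}}$, the parity factor $\epsilon(\sum n_iB_i)$, and the Heisenberg phase reduce precisely to $e(\bar2\,u(B_1,\dots,B_5)/N)$. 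The hypothesis $A\equiv I\pmod2$ is exactly what makes this come out cleanly — it forces the relevant symplectic quantities to be even, so that $e(\cdot/2N)=e(\bar2\cdot/N)$ — while Lemma~\ref{lem:Hecke_lmnts_invariant} guarantees independence of all the choices. Once this verification is in place the lemma follows; the restriction $N>N_0(\nu_1,\dots,\nu_5)$ is used only to ensure existence of the $n_i$, their nonvanishing mod $N$, and the applicability of Lemmas~\ref{lem:Hecke_lmnts_invariant} and~\ref{lem:Hecke_mmnts_r_tr}.
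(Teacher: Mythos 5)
Your proposal matches the paper's proof essentially step for step: rewrite the product of normalized matrix elements using the definition of $Y_\nu$ (you call it $V_\nu$, as the paper also does in the proof), invoke Lemma~\ref{lem:Hecke_mmnts_r_tr} to replace the sum of products by $\tr(D(n_1)\cdots D(n_5))$ with error $O(N^{-5/2})$ (which after the $N^{3/2}$ prefactor gives the stated $O(N^{-1})$), expand each $D(n_i)$ over $\CA(N)$, collapse $T_N(n_1B_1)\cdots T_N(n_5B_5)$ by iterating~\eqref{eq:CCRN} and applying~\eqref{eq:trace_of_TN}, and cancel the parity factors using $B\equiv I\pmod 2$ for $B\in\CA(N)$. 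The phase/sign bookkeeping you flag as "the main nuisance" is handled in the paper precisely by the observation $\epsilon(n_i)=\epsilon(n_iB_i)$, and your diagnosis of the role of $A\equiv I\pmod2$ is the right one.
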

\begin{proof}
By definition we have that
\begin{eqnarray*}
&&\frac1N\sum_{j=1}^NV_{\nu_1}(\psi_j)\cdots
V_{\nu_5}(\psi_j)=\\
&&N^{3/2}\epsilon(n_1)\dots\epsilon(n_5)\sum_{j=1}^N\langle
T_N(n_1)\psi_j,\psi_j\rangle\cdots \langle
T_N(n_5)\psi_j,\psi_j\rangle=\\
&&N^{3/2}\epsilon(n_1)\dots\epsilon(n_5)\tr(D(n_1)\cdots
D(n_5))+O(N^{-1})
\end{eqnarray*}
where the last equality is by lemma \ref{lem:Hecke_mmnts_r_tr}. By
definition of $D(n)$, and by \eqref{eq:trace_of_TN},\eqref{eq:CCRN}
we have
\begin{eqnarray*}
&&\tr(D(n_1)\cdots
D(n_5))=\frac1{|\CA(N)|^5}\sum_{B_1,\dots,B_5\in\CA(N)}T_N(n_1B_1)\cdots
T_N(n_5B_5)=\\
&&\frac
N{|\CA(N)|^5}\epsilon(n_1B_1)\dots\epsilon(n_5B_5)\sum_{\substack{B_1,\dots,B_5\in\CA(N)\\n_1B_1+\dots+n_5B_5=0\pmod
N}}e(\frac{\bar2u(B_1,\dots,B_5)}{N})
\end{eqnarray*}
and since if $B\in\CA(N)$ then $B\equiv I\pmod2$, we have that
$\epsilon(n_i)=\epsilon(n_iB_i)$, which concludes the proof.
\end{proof}
Theorem \ref{thm:mtrx_lmnt_5th_mmnt} is now a consequence of the
following proposition
\begin{prop}\label{prop:5th_mmnt_mtrx_bnd}
If $Q(n_i)\ne0,i=1,\dots,5$, then for sufficiently large split prime
$N$
\begin{equation}\label{eq:5th_mmnt_dbl_sum}
\sum_{\substack{B_1,\dots,B_5\in\CA(N)\\n_1B_1+\dots+n_5B_5=0\pmod
N}}e(\frac{\bar2u(B_1,\dots,B_5)}{N})\ll N^2
\end{equation}
\end{prop}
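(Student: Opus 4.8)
The plan is to reduce the sum in \eqref{eq:5th_mmnt_dbl_sum} to the exponential sum $S_{\underline A}(\psi)$ of Proposition \ref{prop:5th_mmnt_exp_sum_bnd} (equivalently, to the fifth $\tilde F$-moment bounded in Theorem \ref{thm:exp_sum_5th_moment}), at the cost of a factor of size $N$, and then invoke that bound. Since $N$ is a split prime and $N\nmid\disc Q=(\tr A)^2-4$, the matrix $A$ is diagonalizable over $\bF_N$, say with eigenvalues $\mu,\mu^{-1}$ ($\mu\ne\mu^{-1}$) and eigenvectors $e_+,e_-$, and $\CA(N)\cong\bF_N^*$ via $B\leftrightarrow\lambda$ where $Be_\pm=\lambda^{\pm1}e_\pm$. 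Writing $n_i=\alpha_i e_++\beta_i e_-$, a direct computation gives $Q(n_i)=w(\mu^{-1}-\mu)\alpha_i\beta_i$ with $w:=\omega(e_+,e_-)\in\bF_N^*$, so the hypothesis $Q(n_i)\ne0$ is exactly the statement that every $\alpha_i$ and $\beta_i$ is a unit; this is what licenses the change of variables below.

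In the coordinates $\lambda_1,\dots,\lambda_5$ the constraint $n_1B_1+\dots+n_5B_5=0$ becomes the pair
$$\sum_{i=1}^5\alpha_i\lambda_i=0,\qquad\sum_{i=1}^5\beta_i\lambda_i^{-1}=0,$$
while $\omega(n_iB_i,n_jB_j)=w\bigl(\alpha_i\beta_j\lambda_i\lambda_j^{-1}-\alpha_j\beta_i\lambda_j\lambda_i^{-1}\bigr)$, so $\bar 2\,u(B_1,\dots,B_5)/N$ is an additive character, built from $\bar 2$ and $w$, evaluated at a fixed Laurent polynomial in the $\lambda_i$. The key point is the scaling symmetry: $\lambda_i\mapsto s\lambda_i$ ($s\in\bF_N^*$) fixes both constraints and the phase, and it acts freely, so the sum in \eqref{eq:5th_mmnt_dbl_sum} equals $(N-1)$ times the same sum over the slice $\lambda_5=1$. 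On that slice the two constraints read $\sum_{i=1}^4\alpha_i\lambda_i=-\alpha_5$ and $\sum_{i=1}^4\beta_i\lambda_i^{-1}=-\beta_5$, and the substitution $\lambda_i=(\beta_i/\beta_5)a_i$ (invertible since the $\beta_i$ are units) turns them into $\sum_{i=1}^4A_ia_i+A_5=0$ and $\sum_{i=1}^4a_i^{-1}+1=0$ with $A_i=Q(n_i)/Q(n_5)\in\bF_N^*$ ($i\le4$) and $A_5=1$ --- that is, the variety ${\bf V}(\underline A)$ of Lemma \ref{lem:5th_mmnt_is_exp_sum} --- and carries the phase, after eliminating the constrained monomials, to a unimodular multiple of $\psi(\tilde h_{\underline A}(a_2,a_3,a_4))$ for a suitable nontrivial additive character $\psi$ determined by $\bar 2$ and $w\alpha_5\beta_5$. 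Thus the slice sum is, up to a unit factor, the sum $S_{\underline A}(\psi)$ of Proposition \ref{prop:5th_mmnt_exp_sum_bnd}.

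Granting this identification, Proposition \ref{prop:5th_mmnt_exp_sum_bnd} gives $|S_{\underline A}(\psi)|\ll N$, hence the sum in \eqref{eq:5th_mmnt_dbl_sum} is $\ll(N-1)\cdot N\ll N^2$, which is the claim. (One can instead cite Theorem \ref{thm:exp_sum_5th_moment}: by Lemma \ref{lem:5th_mmnt_is_exp_sum} the slice sum is $N^{3/2}$ times a normalized fifth moment of the $\tilde F(\chi;\psi_i)$, which is $O(N^{3/2}\cdot N^{-1/2})=O(N)$.)

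The main obstacle is the middle step --- checking that after diagonalizing, dividing out the scaling, and substituting $\lambda_i=(\beta_i/\beta_5)a_i$, the phase $\bar 2\,u/N$ genuinely collapses (modulo a unimodular constant) onto $\tilde h_{\underline A}$; this is precisely the chain of changes of variables in the proof of Lemma \ref{lem:5th_mmnt_is_exp_sum} run forwards, and one must track carefully which monomials are removed using each of the two relations. Should the phase emerge instead as a mild variant of $\tilde h_{\underline A}$, nothing is lost: the only input actually needed is that ${\bf V}(\underline A)$ is an irreducible surface and all but finitely many fibers of the phase are absolutely irreducible --- the geometric content of the appendix --- after which Theorem \ref{thm:bound_exp_sum} yields the bound $\ll N$ for the slice sum. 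One should also record at the outset that $w\ne0$ and $\mu-\mu^{-1}\ne0$ in $\bF_N$, both consequences of $N\nmid\disc Q$, and that $w\alpha_5\beta_5$ is a unit for all large $N$, so that $\psi$ is nontrivial and Proposition \ref{prop:5th_mmnt_exp_sum_bnd} applies as stated.
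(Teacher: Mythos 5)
Your proof is correct and follows essentially the same route as the paper's: diagonalize $A$ over $\bF_N$ (split case), rewrite the constraint and the phase $u$ in eigencoordinates, use $Q(n_i)\neq0$ to ensure both eigencomponents of each $n_i$ are units (the paper phrases this as $m_1^{(i)}m_2^{(i)}\neq0$ since $n_i$ is not an eigenvector), and reduce to the sum $S_{\underline A}(\psi)$ of Proposition~\ref{prop:5th_mmnt_exp_sum_bnd}. The one place you add value over the paper's terse write-up is making explicit the free $\bF_N^*$-scaling action $\lambda_i\mapsto s\lambda_i$ that factors out $N-1$ and restricts to the slice $\lambda_5=1$, thereby passing from the three-dimensional constrained sum in eigencoordinates to the two-dimensional variety ${\bf V}(\underline A)$ appearing in Proposition~\ref{prop:5th_mmnt_exp_sum_bnd}; the paper compresses this into the phrase ``as in lemma~\ref{lem:5th_mmnt_is_exp_sum}'' without spelling it out.
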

\begin{proof}
By assumption there exist $M\in SL_2(\bF_N)$ such that $MAM^{-1}=D$
where $D$ is diagonal. In this case the group $\CA(N)$ is conjugated
to the group
$$
\CA(N)^M=\{MBM^{-1}:B\in\CA(N)\}=\{\begin{pmatrix}x&
\\&x^{-1}\end{pmatrix}:x\in\bF_N^*\}
$$
we can therefore write the sum as
\begin{equation}\label{eq:5th_mmnt_bnd_prf1}
\sum_{\substack{D_1,\dots,D_5\in\CA(N)^M\\n_1D_1+\dots+n_5D_5=0\pmod
N}}e(\frac{\bar2u(MB_1M^{-1},\dots,MB_5M^{-1})}{N})
\end{equation}
Writing $D_i=\begin{pmatrix}x_i&\\&x_i^{-1}\end{pmatrix}$ and
$m^{(i)}=(m_1^{(i)},m_2^{(i)}):=n_iM$, we have that
\begin{eqnarray*}
&&u(MB_1M^{-1},\dots,MB_5M^{-1})=\sum_{1\leq
i<j\leq5}\omega(n_iMD_i,n_jMD_j)=\\
&&\sum_{q\leq
i<j\leq5}\left(m_1^{(i)}m_2^{(j)}x_ix_j^{-1}-m_2^{(i)}m_1^{(j)}x_jx_i^{-1}\right)
\end{eqnarray*}
and the condition $n_1B_1+\dots+n_5B_5=0\pmod N$ becomes
\begin{eqnarray*}
\left\{\substack{m_1^{(1)}x_1+\dots+m_1^{(5)}x_5=0\\
m_2^{(1)}x_1^{-1}+\dots+m_2^{(5)}x_5^{-1}=0}\right\}=\left\{\substack{m_1^{(1)}m_2^{(1)}x_1+\dots+m_1^{(5)}m_2^{(5)}x_5=0\\
x_1^{-1}+\dots+x_5^{-1}=0}\right\}
\end{eqnarray*}
where the equations are in $\bF_N$, and the last equality is by the
change of variables $x_i\mapsto m_2^{(i)}x_i$. Denote
$A_i=m_1^{(i)}m_2^{(i)}$, (notice that these are nonzero since $n_i$
is not an eigenvector of $A$, as $Q(n_i)\ne0$)
\eqref{eq:5th_mmnt_bnd_prf1} is now
\begin{equation}\label{eq:5th_mmnt_bnd_prf2}
\sum_{\substack{0\ne
x_1,\dots,x_5\in\bF_N\\A_1x_1+\dots+A_5x_5=0\\
x_1^{-1}+\dots+x_5^{-1}=0}}e(\frac{\tilde h_A(x_3,x_4,x_5)}{N})
\end{equation}
where
$$
\tilde
h_A(x_3,x_4,x_5)=\frac1{x_3}+\frac1{x_4}+\frac1{x_5}+A_3x_3\left(\frac1{x_4}+\frac1{x_5}\right)+\frac{A_4x_4}{x_5}
$$
as in lemma \ref{lem:5th_mmnt_is_exp_sum}. By proposition
\ref{prop:5th_mmnt_exp_sum_bnd} the result now follows.
\end{proof}
\begin{cor}
Let $n_1,\dots,n_5\in\bZ^2$ such that $Q(n_i)\ne0,i=1,\dots,5$, then
for all primes $N$ not dividing $\tr(A)^2-4$ the inequality in
\eqref{eq:5th_mmnt_dbl_sum} holds
\end{cor}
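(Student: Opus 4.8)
The plan is to deduce the bound \eqref{eq:5th_mmnt_dbl_sum} from the split case, Proposition~\ref{prop:5th_mmnt_mtrx_bnd}, by the same Deligne base-change argument used in Section~\ref{sec:P} to extend the estimate for $S_3(j_1,j_2)$ from split primes to all primes; the only genuinely new ingredient is some Galois bookkeeping. For the finitely many primes $N$ at which the reduction in Proposition~\ref{prop:5th_mmnt_mtrx_bnd} degenerates (those for which some $Q(n_i)\equiv0\bmod N$, or which are too small for the cited results) the left side of \eqref{eq:5th_mmnt_dbl_sum} is a fixed number and the inequality is vacuous, so I may assume $N$ is large; the implied constant is allowed to depend on $n_1,\dots,n_5$.

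First I would repackage the left side of \eqref{eq:5th_mmnt_dbl_sum} as an exponential sum of Deligne type. Viewing $\CA(N)$ as the $\bF_N$-points of the one-dimensional algebraic group $\{B\in SL_2:AB=BA\}$ over $\overline{\bF_N}$, let $W$ be the variety cut out in $\CA^5$ by the two linear conditions $n_1B_1+\dots+n_5B_5=0$, and put $u(B_1,\dots,B_5)=\sum_{i<j}\omega(n_iB_i,n_jB_j)$; then the sum in \eqref{eq:5th_mmnt_dbl_sum} is $\mathcal S:=\sum_{x\in W(\bF_N)}\psi(\bar 2\,u(x))$ for the appropriate additive character $\psi$ of $\bF_N$. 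The equations defining $W$ have bounded degree, with coefficients the reductions mod $N$ of the fixed data $A,n_1,\dots,n_5$, and $u$ is a fixed polynomial, so by Deligne's results together with Katz's uniform bound on the number of roots the $L$-function of $\mathcal S$ is rational with $l=O(1)$ inverse zeros $w_1,\dots,w_l$ and $l'=O(1)$ inverse poles $\rho_1,\dots,\rho_{l'}$, all algebraic integers whose conjugates have absolute value a nonnegative integer power of $\sqrt N$; moreover for every $n\ge1$ the analogous sum $\mathcal S^{(n)}$ over $\bF_{N^n}$ equals $\sum_i w_i^n-\sum_j\rho_j^n$.

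Next I would invoke the hypothesis $N\nmid(\tr A)^2-4$: then $A$ has two distinct eigenvalues in $\overline{\bF_N}$, lying in $\bF_N$ (split case) or $\bF_{N^2}$ (inert case), so $A$ is conjugate to a diagonal matrix inside $SL_2(\bF_{N^2})$, and likewise over every extension $\bF_{N^{2m}}$. The proof of Proposition~\ref{prop:5th_mmnt_mtrx_bnd} uses only this diagonalizability over the ground field: after conjugating $A$ to diagonal form it reduces the bound to Proposition~\ref{prop:5th_mmnt_exp_sum_bnd}, whose implied constant is controlled by the geometry of ${\bf V}(A)$ and of the fibers of $\tilde h_A$ (irreducible of fixed degree, with at most $14$ geometrically reducible fibers) and hence is independent of the ground field. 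Running this argument over $\bF_{N^{2m}}$ for every $m\ge1$ produces a single constant $M$, independent of $N$ and of $m$, with $|\mathcal S^{(2m)}|\le M(N^{2m})^2=MN^{4m}$.

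Finally I would regard $\mathcal S^{(2)}$ as an exponential sum over $\bF_{N^2}$: its $L$-function has inverse zeros $w_i^2$ and inverse poles $\rho_j^2$, whose conjugates are integer powers of $\sqrt{N^2}=N$, and $(\mathcal S^{(2)})^{(m)}=\mathcal S^{(2m)}=\sum_i w_i^{2m}-\sum_j\rho_j^{2m}$. If some $w_i^2$ or $\rho_j^2$ had absolute value exceeding $N^4$, then, taking those of maximal such modulus, attaching coefficient $+1$ to the surviving zeros and $-1$ to the surviving poles (after cancelling any coincidences), and discarding the remaining terms (of strictly smaller exponential order in $m$), Proposition~\ref{prop:limsup} would force $|\mathcal S^{(2m)}|\gg N^{em}$ along a subsequence of $m$ for some integer $e\ge5$, contradicting $|\mathcal S^{(2m)}|\le MN^{4m}$. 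Hence $|w_i|\le N^2$ and $|\rho_j|\le N^2$ for all $i,j$, so that $|\mathcal S|=|\sum_i w_i-\sum_j\rho_j|\le(l+l')N^2=O(N^2)$, which is \eqref{eq:5th_mmnt_dbl_sum}. The one step that needs real care — and the main obstacle — is the first: confirming that the pair $(W,u)$ genuinely falls under Deligne's theorem and Katz's uniform root count, so that $l+l'$, and the constant $M$ obtained through Proposition~\ref{prop:5th_mmnt_exp_sum_bnd}, are truly uniform in $N$; once that is in place the remainder is the verbatim analogue of the split-to-all-primes passage already carried out for $S_3(j_1,j_2)$.
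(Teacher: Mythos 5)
Your proof is correct and follows essentially the same strategy as the paper: view the sum as an exponential sum of Deligne type, pass to $\bF_{N^2}$ where $A$ becomes diagonalizable, invoke the split-case bound (Proposition~\ref{prop:5th_mmnt_mtrx_bnd} built on Proposition~\ref{prop:5th_mmnt_exp_sum_bnd}), and conclude that each root of the $L$-function has modulus at most $|k|^2$. The only difference is one of explicitness: the paper compresses the final step to ``for $\nu=2$, by Proposition~\ref{prop:5th_mmnt_mtrx_bnd}, $|\omega_i^2|\leq|k|^4$,'' whereas you correctly observe that a bound at $\nu=2$ alone does not immediately control individual roots (there could be cancellation), so you run the split bound over every $\bF_{N^{2m}}$ and deduce the root bounds via the limsup criterion of Proposition~\ref{prop:limsup} — exactly the mechanism the paper already established in Lemma~\ref{lem:exp sum:variance->individual} and is implicitly reusing here.
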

\begin{proof}
By proposition \ref{prop:5th_mmnt_mtrx_bnd} we have that if $A$ is
diagonalizable over $k$, then
$$
S(A):=\sum_{\substack{B_1,\dots,B_5\in\CA(N)\\n_1B_1+\dots+n_5B_5=0\pmod
N}}e(\frac{\bar2u(B_1,\dots,B_5)}{N})\ll |k|^2
$$
and in particular all roots of the corresponding L-function
$L(S(A))$ are of absolute value at most $|k|^2$. We now show that
the diagonalizable condition may be dropped using "base change". For
a field $k$ consider the algebraic variety over the algebraic
closure $\overline k$ of $k$
 defined by
$$
\CA(\overline k)=\{B\in SL_2(\overline k):BA=AB\}
$$
and for $n_1,\dots,n_5\in k^2$ the subvariety of $\CA(\overline
k)^5$ defined by
$$
V=\{(B_1,\dots,B_5)\in\CA(\overline k)^5:n_1B_1+\dots+n_5B_5=0\}
$$
For any finite extension of $k$ of degree $\nu$, $k_\nu$, let
$$
V(k_\nu)=\{(B_1,\dots,B_5)\in\CA(k_\nu):(B_1,\dots,B_5)\in V\}
$$
be the $k_\nu$-rational points of $V$. Let $\omega_1,\dots,\omega_l$
be the roots of $L(S(A))$. Then by Deligne's result we have that
$$
\sum_{(B_1,\dots,B_5)\in
V(k_\nu)}e(\frac{\tr_{k_\nu/k}(\bar2u(B_1,\dots,B_5))}{N})=\omega_1^\nu+\dots+\omega_l^\nu
$$
If $\tr(A)^2\ne4$, then for $\nu=2$, $A$ is diagonalizable over
$k_\nu$, and hence by proposition \ref{prop:5th_mmnt_mtrx_bnd}
$|\omega_i^2|\leq |k|^4,i=1,\dots,l$, and therefore
$|\omega_i|\leq|k|^2$ which concludes the proof of proposition
\ref{prop:5th_mmnt_mtrx_bnd} for all primes $N$ that do not divide
$\tr(A)^2-4$.
\end{proof}
\section{Discussion}\label{sec:discussion}
\subsection{Matrix elements and exponential sums}
The connection between the matrix elements of the cat map and the
family of exponential sum $F(\chi;\psi)$, was observed previously by
Kurlberg and Rudnick, Gurevich and Hadani, and Kelmer
(\cite{KR_Annals,GH,Kelmer}). In \cite{Kelmer}, Kelmer shows that
the Hecke matrix element corresponding to the (non quadratic)
character $\chi$ of $\CA(N)$ is in fact of the form of
$F(\chi;\psi)$, hence at least in the split case they coincide.
Therefore conjecture \ref{conj:KR_equidistribution conjecture} can
be interpreted in the split case as prediction to the value
distribution of this family as $\chi$ varies. Conjecture
\ref{conj:exponential_sum} is a generalization of this conjecture
and, predicts that the action of the group of characters on this
family has a '{{\it{mixing type}}' behaviour (conjecture
\ref{conj:exponential_sum}.2). Agreement with these  predictions can
be seen in figure \ref{fig:high_moments}.
\begin{figure}[ht]
   \centerline{ \includegraphics[width=14cm,height=8cm]{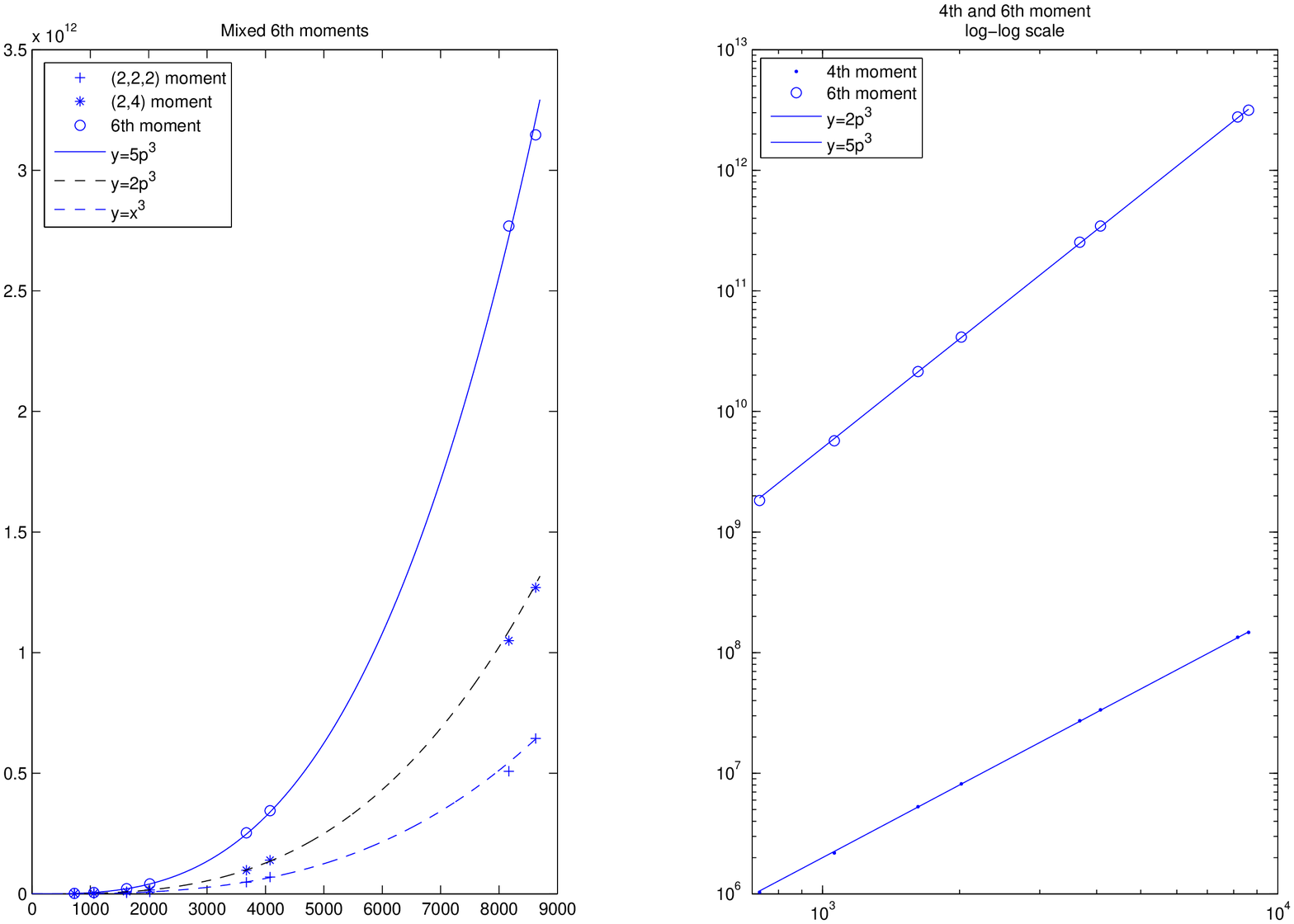}}
   \caption{High mixed moments}
   \label{fig:high_moments}
       \end{figure}
It shows high agreement of the numerical plots of mixed sixth
moments, and fourth moment. In the left part moments of type
$$
\frac1{p-1}\sum_{\chi}F(\chi_1\chi;\psi)\cdots F(\chi_6\chi;\psi)
$$
in different pairing. In case all characters $\chi_1,\dots,\chi_6$
are equal, it shows asymptotic growth of $5p^3$, and in case the
characters are split into subset of two and four equal characters (3
different pairs) we see asymptotic growth of $2p^3$ (respectively
$p^3$). This shows agreement with conjecture
\ref{conj:exponential_sum} recalling that if $X$ is a random
variable with Sato Tate distribution, then
$$
\mathbb E(X^{2n})=\begin{cases}1 &n=1\\2 &n=2\\5&n=3\end{cases}
$$

 It is a generic assumptions on matrix
elements that they behave independently with respect to the
eigenfunctions. When translating conjecture
\ref{conj:exponential_sum} to the language of Hecke matrix elements,
this independence behaviour appears as follows. In the case where
$N$ is prime, one can define a "product law" for the Hecke
eigenfunctions, by parameterizing the Hecke eigenfunction using the
characters of $\CA(N)$, $\psi_\chi$. We define for every character
$\chi_1$ of $\CA(N)$ the following operator
\begin{eqnarray*}
M_\chi:\mathcal H_N&\to&\mathcal H_N\\
\psi_\chi&\mapsto&\psi_{\chi_1\chi}
\end{eqnarray*}
Conjecture \ref{conj:exponential_sum} is therefore: for fixed
$l\in\bN$ and for any prime $N$ choose $l$ characters
$\chi_1,\dots,\chi_l$ of $\CA(N)$. Then as $N\to\infty$ through
primes
$$
\frac1{\CA(N)}\sum_{\chi\in\CA(N)}Y_{\nu_1}(L_{\chi_1}(\psi_\chi))\cdots
Y_{\nu_l}(L_{\chi_l}(\psi_\chi))\to\mathbb
E(\tr(U_{\nu_1})\cdots\tr(U_{\nu_l}))
$$
where $U_\nu$ are as in conjecture \ref{conj:KR_equidistribution
conjecture} independent for $\nu_i\ne\nu_j$.
\subsection{Fluctuations in short windows}
The expected independence behaviour of the matrix elements suggests
more on the fluctuations in short windows. Since at every point we
sum matrix elements related to different eigenvalues (and hence they
have independent behaviour), a Gaussian limiting distribution may
appear. The following figures show agreement with this heuristic.
\begin{figure}[ht]
   \centerline{ \includegraphics[width=14cm,height=10cm]{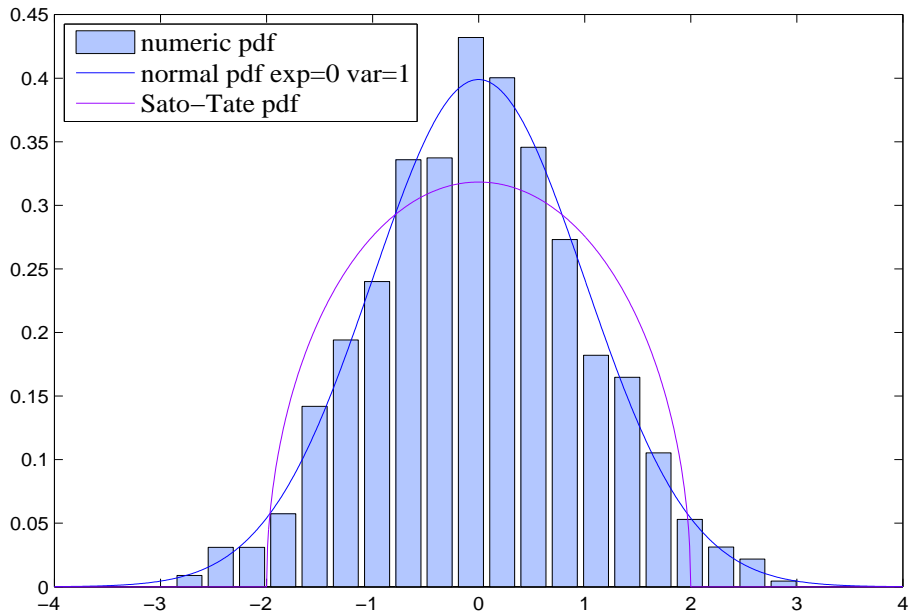}}
   \caption{$P(\theta)$ distribution, $f(x)=e(x+y)$}
   \label{fig:P_theta_var1}
       \end{figure}
\begin{figure}[ht]
   \centerline{ \includegraphics[width=14cm,height=8cm]{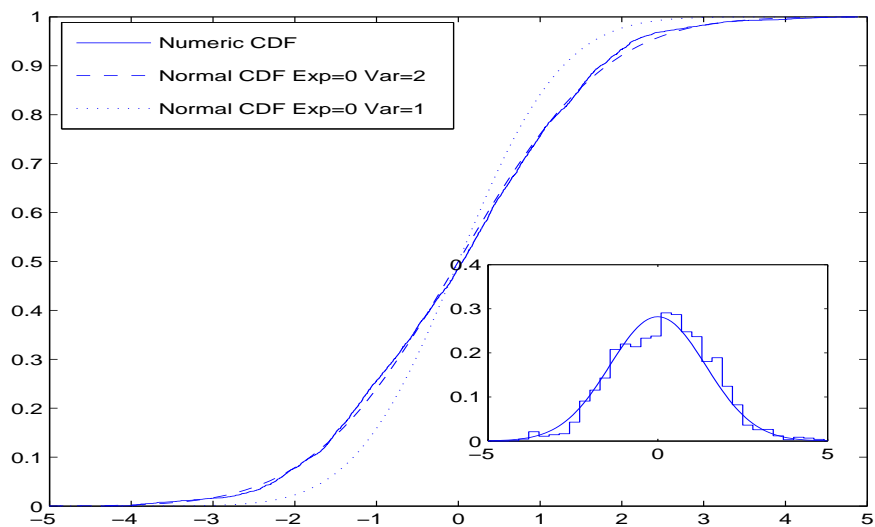}}
   \caption{$P(\theta)$ distribution, $f(x)=e(x+y)+e(2x+y)$}
   \label{fig:P_theta_var2}
       \end{figure}
In figure \ref{fig:P_theta_var1} comparison between the distribution
of $P(\theta)$, normal distribution, and Sato-Tate distribution is
displayed. It shows that the distribution agrees with normal
distribution rather than Sato-Tate. In figure \ref{fig:P_theta_var2}
the function $f(x)$ is a trigonometrical polynomial with exponents
that give two different values for $Q(n)$. We thus see that the
variance is 2 rather than 1 as in figure \ref{fig:P_theta_var1}.
This is in fact the result shown in \cite{KRR}. In fact, assuming
conjecture \ref{conj:exponential_sum}, it is possible to show the
following theorem (see \cite{Ros}):
\begin{thm}\label{conj:gaussian_P}
Let $A\in SL_2(\bZ)$ be a unimodular matrix with distinct
eigenvalues such that $A\equiv I\pmod2$. Fix a smooth function $f\in
C^\infty(\bT^2)$. Assume that for every $\epsilon>0$
$N^{1-\epsilon}\ll L$, and $\frac{L}{\ord(A,N)}\to0$ as $N$ goes to
infinity through primes, then, assuming conjecture
\ref{conj:exponential_sum}, $P(\theta)$ has Gaussian limiting
distribution with mean 0 and $var(P)=\sum_{\nu\in\bZ}f^\sharp(\nu)$
\end{thm}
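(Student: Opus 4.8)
The plan is to prove convergence in distribution by the method of moments: I would show that for every integer $k\ge1$ the moment $\int_0^1\bigl(\sqrt L\,P(\theta)\bigr)^k\,\d\theta$ tends, as $N\to\infty$ through primes, to the $k$th moment of a centered Gaussian of variance $C(f)$ (the constant of \cite{KRR}), which by Carleman's criterion determines the limiting law. The first step is the usual reduction to $f$ a trigonometric polynomial: the tail of its Fourier series contributes $O_f(N^{-\infty})$, and Corollary \ref{cor:sharp expansion} then writes $P=\sum_\nu f^\sharp(\nu)P_\nu$ as a finite sum, so that
$$\int_0^1\bigl(\sqrt L\,P(\theta)\bigr)^k\,\d\theta=\sum_{\nu_1,\dots,\nu_k}\Bigl(\prod_{i=1}^k f^\sharp(\nu_i)\Bigr)\,L^{k/2}\int_0^1 P_{\nu_1}(\theta)\cdots P_{\nu_k}(\theta)\,\d\theta ,$$
and it suffices to understand the mixed moments $L^{k/2}\int_0^1\prod_iP_{\nu_i}$.

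For these I would run, for general $k$, the machine already used for $k=3$ in the proof of Proposition \ref{prop:mtrx_flct_mxd_mmnt}: expand each $P_{\nu_i}$ in Fourier series, integrate over $\theta$ to pick up the constraint $t_1+\dots+t_k=0$, use Lemma \ref{Kelmer's lemma} together with \eqref{formula for q} to write each trace phase as $\nu_i$ times a fixed function of $t_i\bmod\ord(A,N)$, and Fourier-expand the periodized window weight as $H_k=\sum_{\vec j}\gamma_k(\vec j)\,e\bigl(\vec j\cdot\vec\tau/\ord(A,N)\bigr)$ with $\gamma_k(\vec j)\ge0$ and $\sum_{\vec j}\gamma_k(\vec j)=1+o(1)$. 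After completing the sum over the cyclic subgroup $\langle A\rangle$ to a normalized sum over the full Hecke group $\CA(N)$, exactly as there, each mixed moment takes the form $\frac1{L^{k/2}}\sum_{\vec j}\gamma_k(\vec j)\,S_k(\vec j)$, where $S_k(\vec j)$ is an average over auxiliary characters $\vec\theta$ of an exponential sum $\mathcal S_k$ over $\CA(N)^{k-1}$ built from $k$ copies of the form $q(\,\cdot\,;\,\cdot\,)$, the $k$-fold analogue of the sum in \eqref{def:exp_sum2}. Unwinding the isomorphism $\CA(N)\simeq\bF_N^*$ and the identities $\tfrac{1+x^{-1}}{1-x^{-1}}=-\tfrac{1+x}{1-x}$ and $q(n;g^{-1})=-q(n;g)$, one checks that $\mathcal S_k=\tfrac1{q-1}\sum_\chi\prod_{i=1}^kF(\chi\chi_i;\psi_{\nu_i})$ for suitable multiplicative characters $\chi_i$ (assembled from $\vec j$ and $\vec\theta$) and additive characters $\psi_{\nu_i}$ depending only on $\nu_i$; that is, $\mathcal S_k$ is precisely a mixed moment of the family $\{\tilde F(\chi;\psi)\}$. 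The inert primes are reduced to the split case by base change to $\bF_{N^2}$ and Deligne's relation between the $L$-function roots over $\bF_N$ and $\bF_{N^2}$, just as in the base-change arguments of Sections \ref{sec:P} and \ref{sec:Hecke_mtrx_lmnts}.

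Now I would invoke Conjecture \ref{conj:exponential_sum}(3). Grouping the $k$ factors of $\mathcal S_k$ by distinct pairs $(\chi_i,\psi_{\nu_i})$, with multiplicities $e_1+\dots+e_m=k$, it yields $\mathcal S_k=q^{k/2}\,\mathbb E\bigl(\prod_lX_l^{e_l}\bigr)+O\bigl(q^{(k-1)/2}\bigr)$ with the $X_l$ independent Sato-Tate variables. The error $O(q^{(k-1)/2})=O(N^{(k-1)/2})$, carried through $\sum_{\vec j}\gamma_k(\vec j)=O(1)$ and the prefactor $L^{-k/2}$, contributes $O\bigl(N^{(k-1)/2}/L^{k/2}\bigr)=O\bigl(N^{-1/2+o(1)}\bigr)=o(1)$ by the hypothesis $N^{1-\epsilon}\ll L$. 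The main terms $q^{k/2}\,\mathbb E(\prod_lX_l^{e_l})=q^{k/2}\prod_l\mathbb E(X^{e_l})$ vanish unless every pair $(\chi_i,\psi_{\nu_i})$ occurs with even multiplicity $\ge2$; since distinct multiplicative characters produce independent, mean-zero factors, this forces $k=2m$, and the configurations with a multiplicity $\ge4$ lie on a locus of strictly larger codimension in $(\vec j,\vec\theta)$ and are lower order, so only the perfect matchings of the indices into $m$ couples $\{i,j\}$ with $\nu_i=\nu_j$ and $\chi_i=\chi_j$ survive, each carrying $\mathbb E(X^2)=1$. For such a matching the $\chi_i=\chi_j$ constraints localize the corresponding $\vec j,\vec\theta$, and the $(\vec j,\vec\theta)$-sum against $\gamma_k$ factorizes into $m$ copies of the $k=2$ computation of \cite{KRR}, each contributing the factor $\delta_{\nu_i,\nu_j}$ via the Riemann-sum limit $\tfrac1L\sum_t\^h(t/L)^2\to\int_{-\infty}^\infty\^h(y)^2\,\d y=1$; summing over matchings $\pi$ and over $\vec\nu$ against $\prod_if^\sharp(\nu_i)$ then gives $\sum_\pi\prod_{\{i,j\}\in\pi}\sum_\nu f^\sharp(\nu)^2=(2m-1)!!\,C(f)^m$, the $2m$th moment of $N(0,C(f))$, while for odd $k$ there is no matching and the limit is $0$ (the first moment also vanishes since $\int f=0$). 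By the method of moments, $\sqrt L\,P(\theta)$ converges in distribution to $N(0,C(f))$.

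The main obstacle is this last step: verifying that, after the $\gamma_k$-weighting and the averaging over $\vec\theta$, the $q^{k/2}\,\mathbb E(\,\cdot\,)$ main terms reassemble into exactly $(2m-1)!!\,C(f)^m$. This is a Wick-type computation in which one must (i) correctly identify the resonant loci in $\vec j$ and $\vec\theta$, i.e.\ those producing a nonvanishing Sato-Tate expectation; (ii) prove that clusters of multiplicity $\ge4$ are genuinely negligible, so that the Catalan moments $\mathbb E(X^{2e})$ never intervene and the count is the double factorial; and (iii) evaluate the surviving factorized sums, where the Riemann-sum asymptotics of the window weights and the normalization $\sum_{\vec j}\gamma_k(\vec j)\to1$ enter. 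The two hypotheses play complementary roles: $N^{1-\epsilon}\ll L$ gives $L\to\infty$ and annihilates the $O(N^{(k-1)/2})$ errors, while $L/\ord(A,N)\to0$ guarantees $\sum_{\vec j}\gamma_k(\vec j)\to1$ and ensures that the window Fourier modes do not alias modulo $\ord(A,N)$. The bookkeeping of the sign $\eps$ of Proposition \ref{prop:U(A) properties} and of the split/inert dichotomy is routine.
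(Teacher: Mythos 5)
The paper does not prove this theorem: the sentence introducing it reads ``In fact, assuming conjecture \ref{conj:exponential_sum}, it is possible to show the following theorem (see \cite{Ros})'', and \cite{Ros} is the author's Ph.D.\ thesis, listed as in preparation. So there is no in-paper argument to compare against. That said, your strategy --- method of moments, reduce to $P=\sum_\nu f^\sharp(\nu)P_\nu$ via Corollary \ref{cor:sharp expansion}, rerun for general $k$ the Fourier/window/character-completion machine of Section \ref{sec:P}, identify the resulting sums over $\CA(N)^{k-1}$ with mixed $k$th moments of the family $\{\tilde F(\chi;\psi)\}$, then invoke Conjecture \ref{conj:exponential_sum}(3) and a Wick pairing count --- is the natural, and almost certainly intended, extension of what the paper proves unconditionally for the odd cases $k=3,5$.

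Two genuine gaps remain. The first you already name: the Wick bookkeeping --- identifying the resonant $(\vec j,\vec\theta)$ locus, showing clusters of multiplicity $\ge4$ are negligible, and recovering $(2m-1)!!\,C(f)^m$ from the $\gamma_k$-weighted Riemann sums --- is the entire content and is sketched rather than carried out. Note also that the paper's displayed factor $N/\ord(A,N)$ in the formula for $\dsone_A$ should be $\ord(A,N)/|\CA(N)|$; with the correction, $S_k(\vec j)$ is the \emph{average} of $\mathcal S_k$ over $\vec\theta$, not $(N/\ord(A,N))^{k-1}$ times it, and the error budget needs to be retracked with that normalization. The second gap is the inert case, which the stated theorem does not exclude. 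There $\CA(N)\simeq\bF_{N^2}^1$, the norm-one torus, which is not $\bF_q^*$ for any $q$, so $\mathcal S_k$ is not literally a mixed moment of $\{F(\chi;\psi)\}_{\chi\in\widehat{\bF_q^*}}$ and Conjecture \ref{conj:exponential_sum} does not apply as written. The base-change arguments you invoke from Sections \ref{sec:P} and \ref{sec:Hecke_mtrx_lmnts} extract only the bound $|w_i|\le N$ from $S_2$ over $\bF_{N^2}$; a Gaussian limit requires the asymptotic \emph{value} of $S_1$, which $S_2$ alone does not determine. Either a companion conjecture for the norm-one torus is needed, or the statement should be read as restricted to split primes, as Theorem \ref{thm:mtrx_fluct_3rd_mmnt} is. Finally, the variance $\sum_\nu f^\sharp(\nu)$ in the statement is surely a misprint for $\sum_\nu f^\sharp(\nu)^2=C(f)$, which your Wick count correctly produces.
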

\begin{remark}
Notice that the conditions $L/\ord(A,N)\to0$ and $N^{1-\eps}\ll L$
imply an assumption on the size of $\ord(A,N)$, however, assuming
GRH this assumption is valid for most primes (c.f.\cite{Kur03}).
\end{remark}
This result is in some agreement with predicted results on generic
systems. It says that once the arithmetic symmetries of the systems
are grouped together, the resulting desymetrized components have a
generic Gaussian limiting behaviour. However we should notice that
when the size of the window becomes too short, the function
$P(\theta)$ no longer consists of sums of matrix elements
corresponding to different eigenvalues, and in cases where the order
of $A$ modulo $N$ is maximal it studies the matrix elements
distribution and we no longer expect normal distribution but rather
Sato-Tate as is shown in figure \ref{fig:P_theta_ST}
\begin{figure}[ht]
   \centerline{ \includegraphics[width=14cm,height=9cm]{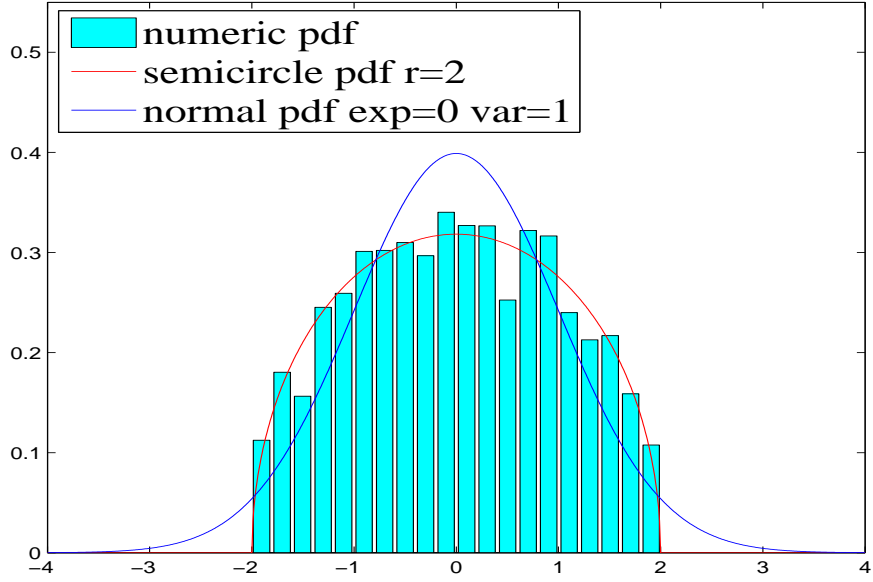}}
   \caption{$P(\theta)$ distribution, Very short window}
   \label{fig:P_theta_ST}
       \end{figure}
       \newpage
\appendix
\section{Proofs of irreducibility}\label{ap:irred_proofs}
Let $k$ be a finite field, and $0\ne A_1,\dots,A_5\in k$. Denote by
\begin{equation*}
{\bf{V}}(A)=\left\{0\ne a_1,a_2,a_3,a_4\in
k:\substack{A_1a_1+A_2a_2+A_3a_3+A_4a_4+A_5=0\\a_1^{-1}+a_2^{-1}+a_3^{-1}+a_4^{-1}+1=0}\right\}
\end{equation*}
and by
\begin{eqnarray*}
{\bf{V}}(A,\nu)=\left\{0\ne
a_1,a_2,a_3,a_4\in k_\nu:\substack{A_1a_1+A_2a_2+A_3a_3+A_4a_4+A_5=0\\a_1^{-1}+a_2^{-1}+a_3^{-1}+a_4^{-1}+1=0}\right\}\\
\overline{{\bf{V}}}(A)=\left\{0\ne a_1,a_2,a_3,a_4\in\overline
k:\substack{A_1a_1+A_2a_2+A_3a_3+A_4a_4+A_5=0\\a_1^{-1}+a_2^{-1}+a_3^{-1}+a_4^{-1}+1=0}\right\}
\end{eqnarray*}
the rational points in any finite extension $k_\nu$ of $k$, and the
points in the algebraic closure $\overline k$. Let
\begin{equation*}
\tilde h_A(a_2,a_3,a_4)=\sum_{2\leq
i<j\leq4}\left(\frac{A_ia_i}{a_j}-\frac{A_ja_j}{a_i}\right)+\sum_{i=1}^4A_ia_i-A_5\sum_{i=2}^4a_i^{-1}
\end{equation*}
We prove the following lemmas
\begin{lem}\label{lem:5th_mmnt_srfce_irred}
For $0\ne A_1,\dots,A_5\in k$, the variety $\overline{{\bf{V}}}(A)$
has one irreducible component of dimension 2
\end{lem}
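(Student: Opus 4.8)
The plan is to eliminate the variable $a_1$, reducing the statement to the irreducibility of a single polynomial in three variables. Since $A_1\neq0$, the first defining equation solves for $a_1=\ell(a_2,a_3,a_4)$, where $\ell:=-A_1^{-1}(A_2a_2+A_3a_3+A_4a_4+A_5)$ has nonzero homogeneous linear part $\ell^{\mathrm{lin}}:=-A_1^{-1}(A_2a_2+A_3a_3+A_4a_4)$. Multiplying the second equation by $a_1a_2a_3a_4$ and substituting $a_1=\ell$ turns it into $G(a_2,a_3,a_4)=0$, where
\[
G\ :=\ a_2a_3a_4\ +\ \ell\cdot\bigl(a_2a_3a_4+a_2a_3+a_2a_4+a_3a_4\bigr).
\]
The map $(a_2,a_3,a_4)\mapsto\bigl(\ell(a_2,a_3,a_4),a_2,a_3,a_4\bigr)$ then identifies $\overline{\mathbf V}(A)$ with the open subvariety $\{G=0\}\cap\{a_2a_3a_4\,\ell\neq0\}$ of the hypersurface $\{G=0\}$ in affine $3$-space over $\overline k$. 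A short computation records the shape of $G$: it has total degree $4$, leading form $\ell^{\mathrm{lin}}\cdot a_2a_3a_4$, no terms of degree $0$ or $1$, and degree-$2$ part equal to $-A_1^{-1}A_5(a_2a_3+a_2a_4+a_3a_4)$. It therefore suffices to show that $G$ is irreducible over $\overline k$: then $\{G=0\}$ is an irreducible surface, the discarded locus $\{a_2a_3a_4\,\ell=0\}$ meets it in a proper closed subset (the degree-$4$ irreducible polynomial $G$ being coprime to each of the linear forms $a_2,a_3,a_4,\ell$), and hence $\overline{\mathbf V}(A)$ is a nonempty open dense subset of $\{G=0\}$ — so it is itself irreducible of dimension $2$, and in particular has exactly one irreducible component of dimension $2$.

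To prove $G$ irreducible I would rule out factorizations one type at a time. In any factorization, the leading form of each factor divides $\ell^{\mathrm{lin}}a_2a_3a_4$, whose four linear factors $\ell^{\mathrm{lin}},a_2,a_3,a_4$ are pairwise non-proportional (this is where $A_2,A_3,A_4\neq0$ enters). If $a_i+c$ divides $G$ then $G|_{a_i=-c}\equiv0$; inspecting the top-degree part of the restriction forces $c=1$, and then the coefficient of $a_j^2$ in $G|_{a_i=-1}$ is a nonzero scalar multiple of some $A_m$, a contradiction. If a linear factor is proportional to $\ell^{\mathrm{lin}}+c$ then $G$ vanishes on the plane $\ell^{\mathrm{lin}}=-c$; on that plane $\ell$ is the constant $\gamma=-c-A_1^{-1}A_5$ and $G$ becomes $(1+\gamma)\,a_2a_3a_4+\gamma\,(a_2a_3+a_2a_4+a_3a_4)$ read on the plane, whose top-degree part forces $\gamma=-1$, leaving the nonzero quadratic $-(a_2a_3+a_2a_4+a_3a_4)$ — again impossible. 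In particular $G$ has no factorization into three or more factors, since any such factorization contains a linear factor. The remaining possibility is $G=G_1G_2$ with $\deg G_1=\deg G_2=2$. Here I would use the symmetry of the problem that simultaneously permutes the pairs $(a_2,A_2),(a_3,A_3),(a_4,A_4)$ (fixing $\ell^{\mathrm{lin}}$) to reduce the three possible splittings of $\ell^{\mathrm{lin}}a_2a_3a_4$ into the leading forms of $G_1,G_2$ to a single case, say leading forms $a_3a_4$ and $\ell^{\mathrm{lin}}a_2$. Comparing homogeneous components of $G_1G_2$ with those of $G$ from degree $4$ downward successively determines every coefficient of $G_1$ and of $G_2$, and in degree $2$ the comparison forces a coefficient equal to $-A_1^{-1}A_3$ to vanish, contradicting $A_3\neq0$. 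Hence $G$ is irreducible and the lemma follows; note that no assumption on the characteristic of $k$ is used.

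The main obstacle is this final $(2,2)$ case: one must set up the permutation symmetry cleanly in order to reduce the three a-priori splittings to one, and then push the coefficient bookkeeping through degree $2$ to extract the contradiction. The earlier steps — the elimination that reduces $\overline{\mathbf V}(A)$ to a hypersurface, and the exclusion of linear factors — are routine once the explicit form of $G$ is in hand.
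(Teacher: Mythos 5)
Your proposal is correct, but it takes a genuinely different route from the paper. The paper proves the lemma analytically: it counts $\#\mathbf{V}(A,\nu)$ using orthogonality of additive characters, which turns the count into $|k_\nu|^2$ plus a normalized average of a product of five Kloosterman sums $\prod_{i=1}^5 Kl(a_i,b)$; Weil's bound $|Kl(a,b)|\le 2|k_\nu|^{1/2}$ then gives $\#\mathbf{V}(A,\nu)=|k_\nu|^2+O(|k_\nu|^{3/2})$, and the Lang--Weil theorem converts this uniform point count into the existence of exactly one geometrically irreducible $2$-dimensional component. Your argument is instead purely algebraic: you eliminate $a_1$ via the linear relation, reduce $\overline{\mathbf V}(A)$ to an open dense subset of the quartic hypersurface $\{G=0\}\subset\mathbb{A}^3$, and show $G$ is irreducible over $\overline k$ by ruling out every possible factorization, using the fact that the leading form $\ell^{\mathrm{lin}}a_2a_3a_4$ is a product of four pairwise non-proportional linear forms (since $A_2,A_3,A_4\ne 0$). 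I checked the key claims: the linear-factor cases are correctly disposed of, and in the $(2,2)$ case, after degree-$4$ and degree-$3$ comparisons force $G_1=a_3a_4+a_3+a_4+c_1$ and pin down $L_2$, the $a_3^2$-coefficient in degree $2$ does indeed force $-A_1^{-1}A_3=0$, a contradiction, exactly as you predicted. What each approach buys: the paper's point count is short once the Weil bound for Kloosterman sums and Lang--Weil are available, but it only asserts uniqueness of the top-dimensional component and imports nontrivial machinery; your elimination argument is elementary and characteristic-free, costs more bookkeeping, but actually establishes the stronger statement that $\overline{\mathbf V}(A)$ is itself irreducible (so there are no lower-dimensional components either), which is cleaner input for Theorem \ref{thm:bound_exp_sum}.
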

and
\begin{lem}\label{lem:5th_mmnt_fibers_irred}
Except for 14 values of $C\in\overline k$ the fibers
$$\tilde
h_A^{-1}(C)\subset\overline{\bf{V}}(A)
$$
are irreducible
\end{lem}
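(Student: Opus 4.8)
The plan is to deduce Lemma~\ref{lem:5th_mmnt_fibers_irred} from the general principle that a dominant morphism from a geometrically irreducible surface to $\mathbb A^1$ has geometrically irreducible generic fibre, made quantitative by an explicit bound on the exceptional locus. By Lemma~\ref{lem:5th_mmnt_srfce_irred} we already know $\overline{\mathbf V}(A)$ is geometrically irreducible of dimension $2$, and $\tilde h_A$ is visibly non-constant, so $\tilde h_A\colon\overline{\mathbf V}(A)\dashrightarrow\mathbb A^1$ is dominant.

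First I would pass to an explicit birational model: solving the linear constraint for $a_1$ and substituting into, and clearing denominators in, $a_1^{-1}+a_2^{-1}+a_3^{-1}+a_4^{-1}+1=0$ presents $\overline{\mathbf V}(A)$ as the smooth locus of an explicit hypersurface $S\subset\mathbb A^3$ in coordinates $(a_2,a_3,a_4)$ of bounded degree, on which $\tilde h_A$ becomes a rational function of bounded degree; clearing denominators in $\tilde h_A=C$ then writes the fibre $Y_C:=\tilde h_A^{-1}(C)$ as an explicit curve on $S$ with defining polynomial $F_C(a_2,a_3,a_4)$ depending polynomially on $C$. Thus $\{Y_C\}$ is a pencil on $S$ with fixed base locus $B$ supported on the coordinate hyperplanes and the hyperplane at infinity, and one must keep the (bounded) exceptional locus of $\overline{\mathbf V}(A)\dashrightarrow S$ in mind, as it lies over only finitely many values of $C$. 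To get generic irreducibility it then suffices to verify that $\tilde h_A$ is \emph{primitive}, i.e.\ that $\overline k(\tilde h_A)$ is algebraically closed in $\overline k(S)$. I would establish this by exhibiting one value $C_0$ for which $Y_{C_0}$ is geometrically irreducible and reduced --- either a convenient symmetric value, or $C_0=\infty$, where $Y_{C_0}$ degenerates to an analysable configuration of coordinate curves --- and invoking the fact that a nontrivial factorisation $\tilde h_A=\varphi\circ g$ with $\deg\varphi\ge2$ would force all but the finitely many totally ramified fibres to be reducible, which a single reduced irreducible fibre rules out. Once primitivity holds, the generic fibre is geometrically irreducible and only finitely many $C$ are exceptional. (In the lucky case that $Y_C$ turns out to be a rational curve one could instead argue irreducibility directly via an explicit rational parametrisation, as was done for the conics in §\ref{sec:prf_exp_sum_3rd_mmnt}, cf.\ \eqref{eq:curve_parametrization}.)

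The quantitative claim --- that there are at most $14$ exceptional values --- is where I expect the real work to lie. I would carry it out by a direct factorisation analysis: assuming $Y_C=Y'\cup Y''$, the bounded degree of $F_C$ limits the possible Newton polygons of the factors, and each resulting case (a component being a line, a coordinate curve through a base point of $B$, or a specific low-degree factor) forces $C$ to be a root of an explicit resultant or discriminant in $k[C]$; summing the degrees of these polynomials, together with the finitely many values coming from the base points of the pencil and from the exceptional locus of the birational model, should yield exactly $14$. The main obstacle is making this count \emph{complete} and \emph{characteristic-uniform}: the Euler-characteristic bookkeeping that trivialises such counts over $\mathbb C$ is unavailable, since the lemma is needed for every finite field of characteristic $\ne 2$, so one must check by hand that no additional reducible fibres appear in small characteristic, that the base points of the pencil on the coordinate hyperplanes do not silently inflate the count, and that the seed fibre $Y_{C_0}$ is indeed irreducible in every characteristic $\ne 2$. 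These explicit case-by-case verifications --- essentially the one-dimension-up analogue of Propositions~\ref{prop:3rdmmnt_fibers_irred} and~\ref{prop:geometric_restrictions} --- are the computational heart of the argument.
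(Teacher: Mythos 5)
Your high-level plan matches the paper's: pass to an explicit birational model in fewer variables, clear denominators so that the fibre becomes the zero locus of an explicit polynomial, and then bound the number of reducible fibres by a case-by-case factorisation analysis. The paper, however, carries the elimination one step further than you propose. It solves the linear equation for $a_1$, \emph{and} uses the difference of (\ref{eq:param_prf1}) and (\ref{eq:param_prf2}) to express $a_2$ rationally in $a_3,a_4$, so that the whole fibre (off an explicit exceptional locus) is parametrised by a single bivariate quartic $p(a_3,a_4)$ as in (\ref{eq:polynomial definition}); Proposition~\ref{prop:poly irreducible} then rules out $p=qr$ by a hands-on analysis of the homogeneous parts $q_0+q_1+q_2+q_3$, $r_0+r_1+r_2+r_3$, using that $p$ has nonzero homogeneous components only in degrees $2,3,4$ --- essentially the Newton-polygon bookkeeping you allude to --- and shows each surviving subcase forces $C$ to satisfy an explicit polynomial condition, summing to at most $14$ values. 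Two remarks on your version. First, your ``primitivity'' detour (exhibit one irreducible fibre $Y_{C_0}$ to rule out a Stein factorisation $\tilde h_A=\varphi\circ g$) is sound but redundant: the explicit factorisation analysis you must do anyway to obtain the constant $14$ already delivers irreducibility of the generic fibre, so nothing is gained by proving generic irreducibility separately. Second, you correctly identify the quantitative factorisation analysis as ``the computational heart of the argument,'' and you correctly flag both the base-locus/exceptional-locus issues and the need for a characteristic-uniform argument --- but you do not actually carry out the elimination or the case analysis, and that is essentially the entire content of the paper's proof (Propositions~\ref{prop:birational map for fibers} and~\ref{prop:poly irreducible}). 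As it stands the proposal is a sound plan, not a proof.
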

\subsection{Proof of lemma \ref{lem:5th_mmnt_srfce_irred}}
We prove the lemma by counting the number of points on
${\bf{V}}(A,\nu)$ for every $\nu\in\bN$. By Lang-Weil theorem, any
irreducible component surface has $|k_\nu|^{2}+O(|k_\nu|^{3/2})$
points on it, and therefore by showing this we prove the lemma. For
a nontrivial additive character $\psi$ of $k_\nu$, and $0\ne a,b\in
k_\nu$ denote by $Kl(a,b)$ the Kloosterman sum
$$
Kl(a,b)=\sum_{0\ne x\in k_\nu}\psi(ax+bx^{-1})
$$
\begin{prop}
Let $\psi$ be a nontrivial additive character of $k_\nu$. Then
$$
\sharp {\bf{V}}(A)-|k_\nu|^2=\frac1{|k_\nu|^2}\sum_{b\in
k_\nu}\left(\prod_{i=1}^5Kl(a_i,b)\right)+O(|k_\nu|^{-1})
$$
where $Kl(a,b)=\sum_{0\ne x}\psi(ax+bx^{-1})$, is the Kloosterman
sum.
\end{prop}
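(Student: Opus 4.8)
The plan is to compute $\sharp{\bf{V}}(A,\nu)$ by orthogonality of additive characters and to recognise the resulting one‑variable sums as Kloosterman sums. Detecting each of the two relations cutting out ${\bf{V}}(A,\nu)$ via the fact that $\frac1{|k_\nu|}\sum_{s\in k_\nu}\psi(sx)$ equals $1$ for $x=0$ and $0$ otherwise, and then performing the now unconstrained sum over $a_1,\dots,a_4\in k_\nu^*$, the product over $i$ factors and one gets
$$
\sharp{\bf{V}}(A,\nu)=\frac1{|k_\nu|^2}\sum_{s,t\in k_\nu}\psi(sA_5+t)\prod_{i=1}^4\left(\sum_{a\in k_\nu^*}\psi(sA_ia+ta^{-1})\right)=\frac1{|k_\nu|^2}\sum_{s,t\in k_\nu}\psi(sA_5+t)\prod_{i=1}^4 Kl(sA_i,t),
$$
where $Kl(\alpha,\beta):=\sum_{a\in k_\nu^*}\psi(\alpha a+\beta a^{-1})$, so that $Kl(A_i,b)$ is as in the statement.

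First I would treat the degenerate ranges $s=0$ and $t=0$. When one of $s,t$ vanishes every factor is of the shape $Kl(0,\beta)=\sum_{b\ne 0}\psi(\beta b)$, which equals $|k_\nu|-1$ if $\beta=0$ and $-1$ otherwise; collecting these contributions produces an explicit polynomial in $|k_\nu|$ with leading term $|k_\nu|^2$. Since moreover $Kl(A_i,0)=-1$, so that the $b=0$ summand of $\frac1{|k_\nu|^2}\sum_{b\in k_\nu}\prod_{i=1}^5 Kl(A_i,b)$ is $-|k_\nu|^{-2}$, reconciling this with the degenerate polynomial leaves only a lower‑order remainder, which yields the error term claimed.

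The core of the argument is the genuinely two‑dimensional range $s,t\in k_\nu^*$. Here I would use that for $s\ne 0$ the substitution $a\mapsto a/s$ inside the Kloosterman sum gives $Kl(sA_i,t)=Kl(A_i,st)$; hence after the change of variable $b=st$ (a bijection of $k_\nu^*$ for fixed $s\ne 0$) the inner product $\prod_{i=1}^4 Kl(sA_i,t)$ becomes $\prod_{i=1}^4 Kl(A_i,b)$, independent of $s$, while the phase $\psi(sA_5+t)$ turns into $\psi(sA_5+b/s)$. Summing over $s\in k_\nu^*$ then manufactures the missing fifth factor, $\sum_{s\ne 0}\psi(A_5 s+bs^{-1})=Kl(A_5,b)$, so this range contributes exactly $\frac1{|k_\nu|^2}\sum_{b\in k_\nu^*}\prod_{i=1}^5 Kl(A_i,b)$. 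Adding back the $b=0$ term together with the polynomial from the degenerate ranges gives the asserted identity.

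I expect the only non‑routine step to be the maneuver in the previous paragraph---the rescaling $a\mapsto a/s$, the substitution $b=st$, and the summation over $s$ that converts the surplus phase $\psi(sA_5)$ into a fifth Kloosterman factor. It is essential: four Kloosterman factors together with the two free parameters $s,t$ only give the useless bound of size $|k_\nu|^2$, whereas the fifth factor combined with Weil's bound $|Kl(A_i,b)|\le 2\sqrt{|k_\nu|}$ forces $\frac1{|k_\nu|^2}\sum_{b}\prod_{i=1}^5 Kl(A_i,b)=O(|k_\nu|^{3/2})$. Consequently the identity yields $\sharp{\bf{V}}(A,\nu)=|k_\nu|^2+O(|k_\nu|^{3/2})$, which is precisely what is needed to conclude via Lang--Weil that $\overline{{\bf{V}}}(A)$ has a single two‑dimensional geometric component (Lemma \ref{lem:5th_mmnt_srfce_irred}); the remaining bookkeeping of boundary terms is routine.
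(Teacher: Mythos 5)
Your argument is essentially identical to the paper's: detect both defining equations of $\mathbf{V}(A)$ by orthogonality of additive characters, recognize the four inner sums as Kloosterman sums $Kl(sA_i,t)$, and on the range $s,t\ne0$ use the scaling identity $Kl(sA_i,t)=Kl(A_i,st)$ together with the substitution $b=st$ to peel off the $s$-sum as a fifth Kloosterman factor $Kl(A_5,b)$. One small bookkeeping point, which the paper's own proof also glosses over: the $s=t=0$ term contributes $(|k_\nu|-1)^4/|k_\nu|^2=|k_\nu|^2-4|k_\nu|+6+O(|k_\nu|^{-1})$, so the discrepancy after subtracting $|k_\nu|^2$ is $-4|k_\nu|+6$, not $O(|k_\nu|^{-1})$ as the stated identity suggests; this does not matter for the application, since both this polynomial remainder and the fifth Kloosterman moment are $O(|k_\nu|^{3/2})$, which is exactly what Lang--Weil requires to conclude that $\overline{\mathbf{V}}(A)$ has a single two-dimensional component.
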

\begin{proof}
By the orthogonality relations of additive characters, we have that
\begin{eqnarray*}
&&\sharp{\bf{V}}(A)=\frac1{|k_\nu|^2}\sum_{a,b\in k_\nu}\sum_{\underline x\in\left(k_\nu^*\right)^4}\psi(af_1(\underline x)+bf_2(\underline x))=\\
&&|k_\nu|^2+\frac1{|k_\nu|^2}\sum_{b\in k_\nu^*}\psi(-b)\prod_{i=1}^4Kl(0,b)+\\
&&\frac1{|k_\nu|^2}\sum_{a,b\in
k_\nu,a\ne0}\psi(aa_5+b)\prod_{i=2}^5\left(Kl(aa_i,b)\right)+O(|k_\nu|^{-1})
\end{eqnarray*}
Using that $Kl(0,b)=-1,Kl(ac,b)=Kl(c,ab)$ we have that
\begin{equation*}
\sharp {\bf{V}}(A)-|k_\nu|^2=\frac1{|k_\nu|^2}\sum_{a\in
k_\nu^*}\sum_{b\in
k_\nu}\psi(-aa_5-b)\prod_{i=1}^4\left(Kl(a_i,ba)\right)^4+O(|k_\nu|^{-1})
\end{equation*}
and under the change of variable $b\mapsto b/a$, we get
\begin{equation*}
\sharp {\bf{V}}(A)-|k_\nu|^2=\frac1{|k_\nu|^2}\sum_{b\in
k_\nu}\prod_{i=1}^5\left(Kl(a_i,b)\right)+O(|k_\nu|^{-1})
\end{equation*}
and now using Weil's bound $Kl(a_i,b)\leq2N^{\nu/2}$, we get the
bound
$$
\sharp{{\bf{V}}}(A)=|k_\nu|^2+O(|k_\nu|^{3/2})
$$
which proves the lemma.
\end{proof}
\subsection{Proof of lemma \ref{lem:5th_mmnt_fibers_irred}}
We prove the irreducibility of the fibers by the following strategy:
For each curve $\tilde h_A^{-1}(C)$ we find a curve in the affine
plane $\mathbb{A}$ over $\overline k$ given by the zeros set of a
polynomial, such that (an open Zariski subset of) the fiber is
parameterized by (an open Zariski subset of) this plane curve.  We then show that the polynomial defining the plane curve is irreducible over $\overline k$ and thus proving the lemma.

For simplicity of notations we use the following notation: For a
polynomial $P(X_1,\dots,X_n)\in k[X_1,\dots,X_n]$ over a field $k$
we denote the zeros set of this polynomial by
$$
Z(p)=\{(a_1,\dots,a_n\in\mathbb A^n:P(a_1,\dots,a_n)=0\}
$$
and its complement by
$$
Y_p=\{(a_1,\dots,a_n)\in\mathbb A^n:P(a_1,\dots,a_n)\ne0\}
$$
For fixed $A_1,\dots,A_5,C\in k$ we define the following
polynomial $p(a_3,a_4)$:
\begin{eqnarray}\label{eq:polynomial definition}
&&p(a_3,a_4)=2a_4^2((B+C-2A_2)a_3+2A_4a_4)(A_3a_3+A_4a_4)+\\
\nonumber&&a_4(2(B-C-2A_2+2A_5)(A_4a_4^2+A_3a_3^2)+(B^2-C^2+D)a_3 a_4)+\\
\nonumber&&2A_5((B-C+2A_2)a_4+2A_3a_3)(a_3+a_4)
\end{eqnarray}
where
$B=A_2+A_3+A_4+A_5-A_1,D=-4A_2(A_3+A_4+A_5)+4(A_3A_4+A_3A_5+A_4A_5)$.
\begin{prop}\label{prop:birational map for fibers}
Let $p(a_3,a_4)$ be as above. Define
 \begin{eqnarray*}
g_1(x_3,x_4)&=&A_3x_3+A_4x_4+A_5\\
g_2(x_3,x_4)&=&(B+C)x_3 x_4+2A_4x_4^2+2A_5(x_4+x_3)
\end{eqnarray*}
and
 \begin{eqnarray*}
\tilde g_1(a_1,a_2,a_3,a_4)&=&A_3a_3+A_4a_4+A_5\\
\tilde g_2(a_1,a_2,a_3,a_4)&=&(B+C)a_3 a_4+2A_4a_4^2+2A_5(a_4+a_3)
\end{eqnarray*}
 Then
the following map
\begin{eqnarray*}
&&a_1=-\frac{A_2a_2(x_3,x_4)+A_3x_3+A_4x_4+A_5}{A_1}\\
&&a_2=\frac{-2x_3 x_4 g_1(x_3,x_4)}{g_2(x_3,x_4)}\\
&&a_3=x_3\\
&&a_4=x_4
\end{eqnarray*}
defines a bijection between ${\tilde h_A^{-1}(C)}\cap Y_{\tilde
g_1}\cap Y_{\tilde g_2}$ and $Z(p)\cap Y_{g_1}\cap Y_{g_2}$
\end{prop}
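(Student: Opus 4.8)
The plan is to eliminate $a_1$ and $a_2$ from the defining equations of $\overline{\bf V}(A)$ in favour of $(a_3,a_4)$, so that the fiber becomes an explicit plane curve, and then to recognize the map of the proposition, together with its inverse, as the resulting birational identification.

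First I would use the linear relation $A_1a_1+A_2a_2+A_3a_3+A_4a_4+A_5=0$ on $\overline{\bf V}(A)$ to write $a_1=-(A_2a_2+g_1)/A_1$ with $g_1=A_3a_3+A_4a_4+A_5$ (using $A_1\ne0$); the same relation gives $\sum_{i=1}^4A_ia_i=-A_5$, so the block $\sum_{i=1}^4A_ia_i$ occurring in $\tilde h_A$ is constant on the surface and $\tilde h_A$ becomes a rational function of $a_2,a_3,a_4$ alone. Clearing denominators in the remaining relation $a_1^{-1}+a_2^{-1}+a_3^{-1}+a_4^{-1}+1=0$ (after substituting $a_1$) and in $\tilde h_A=C$ yields two polynomials $E_1(a_2,a_3,a_4),E_2(a_2,a_3,a_4)$, each quadratic in $a_2$, cutting out $\tilde h_A^{-1}(C)$ inside $\mathbb A^3_{a_2,a_3,a_4}$. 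Since the map of the proposition has $a_3=x_3,a_4=x_4$, it then suffices to show that on $Y_{g_1}\cap Y_{g_2}$ the common zero set of $E_1,E_2$ is carried bijectively onto $Z(p)$ by the projection $(a_2,a_3,a_4)\mapsto(a_3,a_4)$, with inverse $a_3=x_3,a_4=x_4,a_2=-2x_3x_4\,g_1/g_2$, and $a_1$ as above.

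The heart of the argument is the elimination of $a_2$ between $E_1$ and $E_2$. Their leading $a_2^2$-coefficients are $A_2(a_3+a_4+a_3a_4)$ and $A_2(a_3+a_4)$, and the $\overline k[a_3,a_4]$-combination that cancels the $a_2^2$-terms leaves a relation linear in $a_2$; the claim I would verify by direct expansion is that this relation is, up to a nonzero constant, $g_2\,a_2+2a_3a_4\,g_1=0$ with $g_2=(B+C)a_3a_4+2A_4a_4^2+2A_5(a_4+a_3)$, which gives exactly the stated formula for $a_2$ on $\{g_2\ne0\}$. Substituting this back into $E_1$ (or $E_2$) and clearing $g_2^{\,2}$ then produces, after dividing out the factors invertible on $Y_{g_1}\cap Y_{g_2}$, the polynomial $p(a_3,a_4)$ of \eqref{eq:polynomial definition}. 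Matching coefficients against the stated $B=A_2+A_3+A_4+A_5-A_1$ and $D=-4A_2(A_3+A_4+A_5)+4(A_3A_4+A_3A_5+A_4A_5)$, and confirming that the extraneous factors removed are precisely those excised by $Y_{g_1}$ and $Y_{g_2}$ (so that no component of the curve is lost, and no genericity on the $A_i$ beyond $A_i\ne0$ is secretly needed), is the one genuinely computational step, and the place where I expect the main difficulty to lie.

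Granting this elimination identity, the bijection becomes formal. Given $(x_3,x_4)\in Z(p)\cap Y_{g_1}\cap Y_{g_2}$ I would define $a_3,a_4,a_2,a_1$ by the displayed formulas and check: the linear relation holds by construction of $a_1$; the reciprocal relation holds because on $Y_{g_1}\cap Y_{g_2}$ the equation $p(x_3,x_4)=0$ is equivalent, after the back-substitution, to $E_1=0$; the identity $\tilde h_A=C$ holds because $a_2$ was pinned down exactly by the pair $\{E_1=0,E_2=0\}$; and all four coordinates are nonzero on this locus, possibly after removing the finitely many points of $Z(p)$ on $\{a_3a_4=0\}$, so the tuple lies on $\overline{\bf V}(A)$, hence on $\tilde h_A^{-1}(C)$, with $\tilde g_1,\tilde g_2$ nonvanishing since $g_1,g_2$ are. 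Conversely, on $\tilde h_A^{-1}(C)\cap Y_{\tilde g_1}\cap Y_{\tilde g_2}$ the coordinates $a_1,a_2$ are determined by $(a_3,a_4)$ through the same two relations (uniqueness of $a_2$ using $g_2\ne0$), so the projection $(a_1,a_2,a_3,a_4)\mapsto(a_3,a_4)$ is a two-sided inverse, and it lands in $Z(p)\cap Y_{g_1}\cap Y_{g_2}$ again by the elimination identity. In particular $\tilde h_A^{-1}(C)$ is birational to the plane curve $Z(p)$, which is the form in which the proposition will be used to reduce Lemma~\ref{lem:5th_mmnt_fibers_irred} to the irreducibility of $p$.
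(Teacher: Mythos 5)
Your proposal follows the same route as the paper's proof: eliminate $a_1$ via the linear relation, clear denominators to get two relations in $a_2,a_3,a_4$, cancel the $a_2^2$-terms to obtain $g_2\,a_2+2a_3a_4\,g_1=0$ and hence the displayed formula for $a_2$, substitute back to land on $p(a_3,a_4)=0$, and then observe the bijection. The only cosmetic difference is that in the paper's bookkeeping (equations \eqref{eq:param_prf1} and \eqref{eq:param_prf2}, keeping $\sum_{i=1}^4A_ia_i$ intact) the two relations already share the same $a_2^2$-coefficient, so a straight subtraction suffices rather than the $\overline k[a_3,a_4]$-linear combination you anticipate; this does not affect the argument.
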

\begin{proof}
It is straightforward to check that if $(x_3,x_4)\in Z(p)$, then
their lies in $\tilde h_A^{-1}(C)$. To show the other direction, we
consider the system of equations
\begin{eqnarray*}
&&A_1a_1+A_2a_2+A_3a_3+A_4a_4+A_5=0\\
&&\frac1{a_1}+\frac1{a_2}+\frac1{a_3}+\frac1{a_4}+1=0\\
&&\sum_{2\leq
i<j\leq4}\left(\frac{A_ia_i}{a_j}-\frac{A_ja_j}{a_i}\right)+\sum_{i=1}^4A_ia_i-A_5\sum_{i=2}^4a_i^{-1}=C
\end{eqnarray*}
Multiply the second equation by $A_1a_1a_2a_3a_4$ and substitute
$A_1a_1$ by\\ $-(A_2a_2-A_3a_3-A_4a_4-A_5)$ to get
\begin{eqnarray}\label{eq:param_prf1}
&&A_2a_2^2(a_3+a_4+a_3a_4)+A_3a_3^2(a_2+a_4+a_2a_4)+\\
&&\nonumber
A_4a_4^2(a_2+a_3+a_2a_3)+Ba_2a_3a_4+A_5(a_2a_3+a_2a_4+a_3a_4)=0.
\end{eqnarray}
Next we multiply the third equation by $a_2a_3a_4$ to get
\begin{eqnarray}\label{eq:param_prf2}
&&A_2a_2^2(a_3+a_4+a_3a_4)+A_3a_3^2(-a_2+a_4+a_2a_4)+\\
&&\nonumber A_4a_4^2(-a_2-a_3+a_2a_3)-A_5(a_2a_3+a_2a_4+a_3a_4)=0.
\end{eqnarray}
Subtract \eqref{eq:param_prf1} by \eqref{eq:param_prf2} to get:
\begin{equation*}\label{eq:eliminating y}
g_2(a_3,a_4)a_2+2a_4 a_3 g_1(a_3,a_4)=0
\end{equation*}
By assumption that $g_2(a_3,a_4)\ne0$ we get that
$$
a_2=\frac{-2 a_3 a_4g_1(a_3,a_4)}{g_2(a_3,a_4)}
$$
Use this expression for $a_2$ inside $\tilde h_A(a_2,a_3,a_4)-C=0$
to get
$$
\frac{p(a_3,a_4)}{2 a_4 g_2(a_3,a_4)}=0
$$
and therefore $a_3,a_4$ must satisfy $p(a_3,a_4)=0$.
\end{proof}
\begin{prop}\label{prop:poly irreducible}
Let $p(X_3,X_4)$ as in \eqref{eq:polynomial definition}. Then for
all $C\in\overline k$ but at most 14 values, the polynomial
$p(X_3,X_4)$ is irreducible over $\overline k$.
\end{prop}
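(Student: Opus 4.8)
The plan is to reduce the question to a short list of explicit factorization patterns for the plane quartic $Z(p)$ and to show that each pattern can occur only for finitely many $C$. First I record the shape of $p$. Inspecting \eqref{eq:polynomial definition} term by term, $p$ has total degree $4$; as a polynomial in $a_4$ over $\overline k[a_3]$ it has degree $4$ with leading coefficient the nonzero constant $4A_4^2$ (recall $A_4\ne 0$), so $p$ is primitive in $a_4$, and by Gauss's lemma irreducibility of $p$ over $\overline k$ is equivalent to irreducibility of $p$ in $\overline k(a_3)[a_4]$. The homogeneous degree-$4$ part of $p$ is
\[
F_4=2a_4^2\big((B+C-2A_2)a_3+2A_4a_4\big)(A_3a_3+A_4a_4),
\]
whose linear factors are $a_4$ (with multiplicity $2$) together with $\ell_1:=(B+C-2A_2)a_3+2A_4a_4$ and $\ell_2:=A_3a_3+A_4a_4$; since $A_3\ne 0$ the form $\ell_2$ is never proportional to $a_4$, while $\ell_1\propto a_4$ only for $C=2A_2-B$ and $\ell_1\propto\ell_2$ only for $C=2A_2+2A_3-B$, so apart from those two values $F_4$ has four well-understood linear factors.

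Now suppose $p=q_1q_2$ with $q_1,q_2$ nonconstant. A factor with $\deg_{a_4}q_i=0$ would lie in $\overline k[a_3]$ and divide the constant $4A_4^2$, hence be constant; so $\deg_{a_4}q_1+\deg_{a_4}q_2=4$ with both parts positive. Since $F_4$ is the product of the leading forms of $q_1$ and $q_2$, each leading form is a product of some of $\{a_4,a_4,\ell_1,\ell_2\}$; because $A_3,A_4\ne 0$ one has $a_3^2\nmid F_4$, and it follows that a factor with $\deg_{a_4}=1$ has total degree $1$ (a line) and a factor with $\deg_{a_4}=2$ has total degree $2$ (a conic). Hence, after excluding the at most two special values of $C$ above, $p$ is reducible over $\overline k$ if and only if either (i) $p$ has a linear factor whose direction is that of $a_4$, $\ell_1$, or $\ell_2$, or (ii) $p=Q_1Q_2$ with $Q_1,Q_2$ conics whose leading forms form one of the two partitions $\{a_4\ell_1,\ a_4\ell_2\}$ or $\{a_4^2,\ \ell_1\ell_2\}$ of the multiset $\{a_4,a_4,\ell_1,\ell_2\}$.

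In case (i), for each of the three admissible directions there is a one-parameter family of candidate lines; parametrizing such a line and demanding that $p$ vanish identically along it yields at most five polynomial equations in the line's offset parameter and in $C$, whose common solution set is finite and hence contributes boundedly many values of $C$. In case (ii) I write each $Q_i$ as the prescribed quadratic leading form plus an unknown affine-linear tail, expand $Q_1Q_2=p$, match coefficients, and eliminate the six tail unknowns to obtain a single polynomial condition on $C$; carrying this out for the two partitions produces finitely many further values. Assembling the degrees of all the conditions from (i) and (ii) together with the exceptional values $C=2A_2-B$ and $C=2A_2+2A_3-B$, a careful bookkeeping gives at most $14$ values of $C$ for which $p(X_3,X_4)$ is reducible over $\overline k$, which is the assertion.

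The step I expect to be the main obstacle is case (ii): before the count can even be made one must know that the two-conic locus does not sweep out a positive-dimensional family over the $C$-line, i.e. that none of the elimination-produced conditions on $C$ is vacuous. Equivalently, one must check that $p$ is irreducible for generic (say transcendental) $C$, after which the reducible locus is automatically a proper closed, hence finite, subset of $\mathbb{A}^1_C$; one then still has to do the (routine but somewhat lengthy) resultant computation that pins the cardinality down to $14$.
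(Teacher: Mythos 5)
Your strategy is essentially the paper's: posit a factorization $p=q_1q_2$, exploit that $p$ has homogeneous parts only in degrees $2,3,4$ and that the top part $p_4$ factors into explicit linear forms, force any proper factor to be a line or a conic with constrained leading form, and then bound for how many $C$ each such pattern can occur. Your Gauss's‑lemma reformulation (the $a_4$‑leading coefficient of $p$ is the nonzero constant $4A_4^2$, so each $q_i$ has constant leading $a_4$‑coefficient and hence $\deg q_i=\deg_{a_4}q_i$) is a tidier way of reaching the same rigidity that the paper extracts piecemeal from the vanishing or non‑vanishing of $q_0,\dots,q_3,r_0,\dots,r_3$; in fact you do not even need the observation $a_3^2\nmid F_4$, since $\deg q_1+\deg q_2=\deg_{a_4}q_1+\deg_{a_4}q_2=4$ already forces $\deg q_i=\deg_{a_4}q_i$. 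After translation your line case corresponds to the paper's Cases~1a, 1b and the $q_3\ne0$ branch of Case~2, and your two conic--conic partitions to Case~1b and the $q_3=0$ branch of Case~2.

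The gap is that the proposal halts where the work begins. The content of the proposition is the explicit bound $14$, and that bound comes from actually writing out the coefficient conditions in each of your cases~(i) and~(ii), checking that the resulting polynomials in $C$ are not identically zero (your own nonvacuity worry in the last paragraph), and totalling degrees. The paper carries this through: Case~1a produces a single condition which is visibly quadratic in $C$ (the $X_3^4$‑coefficient of $p(X_3,-X_3)$ contains a $-C^2$ term), hence $\le 2$ values; Case~1b, after normalizing the linear factor to $q_1=X_4$, matches coefficients and leaves three equations in $(C,q_0)$ giving $\le 6$ values; the conic--conic branch of Case~2 is symmetric and gives $\le 6$ more; the $q_3\ne0$ branch of Case~2 is subsumed in Case~1a. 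That is the $2+6+6=14$. Asserting ``a careful bookkeeping gives at most $14$'' and deferring ``the (routine but somewhat lengthy) resultant computation'' leaves the proposition unproved; you still need to produce those coefficient equations and exhibit that they are nontrivial in $C$, exactly as the paper does.
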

\begin{proof}
We first denote the following homogeneous parts of $p(X_3,X_4)$
\begin{eqnarray*}
&p_4(X_3,X_4)=&2X_4^2((B+C-2a_2)X_3+2a_4X_4)(a_3X_3+a_4X_4)\\
&p_3(X_3,X_4)=&X_4(2(B-C-2(a_2-a_5))(a_4X_4^2+a_3X_3^2)+\\&&(B^2-C^2+D)X_3 X_4)\\
&p_2(X_3,X_4)=&2a_5((B-C+2a_2)X_4+2a_3X_3)(X_3+X_4)
\end{eqnarray*}
Let $q(X_3,X_4),r(X_3,X_4)$ be two polynomials satisfying\\
$p(X_3,X_4)=q(X_3,X_4)r(X_3,X_4)$, and denote their decomposition
into homogeneous parts, $q=q_0+q_1+q_2+q_3,r=r_0+r_1+r_2+r_3$.
Without loss of generality we assume $\deg{r}\leq\deg(q)$. Since the
homogeneous parts of $p$ are of degree 2,3,4 only this imposes a few
restrictions on $q,r$. We split the cases into 2 parts
\begin{enumerate}
\item
 {\bf{Case 1 $q_0\ne0$}}: If $q_0\ne0$ we get that $r_0=r_1=0$ since otherwise
the minimal degree of $qr<2$, moreover $r_2\ne0$. This implies that
$q_3=0,q_1\ne0$ (otherwise $\deg{qr}>4$, and there would not be a
homogeneous part of degree 3). It is left to check whether $q_2$
vanishes or not.
\begin{enumerate}
\item
{\bf{$q_2\ne0$}}: If $q_2\ne0$ then $r_3=0$ and we get that
$r=r_2=p_2$ is homogeneous of degree 2, and that $p_2$ divides $p$,
in particular $(X_3+X_4)$ divides $p(X_3,X_4)$. Considering this
composition in $\overline k(X_3)[X_4]$ this implies that $-X_4$ is a
root of $p(X_3,X_4)$ that is $p(X_3,-X_3)=0$. The coefficient of the
fourth degree of $p(X_3,-X_3)$ is then
$$
(B^2 - 2 a_3 (-2 a_2 + 2 a_5 + B - C) - 2 a_4 (-2 a_2 + 2 a_5 + B
-C) - C^2 + D)
$$
that vanishes for at most 2 values of $C$.
\item
{\bf{$q_2=0$}}: If $q_2=0$, then $r_3\ne0$ and we have that
$q=q_0+q_1,r=r_2+r_3$, such that $q_0r_2=p_2,q_1r_3=p_4$ and
$q_1r_2+q_0r_3=p_3$. Without loss of generality we may assume
\begin{eqnarray*}
&&q_1=X_4,\;((B+C-2a_2)X_3+2a_4X_4)
{\mbox{,\;or}}\quad(a_3X_3+a_4X_4)\\
&&r_3=\frac{p_4}{q_1},\quad r_2=\frac{p_2}{q_0}
\end{eqnarray*}
If $q_1\ne X_4$ we get that $X_4$ divides $r_3,p_3$ and therefore
$X_4|r_2$ which is a contradiction. Therefore we get that $q_1=X_4$,
and
\begin{eqnarray*}
&&p_2(X_3,X_4)/q_0+q_0((B+C-2a_2)X_3+2a_4X_4)(a_3X_3+a_4X_4)=\\
&&2(B-C-2a_2+2a_5)(a_4X_4^2+a_3X_3^2)+(B^2-C^2+D)X_3 X_4
\end{eqnarray*}
comparing coefficients gives 3 equations for $C,q_0$ that have at
most 6 solutions for $C$.
\end{enumerate}
\item
{\bf{Case 2}} $q_0=0$: If $q_0=0$ then $q_1\ne0$ (otherwise
$\deg(r)\>\deg(q)$ that contradicts our assumption). This implies
that $r_0=0,r_1,q_2\ne0$. If $q_3\ne0$ then $r_2=0$ and hence we get
that $r_1$ divides $p,p_2,p_3,p_4$ which we saw above that can
happen for at most 2 values of $C$. We therefore get that
$q=q_1+q_2,r=r_1+r_2$ satisfying
$q_1r_1=p_2,q_1r_2+q_2r_1=p_3,q_2r_2=p_4$. Since $X_4$ does not
divide $p_2$ and does divide $p_3,p_4$ we find that $X_4$ must
divide $q_2,r_2$. We therefore assume without loss of generality,
that $q_2=X_4(a_3X_3+a_4X_4), r_2=p_4/q_2$, and
$(q_1,r_1)=(\mu(X_3+X_4),\frac1\mu(2a_5(B-C+2a_2)X_4+2a_3X_3))$ or
$(\mu(2a_5(B-C+2a_2)X_4+2a_3X_3),\frac1\mu(X_3+X_4))$. Comparing
coefficients again for $q_1r_2+q_2r_1=p_3$ we get 3 equations for
$C,\mu$ that have at most 6 solutions in $C$.
\end{enumerate}
Combining all restrictions for $C$ we get that if $C$ is outside a
set of cardinality at most 14 $p(X_3,X_4)$ is irreducible.
\end{proof}

\end{document}